\definecolor{Chris}{rgb}{0.5,0,1}
\definecolor{dubiousEnglish}{rgb}{1,0,.2}
\DeclareMathOperator{\dR}{{\mathrm{dR}}}
\DeclareMathOperator{\End}{End}
\DeclareMathOperator{\ev}{ev}
\DeclareMathOperator{\GL}{GL}
\DeclareMathOperator{\ImaginaryPart}{Im}
\DeclareMathOperator{\RealPart}{Re}
\DeclareMathOperator{\Span}{span}
\DeclareMathOperator{\CS}{\mathrm{CS}}
\DeclareMathOperator{\tr}{\mathrm{tr}}
\newcommand{\conSum}{{\mathbin{\#}}}
\newcommand{\restricted}[2]{{\left.{#1}\right|_{#2}}}
\newcommand{\lcan}{{\lambda_{\mathrm{can}}}}
\newcommand{\ocan}{{\omega_{\mathrm{can}}}}
\newcommand{\p}{\partial}
\newcommand{\lie}[1]{{\mathcal{L}_{#1}}}
\newcommand{\abs}[1]{{\left\lvert #1\right\rvert}}
\newcommand{\norm}[1]{{\lVert #1\rVert}}
\newcommand{\0}{{\mathbf{0}}}
\newcommand{\flow}{\varphi}
\renewcommand{\epsilon}{\varepsilon}
\newcommand{\BLOB}{\textsf{bLob}\xspace}
\newcommand{\DD}{{\mathbb D}}
\renewcommand{\SS}{{\mathbb{S}}}
\newcommand{\RR}{{\mathbb{R}}}
\newcommand{\QQ}{{\mathbb{Q}}}
\newcommand{\TT}{{\mathbb{T}}}
\newcommand{\CC}{{\mathbb{C}}}
\newcommand{\ZZ}{{\mathbb{Z}}}
\newcommand{\NN}{{\mathbb{N}}}
\newcommand{\fF}{{\mathcal F}}
\newcommand{\jJ}{{\mathcal J}}
\newcommand{\hH}{{\mathcal H}}
\newcommand{\nN}{{\mathcal N}}
\newcommand{\oO}{{\mathcal{O}}}
\newcommand{\uU}{{\mathcal U}}
\newcommand{\IntSig}{\mathring \Sigma}
\newcommand{\lambdaGT}{\lambda_{{\operatorname{GT}}}}
\newcommand{\xiGT}{\xi_{{\operatorname{GT}}}}
\newcommand{\cocore}{\mathcal{K}'}
\newcommand{\defin}[1]{\textbf{#1}}
\newcommand{\mM}{{\mathcal M}}
\newcommand{\blowdown}{{\mathbin{/\!/}}}
\newcommand{\Wcob}{W}
\DeclareMathOperator{\Aff}{Aff}
\renewcommand{\k}{\mathbf{k}}
\newcommand{\RRpx}{\mathbb{R}^*_+}
\newcommand{\CCx}{\mathbb{C}^*}
\newcommand{\Ok}{\mathcal{O}_\k}
\newcommand{\Okx}{\Ok^\times}
\newcommand{\Okxp}{\Ok^{\times, +}}
\newcommand{\Uk}{\mathbb{U}_\k}
\newcommand{\Ukp}{\mathbb{U}_\k^+}
\newcommand{\Gammak}{\Gamma_\k}
\newcommand{\h}{\mathfrak{h}}
\newcommand{\Gk}{G_\k}
\newcommand{\GrsOrig}{\mathcal{G}^{r,s}}
\newcommand{\GrsunOrig}{\mathcal{G}^{r,s}_1}
\newcommand{\gR}{\mathfrak{g}_\RR}
\newcommand{\gC}{\mathfrak{g}_\CC}
\newcommand{\SL}{\operatorname{SL}}
\newcounter{CobStep}
\newcommand{\X}{\Theta}
\newcommand{\T}{T}
\newcommand{\R}{U}
\newcommand{\Ss}{V}
\newcommand{\U}{X}
\newcommand{\V}{Y}
\newcommand{\step}[1]{\refstepcounter{CobStep}{\vspace{\baselineskip}\noindent
\itshape Step~\arabic{CobStep}: #1}}
\theoremstyle{plain}
\newcounter{maintheorem}
\newtheorem{main_theorem}[maintheorem]{Theorem}
\newtheorem{propIntro}{Proposition}
\newtheorem{corIntro}[propIntro]{Corollary}
\newtheorem{theorem}{Theorem}[section]
\newtheorem{lemma}[theorem]{Lemma}
\newtheorem{conjecture}[theorem]{Conjecture}
\newtheorem{corollary}[theorem]{Corollary}
\newtheorem*{coru}{Corollary}
\newtheorem*{qu}{Question}
\newtheorem{proposition}[theorem]{Proposition}
\theoremstyle{remark}
\newtheorem{remark}[theorem]{Remark}
\newtheorem{example}[theorem]{Example}
\newtheorem{exIntro}[propIntro]{Example}
\theoremstyle{definition}
\newtheorem*{definition}{Definition}
\newtheorem{definitionNumbered}[theorem]{Definition}
\newtheorem{defnIntro}[propIntro]{Definition}
\numberwithin{equation}{section}
\title[Weak and strong fillability in higher dimensions]{Weak 
and strong fillability of higher dimensional contact manifolds}
\author[P.\ Massot]{Patrick Massot}
\email[P.\ Massot]{patrick.massot@math.u-psud.fr}
\address[P.\ Massot]{Département de mathématiques\\
  Université Paris Sud\\
  F-91405 Orsay Cedex\\
  FRANCE}
\author[K.\ Niederkrüger]{Klaus Niederkrüger}
\email[K.\ Niederkrüger]{niederkr@math.univ-toulouse.fr}
\address[K.\ Niederkrüger]{
  Institut de mathématiques de Toulouse\\
  Université Paul Sabatier -- Toulouse III\\
  118 route de Narbonne\\
  F-31062 Toulouse Cedex 9\\
  FRANCE}
\author[C.\ Wendl]{Chris Wendl}
\email[C.\ Wendl]{c.wendl@ucl.ac.uk}
\address[C.\ Wendl]{
Department of Mathematics \\
University College London\\
Gower Street\\
London WC1E 6BT\\
United Kingdom}
\begin{document}

\begin{abstract}
For contact manifolds in dimension three, the notions of weak and strong
symplectic fillability and tightness are all known to be inequivalent.
We extend these facts to higher dimensions: in particular, we define
a natural generalization of weak 
fillings and prove that it is indeed weaker (at least in dimension five), 
while also being obstructed by
all known manifestations of ``overtwistedness''.  We also find the first
examples of contact manifolds in all dimensions that are not symplectically 
fillable but also cannot be called overtwisted in any reasonable sense.
These depend on a higher dimensional analogue of Giroux torsion, which we
define via the existence in all dimensions of exact symplectic manifolds 
with disconnected contact boundary.
\end{abstract}

\maketitle

\section*{Introduction}
\setcounter{maintheorem}{0}

Contact structures in dimension $2n-1$ arise naturally from symplectic
structures in dimension~$2n$ by considering symplectic manifolds with
a convexity condition at the boundary.  It has been understood
since the work of Gromov \cite{Gromov_HolCurves} and
Eliashberg \cite{Eliashberg_filling} in the late 1980's that not every
contact structure arises in this way, i.e.~not all contact structures
are \emph{symplectically fillable}.  Moreover, in dimension three, there
are distinct notions of \emph{strong} and \emph{weak} fillability,
and they are both closely related to the deep dichotomy between \emph{tightness}
and \emph{overtwistedness}, which plays a crucial role in the problem
of classifying contact structures.  One has in particular the following
inclusions among classes of contact structures on $3$-manifolds:
$$
\{ \text{strongly fillable} \} \subset \{ \text{weakly fillable} \}
\subset \{ \text{tight} \}.
$$
Both of these are proper inclusions: in the first case this was shown by
Eliashberg \cite{Eliashberg3Torus}, and in the second by
Etnyre and Honda \cite{EtnyreHonda_tightNonfillable}, though today
a simple alternative construction is also available using the notion
of \emph{Giroux torsion}.  This invariant, introduced by Giroux in
\cite{Giroux_2000}, measures the amount that a contact structure ``twists''
in neighborhoods of certain embedded
$2$-tori; it does not imply overtwistedness but
does obstruct strong \cite{Gay_GirouxTorsion} and sometimes also
weak \cite{GhigginiHonda_twisted} fillability.  It also plays a key role in
several classification results for tight contact structures,
such as the ``coarse'' classification due to Colin, Giroux
and Honda \cite{ColinGH}.

Most of the above discussion only makes sense so far in dimension three.
This is partly because it is not known whether the tight/overtwisted 
dichotomy extends to  higher dimensions, although recent work of the
second author and others
(e.g.~\cite{NiederkruegerPlastikstufe, BourgeoisContactHomologyLeftHanded})
has revealed hints of ``overtwistedness'' in certain classes of examples.
It also has not been clear up to now whether the notions of
weak filling and Giroux torsion have any interesting higher dimensional
counterparts.
One of our main goals in this paper is to answer the latter question
in the affirmative: we will show that several well known three-dimensional
phenomena, such as the existence of tight but non-fillable or weakly
but not strongly fillable contact manifolds, also occur in higher
dimensions.

Let us begin the discussion with the phenomenon of contact structures
that are tight but not (strongly) fillable.
The emblematic example is the family of contact structures on $\TT^3$
defined for $k\in \NN$ by
\begin{equation*}
  \xi_k := \ker \bigl(\cos ks\; d\theta + \sin ks\; dt\bigr) \;,
\end{equation*}
where we define $\TT^3$ as $(\RR / 2\pi\ZZ) \times (\RR / \ZZ)^2$ with
coordinates $(s,t,\theta)$.
These contact structures are all tight due to Bennequin's theorem
\cite{Bennequin}, since they are covered by the standard contact structure on
$\RR^3$, but Eliashberg \cite{Eliashberg3Torus} showed that only $\xi_1$ has a
strong symplectic filling.
Despite this lack of fillability, they share other important properties that are
incompatible with overtwistedness.
For example, they are \emph{hypertight}, i.e. they allow  Reeb vector fields 
without contractible closed orbits, in contrast to Hofer's theorem
\cite{HoferWeinstein} that such orbits always exist in the overtwisted case.
More importantly, they are not ``flexible,'' meaning they are all 
homotopic as plane fields yet not isotopic \cite{Giroux_infinity}, 
whereas overtwisted contact structures are maximally flexible due to
Eliashberg's classification theorem \cite{Eliashberg_Overtwisted}.

In higher dimensions, it is an open question whether one can define a
reasonable notion of tightness, but of course flexibility and
contractible Reeb orbits are easy to define.  Strong fillability can also
be defined in the same way as in dimension three, by considering symplectic
manifolds with convex boundary (see Definition~\ref{defn:sympFilling} below).
This allows us to
compare the properties of the contact structures~$\xi_k$ on
$\TT^3$ discussed above with the following statement.

\begin{main_theorem}\label{thm:generalisationTori}
Identify the torus $\TT^2$ with $(\RR / 2\pi\ZZ) \times (\RR / \ZZ)$
with coordinates $(s,t)$.
In any odd dimension, there is a closed manifold~$M$ carrying two
contact forms~$\alpha_+$ and $\alpha_-$ such that the formula
\begin{equation*}
	\xi_k := \ker\left(\frac{1 + \cos ks}{2}\, \alpha_+ + 
		\frac{1 - \cos ks}{2}\, \alpha_- + \sin ks \; dt \right)
\end{equation*}
for $k \in \NN$ defines a family of contact structures on $\TT^2\times M$ 
with the following properties:
\begin{enumerate}
\item They all admit Reeb vector fields without contractible closed orbits.
\item They are all homotopic as almost contact structures but not
contactomorphic.
\item $(\TT^2 \times M,\xi_k)$ is strongly fillable only for $k=1$.
\end{enumerate}
\end{main_theorem}

We recover the $3$-dimensional case discussed above by taking $M = \SS^1 :=
\RR / \ZZ$ and
$\alpha_\pm = \pm d\theta$ in the theorem.

The non-fillability of the above contact structures on $\TT^3$ was
later recognized to be a consequence of the positivity of their Giroux
torsion, and we'd next like to generalize this fact.
Let us briefly recall the definition of Giroux torsion, in language
that is suitable for generalization to higher dimensions.
Denote by $(A,\beta)$ the cylinder $A := \RR\times \SS^1$ with coordinates
$(s,\theta)$, together with the $1$-form $\beta := s\, d\theta$, which
makes it the completion of a Liouville domain 
(see Definition~\ref{defn:sympFilling}).
The \defin{contactization}\footnote{Our use of the term
``contactization'' is slightly nonstandard, as the word is typically 
used in the literature to mean a product of a Liouville domain with 
$\RR$ instead
of with~$\SS^1$.  In this paper, we shall go back and forth between
both meanings of the term---it should always be clear from context
which one is meant.} of $(A, \beta)$ is the manifold $A
\times \SS^1 = \RR\times\SS^1 \times \SS^1 = \RR\times \TT^2$ equipped
with the contact structure $\ker (dt + \beta)$, where $t$ denotes the
coordinate on the new $\SS^1$-factor.
This contact structure is tangent to the $\RR$-factor, and it makes a
half twist as we move from $s = -\infty$ to $s = +\infty$.
One can then compactify this domain by identifying it with the interior of
$[0,\pi] \times \SS^1 \times \SS^1$ with coordinates $(s,t,\theta)$
and contact structure
\begin{equation*}
  \ker \bigl(\cos s\; d\theta + \sin s\; dt\bigr) \;.
\end{equation*}
This last contact manifold is called a \defin{Giroux $\pi$-torsion}
domain (or sometimes Giroux \emph{half-torsion} domain).
Such domains can be glued along boundary tori to achieve any number of
half turns.
The Giroux torsion of a contact $3$-manifold $(V, \xi)$ is defined to
be the supremum of all integers~$n$ such that $(V, \xi)$ contains $2n$
Giroux $\pi$-torsion domains glued together.
The idea described above can be conveniently rephrased in terms of
\emph{ideal Liouville domains}, a notion recently introduced by
Giroux.
We will review the precise definition in \S\ref{section:giroux_domains}, but in
a nutshell, an ideal Liouville domain is the compactification of a complete
Liouville manifold that appears naturally e.g.~as the closure of a page of a
supporting open book decomposition, or more generally, the closure of any
component of a $\xi$-convex hypersurface minus its dividing set.
With this notion, a Giroux $\pi$-torsion domain can be viewed
directly as the contactization of an ideal Liouville domain.
In this paper, we shall refer to contactizations of ideal Liouville
domains as \defin{Giroux domains}.
The fact that Giroux torsion is an obstruction to strong fillability
\cite{Gay_GirouxTorsion} is then generalized to the following theorem.

\begin{main_theorem}\label{thm:1torsion}
If a contact manifold contains a connected codimension~$0$ submanifold with
nonempty boundary obtained by gluing together two Giroux domains, then it is not
strongly fillable.
\end{main_theorem}

Observe that at least one of the Giroux domains in Theorem~\ref{thm:1torsion}
must always have disconnected boundary.
The existence of Liouville domains with disconnected boundary in
dimensions four and higher is itself a nontrivial fact: the first
examples were found by McDuff \cite{McDuff_contactType} in dimension
four, and more were found by Geiges in dimensions four
\cite{Geiges_disconnected4} and six \cite{Geiges_disconnected},
and Mitsumatsu \cite{Mitsumatsu_Anosov} in dimension four.
The following notion generalizes the construction of Geiges:

\begin{defnIntro}
  A \defin{Liouville pair} on an oriented $(2n-1)$-dimensional
  manifold $M$ is a pair $(\alpha_+,\alpha_-)$ of contact forms such
  that $\pm \alpha_\pm \wedge d\alpha_\pm^{n-1} > 0$, and the
  $1$-form
  \begin{equation*}
    \beta := e^{-s} \alpha_- + e^{s} \alpha_+
  \end{equation*}
  on $\RR \times M$ satisfies $d\beta^n > 0$.
\end{defnIntro}

A Liouville pair allows us to construct Liouville domains 
with two boundary components (in fact, by attaching Stein
$1$-handles to these examples one can obtain 
examples with any number of boundary components).
These manifolds can then be used to build Giroux domains of the form
$[0,\pi] \times \SS^1\times M$ with contact form
\begin{equation}\label{eqIntro:HigherDimTorison}
  \lambdaGT = \frac{1 + \cos s}{2}\, \alpha_+ + 
  \frac{1 - \cos s}{2}\, \alpha_- + \sin s \; dt \;,
\end{equation}
which can be stacked together to produce the examples described in
Theorem~\ref{thm:generalisationTori}.
In order to state an existence result%
\footnote{Our proof of Theorem~\ref{thm:LiouvilleExists} owes a
considerable debt to Yves Benoist, who explained to us how to use number 
theory to find lattices in the groups considered by Geiges in 
\cite{Geiges_disconnected}.}
for Liouville pairs, recall that
a \defin{number field of degree~$n$} is a field that is an
$n$-dimensional vector space over~$\QQ$.
Recall also that $\RR$ contains number fields of arbitrary
degree.

\begin{main_theorem}\label{thm:LiouvilleExists}
  One can associate canonically to any number field~$\k$ of degree~$n$
  a $(2n - 1)$-dimensional closed contact manifold $(M_\k, \xi_\k)$.
  If $\k$ can be embedded into $\RR$, then $M_\k$ also admits a Liouville
  pair, hence $\RR \times M_\k$ is Liouville.
\end{main_theorem}
\begin{coru}
There exist Liouville domains with disconnected boundary in all
even dimensions.
\end{coru}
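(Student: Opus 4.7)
The plan is to show that Theorem~\ref{thm:LiouvilleExists} gives the corollary essentially for free, by truncating the complete Liouville manifold $\RR \times M_\k$ to a compact domain bounded by two slices $\{\pm R\} \times M_\k$.

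First I would recall that for each $n \geq 2$ there exists a number field $\k \subset \RR$ of degree $n$ (for instance $\QQ(2^{1/n})$), so Theorem~\ref{thm:LiouvilleExists} provides a closed $(2n-1)$-manifold $M_\k$ carrying a Liouville pair $(\alpha_+,\alpha_-)$. On $\RR \times M_\k$ the $1$-form
\begin{equation*}
  \beta = e^{-s}\alpha_- + e^{s}\alpha_+
\end{equation*}
satisfies $d\beta^n > 0$, making $(\RR \times M_\k, d\beta)$ a complete Liouville manifold.

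Next I would fix $R > 0$ large and consider the candidate Liouville domain $W_R := [-R,R] \times M_\k$ equipped with $\beta$. Its boundary has two components, $\{R\}\times M_\k$ and $\{-R\}\times M_\k$; verifying that each is of contact type with the correct coorientation is the only real check. On the slice $\{s = R\}$,
\begin{equation*}
  \beta|_{\{R\}\times M_\k} \wedge (d\beta|_{\{R\}\times M_\k})^{n-1}
\end{equation*}
is, up to terms of lower order in $e^R$, equal to $e^{(2n-1)R}\,\alpha_+ \wedge d\alpha_+^{n-1}$, which is positive because $\alpha_+$ is a positive contact form on $M_\k$; so for $R$ large this restriction is a positive contact form with respect to the boundary orientation (whose outward normal is $\partial_s$). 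On the slice $\{s = -R\}$, the analogous computation produces a leading term $e^{(2n-1)R}\,\alpha_- \wedge d\alpha_-^{n-1}$, which is \emph{negative} on $M_\k$ by the defining condition $-\alpha_- \wedge d\alpha_-^{n-1} > 0$ in the definition of a Liouville pair; however, the outward normal at $\{s=-R\}$ is $-\partial_s$, so the boundary orientation is the opposite of that of $M_\k$, and $\beta|_{\{-R\}\times M_\k}$ becomes a positive contact form there as well. Hence $W_R$ is a genuine Liouville domain of dimension $2n$ whose boundary $M_\k \sqcup \overline{M_\k}$ is disconnected.

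This handles all even dimensions $2n \geq 4$; dimension $2$ is trivial (the cylinder $[-1,1]\times \SS^1$ with $\beta = s\, d\theta$ is a Liouville domain with two boundary circles), so the corollary holds in every even dimension. The main obstacle is really already absorbed into Theorem~\ref{thm:LiouvilleExists}; what remains here is purely the orientation bookkeeping at the two ends, which is why the asymmetric definition of a Liouville pair (with the sign condition $\pm \alpha_\pm \wedge d\alpha_\pm^{n-1} > 0$) is exactly what makes the truncation work on both sides simultaneously.
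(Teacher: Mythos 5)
Your proposal is correct and is exactly the paper's intended argument: Theorem~\ref{thm:LiouvilleExists} furnishes a closed $(2n-1)$-manifold $M_\k$ with a Liouville pair, and truncating the complete Liouville manifold $(\RR\times M_\k, d\beta)$ with $\beta = e^{-s}\alpha_- + e^s\alpha_+$ at $s = \pm R$ for $R$ sufficiently large yields a compact Liouville domain with two boundary components, the sign condition $\pm\alpha_\pm \wedge d\alpha_\pm^{n-1} > 0$ being precisely what makes both ends convex. One small arithmetic slip worth noting: the leading term of $\restricted{\beta}{TM_\k} \wedge \bigl(\restricted{d\beta}{TM_\k}\bigr)^{n-1}$ at $s=R$ is $e^{nR}\,\alpha_+ \wedge d\alpha_+^{n-1}$, not $e^{(2n-1)R}\,\alpha_+ \wedge d\alpha_+^{n-1}$, though this does not affect the estimate or the conclusion.
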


This corollary provides a source of examples%
\footnote{Actually this construction provides infinitely many examples with 
pairwise distinct fundamental groups. We thank Gaëtan Chenevier for arithmetic
discussions clarifying this.}
that can be plugged into Theorem~\ref{thm:1torsion} to construct
nonfillable contact manifolds in all dimensions, and a special case of
this leads to the examples of 
Theorem~\ref{thm:generalisationTori} as well as the higher dimensional 
version of Giroux torsion
discussed in~\S\ref{sec:torsion}.  The proof of Theorem~\ref{thm:1torsion}
is in fact a generalization to higher
dimensions of a construction that was used by the third author in
\cite{WendlCobordisms} to show that every contact $3$-manifold with
Giroux torsion is weakly symplectically cobordant to one that is
overtwisted. In higher dimensions, the overtwistedness will come from a
generalization of the work of Atsuhide Mori in \cite{Mori_Lutz}.
Note that already in dimension three, the cobordism argument requires the
fact that overtwistedness obstructs \emph{weak} (not only strong)
fillability, a notion that has not previously been defined in any
satisfactory way in higher dimensions.  In dimension three of course,
the subtle differences between weak and strong fillings are of interest
in themselves, not only as a tool for understanding strong fillability.

As preparation for the definition of weak fillability that we will propose
here, let us first have a look at the realm of (almost) complex manifolds.

\begin{defnIntro}\label{def:tamed_pseudo_convex}
One says that a contact manifold $(V, \xi)$ is the \defin{tamed pseudoconvex
boundary} of an almost complex manifold $(W, J)$ if $V = \p W$ and
  \begin{itemize}
  \item $\xi$ is the hyperplane field $TV \cap JTV$ of $J$-complex
    tangencies,
  \item $W$ admits a symplectic form~$\omega$ taming $J$, and
  \item $V$ is $J$-convex.
  \end{itemize}
  The last point means that if we orient $V$ as the boundary of $W$,
  then for any $1$-form~$\lambda$ defining~$\xi$ (i.e.~$\lambda$ is a
  $1$-form with $\xi = \ker \lambda$ as oriented hyperplanes), we
  have $d\lambda(v, Jv) > 0$ for every nonzero vector $v \in \xi$.
\end{defnIntro}

Note that there is no direct relation in the definition between
the taming form~$\omega$ and the contact structure~$\xi$.
It must also be pointed out that the existence of $(W, J)$ is not very
restrictive without the taming condition.
For instance, the overtwisted contact structure on $\SS^3$ that is
homotopic to the standard contact structure can be realized as a
pseudoconvex boundary of the ball for some almost complex structure,
but the Eliashberg-Gromov theorem implies that this structure can
never be tamed.
We now recall the standard definitions on the symplectic side.
\begin{defnIntro}
\label{defn:sympFilling}
Let $V$ be a closed oriented manifold with a positive and co-oriented
contact structure~$\xi$.  We say that a compact symplectic manifold
$(W,\omega)$ is a \defin{symplectic filling} of $(V,\xi)$ if
$\p W = V$ as oriented manifolds and $\omega$ admits a primitive
$\lambda$ (a \defin{Liouville form}) near~$\p W$ which restricts to~$V$ as 
a contact form for~$\xi$.  We call $(W,\omega)$ an \defin{exact filling}
of $(V,\xi)$, or a \defin{Liouville domain}, if the Liouville form~$\lambda$
extends globally over~$W$.
\end{defnIntro}
Note that a Liouville form $\lambda$ gives rise (via the $\omega$-dual)
to a \defin{Liouville vector field}, whose flow is a symplectic dilation,
and the condition that $\restricted{\lambda}{TV}$ be a positive
contact form means that the Liouville vector field points transversely
outward at the boundary.  For this reason we say in this case that
$(W,\omega)$ has (symplectically) \defin{convex} boundary.
In dimension three, it is customary to distinguish this notion from
the weaker version discussed below by calling $(W,\omega)$ a
\defin{strong filling} of $(V,\xi)$, and we shall also apply this
convention to higher dimensions in the present paper.
To obtain a weaker notion of symplectic filling, recall that every
co-oriented contact structure $\xi$ carries a natural conformal class
$\CS_\xi$ of symplectic structures: indeed, if $\lambda$ is any
contact form for~$\xi$, then $\restricted{d\lambda}{\xi}$ defines a
symplectic bundle structure that is independent of the
choice of~$\lambda$ up to scaling.
If $(W, \omega)$ is a symplectic manifold and $V = \p W$ carries a
positive contact structure~$\xi$, one says, following
\cite{EliashbergGromov_convex}, that $\omega$
\defin{dominates}~$\xi$ if the restriction $\omega_\xi :=
\restricted{\omega}{\xi}$ belongs to~$\CS_\xi$.
This is always the case if $(W,\omega)$ is a strong filling of $(V,\xi)$,
and in dimension three it defines a strictly weaker notion of symplectic
fillability, e.g.~it is obstructed by overtwistedness but not by Giroux
torsion.
A symplectic $4$-manifold $(W,\omega)$ dominating a contact structure~$\xi$
at its boundary $V = \p W$ is therefore called a \defin{weak filling}
of $(V,\xi)$.
However, McDuff proved \cite[Lemma~2.1]{McDuff_contactType} that
from dimension~$5$ upward, the dominating condition already
implies that $(W,\omega)$ is a \emph{strong} filling.
In this paper, we propose the following weak filling condition for all 
dimensions.

\begin{defnIntro}
\label{def:weakfilling}
  Let $\xi$ be a co-oriented contact structure on a manifold $V$.
  Denote by $\CS_\xi$ the canonical conformal class of symplectic
  structures on $\xi$.
  Let $(W,\omega)$ be a symplectic manifold with $\p W = V$ as oriented
  manifolds and
  denote by $\omega_\xi$ the restriction of $\omega$ to $\xi$.
  We say that $(W, \omega)$ is a \textbf{weak filling} of $(V,\xi)$ 
	(and $\omega$ \defin{weakly dominates} $\xi$)	if
  $\omega_\xi$ is symplectic and $\omega_\xi + \CS_\xi$ is a ray of symplectic
	structures on~$\xi$.
\end{defnIntro}

The weak filling condition is thus equivalent to the requirement that
\begin{equation*}
\alpha \wedge \bigl(d\alpha + \omega_\xi\bigr)^{n - 1} \text{ and } 
\alpha \wedge \omega_\xi^{n - 1}
\end{equation*}
should be positive
volume forms for every choice of contact form~$\alpha$ for $\xi$. 
If one fixes a contact form~$\alpha$, then this is equivalent
to requiring $\alpha \wedge (\omega_\xi + \tau\, d\alpha)^{n-1} > 0$
for all constants $\tau \ge 0$, and it holds
for instance whenever
$$
\alpha \wedge d\alpha^k \wedge \omega_\xi^{n - 1 - k} > 0
$$
for all $k \in \{0,1,\dotsc,n-1\}$.
In dimension three, weak domination is equivalent to domination, hence our 
definition of weak filling reduces to the standard one.
The first important result to state about this new definition is that it is
the purely symplectic counterpart of tamed pseudoconvex boundaries.\footnote{We
  are deeply indebted to Bruno Sévennec and Jean-Claude Sikorav for
  discussions that led to the proof of Theorem~\ref{thm:omegaJ}.}

\begin{main_theorem}\label{thm:omegaJ}
A symplectic manifold $(W,\omega)$ is a weak filling of a contact
manifold $(V,\xi)$ 
(Definition~\ref{def:weakfilling}) if and only
if it admits a smooth almost complex structure~$J$ that is tamed by~$\omega$
and makes $(V,\xi)$ the tamed pseudoconvex boundary of $(W,J)$ 
(Definition~\ref{def:tamed_pseudo_convex}).
\end{main_theorem}

By contrast, weak fillings are not automatically strong fillings.
Indeed, weak domination of a fixed $\xi$ is an open
condition on $\omega$, so one can easily construct weak fillings that are
non-exact at the boundary by taking small perturbations of strong fillings.
The following less trivial examples of weak fillings non-exact at
the boundary are inspired by
Giroux's construction \cite{Giroux_plusOuMoins} of weak fillings for the tight
contact structures~$\xi_k$ on~$\TT^3$.

\begin{exIntro}
  Starting from a closed contact manifold $(V,\xi)$ and a supporting
	open book decomposition \cite{Giroux_ICM}, Frédéric Bourgeois constructed in
  \cite{BourgeoisTori} a contact structure on $V \times \TT^2$.
It can be written as the kernel of the $1$-form
\begin{equation*}
  \alpha_\epsilon = \alpha_V + \epsilon f\, dx_1 + \epsilon g\, dx_2
\end{equation*}
for any $\epsilon > 0$, where $(x_1,x_2)$ are the coordinates on
$\TT^2 = \SS^1 \times \SS^1$, $\alpha_V$ is a contact form on $V$ compatible
with the given open book, and $f,g\colon V \to \RR$ are functions
associated to the open book.
Now if $(W,\omega)$ is a weak filling of $(V,\xi)$, one can check by
examining the limit $\epsilon \to 0$ that
the Bourgeois contact structure on $V \times \TT^2$ is weakly filled by
$(W\times \TT^2,\omega \oplus \omega_{\TT^2})$, where $\omega_{\TT^2}$
is an area form on~$\TT^2$.
\end{exIntro}

The next result extends the fact that weak fillability is \emph{strictly}
weaker than strong fillability beyond dimension three.  Though we prove
this only for dimension five, it is presumably true in all dimensions; see
\S\ref{sec:torsion} for further discussion.

\begin{main_theorem}\label{thm:exist_weak_not_strong}
There exist $3$-manifolds $M$ with Liouville pairs $(\alpha_+,\alpha_-)$ 
such that the contact manifolds
$(\TT^2 \times M,\xi_k)$ of Theorem~\ref{thm:generalisationTori}
are all weakly fillable.  In particular,
there exist contact $5$-manifolds that are weakly but not
strongly fillable.
\end{main_theorem}

As in dimension three, one should expect that any notion of ``overtwistedness''
one might define in higher dimensions obstructs
the existence of a weak filling.  Here we have two possible notions
in mind: recall first that the second author \cite{NiederkruegerPlastikstufe}
has introduced 
a higher dimensional generalization of the overtwisted disk, called the
\emph{plastikstufe}.  We shall introduce in \S\ref{sec:blobs} a natural
generalization of this, called a \emph{bordered Legendrian open book}
(or ``\BLOB'' for short), and refer to contact manifolds that contain such
objects as \emph{$PS$-overtwisted}.  An alternative (though not necessarily
inequivalent) notion emerges from the observation that a
contact $3$-manifold is overtwisted if and only if it has a supporting
open book that is the negative stabilization of another open book.
The corresponding condition in higher dimensions is known to imply
\emph{algebraic} overtwistedness, i.e.~vanishing contact homomology
\cite{BourgeoisContactHomologyLeftHanded}.  We will show that each of
these conditions gives an obstruction to semipositive\footnote{In
Theorem~\ref{thm:non_weakly_fillable} and several other results in this
paper, we write the word ``semipositive'' in parentheses: this means that
the condition is presently necessary for technical reasons, but should be
removable in the future using the polyfold technology of Hofer-Wysocki-Zehnder,
cf.~\cite{Hofer_polyfoldSurvey}.
Note that in dimensions~$4$ and~$6$, symplectic manifolds are always semipositive.}
weak fillings:
\begin{main_theorem}
\label{thm:non_weakly_fillable}
  If $(V,\xi)$ is a closed contact manifold that either
  \begin{itemize}
  \item [(i)] contains a contractible $PS$-overtwisted subdomain, or
  \item [(ii)] is obtained as the negative stabilization of an open book,
  \end{itemize}
  then $(V, \xi)$ has no (semipositive) weak filling.

  Hence any contact structure on a closed manifold $V$ with $\dim V \ge 3$
  can be modified within its homotopy class of almost contact structures
  to one that admits no (semipositive) weak fillings.
\end{main_theorem}

We will also show in \S\ref{section:weak_condition} that the weak filling
condition is conveniently amenable to deformations near the boundary.
An often used fact in dimension three, due originally to Eliashberg
\cite{EliashbergContactProperties}, is that any weak filling which is
exact near the boundary can be deformed to a strong filling.
This was extended in \cite{NiederkrugerOvertwistedAnnulus} to show
that every weak filling can be deformed to make the boundary a
\emph{stable hypersurface}, so that weak fillings can be studied using
the machinery of Symplectic Field Theory (SFT).
Extending this idea to higher dimensions led to the notion of a
\emph{stable symplectic filling} defined in \cite{LatschevWendl}, and
we will show:

\begin{propIntro}\label{prop:stabilizeBoundary}
  Any weak filling can be deformed near its boundary to a stable
  filling.
  Moreover, if the symplectic form is exact near the boundary, then it
  can be deformed to a strong filling.
\end{propIntro}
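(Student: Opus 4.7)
The plan is to produce the stabilization (and the strong filling, when applicable) via a one-parameter deformation supported in an arbitrarily thin collar of $\partial W$, using the extra positivity provided by weak domination.

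First I would choose a contact form $\alpha$ for $\xi$ and identify a collar of $\partial W$ with $(-\epsilon, 0] \times V$, extending $\alpha$ to the collar by pullback under the projection to $V$. The weak filling condition supplies a whole ray $\omega|_\xi + \tau\, d\alpha|_\xi$ of symplectic forms on $\xi$ for $\tau \ge 0$, and this ray is what powers the deformation. Next I would consider
\[
\omega_t := \omega + t\, d\bigl(\chi(s)\, \alpha\bigr),
\]
where $\chi$ is a nondecreasing cutoff equal to $0$ for $s \le -\epsilon/2$ and to $1$ near $s = 0$. Since $d(\chi(s)\alpha) = \chi'(s)\, ds \wedge \alpha + \chi(s)\, d\alpha$, the weak domination inequalities together with the positivity of $ds \wedge \alpha \wedge d\alpha^{n-1}$ make it routine to verify that $\omega_t$ is symplectic for every $t \ge 0$; close to $\partial W$ it simplifies to $\omega + t\, d\alpha$, so the boundary restriction becomes $\omega|_V + t\, d\alpha|_V$.

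To reach a stable filling, I would argue that after replacing $\alpha$ by a suitable positive rescaling and taking $t$ sufficiently large, the pair $(\alpha, \omega_t|_V)$ becomes a stable Hamiltonian structure on $V$: the positivity $\alpha \wedge (\omega_t|_V)^{n-1} > 0$ is immediate from weak domination, while a cohomological matching on $V$---choosing the rescaling so that the one-dimensional kernel of $\omega_t|_V$ lies inside $\ker d\alpha$---is the key step, and it is precisely this step that the large-$t$ regime makes tractable. A Moser-type normal form argument then puts $\omega_t$ into the standard stable model $\Omega + d(s\alpha)$ on a thinner collar, exhibiting the deformed filling as a stable filling in the sense of \cite{LatschevWendl}. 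I expect this Reeb-alignment to be the main obstacle, since in general a line field transverse to $\xi$ need not be the Reeb line of any contact form for $\xi$; the argument must genuinely exploit the particular form $\omega|_V + t\, d\alpha$ with $t$ large, rather than an arbitrary closed $2$-form.

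Finally, in the exact case $\omega = d\lambda$ on the collar, the deformed form $\omega_t = d(\lambda + t\chi(s)\alpha)$ is still exact, and its primitive restricts on $\partial W$ to $\lambda|_V + t\alpha$. For $t$ large this restriction is dominated by $t\alpha$, and using the freedom to modify $\lambda$ by the differential of any function supported in the collar---which leaves $\omega$ itself unchanged---I would correct the primitive so that its boundary restriction is a positive multiple of a contact form for $\xi$. The corresponding $\omega_t$-dual vector field is then a Liouville field outward-transverse to $\partial W$, upgrading the deformed filling to a strong one.
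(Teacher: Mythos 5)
Your proposal correctly identifies the overall strategy---deform the symplectic form in a collar of $\partial W$, exploiting the open ray of symplectic forms $\omega_\xi + \tau\,d\alpha$ that weak domination provides---and the magnetic/cylindrical part of the deformation is essentially what the paper does (via Corollary~\ref{cor:magnetic_completion} and Lemma~\ref{thm:cohomologous_deformation_positive_end}). However, there are two genuine gaps, one of which you partially anticipate yourself.

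For the stable filling conclusion, the decisive step is arranging $\ker\omega_V' \supset \ker d\alpha$ for some nondegenerate contact form $\alpha$ for $\xi$ and some $\omega_V'$ cohomologous to $\omega_V$. You correctly flag that a line field transverse to $\xi$ need not be a Reeb line, but you then hope that ``the large-$t$ regime makes this tractable.'' It does not, at least not by any elementary manipulation: the kernel of $\omega_V + t\,d\alpha$ converges to the Reeb line of $\alpha$ as $t\to\infty$ but generically never equals the Reeb line of any contact form for $\xi$ at finite $t$, and no amount of rescaling $\alpha$ by a positive function fixes this. The paper resolves the problem by first perturbing $\omega$ so that $[\omega_V]$ is rational (using openness of weak domination), then invoking Cieliebak--Volkov's result \cite[Proposition~2.16]{Cieliebak_Volkov_SHS}, which produces a nondegenerate $\alpha$ and a cohomologous $\omega_V'$ forming a stable Hamiltonian structure with $\ker\alpha = \xi$; the cohomologous-deformation lemma then pushes the collar from $\omega_V + d(t\alpha)$ to $\omega_V' + d(t\alpha)$. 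Without an input of this kind, the argument does not close.

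The argument you sketch for the exact case is also flawed as stated. You propose to adjust the primitive $\lambda$ by $df$ so that $\lambda|_V + t\alpha$ becomes proportional to a contact form for $\xi$. Writing this condition out, you would need $\lambda|_V - h\alpha$ to be exact for some function $h$, and taking $d$ of that equation gives $\omega_V = d(h\alpha)$, which is a strong constraint on $\omega_V$ with no reason to hold; adding $df$ cannot rescue this because the obstruction is not an exact $1$-form. Two repairs are available: use Gray stability to straighten the contact structure $\ker(\lambda|_V + t\alpha)$ back to $\xi$ (which you gesture at with ``dominated by $t\alpha$'' but do not invoke), or, as the paper does, apply the cohomologous-deformation lemma with $\omega_V' = 0$ so that the deformed collar ends with $\omega = d(t\alpha)$ and primitive $t\alpha$, which is already a contact form for $\xi$---no isotopy needed.
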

The fact that weak fillings can be ``stabilized'' means that they are
obstructed by the invariants defined in \cite{LatschevWendl}, known as
\emph{algebraic torsion}.  The following corollary, which we will not
use in this paper, comes of course with the standard caveat about the
analytical foundations of SFT:
\begin{corIntro}\label{cor:AT}
If $(V,\xi)$ has fully twisted algebraic torsion in the sense of
\cite{LatschevWendl}, then it is not weakly fillable.
In particular, this is the case if $(V,\xi)$ has vanishing contact
homology with fully twisted coefficients.
\end{corIntro}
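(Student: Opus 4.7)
The plan is to derive the corollary directly from Proposition~\ref{prop:stabilizeBoundary} together with the obstruction results of \cite{LatschevWendl}. Suppose for contradiction that $(V,\xi)$ admits a weak filling $(W,\omega)$. By Proposition~\ref{prop:stabilizeBoundary}, I may modify $\omega$ in a collar neighborhood of $\p W$ to obtain a stable filling of $(V,\xi)$ in the sense of \cite{LatschevWendl}. The deformation is supported near the boundary, so the cohomology class $[\omega|_V] \in H^2(V;\RR)$, which records the failure of $\omega$ to be exact at $\p W$, is preserved by the stabilization.

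Having produced a stable filling, I would then invoke the main obstruction result of \cite{LatschevWendl}: fully twisted algebraic torsion, being an invariant built out of fully twisted SFT of $(V,\xi)$, obstructs the existence of any stable filling for every admissible twisting of the coefficient ring. Working with \emph{fully} twisted coefficients is precisely what allows one to handle the twisting induced by $[\omega|_V]$ without singling it out explicitly; therefore the stable filling constructed in the first step contradicts the hypothesis that $(V,\xi)$ has fully twisted algebraic torsion, and no weak filling can exist in the first place.

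For the ``in particular'' clause, recall that the vanishing of fully twisted contact homology means that the unit is zero in the fully twisted DGA, and this is by definition the order-zero (hence strongest) form of fully twisted algebraic torsion; the first part of the corollary then applies. The only real subtlety, and the reason this is stated as a corollary rather than a theorem, is the bookkeeping of the coefficient system under the deformation provided by Proposition~\ref{prop:stabilizeBoundary}, which working with fully twisted coefficients finesses by incorporating all possible twistings simultaneously. As the statement already notes, the SFT machinery of \cite{LatschevWendl} remains subject to the usual analytic caveats, and these are inherited unchanged by the argument above.
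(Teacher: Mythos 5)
Your proposal is correct and takes essentially the same route as the paper: the paper treats Corollary~\ref{cor:AT} as an immediate consequence of Proposition~\ref{prop:stabilizeBoundary} (weak fillings deform to stable fillings) combined with the obstruction results of \cite{LatschevWendl}, with the ``in particular'' clause handled by the fact that vanishing fully twisted contact homology is algebraic $0$-torsion. You merely spell out the cohomological bookkeeping that the paper leaves implicit.
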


The contact structures defined in
\eqref{eqIntro:HigherDimTorison} can be used to define a higher
dimensional version of the standard $3$-dimensional Lutz twist
along a pre-Lagrangian torus.
Notably, whenever $(V,\xi)$ contains a hypersurface~$H$ that is
isomorphic to one of the boundary components of the domain $[0,2\pi]
\times \SS^1\times M$ with the contact structure given by $\lambdaGT$,
we can cut $V$ open along $H$ and glue in an arbitrary number of
such domains to modify the contact structure on~$V$.
The contact structure obtained from this operation will never be strongly
fillable, and in some cases it is not even weakly fillable:

\begin{main_theorem}\label{thm:stacking}
By inserting contact domains of the form
$([0,2\pi k] \times \SS^1\times M,\ker\lambdaGT)$ for various $k \in \NN$, 
one can construct closed manifolds
in any dimension $2n-1 \ge 3$ which admit
infinite families of hypertight but not weakly fillable contact structures 
that are homotopic as almost
contact structures but not contactomorphic.
\end{main_theorem}

We will also discuss in \S\ref{sec:torsion} a ``blown down'' version of the 
above operation, which generalizes both the classical Lutz twist along
transverse knots in dimension three and a $5$-dimensional version recently
introduced by A.~Mori \cite{Mori_Lutz}.  As we shall see, this
operation always produces a contact structure that is in the same homotopy
class of almost contact structures, but is $PS$-overtwisted and thus
not weakly fillable. See also \cite{EtnyreGeneralizedLutzTwists}
for a completely different generalization of the Lutz twist to higher
dimensions.

\subsection*{Organization}
Here is an outline of the remainder of the paper.

In \S\ref{section:weak_condition} we establish some basic properties of
the weak filling condition, including its relation to tamed pseudoconvexity and
behavior under deformations in collar neighborhoods.  
This includes the proofs of
Theorem~\ref{thm:omegaJ} and Proposition~\ref{prop:stabilizeBoundary}.

Section~\ref{sec:negStab} shows that weak fillings are obstructed by
negatively stabilized open books.  The technology here involves finite energy
holomorphic planes in the noncompact completion of a weak filling; it is
a minor adaptation of the contact
homology computation due to Bourgeois and van Koert 
\cite{BourgeoisContactHomologyLeftHanded}.  Instead of appealing to contact
homology, however, we argue directly that the moduli space of holomorphic
planes found in \cite{BourgeoisContactHomologyLeftHanded} cannot exist if there
is a semipositive weak filling.

In \S\ref{sec:blobs}, we introduce the \BLOB as 
a natural generalization of the
plastikstufe and adapt the standard ``Bishop family of holomorphic disks''
argument to prove the remainder of 
Theorem~\ref{thm:non_weakly_fillable}.

The next three sections establish the proof of Theorem~\ref{thm:1torsion},
defining the first higher dimensional filling obstruction that is
distinct from any notion of ``overtwistedness''.  
In \S\ref{section:giroux_domains}, we discuss ideal Liouville domains and
Giroux domains, and state a more precise version of
Theorem~\ref{thm:1torsion} that can also be applied to weak fillings.
The proof requires a surgery construction explained in \S\ref{section:surgery},
which is inspired by the construction in \cite{WendlCobordisms} of symplectic
cobordisms from any contact $3$-manifold with Giroux torsion to one that
is overtwisted.  In our case, we consider a contact manifold $(V,\xi)$ which
contains a region with nonempty boundary consisting of two Giroux 
domains $G_0 = \Sigma_0 \times \SS^1$ and~$G_1 = \Sigma_1 \times \SS^1$
glued together.  It turns out that one can attach along~$G_0$ a 
symplectic ``handle'' 
of the form $\Sigma_0 \times \DD^2$, the effect of which is to replace 
$G_0 \cup G_1$ with
a region that is $PS$-overtwisted, thus a weak filling of $(V,\xi)$ with
suitable cohomological properties at the boundary gives rise to a larger
weak filling of something $PS$-overtwisted and hence a contradiction.
Note that since the new boundary is only weakly filled in general, the
new notion of weak fillability plays a crucial role even just for proving
that $(V,\xi)$ is not \emph{strongly} fillable.  
We shall also provide in \S\ref{sec:applications} an alternative argument
that avoids holomorphic disks and uses the somewhat simpler technology
of closed holomorphic spheres; this allows us to overcome transversality
problems using the recently developed polyfold machinery
\cite{HoferWZ_GW}.

In \S\ref{sec:Liouville} we switch gears and address the existence of
Liouville pairs in all dimensions, proving Theorem~\ref{thm:LiouvilleExists}.
For this we borrow an idea of Geiges from \cite{Geiges_disconnected} to
look for Liouville pairs among left-invariant $1$-forms on noncompact 
Lie groups that admit co-compact lattices and hence compact quotients.
Our examples of left-invariant Liouville pairs on Lie groups are quite
easy to write down (see e.g.~Equation \eqref{eqn:LiouvillePair}), but
in order to find co-compact lattices we'll need to apply some basic algebraic 
number theory.

Finally, \S\ref{sec:torsion} explains the most important special cases
of the filling obstruction from Theorem~\ref{thm:1torsion},
leading to higher dimensional generalizations of Giroux torsion and
the Lutz twist.  From this follow the proofs of
Theorems~\ref{thm:generalisationTori}, \ref{thm:exist_weak_not_strong}
and~\ref{thm:stacking}.

The appendix contains some technical results in symplectic linear algebra needed
for the proof of Theorem~\ref{thm:omegaJ}, relating weak symplectic fillings and
tamed pseudoconvexity.

\subsection*{Notation}

Unless otherwise indicated, throughout this paper 
we will assume $(W,\omega)$ is a compact 
symplectic manifold of dimension $2n \ge 4$, and $(V,\xi)$ is a closed
$(2n-1)$-dimensional contact manifold, with $\xi$ positive and
co-oriented.  In cases where $V$ is identified with $\p W$, we assume
that this identification matches the orientation induced by~$\xi$ to
the natural boundary orientation determined by~$\omega$.  
Also when $V = \p W$, we will often use the abbreviations
$$
\omega_V := \restricted{\omega}{TV} \qquad\text{ and }\qquad
\omega_\xi := \restricted{\omega}{\xi}.
$$

\subsection*{Acknowledgments}

We are grateful to Bruno Sévennec and Jean-Claude Sikorav for e-mails leading to
the proof of Theorem \ref{thm:omegaJ}, Yves Benoist for conversations which were
crucial for the proof of Theorem \ref{thm:LiouvilleExists}, Sylvain Courte for
his proof of Lemma~\ref{lemma:idealCollar}, Yves de Cornulier for his proof of
Lemma~\ref{lemma:deCornulier}, Helmut Hofer and Joel Fish for
explaining to us some details of the polyfold machinery, and Paolo Ghiggini for
many helpful discussions at the beginning of this project.
The idea that some modification of Mori's ideas in \cite{Mori_Lutz}
might lead to a notion of Giroux torsion in higher dimensions was first
suggested to us by John Etnyre.  We would also like to
thank the mathematics department in Nantes for
creating a pleasant working environment which hosted several meetings of the
authors, and a very careful anonymous referee whose comments on the
original version of this article have led to several improvements in the
exposition.
The first and second author were partially supported by the ANR grant
\emph{ANR-10-JCJC 0102}. The third author was supported by an
Alexander von Humboldt Foundation fellowship.

\section{The weak filling condition}
\label{section:weak_condition}

\subsection{Pseudoconvexity and weak filling}

The aim of this section is to show that our definition of a weak
filling (Definition~\ref{def:weakfilling} in the introduction) 
is in a certain sense the
purely symplectic counterpart of a tamed almost complex manifold with
pseudoconvex boundary.

Before proving the main theorem on this subject, we will need some important
properties of complex structures on vector spaces which were
explained to us by Bruno Sévennec and Jean-Claude Sikorav.
We will give proofs of the following two propositions in
Appendix~\ref{section:contractibility_of_space_J} and
\ref{section:linear_algebra} respectively.

\begin{proposition}\label{prop:space_cotamed_contractible}
  The space of complex structures on a vector space $E$ tamed by two
  given symplectic forms $\omega_0$ and $\omega_1$ is either empty or
  contractible.
\end{proposition}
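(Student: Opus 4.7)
The plan is to leverage the classical single-form result that the space $\mathcal{J}(\omega)$ of complex structures tamed by a fixed symplectic form $\omega$ is contractible, combined with the convexity properties of the taming condition. First I would observe that if a complex structure $J$ is tamed by both $\omega_0$ and $\omega_1$, then for every $t \in [0,1]$ the form $\omega_t := (1-t)\omega_0 + t\omega_1$ satisfies
$$
\omega_t(v, Jv) = (1-t)\omega_0(v,Jv) + t\omega_1(v,Jv) > 0
$$
for every nonzero $v$. In particular each $\omega_t$ is automatically non-degenerate (hence symplectic) and tames $J$, so $\mathcal{J}(\omega_0, \omega_1) = \bigcap_{t \in [0,1]} \mathcal{J}(\omega_t)$, and the whole family of intermediate symplectic forms is available as auxiliary data.

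Next I would construct an explicit deformation retraction of $\mathcal{J}(\omega_0, \omega_1)$ onto a single point by adapting the polar-decomposition retraction that underlies the contractibility of $\mathcal{J}(\omega)$ for a single form. Concretely, taking $\omega = \omega_{1/2}$ and any $\omega$-tame $J$, the symmetric part of the bilinear form $(v,w) \mapsto \omega(v, Jw)$ is a positive definite inner product $g_J$ on $E$; from $g_J$ one obtains an $\omega$-compatible complex structure $J^\flat$ together with a canonical continuous path of $\omega$-tame structures interpolating between $J$ and $J^\flat$. The central geometric step I would then carry out is verifying that, when $J$ is bi-tame, this entire path remains tamed by both $\omega_0$ and $\omega_1$, not merely by $\omega_{1/2}$. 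This would reduce the problem to showing that the set of $\omega_{1/2}$-compatible structures additionally tamed by $\omega_0$ and $\omega_1$ is contractible.

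For this reduced problem I would invoke the Siegel upper-half-space model of the space of $\omega_{1/2}$-compatible complex structures on $E$, in which this space is identified with a convex open subset of a real vector space. The additional taming conditions imposed by $\omega_0$ and $\omega_1$ should cut out a convex open subset of the Siegel domain (nonempty whenever $\mathcal{J}(\omega_0, \omega_1)$ was nonempty to start with), which is therefore contractible.

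I expect the main obstacle to be verifying the compatibility of the polar-decomposition retraction with simultaneous taming by both forms: the straight-line interpolation inside $\mathcal{J}(\omega_{1/2})$ is naturally controlled by $g_J$, but the sign of $\omega_i(v, J_s v)$ for $i = 0, 1$ along the retraction requires additional estimates. If the obvious interpolation does not quite preserve bi-tameness, one can likely repair this by iterating a short-time flow on the bi-tame subspace, or by splitting $[0,1]$ into small subintervals and using on each piece a polar retraction with respect to an $\omega_t$ whose choice is adapted to the current position, so as to control the deviation from bi-tameness uniformly.
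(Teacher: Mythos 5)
Your first observation (that every interpolated form $\omega_t$ is symplectic and tames $J$) is correct and matches the easy implication in the paper. After that, however, the proposal has a genuine gap at its central step. You want to retract from the bi-tame set onto the set of $\omega_{1/2}$-compatible, bi-tame complex structures via the polar-decomposition retraction adapted to $\omega_{1/2}$, and you explicitly flag that it is unclear whether this retraction preserves taming by $\omega_0$ and $\omega_1$ individually. This is not a detail to be filled in: the retraction is defined entirely in terms of $\omega_{1/2}$ and the inner product $g_J$ extracted from it, and there is no reason a priori for $\omega_0(v, J_s v)$ to stay positive along the path. The fallback plans you sketch (short-time flows, subdividing $[0,1]$ and using $\omega_t$-adapted polar retractions) are speculative and would each require precisely the kind of estimate you acknowledge is missing. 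The subsequent reduction is also not self-contained: you assert that the two taming conditions cut out a \emph{convex} subset of the Siegel domain, but this is exactly the nontrivial convexity fact that needs to be proved, not invoked.

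The paper's proof sidesteps the retraction step entirely. It fixes any cotamed $J_0$ and uses the Cayley-type map $\mu_{J_0}(J) = (J+J_0)^{-1}(J-J_0)$, which is a diffeomorphism from $\{J : J+J_0 \text{ invertible}\}$ onto the $J_0$-antilinear maps $A$ with $A - I$ invertible. The key lemma (Sévennec) is that for any single symplectic form $\omega$ taming $J_0$, the image $\mu_{J_0}\bigl(\jJ_t(\omega)\bigr)$ is convex; this is proved by a quadratic-in-$s$ computation whose leading coefficient has a fixed sign. Applying this to $\omega_0$ and $\omega_1$ separately, the cotamed set maps to an intersection of two convex sets, hence is convex, hence contractible. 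This gives exactly the convexity statement you were hoping to extract from the Siegel model, but in a single chart and with no need to first pass through compatible structures or to control a retraction. If you want to salvage your approach, the cleanest repair is to drop the polar retraction altogether and prove convexity of each tamed set in a Cayley-type chart, which brings you back to the paper's argument.
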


\begin{proposition}\label{prop:carac_cotamed}
  Let $E$ be a real vector space equipped with two symplectic forms
  $\omega_0$ and $\omega_1$.
  The following properties are equivalent:
  \begin{enumerate}
  \item the linear segment between $\omega_0$ and $\omega_1$ consists
    of symplectic forms
  \item the ray starting at $\omega_0$ and directed by $\omega_1$
    consists of symplectic forms
  \item there is a complex structure $J$ on $E$ tamed by both
    $\omega_0$ and~$\omega_1$.
  \end{enumerate}
\end{proposition}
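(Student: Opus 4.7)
I would prove the three-way equivalence by following the natural cycle $(3)\Rightarrow(1)\Rightarrow(2)\Rightarrow(3)$, with the delicate direction being $(1)\Rightarrow(3)$; I would also establish $(1)\Leftrightarrow(2)$ directly by rescaling, which then lets me treat $(2)$ as equivalent to $(1)$ throughout.

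\textbf{Easy implications.} For $(3)\Rightarrow(1)$ and $(3)\Rightarrow(2)$, given a $J$ tamed by both $\omega_0$ and $\omega_1$, any nonnegative linear combination $a\omega_0+b\omega_1$ (with $a+b>0$) satisfies $(a\omega_0+b\omega_1)(v,Jv)=a\omega_0(v,Jv)+b\omega_1(v,Jv)>0$ for $v\neq 0$, so this combination is nondegenerate, hence symplectic. For the equivalence $(1)\Leftrightarrow(2)$, the key observation is that for $s>0$ one has
\begin{equation*}
  \omega_0 + s\omega_1 = (1+s)\bigl((1-t)\omega_0 + t\omega_1\bigr), \qquad t=\tfrac{s}{1+s}\in(0,1),
\end{equation*}
so positivity of the ray for $s\geq 0$ and positivity of the open segment for $t\in[0,1)$ transfer back and forth, while the endpoints $\omega_0$ (at $s=0$ or $t=0$) and $\omega_1$ (at $t=1$) are symplectic by hypothesis.

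\textbf{The main direction $(1)\Rightarrow(3)$.} I would introduce the operator $A\in\End(E)$ defined by $\omega_1(v,w)=\omega_0(Av,w)$. Skew-symmetry of $\omega_0$ and $\omega_1$ forces $A$ to be $\omega_0$-self-adjoint in the sense $\omega_0(Av,w)=\omega_0(v,Aw)$, and nondegeneracy of $\omega_1$ makes $A$ invertible. Computing $(1-t)\omega_0+t\omega_1=\omega_0\bigl((I+t(A-I))v,w\bigr)$, condition~(1) translates to: $A$ has no eigenvalue in $(-\infty,0]$, equivalently no eigenvalue on the closed negative real axis. The standard structure theory of $\omega_0$-self-adjoint operators then provides an $\omega_0$-orthogonal $A$-invariant decomposition $E=\bigoplus_\alpha E_\alpha$ where each summand corresponds to a single positive real eigenvalue (possibly with nontrivial Jordan structure) or to a single pair of complex-conjugate non-real eigenvalues. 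Because the summands are mutually $\omega_0$- and $\omega_1$-orthogonal, both $\omega_0|_{E_\alpha}$ and $\omega_1|_{E_\alpha}$ are symplectic, so it suffices to construct a tame $J_\alpha$ on each block and take $J=\bigoplus J_\alpha$.

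\textbf{The block cases and the main obstacle.} On a block with a single positive real eigenvalue, $A=\lambda(I+N)$ with $\lambda>0$ and $N$ nilpotent; the binomial series for $(I+N)^{1/2}$ terminates, producing a well-defined $\omega_0$-symmetric square root $P=A^{1/2}$ with $\omega_1(v,w)=\omega_0(Pv,Pw)$. On a complex-conjugate block one uses instead the natural $\CC$-action provided by $A$ after normalization. The main obstacle is to produce, on each block, a complex structure $J_\alpha$ that is simultaneously tamed by $\omega_0$ and $\omega_1=\omega_0\circ A$. In the diagonalizable case this is easy: eigenspaces of $P$ are $\omega_0$-orthogonal symplectic subspaces on which one picks $\omega_0$-compatible $J$'s, automatically tame for $\omega_1$ because $P$ acts by a positive scalar. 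The genuinely delicate point is handling nontrivial Jordan structure for a single positive eigenvalue, or the skewed action in the complex-pair case, where I would combine an $\omega_0$-compatible reference $J_0$ with an explicit matrix-model perturbation built from the spectral data of $A$, and verify tameness by a direct computation using the identity $\omega_1(v,Jv)=\omega_0(Av,Jv)$. The technical linear-algebra work involved is precisely what the referenced Appendix~\ref{section:linear_algebra} is designed to carry out.
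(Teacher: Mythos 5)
Your overall architecture is correct and matches the paper's: $(1)\Leftrightarrow(2)$ by rescaling, $(3)\Rightarrow(1)$ is a one-line positivity argument, and the content sits in $(1)\Rightarrow(3)$. Your reduction there is also right: introducing the $\omega_0$-self-adjoint operator $A$ (the paper calls it $B$), translating condition~$(1)$ into the absence of eigenvalues in $(-\infty,0]$, and decomposing $E$ into generalized eigenspace blocks that are simultaneously $\omega_0$- and $\omega_1$-orthogonal. All of that is exactly what the paper does in its proof of Proposition~\ref{prop:LR}.

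Where the proposal has a genuine gap is the block-level construction of $J$, which is the actual crux. For a positive real eigenvalue $\lambda$ with nontrivial Jordan structure you introduce $P=A^{1/2}$ and note $\omega_1(v,w)=\omega_0(Pv,Pw)$, but this does not by itself produce a cotame $J$: if you conjugate an $\omega_0$-compatible $J_0$ by $P$, the result is tamed by $\omega_1$ but not obviously by $\omega_0$ (since $P$ need not be symmetric for the associated metric $\omega_0(\cdot,J_0\cdot)$, the inner product $\omega_0(v,P^{-1}J_0Pv)$ can fail to be positive). To make this route work you would need a $J$ compatible with $\omega_0$ that \emph{commutes} with $P$, and such a $J$ does not exist in general when the Jordan structure is nontrivial. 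You flag this as ``the genuinely delicate point'' and gesture at a perturbation of a reference $J_0$, but no construction is given, and the complex-conjugate-pair case is treated the same way. Deferring to ``Appendix~\ref{section:linear_algebra}'' does not close this, because the mechanism you describe is not the one the appendix uses.

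The paper's actual device, and the idea you are missing, is to avoid handling Jordan structure at all. Proposition~\ref{prop:LR} does not produce an exact normal form for the pair $(\omega_0,\omega_1)$; it produces, for every $\varepsilon>0$, a basis in which $\omega_0$ is standard and $\omega_1$ is represented by a matrix that is $\varepsilon$-close to a block matrix $A_1$ with \emph{no nilpotent part} --- the basis vectors $v_j:=\varepsilon^{-j}(B-\lambda)^jv_0$ are rescaled so the Jordan contributions appear with a factor of $\varepsilon$. One then constructs an explicit $J$ cotamed by $\omega_0$ and by $A_1$ itself in the two model blocks (Proposition~\ref{prop:tamed_J_models}), and concludes by openness of the taming condition for $\varepsilon$ small enough. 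This $\varepsilon$-approximation plus openness step is exactly what makes the proof work without solving the Jordan-block existence problem directly. If you prefer your route, you either need an openness argument of the same flavor (diagonalizable $A$'s are dense among those with spectrum off $(-\infty,0]$, and cotaming is open) or a genuine construction of a cotame $J$ on Jordan blocks --- but the latter is harder than it looks and is precisely what the paper is engineered to avoid.
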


\begin{remark}\label{rk:cocompatible}
  When choosing an almost complex structure~$J$ on a symplectic manifold,
for most applications it makes no difference whether one requires~$J$ to be
\emph{calibrated} (i.e.~compatible with) or \emph{tamed} by the symplectic 
structure, and typically very little attention is paid to this distinction
in the literature.  
Note however that the \emph{cotaming} condition is strictly weaker
than \emph{cocalibrating}, and in many cases it is not possible to
require the latter.
For instance, one can prove  (by hand or using the previous proposition)
that there exists a complex
structure on $\RR^4$ that is cotamed by the two forms
$\omega_0 = dx_1 \wedge dx_3 + dx_2 \wedge dx_4$ and $\omega_1 =
dx_2 \wedge dx_1 + dx_3 \wedge dx_4$.
On the other hand, one can use the fact that $\omega_0 \wedge \omega_1 =  0$
to show that there does not exist any complex structure that
is both calibrated by~$\omega_0$ and tamed by~$\omega_1$.
\end{remark}

The following is a restatement of Theorem~\ref{thm:omegaJ} from
the introduction.

\begin{theorem}
    A symplectic manifold $(W, \omega)$ is a weak filling of $(\p W,
    \xi)$ if and only if there is an almost complex structure $J$ on
    $W$ which is tamed by $\omega$ and such that $(\p W, \xi)$ is the
    strictly pseudoconvex boundary of $(W, J)$.
\end{theorem}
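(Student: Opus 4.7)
The reverse implication (tamed pseudoconvex $\Rightarrow$ weak filling) is the quick direction. Given $(W,J)$ tamed by $\omega$ with $V$ $J$-convex, the hyperplane $\xi = TV \cap JTV$ is $J$-invariant, so $J$ restricts to a complex structure $J_\xi$ on the bundle $\xi \to V$. The taming of $J$ by $\omega$ restricts to show that $\omega_\xi$ tames $J_\xi$, while the $J$-convexity of $V$ (with respect to any positive contact form $\alpha$ for $\xi$) says that $d\alpha|_\xi$ also tames $J_\xi$. Applying Proposition \ref{prop:carac_cotamed} fiberwise in the direction (3) $\Rightarrow$ (2), the ray $\omega_\xi + \tau\, d\alpha|_\xi$ for $\tau \geq 0$ consists of symplectic forms on $\xi$, which is exactly the weak filling condition.

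For the forward direction, fix a positive contact form $\alpha$ for $\xi$ and work fiber by fiber on $\xi \to V$. The weak filling condition says precisely that $\omega_\xi$ and $d\alpha|_\xi$ satisfy condition (2) of Proposition \ref{prop:carac_cotamed}, so by (2) $\Rightarrow$ (3) there exist fiberwise complex structures on $\xi$ cotamed by both. Proposition \ref{prop:space_cotamed_contractible} asserts that the space of such cotamed complex structures is contractible in each fiber, so the bundle of cotamed complex structures over $V$ is a fiber bundle with contractible fibers and therefore admits a smooth global section $J_\xi$, i.e.\ a smooth almost complex structure on the bundle $\xi \to V$ that is tamed by both $\omega_\xi$ and $d\alpha|_\xi$.

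It remains to promote $J_\xi$ to a global almost complex structure $J$ on $W$, tamed by $\omega$, with $\xi = TV \cap JTV$. Since $\omega_\xi$ is symplectic, there is an $\omega$-orthogonal decomposition $TW|_V = \xi \oplus \xi^{\omega\perp}$ with $\xi^{\omega\perp}$ a rank-$2$ symplectic subbundle along $V$; by dimension count the line $\ell := TV \cap \xi^{\omega\perp}$ is a smooth $1$-dimensional subbundle (the characteristic line field of $V$). Choose any smooth almost complex structure $J^\perp$ on $\xi^{\omega\perp}$ tamed by $\omega$ (possible by contractibility of tamed $J$'s on any symplectic rank-$2$ bundle) and set $J := J_\xi \oplus J^\perp$ on $TW|_V$. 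The $\omega$-orthogonality of the splitting makes this $J$ tamed by $\omega$. Moreover $J^\perp\ell$ cannot lie in $TV$, for otherwise $\xi^{\omega\perp} = \ell \oplus J^\perp\ell \subseteq TV$, contradicting $\dim(TV \cap \xi^{\omega\perp}) = 1$; hence if $X = v + w \in TV \cap JTV$ with $v \in \xi$ and $w \in \xi^{\omega\perp}$, then $w \in \ell$ and $J^\perp w \in J^\perp\ell \cap TV = \{0\}$, forcing $w = 0$ and $X \in \xi$. Finally, extend $J$ from $TW|_V$ to all of $TW$ while preserving the taming condition, which is standard given contractibility of the space of $\omega$-tame almost complex structures. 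The $J$-convexity of $V$ with respect to $\alpha$ is then immediate from the taming of $J_\xi$ by $d\alpha|_\xi$.

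The main obstacle is the passage in the forward direction from pointwise existence of a cotamed complex structure on $\xi$ to a smooth global section; this is precisely what Proposition \ref{prop:space_cotamed_contractible} is designed for, and the rest of the argument is a clean splitting plus an ordinary extension of $\omega$-tame almost complex structures.
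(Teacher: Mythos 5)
Your proof is correct and follows essentially the same route as the paper's: both directions go through Propositions~\ref{prop:carac_cotamed} and~\ref{prop:space_cotamed_contractible}, and the forward direction in both cases builds $J$ block-diagonally with respect to the $\omega$-orthogonal splitting $TW|_V = \xi \oplus \xi^{\omega\perp}$. The only cosmetic difference is that the paper picks an explicit tamed complex structure on $\xi^{\omega\perp}$ by choosing vector fields $X$ spanning $\ker\omega_V$ and $Y$ with $\omega(X,Y)>0$ and setting $JX = Y$, whereas you invoke abstract existence of a tamed $J^\perp$; your version has the small advantage of making the taming of the direct sum by $\omega$ and the identity $TV\cap JTV=\xi$ completely explicit (the latter uses, implicitly, that $X\in JTV$ forces $JX\in TV$ and hence $J^\perp w \in TV$, which you should state), while the paper simply asserts these.
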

\begin{proof}
  We denote the boundary of $W$ by $V$ and use the notation of the
  introduction.
  Suppose we have a weak filling.
  From Proposition~\ref{prop:carac_cotamed}, using the fact that
  the cotaming property is open, it follows that every point in the
  manifold $V$ has a small neighborhood on which there exists a
  complex structure $J_\xi$ on $\xi$ which is tamed by both
  $\omega_\xi$ and~$\CS_\xi$.  Using the contractibility of the space of such
  $J_\xi$'s (Proposition \ref{prop:space_cotamed_contractible}), we can then
replace $J_\xi$ with a global complex structure on~$\xi$
that has this property.
  Choose any vector field~$X$ on $V$ that spans $\ker \omega_V$, and
  extend it to a collar neighborhood $U$ of $V$.
  Let $Y$ be a vector field on $U$ that lies along $V$ in the
  $\omega$-orthogonal complement of $\xi$ and that satisfies
  $\omega(X, Y) > 0$.
  We extend $J_\xi$ to an almost complex structure $J$ on $U$ by
  setting $JX = Y$.
  Clearly, $J$ is tamed by $\omega$ on a small neighborhood of $V$,
  and we can then extend $J$ to the interior of $W$ to obtain the
  desired tamed almost complex structure on the entire filling $W$.
  By construction, $\xi = TV \cap JTV$, and $V$ is strictly
  $J$-pseudoconvex since $J_\xi$ is tamed by $\CS_\xi$.

  Conversely, assume $W$ has an almost complex structure~$J$ 
  that is tamed by $\omega$ and makes the boundary strictly pseudoconvex,
with $\xi$ as the field of complex tangencies $TV \cap J TV$.
  We can then write $\xi$ as
  the kernel of a nonvanishing $1$-form $\alpha$, and
  pseudoconvexity implies that we can choose the sign of $\alpha$ in
  such a way that $\restricted{d\alpha}{\xi}$ tames
  $\restricted{J}{\xi}$, and such that the natural orientation of~$\xi$
together with its co-orientation defined via $\alpha$ is compatible with
the boundary orientation of~$W$.
Since $\omega$ tames $J$, $\omega_\xi$ also 
tames $\restricted{J}{\xi}$.  We therefore have
  cotaming forms on $\xi$, so the easy implication (3)~$\implies$~(2)
  of Proposition~\ref{prop:carac_cotamed}
guarantees that $(W,\omega)$ is a weak filling of~$(V,\xi)$.
\end{proof}

Suppose now $U$ is a domain inside a symplectic manifold $(W, \omega)$ and 
$V := \partial U$ is pseudoconvex for some tamed $J$.
Using the easy direction of the preceding theorem, we see that 
$(U, \restricted{\omega}{U})$ is a weak filling of $(V, TV \cap JTV)$.
It is not true in general that it is a strong filling.
This was observed first in \cite[p.~158]{EliashbergGromov_convex},
where an example in $\CC^n$ with its standard Kähler structure is
discussed.
In this example, Eliashberg proved that the relevant contact structure
is actually Stein fillable, but weak fillability is much easier to
check (recall that we used the easy direction).  
By Theorem~\ref{thm:non_weakly_fillable} in the introduction, 
this already implies global
information about the contact structure, such as the nonexistence of a
contractible $PS$-overtwisted subdomain, or of
a negatively stabilized supporting open book.

\subsection{Magnetic collars and cones}
\label{subsection:magnetic}

Recall that for any co-oriented hyperplane field $\xi$ on a manifold
$V$, one can consider the annihilator of $\xi$ in $T^*V$:
\begin{equation*}
  S\xi := \bigl\{ \lambda \in T^*V\bigm|\,
  \text{$\ker \lambda = \xi$ and $\lambda (v) > 0$
    if $v$ is positively transverse to $\xi$}\bigr\} \;.
\end{equation*}
The field $\xi$ is a contact structure if and only if $S\xi$ is a
symplectic submanifold of $(T^*V, \ocan)$, and in this case $S\xi$ is
called the \emph{symplectization} of $\xi$.
Any contact form $\alpha$ is a section of this $\RR^*_+$-bundle, and thus
determines a trivialization $S\xi \cong \RR_+^* \times V$. 
In this trivialization, the restriction of the canonical symplectic form
$\ocan$ becomes $d(t\alpha)$, where $t$ is the coordinate in
$\RR^*_+$.

In order to rephrase the definition of weak filling in these terms,
we need to recall one further notion.
Suppose $\omega_V$ is any closed $2$-form on $V$, and denote the
projection from $T^*V$ to $V$ by $\pi$.
The $2$-form $\ocan + \pi^*\omega_V$ is then a symplectic form on $T^*V$,
which is called \defin{magnetic}.

The definition of weak fillings can now be reformulated as follows.

\begin{lemma}\label{lem:weak equals magnetic}
  Let $(W, \omega)$ be a symplectic manifold with $\p W = V$.
  Denote by $\omega_V$ the restriction of $\omega$ to $TV$ and by
  $\omega_\xi$ its restriction to a contact structure $\xi$ on $V$.
  The manifold $(W, \omega)$ is a weak filling of $(V, \xi)$ if and
  only if $\omega_\xi$ is symplectic and $S\xi$ is a symplectic
  submanifold of the magnetic cotangent bundle associated to
  $\omega_V$. \qed
\end{lemma}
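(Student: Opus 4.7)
The plan is to trivialize the symplectization $S\xi$ with a contact form $\alpha$ for $\xi$ and then directly compute the top exterior power of the restricted magnetic form. Concretely, I would use the diffeomorphism $\RR^*_+ \times V \to S\xi$, $(t,x) \mapsto t\alpha_x$, under which the tautological $1$-form on $T^*V$ pulls back to $t\alpha$; thus $\ocan$ restricts to $d(t\alpha) = dt \wedge \alpha + t\, d\alpha$ and the magnetic form $\ocan + \pi^*\omega_V$ becomes
\begin{equation*}
  \Omega = dt \wedge \alpha + t\, d\alpha + \omega_V,
\end{equation*}
where I suppress the pullback via the trivial projection $\RR^*_+ \times V \to V$.

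Next, I would compute $\Omega^n$ by writing $\Omega = A + B$ with $A = dt \wedge \alpha$ and $B = t\, d\alpha + \omega_V$. Since $A \wedge A = 0$ and $B^n = 0$ (the latter because $B$ has no $dt$ component and $V$ has dimension only $2n - 1$, so $B^n$ would be a $2n$-form with no $dt$ factor on the product $\RR^*_+ \times V$), the binomial expansion collapses to $\Omega^n = n\, dt \wedge \alpha \wedge (t\, d\alpha + \omega_V)^{n-1}$. Hence $S\xi$ is a symplectic submanifold if and only if, for every $t > 0$, the $(2n-1)$-form $\alpha \wedge (t\, d\alpha + \omega_V)^{n-1}$ is a positive volume form on $V$ (with the contact orientation).

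Finally, I would match this with the weak filling condition by using the standard fact that wedging with $\alpha$ is only sensitive to the $\xi$-component of the remaining factor: picking any vector field $R$ on $V$ with $\alpha(R) = 1$ and replacing a form $\beta$ by $\beta - \alpha \wedge \iota_R\beta$ preserves $\alpha \wedge \beta$ while yielding a form determined by $\restricted{\beta}{\xi}$. Applied to $\beta = (t\, d\alpha + \omega_V)^{n-1}$, this reduces the criterion of the previous paragraph to the statement that $\omega_\xi + t\, \restricted{d\alpha}{\xi}$ is a positive symplectic form on $\xi$ for every $t > 0$. Together with the explicit hypothesis that $\omega_\xi$ itself is symplectic, continuity in~$t$ makes this equivalent to $\alpha \wedge (\omega_\xi + \tau\, d\alpha)^{n-1} > 0$ for all $\tau \ge 0$, which is exactly the reformulation of the weak filling condition recorded a few lines below Definition~\ref{def:weakfilling}.

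I do not expect any deep obstacle; this is essentially a one-line calculation dressed in bookkeeping. The only point worth checking carefully is orientations: one must verify that $\Omega^n$ being a \emph{positive} volume form on $S\xi$ (with respect to its symplectic orientation) matches the \emph{positive} volume forms on $V$ (with respect to the contact orientation of $\xi$) that the weak filling condition requires. This reduces to the conventions that $\alpha$ is a positive contact form for the cooriented $\xi$ and that the $\RR^*_+$-factor in the trivialization is oriented by~$dt$.
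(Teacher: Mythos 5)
Your proof is correct. The paper itself gives no argument for this lemma (the statement is punctuated with \texttt{\textbackslash qed}, signalling that the authors regard it as a direct consequence of the definitions via the discussion in \S\ref{subsection:magnetic}), and your computation---trivializing $S\xi \cong \RR^*_+ \times V$ by a contact form, expanding $\Omega^n = n\,dt\wedge\alpha\wedge(t\,d\alpha+\omega_V)^{n-1}$, and reducing to the $\xi$-restriction---is precisely the argument the authors intend the reader to supply, including the continuity-in-$t$ observation needed to obtain the $\tau \ge 0$ formulation of weak domination and the orientation check at the end.
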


In the case where $(W,\omega)$ strongly fills $(V,\xi)$, it admits a
Liouville vector field $X$ near $V$, which induces the contact form
$\alpha = \restricted{\iota_X\omega}{TV}$ on~$V$.  
Let $\flow_t$ denote the flow of $X$
for time~$t$.  For sufficiently small $\epsilon > 0$, the map 
$(t, m) \mapsto \flow_{\ln t}(m)$ embeds 
$\bigl((1 - \epsilon, 1]\times V, \ocan\bigr)$ symplectically into $W$.
This allows the completion of $W$ by adding the positive half 
$(1, \infty)\times V$ of $S\xi$.
To understand this from a magnetic point of view, observe that 
$\omega_V = d\alpha$, so the magnetic form on 
$S\xi$ is $\omega_V + d(t\alpha) =
d((t+1)\,\alpha)$.  Thus $(t,m) \mapsto (t + 1,m)$ is a
symplectomorphism from the magnetic symplectization to the cylindrical end
of the completed strong filling.

In the setting of weak fillings, we would similarly like to be able to complete 
$(W, \omega)$ by adding the
magnetic symplectization. For this we need
a suitable description of a collar neighborhood of the boundary:
the following lemma has an obvious analogue for the situation where~$V$
is an oriented boundary component of a symplectic manifold $(W,\omega)$.

\begin{lemma}\label{lemma:model_tubular_neighborhood}
Suppose $V \subset W$ is an oriented hypersurface in the interior of a
$2n$-dimensional symplectic
  manifold $(W,\omega)$, $\xi \subset TV$ is the co-oriented (and hence
also oriented) hyperplane distribution induced by a nowhere vanishing
$1$-form $\lambda$ on~$V$,
  and the restriction of $\omega$ to $\xi$ is symplectic and induces the
positive orientation.
  Then a neighborhood of~$V$ in $(W,\omega)$ is symplectomorphic to
  \begin{equation*}
    \bigl( (-\epsilon,\epsilon) \times V,\,
    d(t\lambda) + \omega_V \bigr) \;,
  \end{equation*}
for some $\epsilon > 0$, where $\omega_V := \restricted{\omega}{TV}$,
$V$ is identified in the natural way with $\{0\}\times V$, and
  the direction of $\partial_t$ is such that
  $\iota_{\partial_t}\omega^{n} = \lambda \wedge \omega^{n-1}$.
Moreover, the vector field $\partial_t$ in~$W$ can be chosen to extend any given
vector field which has these properties on a neighborhood of 
some part of~$V$.
\end{lemma}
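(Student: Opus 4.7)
\emph{Plan.} The proof splits into three stages: a pointwise linear-algebra step along $V$ constructing the correct transverse vector field, a flow-box argument producing a tubular neighborhood embedding, and a relative Moser argument straightening the pulled-back symplectic form to the model $\omega_0 := d(t\lambda) + \omega_V$ (with $\lambda$ and $\omega_V$ understood as pulled back from $V$ to the product). The final flexibility statement is then obtained by running these steps relative to the prescribed data.

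First I would construct a section $X$ of $TW|_V$, transverse to $V$, with $\iota_X\omega|_{TV} = \lambda$. The linear map $T_pW \to T_p^*V$, $Y \mapsto \omega(Y,\cdot)|_{T_pV}$, is surjective (since $\omega$ is nondegenerate and forms on $V$ extend to $W$) with kernel the symplectic orthogonal $(T_pV)^\omega$. The assumed nondegeneracy of $\omega|_\xi$ forces $\xi_p \cap (T_pV)^\omega = 0$, so $(T_pV)^\omega$ is one-dimensional and contained in $TV$, where it coincides with the characteristic line $\ell_p := \ker \omega_V$. A solution $X$ therefore exists, is unique modulo $\ell$, and must be transverse to $V$: if $X$ were tangent, then $\iota_X\omega|_{TV}$ would vanish on $\ell$, contradicting $\lambda|_\ell \neq 0$ (which follows from $TV = \xi \oplus \ell$ and $\xi = \ker \lambda$). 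Flipping $X$ if necessary, its orientation can be arranged to match the stated identity $\iota_X\omega^n = \lambda \wedge \omega^{n-1}$ up to an overall positive factor.

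Next I would extend $X$ arbitrarily to a vector field on a neighborhood of $V$ in $W$ and define
\begin{equation*}
  \Phi \colon (-\epsilon,\epsilon) \times V \longrightarrow W, \qquad
  \Phi(t, p) := \flow_t^X(p),
\end{equation*}
an embedding for small $\epsilon$. Set $\tilde\omega := \Phi^*\omega$. A pointwise check at $\{0\} \times V$ gives $\tilde\omega|_{TV} = \omega_V = \omega_0|_{TV}$, $\iota_{\partial_t}\tilde\omega = \iota_X\omega|_{TV} = \lambda = \iota_{\partial_t}\omega_0$, and $\tilde\omega(\partial_t, \partial_t) = 0 = \omega_0(\partial_t, \partial_t)$, so the two forms agree along $\{0\} \times V$. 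Moreover $\omega_0 = dt \wedge \lambda + t\, d\lambda + \omega_V$ is symplectic on a neighborhood of $\{0\} \times V$, since its top power at $t=0$ is a positive multiple of $dt \wedge \lambda \wedge \omega_V^{n-1}$, a volume form by the orientation hypothesis on $\omega|_\xi$. Shrinking $\epsilon$, the interpolation $\omega_s := (1-s)\omega_0 + s\tilde\omega$ is symplectic for all $s \in [0,1]$; by the relative Poincaré lemma the closed form $\tilde\omega - \omega_0$, vanishing on $\{0\} \times V$, admits a primitive $\sigma$ vanishing there; and the Moser vector field $Y_s$ defined by $\iota_{Y_s}\omega_s = -\sigma$ then vanishes on $\{0\} \times V$, so its time-one flow $\psi_1$ fixes $\{0\} \times V$ pointwise and satisfies $\psi_1^*\tilde\omega = \omega_0$. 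Then $\Phi \circ \psi_1$ is the desired symplectomorphism, and the orientation condition on $\partial_t$ transports from the model to $W$ automatically by symplectomorphism-invariance.

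For the last assertion, given a vector field $X_0$ with the stated properties on a neighborhood of some open set $O \subset V$, choose $X$ in the linear step to equal $X_0|_V$ on $O$ (both satisfy the same pointwise characterization modulo $\ell$) and use $X_0$ itself in place of an arbitrary extension near $O$, so that $\Phi_*\partial_t = X_0$ on a neighborhood of $\{0\} \times O$. A localized relative Poincaré lemma then allows $\sigma$ to be chosen so that in addition to vanishing along $\{0\} \times V$, its flow $\psi_1$ preserves $\partial_t$ on a neighborhood of $\{0\} \times O$, ensuring $(\Phi \circ \psi_1)_*\partial_t = X_0$ there. I expect this last bookkeeping, distinguishing ``$\psi_1$ fixes $\{0\} \times V$ pointwise'' from ``$\psi_1$ preserves the product structure near $\{0\} \times O$'', to be the most delicate part of the argument, though it is entirely standard in the relative Moser framework.
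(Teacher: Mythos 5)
Your proof follows essentially the same route as the paper's: construct a transverse vector field $X$ along $V$ satisfying $\iota_X\omega|_{TV}=\lambda$ (the paper does this by first taking the ``Reeb-like'' section $X_\omega$ of $\ker\omega_V$ and then normalizing a transverse section $Y$ of the $\omega$-orthogonal complement of $\xi$ by $\omega(Y,X_\omega)=1$, which is the same characterization), flow it to obtain a collar, and finish with a relative Moser argument rel $\{0\}\times V$. You spell out the Moser bookkeeping more explicitly than the paper does, and you correctly flag the subtlety in the ``moreover'' clause (that fixing $\{0\}\times V$ pointwise is weaker than preserving $\partial_t$ near $\{0\}\times O$), which the paper glosses over; your resolution via a localized relative primitive is the right idea.
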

\begin{remark}
The statement about the direction of $\p_t$ means that in the version of this
lemma for the boundary of a weak filling, one obtains a neighborhood
of the form $((-\epsilon,0] \times V , d(t\lambda) + \omega_V)$, so in
particular~$\p_t$ points outwards.  There is a corresponding variation for
negative boundary components of weak symplectic cobordisms, for which~$\p_t$
points inwards.
\end{remark}
\begin{proof}[Proof of Lemma~\ref{lemma:model_tubular_neighborhood}]
  An identical proof has been given for the $3$-dimensional case in
  \cite{NiederkrugerOvertwistedAnnulus}.
  We will first define a collar neighborhood of $V$ by choosing a
  vector field that is transverse to $V$.
  Let $E\subset \restricted{TW}{V}$ be the $\omega$-orthogonal complement
  of $\xi$ along $V$.
  The intersection of $E$ with $TV$ is a $1$-dimensional
  subbundle, and we can uniquely define a $\emph{Reeb-like}$ vector
  field $X_\omega$ by taking the section in $E\cap TV$ that satisfies
  $\lambda(X_\omega) \equiv 1$.
  By our definition, $\restricted{\omega(X_\omega,\cdot)}{TV} = 0$ holds.
  Choose now a second section $Y$ in $E$ that is transverse to $V$,
  and normalize it such that $\omega(Y,X_\omega) \equiv 1$.
Note that if such a section is already given near some subset of~$V$,
then we can choose $Y$ to be an extension of that section.
We now have $\restricted{\omega(Y,\cdot)}{TV} = \lambda$,
  since both forms vanish on $\xi$ and agree on $X_\omega$.

  Extend $Y$ to a smooth vector field in a neighborhood of $V$, and
  use the flow $\flow^Y$ of this vector field to define a smooth
  diffeomorphism
  \begin{equation*}
    \Phi\colon (-\epsilon, \epsilon) \times V \hookrightarrow W, \,
    (t,p) \mapsto \flow^Y_t(p) \;,
  \end{equation*}
  which agrees with the canonical identification on $\{0\}\times V$.
  Next, compare the $2$-forms $\Phi^*\omega$ and $\omega_V + d(t
  \lambda)$ on $(-\epsilon,\epsilon) \times V$.
  Both forms coincide along $\{0\} \times V$, thus the linear
  interpolation of these forms is a path of symplectic structures
  (decreasing $\epsilon>0$ if necessary).  We can then use the Moser trick
  to show that they are all symplectomorphic to each other
(perhaps in a smaller neighborhood) by an isotopy that keeps the level
  set $\{0\} \times V$ fixed.
\end{proof}

\begin{corollary}\label{cor:magnetic_completion}
  If $(W,\omega)$ is a weak filling of $(V, \xi)$, then
  one can extend $W$ to a magnetic completion 
	$(\widehat W,  \widehat\omega)$ with $\widehat W = W \cup S\xi$,
  $\restricted{\widehat \omega}{W} = \omega$ and $\restricted{\widehat
    \omega}{S\xi} = \ocan + \omega_V$.
  
  Moreover, for every positive $t$, ($\{t\}\times V,\xi)$ is then weakly
  filled by $W \cup (0,t]\times V$ equipped with the restriction of
  $\widehat{\omega}$.  \qed
\end{corollary}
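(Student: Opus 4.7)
The plan is to construct the magnetic completion by gluing $W$ to the positive symplectization $S\xi$ along $V$, using Lemma~\ref{lemma:model_tubular_neighborhood} to arrange that the symplectic forms on $W$ and on the magnetic symplectization coincide in a common collar. The second statement then reduces to checking the ray condition of Definition~\ref{def:weakfilling} on each level set $\{t\} \times V$.

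First, I would apply Lemma~\ref{lemma:model_tubular_neighborhood} after choosing any contact form $\lambda$ for $\xi$: the weak filling hypothesis ensures that $\omega_\xi$ is symplectic with the correct orientation, so the lemma provides a symplectic embedding $\bigl((-\epsilon, 0] \times V,\, d(t\lambda) + \omega_V\bigr) \hookrightarrow (W, \omega)$ identifying $\{0\} \times V$ with $\p W$. In parallel, trivializing $S\xi$ by the section $m \mapsto \lambda_m$ identifies $S\xi$ with $(0, \infty) \times V$ carrying $\ocan + \pi^* \omega_V \cong d(t\lambda) + \omega_V$. Since both forms are given by the same coordinate expression on their matching domains, one obtains a well-defined closed $2$-form $\widehat\omega$ on $\widehat W := W \cup S\xi$ that restricts to $\omega$ and to $\ocan + \pi^*\omega_V$ on the two pieces. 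Nondegeneracy on the $S\xi$ part reduces, via the identity $(d(t\lambda) + \omega_V)^n = n\, dt \wedge \lambda \wedge (t\, d\lambda + \omega_V)^{n-1}$, to positivity of $\lambda \wedge (t\, d\lambda + \omega_V)^{n-1}$ at each $t > 0$; this is exactly the ray condition of weak domination translated via the splitting $TV = \xi \oplus \langle R_\lambda\rangle$.

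For the second assertion, fix $t > 0$ and look at $W_t := W \cup \bigl((0, t] \times V\bigr) \subset \widehat W$ with boundary $\{t\} \times V$. The form $\lambda$ again defines $\xi$ on $\{t\} \times V$, and $\widehat\omega$ restricts on $T(\{t\} \times V) \cong TV$ to $t\, d\lambda + \omega_V$, hence on $\xi$ to $\omega_\xi + t\, d\lambda|_\xi$. The ray $\omega_\xi + (t + s)\, d\lambda|_\xi$, $s \geq 0$, then consists of symplectic forms on $\xi$ by the original weak domination condition at $V$, which verifies that $W_t$ weakly fills $(V, \xi)$. The only real subtlety in the whole argument is matching the collar coordinate on $W$ with the symplectization coordinate on $S\xi$ so that the two symplectic forms agree on the overlap; this is precisely what Lemma~\ref{lemma:model_tubular_neighborhood} is designed to deliver, and the rest is direct verification.
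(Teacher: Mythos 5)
Your proof is correct and follows exactly the route the paper intends: the corollary is marked \qed because Lemma~\ref{lemma:model_tubular_neighborhood} was introduced precisely to supply the collar $\bigl((-\epsilon,0]\times V,\, d(t\lambda)+\omega_V\bigr)$ that glues tautologically to the magnetic symplectization, and the nondegeneracy/weak-domination check on each level $\{t\}\times V$ reduces, just as you computed, to the ray condition $\lambda\wedge(\omega_\xi + \tau\, d\lambda|_\xi)^{n-1}>0$ for $\tau\ge 0$.
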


In the previous section we proved that whenever
$(W, \omega)$ is a weak filling of $(V, \xi)$, there is a~$J$ on $\xi$ which
is tamed by $CS_\xi$ and also by the restriction of~$\omega$. However, 
it is sometimes desirable to fix a complex structure on $\xi$ in advance.
The following observation allows us to do this, at the price of first adding a
sufficiently large part of the magnetic completion.  The proof is a short
computation using the fact that for $T \gg 0$, the restriction of
$\omega_V + d(t\alpha)$ to $\{T\} \times V$ is dominated by the second
term.

\begin{lemma}
\label{lemma:Jalpha}
Suppose $\omega_V$ is a closed $2$-form on $V$ weakly dominating a 
contact structure~$\xi$, $\alpha$ is a contact form for~$\xi$ and
$R_\alpha$ is its Reeb vector field.  Further,
suppose $J$ is an almost complex structure on $[0, \infty) \times V$ which
preserves $\xi$ such that $\restricted{J}{\xi}$ is tamed by 
$\restricted{d\alpha}{\xi}$ and
$J\partial_t = R_\alpha$, with $t$ denoting the coordinate on $[0, \infty)$.
Then there exists a number $T \ge 0$ 
such that $J$ is tamed by $\omega_V + d(t\alpha)$ on 
$[T, \infty) \times V$. \qed
\end{lemma}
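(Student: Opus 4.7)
The plan is to expand the magnetic form $\omega_V + d(t\alpha) = \omega_V + dt\wedge\alpha + t\,d\alpha$ against $(X,JX)$ using the splitting $T([0,\infty)\times V) = \RR\p_t \oplus \RR R_\alpha \oplus \xi$ and then dominate every non-positive contribution by the leading $t$-term. Since $\omega_V$ and $d\alpha$ are pulled back from $V$ they both kill $\p_t$, while $\iota_{R_\alpha}d\alpha = 0$; and the hypothesis $J\p_t = R_\alpha$ combined with $J^2 = -\id$ forces $JR_\alpha = -\p_t$. Writing an arbitrary nonzero $X$ as $X = a\p_t + bR_\alpha + Z$ with $Z \in \xi$, so that $JX = aR_\alpha - b\p_t + JZ$, a direct fiberwise computation produces
\begin{equation*}
(\omega_V + d(t\alpha))(X,JX) = a^2 + b^2 + t\,d\alpha(Z,JZ) + \omega_V(Z,JZ) + b\,\omega_V(R_\alpha,JZ) + a\,\omega_V(Z,R_\alpha).
\end{equation*}

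Next I would use compactness of $V$ to make the estimate uniform. Fixing an auxiliary Riemannian norm $\|\cdot\|$ on $\xi$, the pointwise taming $d\alpha(Z,JZ) > 0$ together with continuity of $d\alpha|_\xi$ and $J|_\xi$ and compactness of the unit sphere bundle of $\xi$ gives constants $m, M > 0$ with
\begin{equation*}
d\alpha(Z,JZ) \ge m\|Z\|^2, \quad |\omega_V(Z,JZ)| \le M\|Z\|^2, \quad |\omega_V(R_\alpha,JZ)|, |\omega_V(Z,R_\alpha)| \le M\|Z\|.
\end{equation*}
A weighted AM--GM bound $M|a|\|Z\| \le \tfrac14 a^2 + M^2\|Z\|^2$ (and similarly for $b$) then absorbs the two cross-terms into $\tfrac12(a^2+b^2)$ plus a multiple of $\|Z\|^2$, yielding an estimate of the shape
\begin{equation*}
(\omega_V + d(t\alpha))(X,JX) \ge \tfrac12(a^2 + b^2) + (tm - C)\|Z\|^2
\end{equation*}
for a constant $C$ depending only on $m$ and $M$. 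Choosing $T > C/m$ makes the right-hand side strictly positive whenever $X \ne 0$, proving the lemma.

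The only delicate point is converting the pointwise taming condition into the uniform quadratic lower bound $d\alpha(Z,JZ) \ge m\|Z\|^2$ and matching upper bounds on the other terms. This relies not only on compactness of $V$ but also on uniform boundedness of $J$, $R_\alpha$ and $\omega_V$ across the end $[0,\infty)\times V$, which is automatic in the setting of the applications where $J$ is translation-invariant in $t$ (or at least cylindrical near infinity). Beyond that, the argument is pure bookkeeping in linear algebra applied fiber by fiber, and because the constants $m$, $M$, $C$ are pulled back from data on the compact manifold $V$, a single threshold $T$ suffices uniformly on $[T,\infty)\times V$.
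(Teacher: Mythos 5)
Your proof is correct and follows exactly the route the paper sketches: the paper declares the lemma ``a short computation using the fact that for $T \gg 0$, the restriction of $\omega_V + d(t\alpha)$ to $\{T\}\times V$ is dominated by the second term,'' and your expansion of $(\omega_V + d(t\alpha))(X, JX)$ against the splitting $\RR\p_t \oplus \RR R_\alpha \oplus \xi$, with absorption of the cross terms, is the detailed version of that computation. Your caveat about uniform boundedness of $\restricted{J}{\xi}$ along the end is well placed: the uniform bounds $d\alpha(Z, JZ) \ge m\norm{Z}^2$ and $\abs{\omega_V(Z,JZ)} \le M\norm{Z}^2$ need compactness of the set over which $J$ is evaluated, which fails on $[0,\infty)\times V$ unless $\restricted{J}{\xi}$ is $t$-independent or cylindrical at infinity, as it is in all the paper's applications.
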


\subsection{Deformations of weak fillings}

We now want to deform completions of weak fillings in order to obtain
some flexibility for~$\omega_V$.

\begin{lemma}\label{thm:cohomologous_deformation_positive_end}
Let $\omega_V$ be a closed $2$-form weakly dominating a contact structure
$\xi = \ker \alpha$ on $V$, and suppose $\omega_V'$ is 
any closed $2$-form on $V$ that is cohomologous to $\omega_V$.
Then the symplectic structure 
$\omega_V + d(t\alpha)$ on $[0,\infty) \times V$ can
be deformed away from $\{0\} \times V$ so that it coincides with 
$\omega_V' + d(t\alpha)$ on $(t_1, \infty) \times V$ for some large 
number $t_1 > 0$ and all levels $(\{t\} \times V,\xi)$ remain weakly filled.
\end{lemma}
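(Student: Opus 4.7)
The plan is to interpolate between $\omega_V$ and $\omega_V'$ along $[0,\infty) \times V$ using a cutoff function, placing the interpolation sufficiently far from $\{0\} \times V$ that the term $d(t\alpha)$ dominates the perturbation. First I would choose a $1$-form $\eta$ on $V$ with $d\eta = \omega_V' - \omega_V$ (which exists since the two forms are cohomologous), then fix numbers $0 < t_0 < t_1$ to be chosen later and a smooth cutoff $\chi \colon [0,\infty) \to [0,1]$ with $\chi \equiv 0$ on $[0, t_0]$ and $\chi \equiv 1$ on $[t_1, \infty)$. The candidate deformation is
\begin{equation*}
\tilde\omega := \omega_V + d\bigl(\chi(t)\,\eta\bigr) + d(t\alpha),
\end{equation*}
which by construction equals $\omega_V + d(t\alpha)$ for $t \le t_0$ and $\omega_V' + d(t\alpha)$ for $t \ge t_1$.

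Expanding, one writes $\tilde\omega = \beta_t + dt \wedge \gamma_t$ with $\beta_t := \omega_V + \chi(t)\,d\eta + t\,d\alpha$ and $\gamma_t := \alpha + \chi'(t)\,\eta$, both pullbacks of $V$-forms depending on the parameter $t$. Since $\dim V = 2n - 1$ forces $\beta_t^n = 0$, one computes
\begin{equation*}
\tilde\omega^n = n\, dt \wedge \gamma_t \wedge \beta_t^{n-1},
\end{equation*}
so $\tilde\omega$ is symplectic precisely when $\gamma_t \wedge \beta_t^{n-1}$ is a positive volume form on $V$ for every $t$. Meanwhile, by the criterion stated just after Definition~\ref{def:weakfilling}, the slice $\{t\} \times V$ remains weakly filled if and only if $\alpha \wedge (\beta_t + \tau\, d\alpha)^{n-1} > 0$ for every $\tau \ge 0$, i.e.~$\beta_t$ weakly dominates $\xi$.

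For $t \le t_0$ both conditions collapse to the original weak domination by $\omega_V$, so the main obstacle is the region $t \ge t_0$, where $\omega_V'$ alone need not weakly dominate $\xi$ and where $\chi'(t)\,\eta$ deforms $\gamma_t$ away from $\alpha$. Here I would use a two-scale argument. First, since $V$ is compact and $d\eta$ is bounded, one chooses $t_0$ large enough that $\omega_V|_\xi + s\,d\eta|_\xi + u\,d\alpha|_\xi$ is symplectic on $\xi$ for every $s \in [0,1]$ and $u \ge t_0$, the bounded perturbation $s\,d\eta$ being absorbed by the dominant term $u\,d\alpha$; with such $t_0$ fixed, $\beta_t$ weakly dominates $\xi$ throughout $t \ge t_0$, and in particular $\alpha \wedge \beta_t^{n-1} \gtrsim t^{n-1}$ uniformly on $V$. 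Next, one rescales $\chi$ by taking $t_1 - t_0$ as large as necessary to make $|\chi'|$ uniformly smaller than a universal constant depending on $\eta$, $\alpha$ and the previous lower bound. Since $\eta \wedge \beta_t^{n-1}$ grows at most like $t^{n-1}$, this renders the perturbation $\chi'(t)\,\eta \wedge \beta_t^{n-1}$ uniformly dominated by $\alpha \wedge \beta_t^{n-1}$, yielding $\gamma_t \wedge \beta_t^{n-1} > 0$. Symplecticity of $\tilde\omega$ and preservation of the weak filling property at every level then follow simultaneously.
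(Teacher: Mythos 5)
Your proof is correct and follows essentially the same strategy as the paper's: introduce the cutoff interpolation $\omega_V + d(\chi(t)\eta) + d(t\alpha)$, reduce both nondegeneracy and per-slice weak domination to the positivity of $(\alpha + \chi'\eta)\wedge\beta_t^{n-1}$, and then satisfy that positivity by pushing the support of $\chi$ far out (so $t\,d\alpha$ dominates the bounded perturbation $\chi\,d\eta$) and stretching it so $|\chi'|$ is small. The paper organizes the estimate by dividing through by $t$ and invoking continuity of $(\gamma,\eta)\mapsto\gamma\wedge\eta^{n-1}$ near $(\alpha,d\alpha)$, whereas you track polynomial growth rates in $t$ directly, but these are the same argument in different clothing.
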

\begin{proof}
Since $\omega_V$ and $\omega_V'$ are cohomologous, there exists a
$1$-form $\beta$ on $V$ such that $\omega_V' = \omega_V + d\beta$.
Consider the closed $2$-form $\omega' = d(t\alpha) + \omega_V +
d(\rho\, \beta)$ on $[0,\infty)\times V$, where $\rho\colon
[0,\infty) \to [0,1]$ is a smooth monotone function that is equal
to $0$ near $t=0$ and to~$1$ for large values of~$t$.
We now show that, if the support of $\rho$ is sufficiently far away from $0$
and $\rho$ increases sufficiently slowly, the new structure $\omega'$ will be
symplectic.
Since it is closed by construction, we only need to check nondegeneracy.
We compute:
\begin{equation*}
  (\omega')^n = dt \wedge (\alpha + \rho'\,\beta) \wedge 
  \bigl(t\,d\alpha + \omega_V + \rho\, d\beta\bigr)^{n - 1} \;.
\end{equation*}
To prove that $t\, d\alpha + \omega_V + \rho\, d\beta$ is a symplectic
form on $\ker(\alpha + \rho'\,\beta)$, choose an auxiliary norm on
the space of differential forms on $V$, and set $c_1 := \norm{\beta}$
and $c_2 := \norm{\omega_V} + \norm{d\beta}$.

The map $\Omega^1(V) \times \Omega^2(V) \to \Omega^{2n+1}(V), \,
(\gamma, \eta) \mapsto \gamma \wedge \eta^n$ is continuous, so that we
find constants $\epsilon_1,\epsilon_2 > 0$ such that $\gamma \wedge \eta^n >
0$ for every pair $(\gamma,\eta) \in \Omega^1(V) \times \Omega^2(V)$
with $\norm{\gamma - \alpha} < \epsilon_1$, and $\norm{\eta - d\alpha} <
\epsilon_2$.
Then for $\eta = d\alpha + \omega_V/t + \rho\, d\beta/t$, we obtain
$\norm{\eta - d\alpha} = \norm{\omega_V/t + \rho\, d\beta/t} \le
c_2/t$, and similarly, we find for $\gamma = \alpha + \rho'\, \beta$
that $\norm{\gamma - \alpha} = \rho'\, c_1$.

The nondegeneracy of $\omega'$ is immediate whenever $t$ lies outside
the support of~$\rho$.
If we let $\rho$ increase sufficiently slowly so that $\rho' <
\epsilon_1 / c_1$ and also assume $\rho (t) = 0$ for $t < c_2/\epsilon_2$,
then the above calculation shows that $\omega'$ is nondegenerate everywhere.
By the same reasoning, every hypersurface $\bigl(\{t\}\times V,
\alpha\bigr)$ will be weakly filled in the new manifold.
\end{proof}

\begin{remark}
This lemma implies that a weak filling gives rise to a strong filling whenever
$\restricted{\omega}{TV}$ is exact. 
This does not mean however that $\omega$ is a weak filling of a unique isotopy
class of contact structures on the boundary---there are counter-examples
in dimension~$3$. As explained for instance in
\cite[Section~4.2]{Massot_GCS}, any Seifert 3-manifold $V$ is the boundary of a 
symplectic manifold $(W, \omega)$ such that $\ker\restricted{\omega}{TV}$
is tangent to the fibers. Thus any (positive) contact structure on $V$ which is
transverse to the fibers is weakly filled by $(W, \omega)$. If $V$ is a
Brieskorn sphere $-\Sigma(2, 3, 6n - 1)$, then the results of \cite{Massot_GCS,
Ghiggini_VHM} combine to prove that there are $n - 2$ isotopy classes of
contact structures transverse to the fibers. Since those manifolds
are homology spheres, $\restricted{\omega}{TV}$ is exact.
\end{remark}

We now make the connection between weak fillings and stable hypersurfaces,
establishing Proposition~\ref{prop:stabilizeBoundary} and hence
Corollary~\ref{cor:AT} from the introduction.

\begin{corollary}\label{cor:weak implies stable}
  Any weak filling $(W, \omega)$ of a contact manifold $(V, \xi)$ can
  be deformed to have the additional property that $\ker \omega_V =
  \ker d\alpha$ for some nondegenerate contact form $\alpha$ for~$\xi$.
  In particular, $(\alpha,\omega_V)$ is then a stable Hamiltonian structure
  on~$V$, and $(W,\omega)$ is a stable filling of $(V,\xi)$ in the
sense of \cite{LatschevWendl}.
\end{corollary}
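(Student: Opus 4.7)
The plan is to attach part of the magnetic symplectization to $(W,\omega)$ and then deform the resulting $2$-form within the collar, using Lemma~\ref{thm:cohomologous_deformation_positive_end}, to produce a stable Hamiltonian structure at the new boundary. Fix a nondegenerate contact form $\alpha$ for $\xi$ and, via Corollary~\ref{cor:magnetic_completion}, glue on a cylinder $[0,T]\times V$ equipped with the symplectic form $d(t\alpha)+\omega_V$; the result is still a weak filling, and the induced $2$-form on the level $\{T\}\times V$ is $\omega_V+T\,d\alpha$.

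Now apply Lemma~\ref{thm:cohomologous_deformation_positive_end} to further deform the symplectic form inside the collar so that beyond some level $t_1$ it takes the shape $\omega_V'+d(t\alpha)$ for a closed $2$-form $\omega_V'$ in the cohomology class $[\omega_V]$ of our choosing. The boundary $2$-form at $\{T\}\times V$ then reads $\omega_V'+T\,d\alpha$, and its $1$-dimensional kernel coincides with $\ker d\alpha=\RR R_\alpha$ precisely when $\iota_{R_\alpha}\omega_V'=0$; in that case $(\alpha,\omega_V'+T\,d\alpha)$ is a stable Hamiltonian structure for $V$. Positivity of $\alpha\wedge(\omega_V'+T\,d\alpha)^{n-1}$ is automatic for $T$ large, since the top-degree term in $T$ is $T^{n-1}\alpha\wedge d\alpha^{n-1}$, which is already a volume form.

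The main obstacle therefore reduces to producing a compatible pair $(\alpha,\omega_V')$. Writing $\omega_V'=\omega_V+d\beta$ and decomposing $\beta=f\alpha+\eta$ along the splitting $TV=\RR R_\alpha\oplus\xi$ with $\iota_{R_\alpha}\eta=0$, Cartan's formula turns the condition $\iota_{R_\alpha}d\beta=-\iota_{R_\alpha}\omega_V$ into a transport equation for $\eta$ along the Reeb flow coupled with a constraint on $f$ in the $\xi$-directions. Solvability of this system for a fixed $\alpha$ need not hold in general, as closed Reeb orbits can obstruct it. One removes the obstruction by allowing a simultaneous rescaling $\alpha\mapsto g\alpha$ for a positive function~$g$, which modifies the Reeb vector field by a Hamiltonian correction in $\xi$; the combined freedom in $(g,\beta)$ is then enough to realize $\iota_{R_{g\alpha}}(\omega_V+d\beta)=0$ for some nondegenerate contact form $g\alpha$. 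Together with the positivity noted above, this yields the desired stable Hamiltonian structure on~$V$ and exhibits the deformed $(W,\omega)$ as a stable filling in the sense of~\cite{LatschevWendl}.

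For the addendum about strong fillings, suppose $\restricted{\omega}{TV}$ is exact, so $[\omega_V]=0$. Then we may simply pick $\omega_V'=0$ in the deformation above; the collar form reduces to $d(t\alpha)$, whose global primitive $t\alpha$ extends the Liouville structure all the way to the new boundary $\{T\}\times V$, where it restricts to the contact form $T\alpha$ for $\xi$. The resulting deformation is thus a strong filling.
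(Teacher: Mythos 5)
Your outline up to the reduction is fine and follows the paper: attach a magnetic collar, deform with Lemma~\ref{thm:cohomologous_deformation_positive_end} to replace $\omega_V$ by a cohomologous $\omega_V'$, and then observe that the boundary form $\omega_V' + T\,d\alpha$ has kernel $\ker d\alpha$ provided $\iota_{R_\alpha}\omega_V' = 0$. But the entire difficulty of the corollary is concentrated in producing the pair $(\alpha,\omega_V')$, and there you wave your hands.

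You explicitly note that for a fixed $\alpha$ the equation $\iota_{R_\alpha}(\omega_V + d\beta) = 0$ may have no solution because of closed Reeb orbits, and then assert without proof that ``the combined freedom in $(g,\beta)$'' removes the obstruction. That assertion is precisely what needs to be proved, and it is false without a further hypothesis: the existence of a nondegenerate contact form $\alpha$ for $\xi$ and a $2$-form $\omega_V'$ in a \emph{prescribed} cohomology class with $\ker d\alpha \subset \ker\omega_V'$ is a nontrivial theorem, not a consequence of the flexibility of $(g,\beta)$. This is exactly the content of the Cieliebak--Volkov result \cite[Proposition~2.16]{Cieliebak_Volkov_SHS} invoked by the paper, whose proof goes through a genuinely different mechanism (a Donaldson-type contact submanifold Poincar\'e dual to the class, a tubular-neighborhood model near it, and Morse--Bott perturbations away from it) rather than a pointwise transport argument along the Reeb flow.

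You also miss the preliminary step that makes that theorem applicable: the paper first uses openness of the weak filling condition to perturb $\omega$ so that $[\omega_V]$ is a \emph{rational} class. Rationality is what allows the cohomology class to be Poincar\'e dual to a contact submanifold, and it is essential to the Cieliebak--Volkov construction. Your argument makes no use of rationality and offers no replacement for it, so it cannot be completed as stated. To repair the proof, either cite \cite[Proposition~2.16]{Cieliebak_Volkov_SHS} after the rational perturbation (the paper's route), or supply an actual proof that a suitable $(\alpha,\omega_V')$ exists, which is a substantial undertaking.
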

\begin{proof}
Since weak filling is an open condition, we can perturb $\omega$ so that
without loss of generality it represents a rational cohomology class
in $H^2_{\dR}(V)$.  Then by a result of Cieliebak and Volkov 
\cite[Proposition~2.16]{Cieliebak_Volkov_SHS}, $(V,\xi)$ admits a nondegenerate contact
form $\alpha$ and a $2$-form $\omega_V'$ cohomologous to $\omega_V$
such that the pair $(\alpha,\omega_V')$ define a stable Hamiltonian
structure.  The claim now follows by application of the preceding lemma.
\end{proof}

\section{Negative stabilizations}
\label{sec:negStab}

Corollary~\ref{cor:AT} in the introduction, together with the result of
Bourgeois and van Koert \cite{BourgeoisContactHomologyLeftHanded} that negatively
stabilized contact manifolds have vanishing contact homology (with full
group ring coefficients), implies in principle that such manifolds are not
weakly fillable and always admit contractible Reeb orbits.  
In this section we shall show how the computation
from \cite{BourgeoisContactHomologyLeftHanded} can be modified to produce 
direct proofs of these facts without relying on SFT.

The simplest example of a negatively stabilized contact manifold is the
sphere $(\SS^{2n-1},\xi_-)$ that is supported by the open book with page
$T^*\SS^{n-1}$ and monodromy isotopic to a single negative Dehn-Seidel twist.
By an observation due to Giroux, we may for our purposes define an
arbitrary closed $(2n-1)$-dimensional contact manifold to be 
\defin{negatively stabilized} if and only
if it is the contact connected sum of $(\SS^{2n-1},\xi_-)$ with some other
closed contact manifold.  Our goal is thus
to prove the following:

\begin{theorem}\label{thm:negStab2}
For any closed $(2n-1)$-dimensional contact manifold $(M,\xi)$, the
contact connected sum $(M,\xi) \conSum (\SS^{2n-1},\xi_-)$ has no (semipositive)
weak filling, 
and its Reeb vector fields always admit contractible closed orbits.
\end{theorem}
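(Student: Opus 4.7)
The plan is to argue by contradiction by constructing a moduli space of $\widehat J$-holomorphic planes in a hypothetical completion of the filling whose ends cannot possibly match up. Suppose $(W,\omega)$ is a semipositive weak filling of $V := (M,\xi) \conSum (\SS^{2n-1},\xi_-)$. Using Corollary~\ref{cor:magnetic_completion} I would extend $W$ to a magnetic completion $(\widehat W,\widehat\omega)$ whose positive end is $[0,\infty)\times V$ with symplectic form $d(t\alpha) + \omega_V$. The negative stabilization equips $V$ with a standard neighborhood $\mathcal U$ disjoint from $M$, modelled on the binding of the open book with page $\page$ and negative Dehn--Seidel monodromy, and I would take $\alpha$ to be the Morse--Bott contact form used on $\mathcal U$ in \cite{BourgeoisContactHomologyLeftHanded}. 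By Lemma~\ref{lemma:Jalpha}, for a sufficiently large constant $T > 0$ I can then pick a tame almost complex structure $\widehat J$ on $\widehat W$ that is cylindrical on $[T,\infty) \times V$ and agrees on $\mathcal U$ with the almost complex structure used in \cite{BourgeoisContactHomologyLeftHanded}.

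Next I would import from \cite{BourgeoisContactHomologyLeftHanded} the distinguished short Reeb orbit $\gamma_0$ living in $\mathcal U$ together with an explicit $\widehat J$-holomorphic plane $u_0$ asymptotic to $\gamma_0$ whose image lies entirely in the symplectization of $\mathcal U$. By the index and transversality analysis of that paper, $u_0$ is the starting point of a smooth one-dimensional moduli space $\mathcal M$ of somewhere injective $\widehat J$-holomorphic planes in $\widehat W$ asymptotic to $\gamma_0$, and one end of $\mathcal M$ is the geometric ``trivial'' end at which the planes collapse onto the Morse--Bott family of short orbits. I would then aim to show that this is the only end of $\mathcal M$, yielding the contradiction that a compact smooth one-manifold cannot have an odd number of boundary points.

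The main obstacle, and the only nontrivial part of the argument, is ruling out the failures of compactness on the other side of $\mathcal M$. By SFT-style compactness, failure can arise only through (a)~sphere bubbling inside $\widehat W$ or (b)~breaking into a multi-level holomorphic building with nontrivial positive-energy lower levels in the symplectization. I would exclude (a) using the semipositivity hypothesis combined with the $\widehat\omega$-area bound inherited from $\gamma_0$, in the style of McDuff--Salamon: any bubble would reduce the dimension of the remaining principal component to at most zero, which is incompatible with the fixed Fredholm index one of the planes in $\mathcal M$. For (b) the input is the Conley--Zehnder index computation in \cite{BourgeoisContactHomologyLeftHanded}: every Reeb orbit with action at most that of $\gamma_0$ is either non-contractible in $V$ and therefore cannot bound a plane, or has an index leaving no room for a nontrivial broken level to carry positive Fredholm index.

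The existence of contractible Reeb orbits finally follows by running the same moduli space argument in a trivial cobordism setting. Given any contact form $\alpha'$ for $\xi$, I would form an exact symplectic cobordism interpolating between $\alpha'$ at the negative end and a sufficiently large multiple of the Bourgeois--van~Koert contact form at the positive end, extend $\widehat J$ tamely across it, and use the plane $u_0$ as one end of the corresponding moduli space. The compactness argument above produces a limit plane whose asymptotic end is a closed Reeb orbit of $\alpha'$, contractible because it bounds a holomorphic plane.
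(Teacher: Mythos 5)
Your overall strategy matches the paper's: complete the filling, import the Bourgeois--van~Koert orbit $\gamma$ and plane $u_0$, build a moduli space of planes asymptotic to~$\gamma$, show it is a compact $1$-manifold with a single boundary point, and for the Reeb orbit statement replace the completion by an exact cylindrical cobordism so that the only compactness failure produces a plane in the negative end. The sphere-bubbling and breaking discussions are essentially the paper's Lemma~\ref{lemma:CH} argument, and the contractible-orbit argument is essentially Lemma~\ref{lemma:notHypertight}.

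There is however a genuine gap in the way you set up the cylindrical end. You take the magnetic completion with form $d(t\alpha)+\omega_V$ and use Lemma~\ref{lemma:Jalpha} only to get \emph{tameness} of a translation-invariant~$J$. That is not enough: the SFT compactness theorem from~\cite{BourgeoisCompactness}, which you implicitly invoke when you analyze the ends of $\mM$, requires the $\RR$-invariant data on the end to come from a \emph{stable Hamiltonian structure}. The pair $(\alpha,\omega_V)$ with $\omega_V$ the raw restriction of $\omega$ need not satisfy $\ker d\alpha \subset \ker\omega_V$, and nothing in your proposal forces this. Moreover, the uniqueness statement you import from~\cite{BourgeoisContactHomologyLeftHanded} (that $\RR$-translates of $u_0$ are the \emph{only} punctured genus-zero curves with one positive end at~$\gamma$) is proved there in the exact symplectization; if the closed $2$-form on the end is nonzero on the negative-stabilization region, both the holomorphicity of $u_0$ for your adapted $J$ and the energy/intersection arguments behind uniqueness need to be re-established, and you give no mechanism for doing so. The paper deals with both points by first perturbing $\omega$ to make $[\omega_V]$ rational, then invoking Cieliebak--Volkov~\cite[Prop.~2.16]{Cieliebak_Volkov_SHS} to manufacture a cohomologous closed $2$-form $\Omega$ such that $(\lambda,\Omega)$ \emph{is} stable Hamiltonian, with the additional crucial feature that $\Omega = d\lambda$ outside a tubular neighborhood of a codimension-two contact submanifold which can be taken disjoint from the connected-sum ball --- so the neck and the stabilized sphere see exactly the exact Bourgeois--van~Koert data --- and then applying the deformation of Lemma~\ref{thm:cohomologous_deformation_positive_end} to replace $\omega_V$ by $\Omega$ on the end. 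Without this step your argument only goes through when $\restricted{\omega}{TV}$ is already exact, i.e.\ for strong fillings, and in fact not even then, because you still need the stable Hamiltonian condition to hold away from $\mathcal U$; it does not cover the general weak case that the theorem asserts.

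A smaller imprecision: the semipositivity argument should say that in the limit nodal configuration the only sphere components that could arise are multiple covers of somewhere-injective spheres of negative index (which a generic $J$ outside the $\RR$-invariant region excludes), since $\dim\mM=1$ forces all spheres in the building to have negative index after quotienting by reparametrization; your phrasing in terms of ``reducing the dimension of the principal component to at most zero'' is not quite the standard statement and would need tightening.
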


To prepare the proof, recall that a $1$-form $\lambda$ and closed
$2$-form $\Omega$
on an oriented $(2n-1)$-dimensional manifold~$V$ form a 
\defin{stable Hamiltonian
structure} $(\lambda,\Omega)$ if $\lambda \wedge \Omega^{n-1} > 0$ and
$\ker d\lambda \subset \ker \Omega$.  Such a pair always 
determines a unique vector field~$R$ with the properties $\lambda(R) \equiv 1$
and $\Omega(R,\cdot) \equiv 0$.  Note that if $\lambda$ is also a contact
form, then $R$ is simply the Reeb vector field.  We shall say that an almost
complex structure~$J$ on $\RR\times V$ is \defin{adjusted to} 
$(\lambda,\Omega)$ if it is $\RR$-invariant, maps the unit vector in the
$\RR$-direction to the vector field $R$, and restricts to an $\Omega$-tame
complex bundle structure on $\xi := \ker\lambda$.

\begin{lemma}\label{lemma:CH}
Suppose $(V,\xi)$ is a closed $(2n-1)$-dimensional contact manifold
with nondegenerate contact form $\lambda$ and closed $2$-form $\Omega$ 
such that
$\ker\lambda = \xi$ and $(\lambda,\Omega)$ forms a stable Hamiltonian
structure on~$V$.  Suppose moreover that $\RR\times V$ admits
an almost complex structure $J$ adjusted to $(\lambda,\Omega)$ with the
following properties:
\begin{itemize}
\item There exists a finite energy
$J$-holomorphic plane $u_0 \colon \CC \to \RR\times V$ which is Fredholm regular,
has Fredholm index~$1$
and is asymptotic to a simply covered Reeb orbit~$\gamma$.
\item Other than $\RR$-translations of~$u_0$, $\RR \times V$ admits no
finite energy punctured $J$-holomorphic curves of genus zero
with one positive end asymptotic to~$\gamma$ and no
other positive ends.
\end{itemize}
Then $(V,\xi)$ does not admit any (semipositive)
weak filling $(W,\omega)$ for which
$\restricted{\omega}{TV}$ is cohomologous to~$\Omega$.
\end{lemma}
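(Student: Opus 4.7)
The plan is to argue by contradiction, constructing a one-parameter family of holomorphic planes in the completed filling whose structure is inconsistent with the hypotheses on $J$. Assuming a (semipositive) weak filling $(W,\omega)$ exists with $\restricted{\omega}{TV}$ cohomologous to $\Omega$, I would first form the magnetic completion $(\widehat W,\widehat\omega)$ of Corollary~\ref{cor:magnetic_completion} and apply Lemma~\ref{thm:cohomologous_deformation_positive_end} to deform $\widehat\omega$ on the cylindrical end so that it coincides with $\Omega + d(t\lambda)$ for $t \ge T_0$. By Lemma~\ref{lemma:Jalpha}, after possibly increasing $T_0$, the given adjusted almost complex structure $J$ on $[T_0,\infty)\times V$ is tamed by $\widehat\omega$, and can be extended to an $\widehat\omega$-tame almost complex structure $\widehat J$ on all of $\widehat W$. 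I would choose this extension generically inside the compact part~$W$ so that somewhere-injective $\widehat J$-holomorphic curves are Fredholm regular, relying on semipositivity to keep multiple covers and sphere bubbling in codimension at least~$2$.

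Next, I would study the moduli space $\mM$ of finite-energy $\widehat J$-holomorphic planes $u\colon \CC \to \widehat W$ with a single positive puncture asymptotic to $\gamma$. Its virtual dimension equals the Fredholm index of $u_0$, namely~$1$. The translates $u_0^T$ of $u_0$, pushed into $\widehat W$ via the cylindrical end for $T$ sufficiently large, give an embedded ray of Fredholm regular points in $\mM$; let $\mM_0 \subset \mM$ denote the connected component containing this ray.

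Finally, I would invoke the SFT compactness theorem of Bourgeois-Eliashberg-Hofer-Wysocki-Zehnder to describe $\overline{\mM_0}$. One end of $\mM_0$ corresponds to $T \to \infty$, where the limit is the single symplectization level $u_0$. Any other boundary point of $\overline{\mM_0}$ must be a broken holomorphic building whose top level is a genus-zero punctured curve in $\RR\times V$ with exactly one positive end asymptotic to~$\gamma$. By the second hypothesis, this top level is itself a translate of $u_0$; since $u_0$ is a plane and has no negative punctures, nothing can be glued below it and no further level in $\widehat W$ is possible. Thus $\overline{\mM_0}$ would be a compact oriented $1$-manifold with exactly one boundary point, contradicting the classification of compact $1$-manifolds.

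The main obstacle is the usual package of transversality and SFT compactness for curves in a noncompact symplectic cobordism whose almost complex structure on the cylindrical end is prescribed rather than generic. Semipositivity is precisely what allows this step to be carried out by classical techniques: it ensures that sphere bubbling and multiple covers cannot corrupt the $1$-parameter family $\mM_0$, so that the Fredholm-regular curve $u_0$ is all one needs to exhibit a component with exactly one end—which is enough to derive the contradiction.
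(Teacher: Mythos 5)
Your strategy matches the paper's: deform the magnetic completion via Lemma~\ref{thm:cohomologous_deformation_positive_end}, use Lemma~\ref{lemma:Jalpha} to ensure taming by $\widehat\omega$ on the cylindrical end, extend $J$ generically over the compact part, and track the moduli space of planes asymptotic to~$\gamma$ containing the translated $u_0$'s. The gap lies in the final compactness step. From the observation that every broken building in the SFT compactification of $\mM_0$ must reduce to (a translate of) $u_0$ alone, you conclude that $\overline{\mM_0}$ is a compact $1$-manifold with exactly one boundary point. That does not follow: if $\mM_0\cong\RR$ and \emph{both} of its ends were to converge to the same ideal point $u_0$, the compactification would close up to something homeomorphic to a circle, which has empty boundary, and there would be no contradiction with the classification of compact $1$-manifolds. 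You must additionally rule out the second end converging to $u_0$, and the reason it cannot is that any sequence converging to a pure symplectization-level building eventually has image entirely in $[T,\infty)\times V$, hence consists of $\RR$-translates of $u_0$, so it already lies in the ``$T\to\infty$'' end and cannot represent a second end. The paper packages this observation cleanly by splitting off the closed ray $\mM_+\subset\mM$ of curves with image in $[T,\infty)\times V$ and showing that $\overline{\mM\setminus\mM_+}$ is Gromov-compact inside $\widehat{W}$ --- no SFT breaking is possible there, since breaking would force the sequence into $\mM_+$ --- with its single boundary point at the endpoint of the ray. With that distinction drawn, your argument coincides with the paper's.
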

\begin{proof}
Assume the contrary, that there exists a weak filling $(W,\omega)$
with $[\omega_V] = [\Omega] \in H^2_{\dR}(V)$.
By Lemma~\ref{thm:cohomologous_deformation_positive_end}, we can complete
$(W,\omega)$ to an open symplectic manifold $(\widehat{W},\omega)$ 
by attaching a cylindrical end $([0,\infty) \times V,\omega)$
such that for some $T > 0$, $\omega = \Omega + d(t\lambda)$ on
$[T,\infty) \times V$.  Assign to $(\widehat{W},\omega)$
an $\omega$-tame almost complex structure that matches the given
$\RR$-invariant structure~$J$ on 
$[T,\infty) \times M$ and is generic everywhere else; we shall
denote this extension also by~$J$.
The point of assuming $(\lambda,\Omega)$ to be a stable Hamiltonian structure
is that the compactness results of Symplectic Field Theory
\cite{BourgeoisCompactness} are now valid for finite energy
$J$-holomorphic curves in $(\widehat{W},\omega)$.

The $\RR$-translations of the $J$-holomorphic plane 
$u_0 \colon \CC \to \RR \times V$ asymptotic to the
orbit~$\gamma$ now give rise to a
smooth $1$-dimensional family of $J$-holomorphic curves in
$[T,\infty) \times V \subset \widehat{W}$.  Let $\mM$ denote the 
unique connected component of the
moduli space of unparametrized finite energy $J$-holomorphic curves
in $\widehat{W}$ that contains this family.  All curves in~$\mM$ are planes
asymptotic to the simply covered orbit~$\gamma$ and are thus somewhere
injective.  Let $\mM_+ \subset \mM$ denote the subset consisting of
curves whose images are contained entirely in $[T,\infty) \times V$.
By the uniqueness assumption for~$u_0$, all of these are $\RR$-translations
of~$u_0$, thus $\mM_+ \cong [0,\infty)$.
Then by genericity, all curves in~$\mM \setminus \mM_+$ are also Fredholm
regular, hence $\mM$ is a smooth $1$-dimensional manifold (without boundary).
Observe that $\mM\setminus\mM_+$ is an open subset.
Its closure $\overline{\mM \setminus \mM_+} \subset \mM$ has exactly one
boundary point, the unique curve in $\mM_+$ that
touches $\{T\} \times V$.  

We claim that $\overline{\mM \setminus \mM_+}$
is compact.  Indeed, by \cite{BourgeoisCompactness}, any sequence
$u_k \in \overline{\mM \setminus \mM_+}$ has a subsequence convergent to
a $J$-holomorphic building $u_\infty$ of arithmetic genus~$0$, with one
positive end asymptotic to~$\gamma$ and no other ends.  If $u_\infty$ has
any nontrivial upper level, then the uniqueness assumption implies that
this level can only be an $\RR$-translation of~$u_0$, thus it has no
negative ends and the main level of $u_\infty$ must be empty.  But this can
happen only if $u_k$ has its image in $[T,\infty) \times V$ for large~$k$,
hence $u_k \in \mM_+$, giving a contradiction.  Thus $u_\infty$ has only
a main level, and is at worst a nodal $J$-holomorphic curve in~$\widehat{W}$,
including exactly one component that is a plane asymptotic to~$\gamma$,
while all other components are spheres.  The spheres are ruled out by
semipositivity: since $\dim \mM = 1$, any spheres that could appear
in~$u_\infty$ would necessarily be covers of somewhere injective spheres
with negative index, and thus cannot exist since $J$ is generic.
It follows that $u_\infty$ is a smooth $J$-holomorphic plane, hence
$\overline{\mM \setminus \mM_+}$ is compact as claimed.

The above shows that $\overline{\mM \setminus \mM_+}$ is diffeomorphic
to a compact $1$-dimensional manifold whose boundary is a single point.
Since no such space exists, we have a contradiction and conclude that
the filling $(W,\omega)$ cannot exist.
\end{proof}

For the case $\Omega = d\lambda$, there is a variation on the 
above argument using a trick pioneered by Hofer in \cite{HoferWeinstein}.
Instead of considering a completed filling $(\widehat{W},\omega)$, 
one considers an exact cylindrical symplectic cobordism 
$(\RR \times V,\omega)$ with
$\omega = d(e^t\lambda)$ near $+\infty$ and $d(e^t\lambda')$ near $-\infty$,
where $\lambda'$ may be taken to be a constant multiple of any given
contact form for~$\xi$.  Defining a moduli space of $J$-holomorphic
planes in $\RR\times V$ based on the $\RR$-translations of~$u_0$
as above, the same compactness argument goes through and produces a
contradiction unless planes
bubble off in the negative end, which means $\lambda'$ must admit a
contractible Reeb orbit.
Note that in this case it's even easier to rule out sphere bubbling,
as the exact cobordism $(\RR\times M,\omega)$ does not admit \emph{any}
closed holomorphic curves.  This proves:

\begin{lemma}
\label{lemma:notHypertight}
If the assumptions of Lemma~\ref{lemma:CH} are satisfied with
$\Omega = d\lambda$, then every contact form on $(V,\xi)$ admits a
contractible closed Reeb orbit. \qed
\end{lemma}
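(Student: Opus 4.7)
The plan is to adapt the proof of Lemma~\ref{lemma:CH} from the completed-filling setting to Hofer's cylindrical cobordism trick, as the preceding paragraph suggests. Suppose, for contradiction, that some contact form $\lambda'$ for $\xi$ admits no contractible closed Reeb orbit. After rescaling $\lambda'$ by a sufficiently small positive constant, one can build an exact symplectic cobordism structure on $\RR \times V$ with $\omega = d(e^t\lambda)$ near $+\infty$ and $\omega = d(e^t\lambda')$ near $-\infty$, interpolated by a Liouville form on a compact slab. Equip this cobordism with an $\omega$-tame almost complex structure $J$ that agrees with the given adjusted $J$ on the positive cylindrical end, is cylindrically adjusted to $\lambda'$ on the negative end, and is generic on the interpolation region.

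Next I would study the connected component $\mM$ of the moduli space of finite-energy $J$-holomorphic planes in $\RR \times V$ with one positive puncture at the simple orbit $\gamma$ that contains the family of $\RR$-translates of $u_0$ sitting in the positive cylindrical end. Let $\mM_+\subset\mM$ be the subset of planes whose image lies in that positive end; by the uniqueness clause of Lemma~\ref{lemma:CH}, $\mM_+ \cong [0,\infty)$, so $\mM_+$ is closed in $\mM$ and $\overline{\mM\setminus\mM_+}\subset\mM$ has a single boundary point, namely the translate of $u_0$ that first touches the interpolation region. Fredholm regularity of $u_0$ in the positive end together with genericity of $J$ on the interpolation region makes $\mM$ a smooth $1$-manifold near this boundary point and in $\mM\setminus\mM_+$.

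The key step is to prove that $\overline{\mM\setminus\mM_+}$ is compact, which will exhibit a compact $1$-manifold with exactly one boundary point, the desired contradiction. Given a sequence $u_k\in\overline{\mM\setminus\mM_+}$, SFT compactness \cite{BourgeoisCompactness} yields a subsequential limit as an arithmetic-genus-zero building with one positive puncture at $\gamma$. Exactness of $\omega$ on the whole cobordism rules out \emph{any} closed $J$-holomorphic components, so sphere bubbling cannot occur and no semipositivity hypothesis is needed. In any nontrivial upper level, lying in the symplectization of $(V,\lambda)$, the uniqueness hypothesis of Lemma~\ref{lemma:CH} forces a component with positive end at $\gamma$ to be an $\RR$-translate of $u_0$, which is a plane and therefore carries no negative ends; consequently no upper level can be nontrivial without collapsing the sequence back into $\mM_+$. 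In any nontrivial lower level, lying in the symplectization of $(V,\lambda')$, each component would be a finite-energy punctured sphere asymptotic to closed orbits of $\lambda'$, and some asymptotic orbit would have to be contractible (by genus-zero topology together with the fact that the whole building caps off $\gamma$). This is forbidden by hypothesis, so no lower level can appear either. The limit is thus a smooth finite-energy plane in $\RR\times V$, establishing compactness.

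The main obstacle is the bookkeeping in the SFT compactness argument: one has to confirm carefully that in the limit building, contractibility of the lower-level asymptotics really is forced by the genus-zero constraint and by the fact that $\gamma$ has no homotopy obstruction inherited from the positive side, so that the hypothesis ``$\lambda'$ admits no contractible Reeb orbits'' is enough to kill all lower levels. Once this is in hand, the rest of the argument is a direct transcription of the filling case, simplified by the absence of any closed $J$-holomorphic curves in the exact cobordism.
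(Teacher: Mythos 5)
Your argument is correct and follows the same route the paper sketches in the paragraph preceding the lemma: both implement Hofer's cylindrical-cobordism trick on $\RR\times V$ with $\lambda$ at $+\infty$ and (a rescaling of) $\lambda'$ at $-\infty$, then rerun the compactness analysis of Lemma~\ref{lemma:CH}, using exactness of $\omega$ to rule out closed components (so no semipositivity is needed), the uniqueness of $u_0$ to rule out upper levels, and concluding that the only possible degeneration is a plane bubbling off in the symplectization of $\lambda'$, whose asymptotic orbit is therefore contractible. You phrase this as a proof by contradiction (assume no contractible $\lambda'$-orbit, deduce that $\overline{\mM\setminus\mM_+}$ is a compact $1$-manifold with connected boundary, impossible), whereas the paper states it constructively, but this is only a cosmetic difference in exposition.
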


\begin{proof}[Proof of Theorem~\ref{thm:negStab2}]
For the case of $\Omega$ exact, \cite{BourgeoisContactHomologyLeftHanded}
establishes precisely the hypotheses of Lemma~\ref{lemma:CH},
thus proving that $(M,\xi) \conSum (\SS^{2n-1},\xi_-)$ is neither strongly
fillable nor (by Lemma~\ref{lemma:notHypertight}) hypertight.
Specifically, Bourgeois and van Koert construct a contact form
and suitable complex structure for $(\SS^{2n-1},\xi_-)$ such that there
is a special Reeb orbit $\gamma$, which has smaller period than all
other Reeb orbits in~$\SS^{2n-1}$, 
and is the asymptotic end of a unique $J$-holomorphic
plane~$u_0$.  In the case of the connected sum $(M,\xi) \conSum (\SS^{2n-1},\xi_-)$,
they also observe that $\gamma \subset \SS^{2n-1}$ can be assumed to have
smaller period than all other Reeb orbits except for a special set
of orbits in the tube connecting $\SS^{2n-1}$ to~$M$, and there can be
no holomorphic curves from~$\gamma$ to any combination of these orbits.
To rule out weak fillings $(W,\omega)$ with arbitrary cohomology
$\beta := [\restricted{\omega}{T(M \conSum \SS^{2n-1})}] \in 
H^2_{\dR}(M \conSum \SS^{2n-1})$,
we now argue as follows.  We can first perturb~$\omega$ to
assume without loss of generality that $\beta$ is a rational cohomology
class.  Let $\beta' \in H^2_{\dR}(M)$ denote the image
of~$\beta$ under the natural isomorphism 
$H^2_{\dR}(M \conSum \SS^{2n-1}) \to
H^2_{\dR}(M)$.  Using the construction in
\cite[Proposition~2.16]{Cieliebak_Volkov_SHS},
we can find a stable Hamiltonian structure $(\lambda',\Omega')$
on~$M$ such that $\ker\lambda' = \xi$, $[\Omega'] = \beta'$ and
$\Omega' = d\lambda'$ outside a tubular neighborhood $\nN(\Sigma)$ 
of a contact submanifold
$\Sigma \subset M$ such that $[\Sigma] \in H_{2n-3}(M)$ is
Poincar\'e dual to a multiple of~$\beta'$.  
The contact form $\lambda'$ may also be
chosen freely outside $\nN(\Sigma)$, and we may assume that the
ball deleted from~$M$ to form the connected sum is disjoint
from~$\nN(\Sigma)$.  The stable Hamiltonian structure 
$(\lambda',\Omega')$ can then be extended over $M \conSum \SS^{2n-1}$ as a 
stable Hamiltonian structure $(\lambda,\Omega)$
such that $[\Omega] = \beta$, and outside of $\nN(\Sigma)$,
$\Omega = d\lambda$ with $\lambda$ an arbitrarily chosen contact
form for $\xi \conSum \xi_-$.
This construction can therefore be arranged to guarantee the same 
essential properties of the orbit~$\gamma$ and curve~$u_0$ as in the 
exact case, thus establishing the hypotheses of Lemma~\ref{lemma:CH}.
\end{proof}

\section{Bordered Legendrian open books}
\label{sec:blobs}

In this section, we will first introduce a generalization of the
plastikstufe that is more natural and less restrictive than the
initial version introduced in \cite{NiederkruegerPlastikstufe}.
Subsequently we will prove that these objects, under a
certain homological condition (which is trivially satisfied for the
overtwisted disk), represent obstructions to weak fillability.

\begin{definition}
Let $N$ be a compact manifold with nonempty boundary.  A \defin{relative open
book} on $N$ is a pair~$(B, \theta)$ where:
\begin{itemize}
\item 
the \defin{binding} $B$ is a nonempty codimension~$2$ submanifold in the
interior of $N$ with trivial normal bundle;
\item 
$\theta \colon N \setminus B \to \SS^1$ is a fibration whose fibers are
transverse to $\p N$, and which coincides in a neighborhood $B \times \DD^2$ of 
$B = B \times \{0\}$ with the normal angular coordinate.
\end{itemize}
\end{definition}

\begin{definition}
Let $(V,\xi)$ be a $(2n+1)$-dimensional contact manifold.  A compact 
$(n + 1)$-dimensional submanifold $N\hookrightarrow V$ with boundary is called
a \defin{bordered Legendrian open book} (abbreviated \defin{\BLOB{}}), if it has a
relative open book $(B, \theta)$ such that:
\begin{itemize}
\item [(i)] all fibers of $\theta$ are Legendrian;
\item [(ii)] the boundary of $N$ is Legendrian.
\end{itemize}
\end{definition}

\begin{remark}
The binding~$B$ of a Legendrian open book is automatically isotropic because
its tangent space is contained in the tangent space of the closure of all pages.
Similarly, the fibers of $\theta$ and the boundary of $N$ meet transversely in $N$, and saying
that they are both Legendrian implies that the induced foliation on $N$ is singular
on $B$ and $\p N$.
\end{remark}

A \BLOB is an example of a \defin{maximally foliated} submanifold of $(V,\xi)$,
meaning that the singular distribution defined by intersecting its tangent
spaces with~$\xi$ is integrable, thus forming an oriented singular foliation,
and it has the largest dimension for which this is possible
(see \cite[Section~1]{NiederkruegerPlastikstufe} for further discussion).
A \BLOB in a $3$-dimensional contact manifold is the ``flat version'' of the 
overtwisted disk, the one
where the characteristic foliation is singular along the boundary.
This is a slight difference compared with the definition of plastikstufes in
\cite{NiederkruegerPlastikstufe}, where the boundary was a regular leaf of the
induced foliation, hence analogous to the ``cambered version'' of the
overtwisted disk.
This is a minor technical detail; each version can be deformed into the other
one.

\begin{definitionNumbered}\label{defn:PSOT}
  A contact manifold that admits a \BLOB is called \defin{$PS$-overtwisted}.
\end{definitionNumbered}

Note that the definition of the \BLOB is topologically \emph{much}
less restrictive than the initial definition of the plastikstufe.  For
example, a $3$-manifold admits a relative open book if and only if its
boundary is a nonempty union of tori. On the
other hand, a plastikstufe in dimension~$5$ is always diffeomorphic to
a solid torus~$\SS^1\times \DD^2$.

In this paper we will discuss one setting where we can find 
\BLOB{s} and are unable to find plastikstufes: in
Proposition~\ref{prop:bLob_in_blown_down_domain}, we show that \BLOB{s}
always exist in certain subdomains that are naturally associated to 
Liouville domains with
disconnected boundary, a special case of which produces the Lutz-type
twist due to Mori \cite{Mori_Lutz} (cf.~\S\ref{subsec:LutzMori}).

\begin{remark}
\label{remark:GPS}
Some \BLOB{s} also naturally arise in relation to the results of
\cite{NiederkruegerPresas_neighborhoods}, where it is shown that
sufficiently large neighborhoods of overtwisted submanifolds in
higher dimensional contact manifolds give a filling obstruction.
In \cite{NiederkruegerPresas_neighborhoods} this required a rather
technical argument involving holomorphic disks with an
immersed boundary condition, but it can be simplified and strengthened
by showing (using arguments similar to
those of Proposition~\ref{prop:bLob_in_blown_down_domain})
that such neighborhoods always contain a \BLOB.
\end{remark}

Of course, finding a \BLOB would be useless without the following theorem.

\begin{theorem}\label{thm:plastikstufe_and_weak_fillability}
  If a closed contact manifold is $PS$-overtwisted, then it does not have any
  (semipositive) weak symplectic filling $(W,\omega)$ for which $\omega$
  restricted to the \BLOB is exact.
\end{theorem}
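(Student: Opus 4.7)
The plan is to mimic the Bishop family argument pioneered by Gromov and adapted by the second author for the plastikstufe in \cite{NiederkruegerPlastikstufe}. Suppose for contradiction that $(W,\omega)$ is a semipositive weak filling of $(V,\xi)$ such that $\restricted{\omega}{TN}$ is exact, where $N \subset V$ is a \BLOB{} with binding~$B$ and angular fibration $\theta$. By Corollary~\ref{cor:magnetic_completion} I attach a magnetic collar to extend $(W,\omega)$ to a completion $(\widehat W,\widehat\omega)$ in which every slice remains weakly filled. Using Theorem~\ref{thm:omegaJ} and Lemma~\ref{lemma:Jalpha}, I choose an $\widehat\omega$-tame almost complex structure~$J$ on $\widehat W$ such that the pages of~$N$ are totally real, $J$ is standard in a Darboux neighborhood of~$B$ so that the classical Bishop family of small $J$-holomorphic disks with boundary on~$N$ exists and fills a tubular neighborhood of~$B$, and $J$ is generic elsewhere.

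Next I study the moduli space $\mM$ of unparametrized $J$-holomorphic maps $u \colon (\DD^2,\p \DD^2) \to (\widehat W, N)$ of Maslov index~$2$, in the connected component containing the Bishop family. A standard relative index computation, as in \cite{NiederkruegerPlastikstufe}, shows that after fixing a marked boundary point, $\mM$ is a smooth $1$-manifold near the Bishop family, with one end consisting of the disks that degenerate onto~$B$. Exactness of $\restricted{\omega}{TN}$ produces an a priori area bound: writing $\restricted{\omega}{TN} = d\eta$, we have $\int_{\DD^2} u^*\omega = \int_{\p \DD^2} u^*\eta$, which is uniformly controlled by the geometry of~$N$ and the length of $\p u$.

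With this energy bound in hand, SFT-type compactness \cite{BourgeoisCompactness} forces any sequence of disks in $\mM$ to converge to a nodal $J$-holomorphic building with boundary on~$N$. Sphere bubbles are excluded by semipositivity together with genericity of~$J$, exactly as in the proof of Lemma~\ref{lemma:CH}. Disk bubbles are excluded by the Legendrian condition on the pages combined with the index count: any nontrivial disk bubble would have to be somewhere injective with nonpositive Maslov index, contradicting genericity. Finally, breaking off into the cylindrical end of $\widehat W$ cannot occur, because a maximum-principle argument based on the totally real boundary condition on~$N$ and the product structure of $\widehat\omega$ at infinity confines every disk in $\mM$ to a fixed compact subset of $\widehat W$. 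Consequently $\overline{\mM}$ is a compact $1$-manifold whose boundary is the single degenerate disk at~$B$, which is impossible, so the assumed weak filling cannot exist.

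The main obstacle will be the bubbling and boundary analysis near~$\p N$: because the \BLOB{} has Legendrian boundary rather than a regular leaf as in the original plastikstufe, one must ensure that the boundary circles of disks in $\mM$ cannot slip from the pages off into~$\p N$, and that limits of disks whose boundaries approach~$\p N$ are still controlled. The resolution is to exploit the fact that the singular foliation on~$N$ induced by~$\xi$ is tangent to~$\p N$, so a collar of~$\p N$ inside~$N$ is foliated by Legendrian arcs; this allows a no-escape argument analogous to the one in \cite{NiederkruegerPlastikstufe} to rule out escape across~$\p N$ as well as boundary bubbling there.
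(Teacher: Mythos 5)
Your proposal follows the same Bishop-family strategy the paper uses, but two of your steps have genuine gaps.

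First, your energy bound is stated incorrectly.  You claim that writing $\restricted{\omega}{TN}=d\eta$ gives $\int_{\DD^2}u^*\omega=\int_{\p\DD^2}u^*\eta$, but this equality requires $u^*\omega$ to be exact \emph{on the disk}, i.e.\ $\omega=d\Lambda$ on a neighborhood of the full image $u(\DD^2)$, not just on $N$.  Since only $u(\p\DD^2)$ lies in $N$, your Stokes argument does not go through as written.  The paper fixes this by first deforming the closed $2$-form on the collar so that it \emph{vanishes} on a neighborhood of $N$ (Lemma~\ref{thm:cohomologous_deformation_positive_end}), then caps the holomorphic disk $u$ with a smooth disk $D\subset N$ to form a ball $B^3$, and applies Stokes to $\omega$ on $B^3$: this gives $\int_u\omega=-\int_D\omega=T\int_{\p u}\alpha$, which is bounded via the structure of the foliation on $N$.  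The capping trick is essential; exactness of $\omega$ on $N$ alone is not enough.

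Second, and more seriously, your treatment of the boundary $\p N$ amounts to saying that a ``no-escape argument analogous to the one in \cite{NiederkruegerPlastikstufe}'' resolves the matter, but this is precisely where the plastikstufe and \BLOB cases diverge: in the plastikstufe, $\p N$ is a regular closed leaf, while for a \BLOB it is a singular locus, and the interior monodromy is arbitrary.  The paper proves two nontrivial lemmas (Lemma~\ref{lemma:boundaryBLOB_contact_form}, giving a normal form for $\xi$ near $\p N$, and Lemma~\ref{lemma:complexStructureBoundaryBLOB}, constructing a model almost complex structure for which any holomorphic curve touching a neighborhood of $\p N$ is forced to be constant by a boundary-point-lemma argument on a holomorphically projected model).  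That is where the technical content of the theorem lives, and your proposal identifies it as the main obstacle but does not supply the argument.  Two smaller remarks: the breaking-into-the-cylindrical-end issue you raise does not actually occur once $J$ is chosen so that the level set $\{T\}\times V$ is pseudoconvex, since the disks with boundary on $N\subset\{T\}\times V$ cannot escape above that level; and the exclusion of disk bubbles in this setting is topological (each boundary circle intersects every page of the Legendrian open book exactly once), not an index/genericity argument as you propose, since $N$ is totally real but not Lagrangian and the standard Maslov-index machinery does not apply directly.
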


\begin{remark}
\label{remark:small}
  The condition that the restriction of the symplectic form $\omega$
  should be exact is trivially satisfied in dimension~$5$ for the
  plastikstufes defined in \cite{NiederkruegerPlastikstufe}, which were
  all diffeomorphic to $\SS^1 \times \DD^2$.
  In general however this condition could fail, and we believe that this
  could provide a hint as to varying degrees of filling obstructions
  or overtwistedness.
  Though it is unknown whether there is a \emph{unique} natural
  notion of overtwistedness beyond dimension~$3$, or whether the different
  definitions known thus far are inequivalent,
  it would be interesting to speculate that a manifold can only
  be overtwisted in some ``universal'' sense if the \BLOB (or a
  similar object) can be \emph{embedded into a ball} within the contact
  manifold.  In this way the cohomological condition is satisfied automatically,
  thus defining an obstruction to weak fillings due to the above theorem.
  We will refer to any \BLOB that lies inside a ball in the contact manifold
  as a \defin{small \BLOB{}}.
\end{remark}

If $\dim V \ge 3$, then any contact structure~$\xi$ on~$V$ can be
modified either by
\cite{PresasExamplesPlastikstufes, vanKoertPSOvertwistedEverywhere} or by
\cite{EtnyreGeneralizedLutzTwists}---in the latter case without changing the
homotopy class of almost contact structures---to produce one
that is $PS$-overtwisted.
In both cases, the change produces a small plastikstufe, hence 
Theorem~\ref{thm:plastikstufe_and_weak_fillability} and the preceding section
imply Theorem~\ref{thm:non_weakly_fillable} stated in the introduction.

In the proof of Theorem~\ref{thm:plastikstufe_and_weak_fillability} below,
the general strategy is the same as in
\cite{NiederkruegerPlastikstufe, NiederkruegerPresas_neighborhoods}, but there
are differences coming from two sources: the need to handle weak rather
than strong fillings, and \BLOB{s} rather than plastikstufes. 
Working with weak fillings complicates the question of energy bounds 
because the integral of $\omega$ on a
holomorphic curve no longer has a direct relation to the integral of~$d\alpha$.
This is where the homological condition comes in.  Further, it is no 
longer obvious that we can choose our almost complex structure to be
both adapted to a contact form near the binding and boundary of the \BLOB 
and tamed by~$\omega$.  As far as the differences between the plastikstufe
and the \BLOB are concerned, the first is the singularity along the
boundary, which makes energy control easier but makes it harder to ensure that
holomorphic curves cannot escape through the boundary. 
This difference can be handled similarly to
the analogous work in \cite{NiederkruegerPresas_neighborhoods},
which dealt with the case where the fibration of the \BLOB becomes trivial
at the boundary.  The general case additionally requires the somewhat
technical Lemmas~\ref{lemma:boundaryBLOB_contact_form}
and~\ref{lemma:complexStructureBoundaryBLOB} below (though since we will
not need this level of generality for our main results, the reader
may skip these if desired).
The second difference is of course that pages are more complicated and the
interior monodromy can be anything, but this plays no role in the
proof; what matters is the existence of a fibration over $\SS^1$.	

\begin{proof}[Proof of  Theorem~\ref{thm:plastikstufe_and_weak_fillability}]
Let  $N$ be a \BLOB in $(V,\xi)$ with induced Legendrian open book
$(B,\theta)$.
Suppose that $(W,\omega)$ is a weak filling of $V$ for which  
$\restricted{\omega}{TN}$ is exact.
We choose a contact form $\alpha$ for $\xi$ and attach to $(W,\omega)$ the
corresponding conical end from Corollary~\ref{cor:magnetic_completion}.
Since the restriction of $\omega$ to a neighborhood of the \BLOB is exact, we
can choose a closed $2$-form $\Omega$ on $V$ that is cohomologous to
$\restricted{\omega}{TV}$ and vanishes on a neighborhood of~$N$.
In a second step, we can deform the symplectic structure on the
conical end to
\begin{equation*}
  \bigl([t_1,\infty)\times V,\, \Omega + d(t \alpha)\bigr)
\end{equation*}
for large $t_1$ as described in Lemma~\ref{thm:cohomologous_deformation_positive_end}.

Identify the contact manifold $(V, \xi)$ with a level set $\{T\}\times
V$ in the conical end for sufficiently large $T > t_1$, and choose an
almost complex structure $J$ close to $\{T\}\times V$ that makes
$(\{T\}\times V, \xi)$ pseudoconvex and is tamed by $d(t\alpha)$.
We require this $J$ to be of the explicit form given in
\cite{NiederkruegerPlastikstufe} in a neighborhood of the binding
$\{T\}\times B$, which means the following.  We can
identify a neighborhood of $\{T\} \times B$ symplectically with
an open set in $\CC^2 \times T^*B$, with symplectic structure
$\omega_0 \oplus d\lcan$, such that the part of the \BLOB intersecting
this neighborhood lies in $\CC^2 \times B$.  
The desired almost complex structure is then the
product of the standard structure~$i$ on the first factor with a tamed almost
complex structure on the cotangent bundle.
This choice simplifies the behavior of local holomorphic disks
significantly: indeed, any disk lying entirely
in this neighborhood and having boundary on the $\BLOB$ projects to 
disks in $\CC^2$ and $T^*B$, and the latter has boundary in the
zero-section and must therefore be constant for energy reasons.
In this way one can easily understand small
disks close to the binding of the $\BLOB$, and in particular one obtains the
existence of a Bishop family of holomorphic disks close to~$B$, as well as
the important fact that any holomorphic disk intersecting this model
neighborhood must be part of the Bishop family.  We refer to
\cite{NiederkruegerPlastikstufe} for the full details.

Similarly, $J$ should agree on a neighborhood of
$\{T\}\times \p N$ with an almost complex structure that we will describe
in Lemma~\ref{lemma:complexStructureBoundaryBLOB} below.  As explained in
Lemma~\ref{lemma:Jalpha}, we can ensure by increasing $T$ that the
chosen $J$ will not only be tamed by $d(t \alpha)$ but also by 
$\Omega + d(t \alpha)$ close to $\{T\}\times V$.  
Denote the symplectic manifold obtained by attaching $[0,T]\times V$ to $W$ by
$\widehat W$.  
We use contractibility of the space of tamed almost complex structure to extend
$J$ to the interior of the weak filling $\widehat W$.

As in \cite{NiederkruegerPlastikstufe}, we now study the connected
moduli space of $J$-holomorphic disks
\begin{equation*}
  u\colon (\DD^2, \p \DD^2)  \to \bigl(\widehat W, \{T\}\times 
  (N \setminus B)\bigr)
\end{equation*}
emerging from a so-called \emph{Bishop family} of disks in a neighborhood
of some point on~$B$.  The boundaries of these disks necessarily
intersect each page of the Legendrian open book exactly once.

We first establish the energy bound required for Gromov compactness.
Any holomorphic disk $u$ in the moduli space under consideration 
can be capped with a disk~$D$ lying in the $\BLOB$ so that $u$ together
with~$D$ bounds a $3$-ball~$B^3$.  Using Stokes' theorem,
\begin{equation*}
  0 = \int_{B^3} d\omega = \int_{u} \omega + \int_{D} \omega,
\end{equation*}
it then follows
that the energy of the holomorphic disk is equal to minus the symplectic
area of $D \subset N$.  But since the restriction of
$\omega$ to the \BLOB coincides with $T\,d\alpha$ in our construction, 
this quantity can
be determined by integrating $T \alpha$ over the common boundary of
the two disks $u$ and $D$:
\begin{equation*}
  E_\omega(u) = \int_u \omega = - \int_D T\,d\alpha = T\, \int_{\p u} \alpha \;.
\end{equation*}
Since the foliation on the \BLOB is given by $\xi \cap TN
= \ker d\theta$, there is a continuous function
$f\colon N\to \RR$ that is everywhere nonnegative and vanishes only on
$B\cup \p N$ such that $\restricted{\alpha}{TN} = f\,d\theta$.
The energy of $u$ is thus bounded by
\begin{equation*}
  E_{\omega} (u) = T\, \int_{\p u} \alpha \le
  2\pi T\, \max_{p\in N} f(p) \;.
\end{equation*}

This leads to the same contradiction to Gromov compactness as in the
proof for strong fillings \cite{NiederkruegerPlastikstufe}, because by
Lemma~\ref{lemma:complexStructureBoundaryBLOB} below, the boundaries of the
holomorphic disks are trapped between $B$ and $\p N$, and the topology
of the Legendrian open book prevents bubbling of disks.
\end{proof}

\begin{lemma}\label{lemma:boundaryBLOB_contact_form}
Suppose $N$ is a manifold with boundary carrying a relative open book
$(B,\theta)$ which embeds as a \BLOB into two contact manifolds
  $(V_1,\xi_1)$ and $(V_2,\xi_2)$.
  Then there are neighborhoods $U_1\subset V_1$ and $U_2\subset V_2$ of $\p
  N$ and a contactomorphism $\Phi\colon (U_1,\xi_1) \to (U_2, \xi_2)$
  such that $\Phi(N\cap U_1) = N\cap U_2$.
\end{lemma}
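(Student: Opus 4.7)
My plan is to place both embeddings into a common normal form via the Weinstein Legendrian neighborhood theorem, then match the \BLOB structure beyond $\p N$ using a Moser-type argument.

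First, since $\p N$ is a closed Legendrian submanifold in each $(V_i, \xi_i)$, the classical Weinstein neighborhood theorem identifies a neighborhood of $\p N$ in $V_i$ with a neighborhood of the zero section in the $1$-jet bundle $J^1(\p N) = T^*\p N \times \RR$ endowed with its standard contact structure. This yields an initial contactomorphism $\Phi_0 \colon U_1 \to U_2$ of suitable neighborhoods, restricting to the identity on $\p N$.

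Next I would match the first-order data of $N$ along $\p N$. At each $p \in \p N$, the submanifold $N \cap U_i$ meets $\p N$ tangent to a line $\ell_i(p) \subset T_p V_i$ spanned by the inward-pointing vector tangent to the closed page $\bar P_{\theta(p)}$. Legendrian-ness of the page forces $\ell_i \subset \xi_i$ and transversality to $T\p N$; moreover, since the page is Lagrangian in $(\xi_i, d\alpha)$ and contains $T_p(\p P_{\theta(p)})$, the image of $\ell_i$ in $\xi_i/T\p N$ must lie in the annihilator of $T(\p P_{\theta(p)})$ under the canonical perfect pairing between $\xi_i/T\p N$ and $T\p N$ induced by $d\alpha$. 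This annihilator is a rank-one subbundle canonically determined by $\restricted{\theta}{\p N}$. After composing $\Phi_0$ with a contactomorphism supported near $\p N$, I may therefore arrange $d\Phi_0(\ell_1) = \ell_2$.

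Having matched the first-order data, $\Phi_0(N \cap U_1)$ and $N \cap U_2$ are now two Legendrian-fibered half-collars of $\p N$ with coinciding germs along $\p N$. The space of such half-collars is contractible, so I can choose a smooth family $N_t$ interpolating between them. The corresponding infinitesimal deformation is a vector field along $N_t$ tangent to the Legendrian pages and hence isotropic; it lifts to a time-dependent contact Hamiltonian vanishing on $\p N$, whose contact flow $\psi_t$ carries $\Phi_0(N \cap U_1)$ to $N_t$. Then $\Phi := \psi_1 \circ \Phi_0$ is the desired contactomorphism.

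The main obstacle is the final Moser step: one must verify that the deformation of half-collars can be generated by a contact vector field vanishing on $\p N$. This amounts to showing that infinitesimal deformations of a Legendrian fibration through Legendrian fibrations correspond to contact Hamiltonians, and that these can be chosen to vanish along $\p N$ so that the flow is the identity there. Both points follow from standard correspondences between Legendrian isotopies and generating functions, combined with contractibility of the space of Legendrian extensions of a fixed boundary.
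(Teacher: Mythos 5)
Your plan takes a genuinely different route from the paper, but it leaves two real gaps that I don't think can be filled by appealing to ``standard correspondences.''

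The paper does not invoke the Weinstein Legendrian neighborhood theorem at all, nor does it interpolate between submanifolds. It first normalizes the \emph{contact forms} so that $\iota_1^*\alpha_1 = \iota_2^*\alpha_2$ near $\p N$ (this uses that the functions $h_j$ with $\iota_j^*\alpha_j = h_j\,d\theta$ are transverse to zero along $\p N$). It then constructs a concrete diffeomorphism between tubular neighborhoods of $N$ by choosing, for each $j$, a complex structure $J_j$ on $\xi_j$ compatible with $d\alpha_j$ and adapted to the splitting $\Span(X_r,X_\theta)\oplus T\fF\oplus J_jT\fF$, identifying the normal bundles $\nu_j = J_jT\fF\oplus\Span(R_j)$ via the $d\alpha_j$-contractions. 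That bundle map is engineered precisely so that $\alpha_2$ pulls back to $\alpha_1$ \emph{together with} $d\alpha_2\mapsto d\alpha_1$ at every point of $\p N$. The linear interpolation $\beta_t$ of the two contact forms is therefore contact, the Moser vector field $Y_t$ vanishes on $\p N$, and --- this is the crucial, non-obvious point that the paper verifies by an explicit pairing computation --- $Y_t|_N$ lies in the Lagrangian subspace $\ker d\theta$, so the Moser flow preserves $N$ rather than pushing it off itself.

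Your version replaces all of this with (a) Weinstein to standardize $\xi_i$ near $\p N$, (b) a matching of the tangent lines $\ell_i$ at $\p N$, and (c) a Moser/isotopy-extension argument on a family $N_t$ of ``Legendrian-fibered half-collars.'' The problem is in (c). First, you assert contractibility of the space of such germs with fixed first-order data, but this is not a standard fact: the objects in question are germs of maximally foliated submanifolds with a singular boundary and a Legendrian fibration structure, not Legendrian submanifolds, and connectivity (let alone contractibility) of that space is exactly the kind of statement that would itself require something like the paper's normal form to establish. Second, you invoke Legendrian isotopy extension / generating functions to realize the deformation of $N_t$ by an ambient contact flow fixing $\p N$, but $N$ is not Legendrian --- it is an $(n+1)$-dimensional submanifold of a $(2n+1)$-manifold foliated by Legendrian $n$-discs. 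Extending an isotopy of such an object to a contact isotopy, while preserving the foliation-by-pages structure and fixing $\p N$, is not covered by the usual Legendrian isotopy extension theorem; it is precisely the content one would need to prove. In the paper's argument, no such lifting statement is needed: the Moser trick is applied to the interpolated \emph{contact forms} in a single ambient manifold, and the preservation of $N$ is a local computation, namely that $\dot\beta_t$ vanishes on $TN$ forces $Y_t|_N\in\ker d\theta$.

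So the overall geometry of your approach is reasonable, and step (b) about the canonical image of $\ell_i$ in $\xi/T\p N$ (the $d\alpha$-annihilator of $T(\p P_\theta)$) is a correct and nice observation. But the two claims in (c) are load-bearing and unproven; as written the proof has a genuine gap.
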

\begin{proof}
  Denote the two embeddings by $\iota_j\colon N \hookrightarrow V_j$ for
  $j=1,2$.
  The first step will be to prove the existence of contact forms
  $\alpha_1$ and $\alpha_2$ for $\xi_1$ and $\xi_2$ with
  $\iota_1^*\alpha_1 = \iota_2^*\alpha_2$ near $\p N$.
  Start with any pair of contact forms~$\alpha_1$ and $\alpha_2$.
	By the definition of a \BLOB, there are functions $h_1$ and $h_2$ which vanish
	exactly along $\p N$ such that $\iota_j^*\alpha_j = h_j\, d\theta$.
	We will prove shortly that $h_1$ and $h_2$ are both transverse to zero along
	$\p N$. The implicit function theorem then guarantees the existence of a
	positive function $f$ on~$V$ with $h_1 = f h_2$, allowing us to
	replace $\alpha_2$ by
	$f\alpha_2$.  The key point is that
	$\iota_j^*d\alpha_j = dh_j \wedge d\theta$, so 
	$dh_j$ cannot vanish anywhere along $\p N$, otherwise $TN$ would be an
	isotropic subspace of dimension $n + 1$ inside the symplectic vector space
	$(\xi_j, d\alpha_j)$ of dimension $2n$. 
	
  We now turn to the construction of the desired contactomorphism.
	We fix near $\p N$ a vector field $X_r$ tangent to $\ker d\theta$ and a vector
	field $X_\theta$ tangent to $\p N$ such that $d\theta(X_\theta) = 1$. Then
	$d\alpha_j(X_r, X_\theta) = \iota_j^*d\alpha_j(X_r, X_\theta) = dh(X_r)$ is
	positive.  We denote by $\fF$ the foliation on $\p N$ induced by the pages,
	meaning $T\fF = T\p N \cap \ker d\theta$. 
  Its tangent space is $d\alpha_j$-orthogonal to the symplectic subspace
  $\Span(X_r, X_\theta)$, so we can construct for each $j = 1,2$ a
  complex structure $J_j$ on $\xi_j$ which is compatible with
  $d\alpha_j$, such that $X_\theta = J_jX_r$ and the
  $d\alpha_j$-symplectic complement of
  $\Span(X_r,X_\theta)$ in $\xi_j$ is $T\fF \oplus J_j T\fF$.
  Denoting the Reeb vector field of $\alpha_j$ by $R_j$, we obtain the
  decomposition
  \begin{equation*}
    \restricted{TV_j}{\p N} = 
    \Span(X_r, X_\theta) \oplus T\fF \oplus J_j T\fF \oplus \Span(R_j) \;.
  \end{equation*}
  The first two summands span $\restricted{TN}{\p N}$, and each $\nu_j
  := J_j T\fF \oplus \Span(R_j)$ can be identified with the normal
  bundle of $N$.
  Let $\tau_j$ be the restriction to $\nu_j$ of the exponential map
  for some auxilliary Riemannian metric.
  Each $\tau_j$ allows us to identify a tubular neighborhood of $N$ with a
  neighborhood of the zero section in~$\nu_j$.
  The bundles $\nu_1$ and $\nu_2$ are related by the bundle map $\Phi
  := \Psi \oplus \Phi_R$, where $\Phi_R$ sends $t\cdot R_1$ to $t\cdot
  R_2$ and $\Psi = \phi_2^{-1} \circ \phi_1$, with $\phi_j
  \colon J_j T\fF \to T^* \fF$ denoting the interior product with
  $d\alpha_j$.
  Thus $\tau_2 \circ \Phi \circ \tau_1^{-1}$ combines with the identity
  on $N$ to give a diffeomorphism between tubular neighborhoods of $N$
  in $V_1$ and $V_2$ near $\p N$.
  This map pulls $\alpha_2$ back to $\alpha_1$ and $d\alpha_2$ to
  $d\alpha_1$ for every $p\in \p N$, so that the linear interpolation
  between both forms is a contact form, and we may apply the Moser
  trick.
  Denoting by $\beta_t$ with $t\in [0,1]$ the interpolation between the
  pulled back contact forms, the Moser vector field~$Y_t$ is the
  unique solution to the two equations
  \begin{equation*}
		\beta_t(Y_t) = 0 \quad \text{ and } \quad 
		\restricted{\bigl(\iota_{Y_t}	d\beta_t\bigr)}{\ker\beta_t} = 
		-\restricted{\dot\beta_t}{\ker \beta_t}. 
  \end{equation*}
  From this we see that $Y_t$ vanishes along $\p N$, so that the
  isotopy~$\flow_t$ is well defined on a small neighborhood~$U$ of 
	$\p N$ and fixes $\p N$ pointwise.
	We now observe that $\restricted{Y_t}{N}$ lies in $\ker d\theta$, 
	so that the isotopy preserves $N$.
	Indeed, if $Y_t$ had any component in the complement of the Lagrangian
subspace	$\ker	d\theta$, it would pair via $d\beta_t$ with a vector in 
$\ker d\theta$ and thus be different from $- \dot \beta_t$, which vanishes 
on $TN$.
\end{proof}

We can now construct a suitable almost complex structure on a model which will
be universal according to the preceding lemma.

\begin{lemma}\label{lemma:complexStructureBoundaryBLOB}
  Assume $(W,\omega)$ has a conical end, and identify $(V,\xi)$ with a
  level set $\{T\}\times V$ of this conical end.
  Let $\alpha$ be any contact form for $\xi$.
  If $N$ is a \BLOB in $V$, then we can choose an almost complex
  structure~$J_0$ in a neighborhood $U_W\subset W$ of the boundary~$\p
  N$ with the following properties:
  \begin{itemize}
  \item $J_0$ is compatible with the symplectization form $d(t\alpha)$
    and it restricts to $\xi$.
  \item If $J$ is any almost complex structure on $W$ that makes
    $(V,\xi)$ pseudoconvex and for which $\restricted{J}{U_W} = J_0$,
    then every compact $J$-holomorphic curve
    \begin{equation*}
      u\colon (\Sigma, \p \Sigma) \to (W,N)
    \end{equation*}
    that intersects $U_W$ and whose boundary lies in the \BLOB must
    be constant.
  \end{itemize}
\end{lemma}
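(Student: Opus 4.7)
The plan is to reduce to a universal local model near $\p N$ via Lemma~\ref{lemma:boundaryBLOB_contact_form}, to construct an explicit $J_0$ in that model, and to exhibit a $J_0$-plurisubharmonic defining function for $N$ which, together with the maximum principle and the almost total reality of $N$, traps any compact $J$-holomorphic curve with boundary on $N$ that enters $U_W$ inside $N$ and forces it to be constant.

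By Lemma~\ref{lemma:boundaryBLOB_contact_form}, the germ of $(V,\xi)$ together with the \BLOB near $\p N$ is determined up to contactomorphism once we normalize the contact form so that $\restricted{\alpha}{TN} = r\,d\theta$ near $\p N$, where $r \geq 0$ is the transverse coordinate to $\p N$ inside $N$ and $\theta$ is the page coordinate. Adjoining the symplectization variable $t$ yields adapted coordinates $(t, r, \theta, q, p)$ on a neighborhood $U_W$ of $\{T\}\times \p N$ in $W$, where $q = (q_1,\ldots,q_{n-1})$ are coordinates along leaves of the page foliation $\fF$ of $\p N$ and $p = (p_R, p_1, \ldots, p_{n-1})$ are normal coordinates to $N$ in $V$, with $p_R$ along the Reeb direction and $p_i$ along $J_\xi T\fF$. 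I define $J_0$ on $U_W$ to be compatible with $d(t\alpha)$, to preserve $\xi$, to satisfy $J_0 \p_t = R_\alpha$, to act on $\Span(X_r, X_\theta)$ by $J_0 X_r = X_\theta$ as in the proof of Lemma~\ref{lemma:boundaryBLOB_contact_form}, and to act on $T\fF \oplus J_\xi T\fF$ as a standard complex rotation pairing $\p_{q_i}$ with $\p_{p_i}$.

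Next I construct the defining function. Fix a smooth nondecreasing $\chi\colon \RR \to [0,\infty)$ with $\chi \equiv 0$ on $[0,\infty)$ and $\chi > 0$ on $(-\infty, 0)$, and set
\begin{equation*}
\phi := (t - T)^2 + p_R^2 + \sum_{i=1}^{n-1} p_i^2 + \chi(r).
\end{equation*}
Then $\phi \geq 0$ with $\phi^{-1}(0) = N \cap U_W$. A direct computation using the above form of $J_0$ shows that $-dd^c\phi$ is nonnegative on every $J_0$-complex line in $TU_W$: the $(t-T)^2 + p_R^2$ summand is strictly positive on the complex line $\langle \p_t, R_\alpha\rangle$, the $\sum p_i^2$ summand is strictly positive on $T\fF \oplus J_\xi T\fF$, and $\chi(r)$ contributes nonnegativity on $\Span(X_r, X_\theta)$ with strict positivity in the region $r < 0$. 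The Levi form must vanish on $\Span(X_r, X_\theta)$ at points of $\p N$, since that plane lies in $T_p N$ there; the construction concentrates this unavoidable degeneracy on $N$ itself.

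The conclusion then follows in the standard way. For any compact $J$-holomorphic $u\colon (\Sigma,\p \Sigma) \to (W, N)$ with $J = J_0$ on $U_W$, the composite $\phi \circ u$ is subharmonic on $u^{-1}(U_W)$, nonnegative, and vanishes on $\p \Sigma \cap u^{-1}(U_W)$. Using the strong maximum principle together with Hopf's lemma along $\p \Sigma$, and the fact that $N$ is totally real in $(W, J_0)$ away from $\p N \cup B$, one deduces that $\phi \circ u$ vanishes identically on each connected component of $u^{-1}(U_W)$ meeting $\p \Sigma$, so $u$ maps those components into $N$; unique continuation for $J$-holomorphic maps with totally real boundary conditions then forces $u$ to be globally constant. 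The hard part is handling the singular locus $\p N$: because the $J_0$-complex line $\Span(X_r, X_\theta) \subset T_p N$ prevents $N$ from being totally real there and compels the Levi form of any defining function to degenerate, the main technical step is to verify that the non-negativity of $-dd^c\phi$ and the subsequent maximum principle argument both survive this degeneracy, which reduces to a finite-dimensional linear-algebra check in the model coordinates.
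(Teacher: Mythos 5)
Your proposal takes a genuinely different route from the paper's: you try to build a $J_0$-plurisubharmonic defining function $\phi$ for $N$ near $\p N$ and conclude via a maximum/Hopf principle, whereas the paper never constructs such a function. Instead, the paper designs a model neighborhood that admits a holomorphic projection $\pi$ onto $\CC \times T^*\SS^1$ (with the $T^*\fF_0$-fibers as holomorphic fibers), uses a plurisubharmonic function $F = |z|^2 + r^2 + h$ only to cut out the contact hypersurface $F^{-1}(1)$, and then applies the Hopf boundary point lemma to the \emph{harmonic coordinate functions} $x,y$ (and then $r,s$) of the base, exploiting that the ``inner'' boundary $\p_- U$ is by design the level set $\{x = 1-\delta\}$ of the harmonic function $x$. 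Your approach, however, has two concrete gaps.

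First, $\phi$ is not $J_0$-plurisubharmonic on $U_W$. With $J_0\p_t = R_\alpha$ and $d^c t := dt\circ J_0 = -\alpha$, one computes
\begin{equation*}
-dd^c\bigl[(t-T)^2\bigr] = 2\,dt\wedge\alpha + 2(t-T)\,d\alpha,
\end{equation*}
and on a $\xi$-complex line (for instance $\Span(X_r, X_\theta) = \Span(X_r, J_0X_r)$) the first term vanishes while the second equals $2(t-T)\,d\alpha(X_r,J_0X_r)$, which is strictly negative at every interior point $t < T$ of $W$. At a point with $t<T$, $p=0$, $r>0$ (so $\chi''=0$), no other summand of $\phi$ compensates: the $p$-terms contribute nothing on $\Span(X_r,X_\theta)$, and $\chi$ contributes $\chi''(r)=0$. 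So the claimed nonnegativity of $-dd^c\phi$ fails on an open set that the curve $u$ must pass through. (Also: a nondecreasing $\chi$ with $\chi\equiv 0$ on $[0,\infty)$ cannot satisfy $\chi>0$ on $(-\infty,0)$; you presumably meant nonincreasing or convex.)

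Second, even granting plurisubharmonicity, the maximum principle does not close. The set $G := u^{-1}(U_W)$ has two kinds of boundary: $G\cap\p\Sigma$ (mapped into $N$, where $\phi=0$) and $u^{-1}(\p U_W)$, where $\phi\circ u$ is positive and completely uncontrolled. Subharmonicity plus vanishing on only part of the boundary does not force $\phi\circ u\equiv 0$, and the Hopf lemma applied at a boundary minimum (rather than a maximum) yields no contradiction. This is exactly the difficulty the paper's argument is engineered to handle: in the model, $\p_- U$ lies in the level set $\{x = 1-\delta\}$ of the harmonic function $x$, so the point where $y$ is extremized is simultaneously a minimizer of $x$, and the Cauchy--Riemann equations then force both tangential and normal derivatives of $x$ and $y$ to vanish there, contradicting Hopf. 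Your $\phi$ has no analogous structure on the inner boundary, so the step ``one deduces that $\phi\circ u$ vanishes identically'' does not follow, and the ``finite-dimensional linear-algebra check'' you defer is precisely where the argument breaks.
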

\begin{proof}
	The first step is to construct a model neighborhood for $\p N$ which is a
	bundle with exact symplectic fibers and holomorphic projection map.
  Let $\fF_0$ be a fiber of the map $\restricted{\theta}{\p N} \colon
  \p N \to \SS^1$. Then $\fF_0$ is the intersection of $\p N$
  with a page of the \BLOB, and $\p N$ is the mapping torus of some
	diffeomorphism $\psi\colon \fF_0\to \fF_0$.
  We consider the $T^*\fF_0$-fibration
  \begin{equation*}
    \begin{split}
      \pi\colon \CC \times \RR \times \bigl(\RR \times
      T^*\fF_0\bigr)/\sim
      &\quad \to \quad \CC \times T^* \SS^1  \\
      (z,r;s;q,p) &\quad \mapsto \quad (z; s,r)
    \end{split} \;,
  \end{equation*}
  where we use the equivalence relation $(z,r;s;q,p) \sim
  \bigl(z,r;s+1; \psi(q),(D\psi^{-1})^*p\bigr)$ on the total space.
	Since $(\psi, (D\psi^{-1})^*)$ is symplectic, we get a symplectic structure
  $d\lcan$ on the vertical bundle $\ker D\pi$. Let $J_\fF$ be a compatible
	complex structure on this bundle.
  Note that the directions $\p_r$, and $\p_s$ are well defined, so
  that we can extend $J_\fF$ to an almost complex structure $J
  = i \oplus i \oplus J_\fF$ on the total space, where $i\,\p_r =
  \p_s$, and $i\,\p_s = - \p_r$.
  By construction, $\pi$ is holomorphic with respect to
  $J$ upstairs and $i\oplus i$ on $\CC \times T^* \SS^1$.
  The next step consists in finding a $J$-plurisubharmonic function
  on a neighborhood of $\{1\} \times \{0\} \times \bigl(\RR \times
  \fF_0\bigr)/\sim$, where $\fF_0$ denotes the $0$-section in
  $T^*\fF_0$.
  Define a function~$h$ on $\CC \times \RR \times \bigl(\RR \times
  T^*\fF_0\bigr)/\sim$ by using a metric on the \emph{vector} bundle
  $\CC \times \RR \times \bigl(\RR \times T^*\fF_0\bigr)/\sim$ over
  $\CC \times \RR \times \bigl(\RR \times \fF_0\bigr)/\sim$, and
  defining $h(v) = \norm{v}^2 /2$ for every vector~$v$ in this bundle.
  In a bundle chart, we obtain
  \begin{equation*}
    h(z,r;s;q,p) =
    \frac{1}{2}\, \sum_{i,j} g_{i,j}(z,r;s;q)\, p_i p_j \;,
  \end{equation*}
  and it follows that $dd^ch = d (dh\circ J)$ simplifies on the
  $0$-section of this bundle to
  \begin{equation*}
    dd^ch =  \sum_{i,j}  g_{i,j}\, dp_i\wedge (d p_j\circ J) \;.
  \end{equation*}
  We claim now that the function
  \begin{equation*}
    \begin{split}
      F\colon \CC \times \RR \times \bigl(\RR \times
      T^*\fF_0\bigr)/\sim & \to [0,\infty), \\
      (z,r;s;q,p) &\mapsto \abs{z}^2 + r^2 + h(z,r;s;q,p)
    \end{split}
  \end{equation*}
  is $J$-plurisubharmonic in a neighborhood of $\{1\} \times \RR
  \times \bigl(\RR \times \fF_0\bigr)/\sim$.
  Here one just needs to check 
  that $- dd^cF$ simplifies near $\{1\} \times \RR
  \times \bigl(\RR \times \fF_0\bigr)/\sim$ to
  \begin{equation*}
    - dd^cF =  4\, dx\wedge dy  + 2\, dr\wedge ds - dd^ch \;,
  \end{equation*}
  where $x = \RealPart z$ and $y = \ImaginaryPart z$.
  This $2$-form is positive on complex lines.
  We find a neighborhood of $\{x = 1\}$ in the level set $F^{-1}(1)$,
  where the restriction of the $1$-form~$\alpha := - dF\circ J$
  defines a contact structure.
  Furthermore, the submanifold~$N' \subset F^{-1}(1)$ given by the
  embedding
  \begin{equation*}
    \begin{split}
      [0, \epsilon)\times \bigl(\RR\times \fF_0\bigr)/\sim &\quad
      \hookrightarrow \quad \CC \times \RR \times \bigl(\RR \times
      T^*\fF_0\bigr)/\sim \\
      (r;s;q) &\quad \mapsto \quad \bigl(\sqrt{1 - r^2} , r;
      s;q,0\bigr)
    \end{split}
  \end{equation*}
  has $N'\cap \{r=0\}$ as boundary and inherits a singular Legendrian
  foliation given by the form~$r\,ds$.
  This foliation is diffeomorphic to the one on the \BLOB~$N$ in the
  collar neighborhood of~$\p N$, so that by
  Lemma~\ref{lemma:boundaryBLOB_contact_form} above, there is a small
  relatively open set $U_W\subset F^{-1}\bigl((0,1]\bigr)$  in the model
  containing $\p N'$
  such that $\p_+ U := U_W\cap F^{-1}(1)$ with contact form $\alpha$
  is contactomorphic to a neighborhood $U_V$ of $\p N$ in~$V$.
  Note that for $\delta > 0$ sufficiently small, the level set $\{x =
  1 - \delta\}$ is a compact hypersurface with boundary in $\p_+ U$,
  and we will set $\p_- U := \{x= 1 - \delta\} \cap U_W$, writing
  from now on $U_W$ for the compact set $U_W\cap \{x \ge 1 -
  \delta\}$.
  Extending this contactomorphism, we can embed $U_W$ into the
  symplectic manifold~$W$ such that $\p_+ U$ lies in $\{T\}\times V$,
  and $N' \cap U_W$ is mapped to $N \cap U_V$.
  Choose the almost complex structure~$J$ on $U_W$ constructed above,
  and extend it to one that makes the contact manifold $(V,\xi)$
  pseudoconvex.

\begin{figure}[htbp]
  \centering
  \includegraphics[height=3.5cm,keepaspectratio]{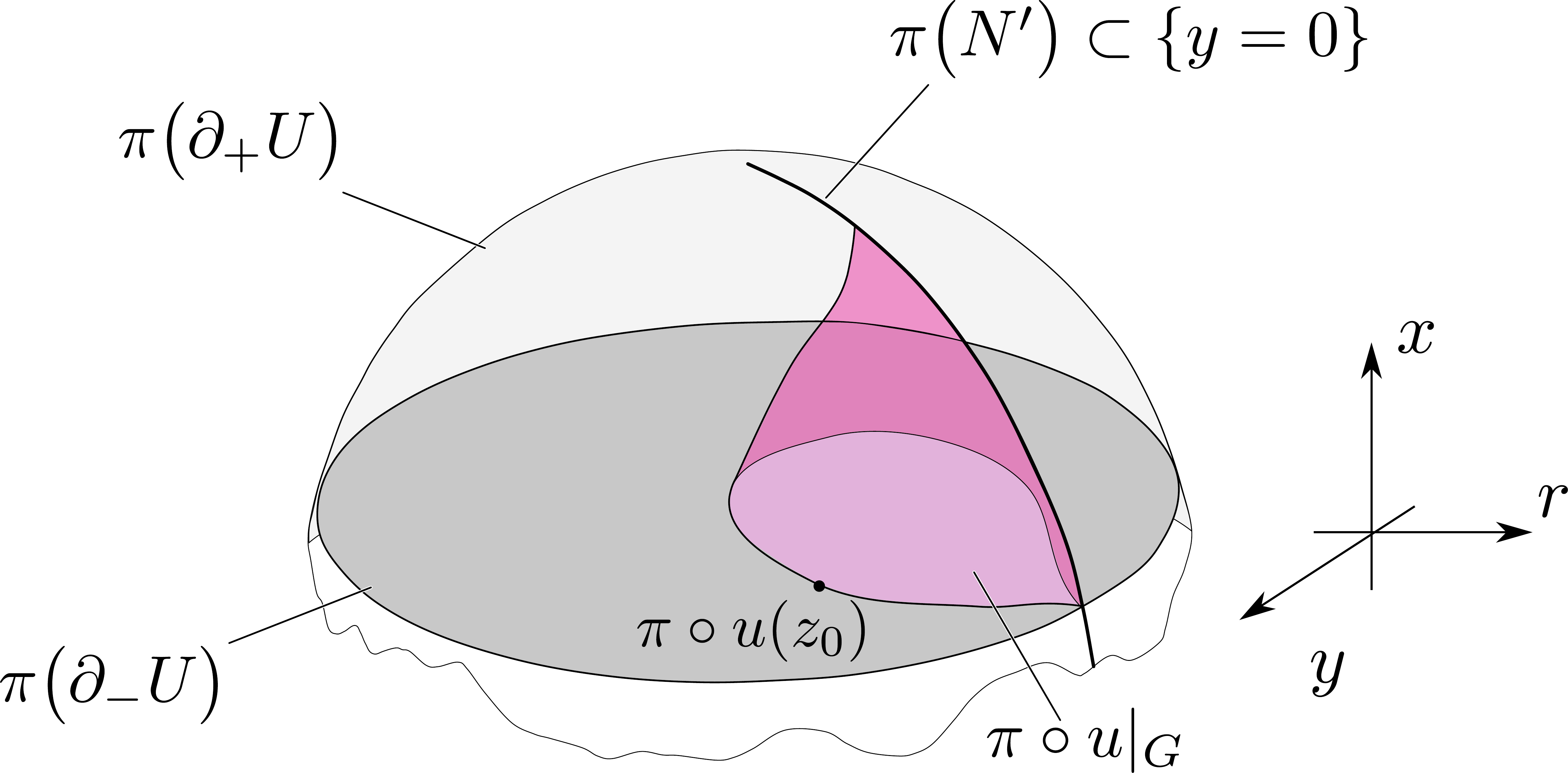}
  \caption{The neighborhood of the boundary~$\p N$ can be thought of as a
    $T^*\fF_0$-bundle.
We obtain a simple model by projecting this neighborhood and the
holomorphic curve~$u$ to the base space.
    The holomorphic curve~$\pi\circ u$ has to be cut off and will have
    two types of boundary: the original one that sits in the \BLOB and
    the boundary where the curve has been cut off.
    Along the cut-off boundary the $x$-value is minimal, and there
    will be a point where the $y$-value also becomes extremal, but
    this contradicts the boundary point lemma showing that the $x$- and
    $y$-values of $u$ have to be constant.\label{fig:bdry_BLOB}}
\end{figure}

  Now let $u\colon (\Sigma,\p \Sigma) \to (W, N)$ be any
  $J$-holomorphic curve that intersects the neighborhood~$U_W$.
  Our aim is to show that $u$ must be constant.
  Define $G := u^{-1}(U_W)$ and write $\restricted{u}{G}$ for the
  restriction of~$u$.
  Perturbing $\delta$ slightly, we can assume that $u^{-1}\bigl(\p_-
  U\bigr) \subset G$ is a properly embedded submanifold so that $G$
  has piecewise smooth boundary.
  Project the curve $\restricted{u}{G}$ via
  \begin{equation*}
    \pi\colon \CC \times \RR \times \bigl(\RR \times
    T^*\fF_0\bigr)/\sim \quad \to \quad \CC \times T^* \SS^1 \;,
  \end{equation*}
  and note that $\restricted{\pi\circ u}{G}$ is a holomorphic map with
  respect to the standard structure (see Fig.~\ref{fig:bdry_BLOB}).
  The boundary $\pi\circ u(\p G)$ lies in the union of
  \begin{align*}
    \pi\bigl(\p_+ U\cap N' \bigr) &= \left\{(z; s,r) \bigm|\,
    \RealPart z \ge 1 - \delta, \, \ImaginaryPart z = 0, \, r = -
    \sqrt{1 - \abs{z}^2} \right\} \intertext{and} \pi(\p_- U) &=
    \bigl\{(z; s, r) \bigm|\, \RealPart z = 1 - \delta, \, \abs{z}^2 +
    r^2 \le 1 \bigr\} \;.
  \end{align*}
  Since both coordinate functions $x = \RealPart z$ and $y =
  \ImaginaryPart z$ are harmonic, it follows that the maxima and
  minima are both attained on $\p G$, so that if we assume $y$ is
  not everywhere equal to $0$, then $u$ must intersect $\pi(\p_- U)$, and
  in particular the minimum of $x$ will be $1-\delta$.
  Let $z_0\in \p G$ be a point for which $u(z_0)$ has both minimal
  $x$-coordinate and extremal $y$-coordinate.
  At this point, it follows that the derivative of
  $\restricted{\pi\circ u}{G}$ in the $\p G$-direction has vanishing
  $x$ and $y$-coordinates.
  Using the Cauchy-Riemann equation at the point $z_0$, we then see
  that the derivatives also vanish in the radial direction, thus
  contradicting the boundary point lemma, making both $x$ and $y$
  constant on~$G$.
  It follows now that $u$ is completely contained in~$U_W \cap \{z =
  x_0\}$, and from this we immediately recover that the
  $r$-coordinate of $\restricted{u}{\p \Sigma}$ is equal to~$-
  \sqrt{1-x_0^2}$.
  The $r$-coordinate is also harmonic, and it follows that $\pi\circ
  u$ must have constant $r$-coordinate everywhere, since both its
  maximum and its minimum are equal, and the Cauchy-Riemann equation
  then implies that the $s$-coordinate is also constant.
  This finishes the proof, because it follows that the
  projection~$\pi\circ u$ is constant, so that $u$ is completely
  contained in a fiber of $\pi$ that is symplectomorphic to $T^*\fF_0$
  with exact symplectic form $d\lcan$, but since $J_\fF$ was
  compatible with $d\lcan$, and since the boundary of $u$ lies in
  the $0$-section of $T^*\fF_0$, it follows that $u$ has no
  $d\lcan$-energy, and hence must be constant.
\end{proof}

\section{Giroux domains}
\label{section:giroux_domains}

While the filling obstructions we've discussed so far (namely
\BLOB{s} and negative stabilizations) were previously understood in
less general forms, in this section we shall introduce a subtler class of
filling obstructions that generalizes Giroux torsion in dimension
three and is completely new in higher dimensions.
The fundamental objects in this discussion are called Giroux domains
and ideal Liouville domains.  As was sketched in the introduction, an
ideal Liouville domain is a natural compactification of a complete
Liouville manifold, and its product with $\SS^1$ naturally inherits a
contact structure, producing what we call a Giroux domain.
The definitions and elementary properties of these objects, including a
blow-down operation along boundary components, are due to Giroux but
cannot yet be found anywhere in the literature, so we will discuss
them in some detail in \S\ref{subsec:idealLiouville} and the beginning
of \S\ref{subsec:GirouxDomains}.
Before that, in \S\ref{subsec:round}, we introduce for later
convenience a slightly more general context for the blow-down
operation.
Some explicit examples of blown down Giroux domains have already
appeared in the work of Mori \cite{Mori_Lutz}, who showed that his
examples always contain a plastikstufe.
The notion of the \BLOB allows us to generalize this result using a
purely topological description that we will explain in~\S\ref{subsec:blowdown}.
The last subsection, culminating with the statement of
Theorem~\ref{thm:torsion}, defines a filling obstruction in terms of
Giroux domains which refines Theorem~\ref{thm:1torsion} from the
introduction and sets the stage for our higher dimensional
generalization of Giroux torsion in~\S\ref{sec:torsion}.

\subsection{Round hypersurfaces}
\label{subsec:round}

We say that an oriented hypersurface $H$ in a contact manifold $(V, \xi)$ is
a \defin{$\xi$-round hypersurface modeled on} some closed contact manifold 
$(M, \xi_M)$ if it is transverse to $\xi$ and admits an orientation
preserving identification with 
$\SS^1 \times M$ such that $\xi \cap TH = T\SS^1 \oplus \xi_M$.
In this definition, the word ``round'' is used as in ``round handle''. 
In general, the orientation of a round hypersurface may be chosen at
will, and we shall assume in particular that whenever $H$ is a component
of $\p V$, its orientation 
is the \emph{opposite} of the natural boundary orientation;
see Remark~\ref{remark:IDontKnowHowYouWereInverted} below.
Observe that in dimension three, a $\xi$-round hypersurface is simply a 
pre-Lagrangian torus with closed characteristic leaves.

\begin{lemma}
\label{lemma:round_neighborhood}
Any $\xi$-round hypersurface $H = \SS^1 \times M$ in the interior (or
boundary) of $(V, \xi)$ has a neighborhood 
$(-\varepsilon,\varepsilon) \times H$ (or $[0,\varepsilon) \times H$
respectively) on which
$\xi$ can be defined by the contact form $\alpha_M + s\, dt$ where $s$ is the
coordinate on the interval, $t$ the coordinate in $\SS^1$
and $\alpha_M$ is a contact form for $\xi_M$.
\end{lemma}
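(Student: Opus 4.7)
The plan is to first verify that the proposed form is contact on the model space, then use a contact neighborhood argument (Gray–Moser) to transplant the model into a neighborhood of $H$ in $V$.

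Setting $\dim V = 2n+1$ so $\dim M = 2n-1$, a direct computation in the model $(-\varepsilon,\varepsilon)\times\SS^1\times M$ shows
\begin{equation*}
	(\alpha_M + s\,dt)\wedge \bigl(d\alpha_M + ds\wedge dt\bigr)^n
	= n\,\alpha_M\wedge (d\alpha_M)^{n-1}\wedge ds\wedge dt,
\end{equation*}
which is a volume form because $\alpha_M\wedge(d\alpha_M)^{n-1}$ is a volume form on $M$; the $(d\alpha_M)^n$ term drops out by dimension. At $s=0$ one reads off $\ker(\alpha_M+s\,dt)\cap TH = T\SS^1\oplus\xi_M$, matching the hypothesis on the round hypersurface.

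Next, I would pick a vector field $Z$ on $V$ transverse to $H$ (inward-pointing if $H\subset\p V$), use its flow to identify a collar neighborhood of $H$ with $(-\varepsilon,\varepsilon)\times H$ or $[0,\varepsilon)\times H$, and choose a contact form $\lambda$ for $\xi$ on this collar. Because $\xi\cap TH = T\SS^1\oplus\xi_M$, the restriction $\restricted{\lambda}{TH}$ vanishes on $T\SS^1$ and has kernel $\xi_M$ in the $M$-direction, so it is the pullback via $\pi_M\colon \SS^1\times M\to M$ of a positive multiple of some contact form for $\xi_M$; multiplying $\lambda$ by an appropriate positive function (extended arbitrarily off $H$) I may assume $\restricted{\lambda}{TH} = \pi_M^*\alpha_M$ for a contact form $\alpha_M$ on $M$.

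The core step is then a Moser/Gray stability argument comparing $\lambda$ to the model form $\lambda_0 := \pi_M^*\alpha_M + s\,dt$ on the collar. Both are contact forms that coincide along $\{s=0\}=H$, so by openness of the contact condition the linear interpolation $\lambda_u := (1-u)\lambda_0 + u\lambda$ stays contact on a possibly smaller neighborhood of $H$. One seeks $Y_u\in\ker\lambda_u$ and a function $\mu_u$ satisfying
\begin{equation*}
	\iota_{Y_u}d\lambda_u + \dot\lambda_u = \mu_u\lambda_u,
\end{equation*}
with $\mu_u$ determined by pairing with the Reeb vector field of $\lambda_u$ and $Y_u$ then uniquely determined using nondegeneracy of $d\lambda_u$ on $\ker\lambda_u$. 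The key observation is that along $H$ we have $\dot\lambda_u = \lambda - \lambda_0 = 0$, hence $\mu_u\equiv 0$ and $Y_u\equiv 0$ on $H$; the flow $\phi_u$ of $Y_u$ is therefore defined for all $u\in[0,1]$ on a neighborhood of $H$ and fixes $H$ pointwise. Then $\phi_1$ conjugates $\ker\lambda$ to $\ker\lambda_0$, producing the desired normal form.

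The main (mild) obstacle is the cleanest organization of the two cases (interior versus boundary $H$) and making sure the sign of the transverse coordinate $s$ is compatible with the orientation convention for boundary round hypersurfaces stated just before the lemma; this amounts to choosing $Z$ to point in the direction that is, respectively, either arbitrary or inward, so that $\p_s$ corresponds to the correct co-orientation.
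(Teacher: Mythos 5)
Your route—verify the model form is contact, normalize the given contact form on $TH$, and run Gray--Moser relative to $H$—is exactly the paper's approach, and the contact computation and the normalization $\restricted{\lambda}{TH}=\pi_M^*\alpha_M$ are fine.

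There is, however, one genuine gap in the step you label the ``key observation.'' You claim that along $H$ one has $\dot\lambda_u=\lambda-\lambda_0=0$, and conclude that $Y_u\equiv 0$ on $H$. But your setup only guarantees that $\lambda$ and $\lambda_0$ agree on $TH$ (both restrict to $\pi_M^*\alpha_M$); you have said nothing about the transverse direction. Since $\lambda_0(\p_s)=0$ at $s=0$ while $\lambda(\p_s)$ can be arbitrary, the difference $\lambda-\lambda_0$ is some multiple of $ds$ along $H$, generically nonzero. In that case the two hyperplane fields $\ker\lambda_0$ and $\ker\lambda$ do \emph{not} coincide in $\restricted{TV}{H}$, the Moser vector field $Y_u$ does not vanish on $H$ (one can check it is tangent to $H$ at $u=0$ but need not be at later $u$), and $\phi_1$ will not restrict to the identity on $H$—so the resulting chart does not restrict to the prescribed identification $H\cong\SS^1\times M$. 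The fix is cheap and should be stated explicitly: because $H$ is transverse to $\xi$, the hyperplane $\restricted{\xi}{H}$ contains a vector field transverse to $H$ (inward-pointing in the boundary case), and you should take this as your collar vector field $Z$. Then $\lambda(\p_s)=0$ on $H$, so $\lambda$ and $\lambda_0$ really do agree on all of $\restricted{TV}{H}$, $Y_u$ vanishes along $H$, and the rest of your argument goes through as written.
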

\begin{proof}
Fix any tubular neighborhood (or collar neighborhood) of $H$ with
coordinate~$t$.
The $1$-form described defines a contact form near $H$ which induces the same
hyperplane field as $\xi$ on $H$, hence they are isotopic relative to $H$.
Pulling back the neighborhood under this isotopy gives the desired
neighborhood.
\end{proof}

Suppose $H$ is a $\xi$-round boundary component of $(V, \xi)$, with
orientation opposite the boundary orientation, and consider the
collar neighborhood from the preceding lemma. 
We now explain how to 
modify $(V,\xi)$ by \defin{blowing down} $H$ to $M$. 
Let $D$ be the disk of radius $\sqrt\varepsilon$ in $\RR^2$.
The map $\Psi \colon (r e^{i\theta},m) \mapsto (r^2,\theta,m)$ is a 
diffeomorphism from 
$\big(D \setminus \{0\}\big) \times M$ to $(0, \varepsilon) \times \SS^1
\times M$ which pulls back 
$\alpha_M + s\, dt$ to the contact form $\alpha_M + r^2 d\theta$. 
Thus we can glue $D \times M$ to $V \setminus H$ 
to get a new contact manifold in which~$H$ has been replaced by~$M$.
This process is equivalent to performing a \emph{contact cut} of $V$ with
respect to the (local) $\theta$-action, as described in
\cite{Lerman_ContactCuts}.

\begin{remark}
\label{remark:IDontKnowHowYouWereInverted}
Topologically, the blow down operation glues $\DD^2 \times M$ to~$V$ 
via the natural identification of 
$\p(\DD^2 \times M)$ with $\SS^1 \times M = H \subset V$.  This is why it is
appropriate to assign to~$H$ the reverse of its natural boundary orientation
with respect to~$V$.
\end{remark}

\subsection{Ideal Liouville domains}
\label{subsec:idealLiouville}

The following notion is of central importance for the new filling
obstructions that we will introduce.

\begin{definition}[Giroux]
  Let $\Sigma$ be a compact manifold with boundary, $\omega$ a
  symplectic form on the interior $\IntSig$ of $\Sigma$ and $\xi$ a contact
  structure on $\p\Sigma$.
  The triple $(\Sigma, \omega, \xi)$ is an \defin{ideal Liouville domain} if
  there exists an auxiliary $1$-form $\beta$ on $\IntSig$ such that:
  \begin{itemize}
  \item $d\beta = \omega$ on $\IntSig$,
  \item For any smooth function $f\colon \Sigma \to [0,\infty)$ with
	regular level set $\p\Sigma = f^{-1}(0)$, the $1$-form $f\beta$
	extends smoothly to $\p\Sigma$ such that its restriction to
	$\p\Sigma$ is a contact form for~$\xi$.
  \end{itemize}
  In this situation, $\beta$ is called a \defin{Liouville form} for $(\Sigma,
  \omega, \xi)$.
\end{definition}

\begin{remark}
In the above definition, the space of possible auxiliary Liouville forms $\beta$
is contractible.  Indeed, we first observe that if the second condition
is satisfied for any given function $f_1$ as specified in the definition, 
then it is also satisfied for any other function $f_2$ with the required 
properties, as we then have $f_2 = g f_1$ for some smooth function
$g \colon \Sigma \to (0,\infty)$.  Thus we can fix a suitable function~$f$
and see that the set of admissible primitives $\beta$ on $\IntSig$ is convex.
An interesting variation on the above definition is obtained by also
regarding~$\xi$ as auxiliary data: this still leaves a
contractible space of auxiliary choices, but it is slightly less
convenient for our purposes. 
\end{remark}

\begin{remark}
\label{remark:fbeta}
Note that for $\beta$ and $f$ as in the above definition, there is no
requirement that $d(f\beta)$ should be symplectic, and in general it is not.
It is true however that one can always find 
(using Lemma~\ref{lemma:idealCollar} below) suitable functions~$f$ for
which $f\beta$ also defines a Liouville form on $\IntSig$, and Liouville
forms of this type arise naturally in certain examples,
cf.~Example~\ref{ex:torsionDomain} and Remark~\ref{remark:convexLiouville}.
\end{remark}

One can check that a Liouville form $\beta$ for an ideal Liouville domain
$\Sigma$ defines on the interior of $\Sigma$ the structure of a \emph{complete}
Liouville manifold.  This means that the flow of the vector field $X$
which is $\omega$-dual to $\beta$ is complete, and in particular the interior 
of $\Sigma$ has infinite volume with respect to $\omega$. This follows from 
Lemma~\ref{lemma:idealCollar} below, which describes precisely what happens 
near~$\p \Sigma$.  
For our purposes, one may regard the statement of this lemma as
part of the definition of an ideal Liouville domain, but keeping in mind that it
is already implied by the definition above.

\begin{lemma}[Giroux]
\label{lemma:idealCollar}
Suppose $(\Sigma, \omega, \xi)$ is an ideal Liouville domain with auxiliary
Liouville form~$\beta$, and let $X$ denote its $\omega$-dual vector
field, i.e.~the unique vector field on $\IntSig$ that satisfies
$\iota_X\omega = \beta$.

Choose any smooth function $f\colon \Sigma \to [0,\infty)$ with regular level
set $\p\Sigma = f^{-1}(0)$.  Then the vector
field $X_f := \frac{1}{f} X$ on $\IntSig$ extends smoothly over $\p\Sigma$
so that it points transversely outward.  Moreover, a collar neighborhood
of~$\p\Sigma$ can be identified with $(0, 1] \times \p\Sigma$ with 
coordinate $s \in (0, 1]$ such that 
$\beta = \frac{1}{1 - s}\, \alpha$ on $(0, 1) \times \p\Sigma$, where
$\alpha$ is a contact form for~$\xi$.
\end{lemma}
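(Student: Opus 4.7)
The plan is two-step: first establish that $X_f = X/f$ extends smoothly across $\p\Sigma$ by a local coordinate computation, then read off the collar description from the general fact that $(\IntSig,\omega,\beta)$ is a complete Liouville manifold and $\p\Sigma$ is its contact boundary at infinity.

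Setting $\lambda := f\beta$ (which by hypothesis is smooth up to~$\p\Sigma$ and restricts there to a contact form $\alpha$ for~$\xi$), a direct manipulation of $\omega = d\beta = d(\lambda/f)$ gives
\begin{equation*}
	f^2\omega \;=\; f\,d\lambda - df\wedge\lambda \;,
\end{equation*}
which is smooth up to $\p\Sigma$, and the interior identity $\iota_X\omega = \beta$ rewrites as $\iota_{X_f}(f^2\omega) = \lambda$. I would then introduce adapted local coordinates $(t,y)$ near $\p\Sigma$ with $t = f$: since the contact condition on $\alpha$ prevents $\lambda$ from being proportional to $df$ at the boundary, $\ker\lambda$ is transverse to the levels of $f$, and I can arrange $\p_t \in \ker\lambda$. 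Then $\lambda = \tilde\lambda(t,y)$ has no $dt$-component, with $\tilde\lambda|_{t=0} = \alpha$. Writing $X_f = B\p_t + W$ with $W$ tangent to the $y$-slices, separating the $dt$-part and horizontal part of $\iota_{X_f}(f^2\omega) = \lambda$ gives a pair of equations whose leading terms force $B|_{\p\Sigma} = -1$ and $W|_{\p\Sigma} \in \ker\alpha = \xi$. The first-order correction then reads, on $\p\Sigma$,
\begin{equation*}
	\iota_{W|_{\p\Sigma}} d\alpha \;=\; \p_t\tilde\lambda|_{t=0} + B_1\,\alpha \;,
\end{equation*}
and by nondegeneracy of $d\alpha$ on $\xi$, both $W|_{\p\Sigma}$ and $B_1 := \p_t B|_{t=0}$ are uniquely determined. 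The same mechanism resolves higher-order Taylor coefficients iteratively, and this formal solution must agree with the uniquely defined interior vector field $X/f$, so $X_f$ extends smoothly. Finally $df(X_f)|_{\p\Sigma} = -1$ shows $X_f$ is transverse to $\p\Sigma$ and points outward.

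For the collar I would pick a small level $H_c := \{f = c\}$, on which $X$ is transverse and $\alpha_H := \restricted{\beta}{H_c}$ is a contact form. The Liouville identity $\lie{X}\beta = \beta$ integrates to an embedding
\begin{equation*}
	\Phi \colon [0,\infty)\times H_c \hookrightarrow \IntSig,\qquad (\tau,p)\mapsto \flow^X_\tau(p),\qquad \Phi^*\beta = e^\tau\alpha_H \;.
\end{equation*}
The estimate $df(X_f)|_{\p\Sigma} = -1$ from Step~1 gives $\tfrac{d}{d\tau}(f\circ\flow^X_\tau) = f\cdot df(X_f) \sim -f$, so $f\circ\flow^X_\tau$ decays like $e^{-\tau}$ and the $X$-trajectories limit smoothly onto $\p\Sigma$ as $\tau\to\infty$. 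Using once more the smooth extension of $X_f$, the limit map $p\mapsto\lim_{\tau\to\infty}\flow^X_\tau(p)$ is a diffeomorphism $H_c\cong\p\Sigma$ identifying $\alpha_H$ with a contact form $\alpha$ for~$\xi$. Substituting $s = 1 - e^{-\tau}$ rescales $[0,\infty)\times H_c$ to $[0,1)\times\p\Sigma$ with $\beta = \alpha/(1-s)$, and attaching $\p\Sigma$ at $\{s=1\}$ yields the required collar $(0,1]\times\p\Sigma$.

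The main obstacle is the smooth extension of~$X_f$ itself: the bundle map $Y\mapsto\iota_Y(f^2\omega)$ drops in rank from $2n$ to $2$ at $\p\Sigma$, so this equation alone does not pin down $X_f$ on the boundary from boundary data. The rigidity comes from demanding that the extension agree with the interior $X/f$, for which the iterative Taylor expansion powered by the nondegeneracy of $d\alpha|_\xi$ is the key mechanism to verify.
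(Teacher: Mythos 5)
Your outline matches the paper's, and Step~2 is essentially the same idea the paper uses: a collar built by transporting a contact form along the Liouville flow. The paper proceeds in the reverse direction, though --- it first normalizes $f$ to a function $h$ satisfying $\lie{X_h}(h\beta) = 0$, which reduces to the regular ODE $dh(X_h) = -1$ solvable with initial data on~$\p\Sigma$, and then flows $X_h$ inward from the boundary so that $h\beta$ is preserved by construction. That avoids the limit-diffeomorphism and the asymptotics of $f\circ\flow^X_\tau$ that your version relies on; these are not wrong, but they demand more care than you give them and they inherit a dependence on Step~1.

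The genuine gap is in Step~1. You correctly identify the obstruction --- $f^2\omega$ restricts on $\p\Sigma$ to the decomposable form $-df\wedge\alpha$, so $Y\mapsto\iota_Y(f^2\omega)$ drops from rank~$2n$ to rank~$2$ there, and the equation $\iota_{X_f}(f^2\omega)=\lambda$ does not determine $X_f$ on the boundary from boundary data --- but the iterative Taylor argument you then give only shows that \emph{if} $X/f$ admits a smooth extension, its Taylor jet at $\p\Sigma$ is uniquely determined, order by order, with the nondegeneracy of $d\alpha|_\xi$ doing the bookkeeping. It does not establish existence of the extension: one would still need a priori bounds on $X/f$ and its derivatives near $\p\Sigma$, and a formally well-posed order-by-order system does not supply them. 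The paper closes this precise gap by replacing $f^2\omega$ with the top-degree form
\begin{equation*}
  \mu := f\,d\gamma^n - n\,df\wedge\gamma\wedge d\gamma^{n-1}, \qquad \gamma := f\beta \;,
\end{equation*}
which on $\IntSig$ simplifies to $f^{n+1}\,\omega^n$ but, unlike $(f^2\omega)^n$, does not degenerate at the boundary: its restriction to $\p\Sigma$ is $-n\,df\wedge\alpha\wedge d\alpha^{n-1}$, nonzero precisely because $\alpha$ is contact. One then \emph{defines} $X_f$ globally by $\iota_{X_f}\mu = n\,\gamma\wedge d\gamma^{n-1}$; both sides are smooth on all of $\Sigma$ and $\mu$ is a volume form there, so $X_f$ is automatically smooth, and the interior identity $\iota_X\omega^n = n\,\beta\wedge\omega^{n-1}$ shows it coincides with $X/f$ on $\IntSig$. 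This nondegenerate replacement for $f^2\omega$ is the ingredient missing from your argument; without it, Step~1 proves only uniqueness of the extension, not that it exists.
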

\begin{proof}
By definition, the $1$-form $\gamma := f\beta$ extends smoothly to $\Sigma$
and restricts on the boundary~$\p\Sigma$ to a contact form for $\xi$.
The smooth $2n$-form $\mu := f\,d\gamma^n - n\, df\wedge
\gamma \wedge d\gamma^{n-1}$ on the domain~$\Sigma$ simplifies on
the interior~$\IntSig$ to
\begin{equation*}
  \mu = f^{n+1} \,\omega^n \;,
\end{equation*}
and is hence a volume form on $\IntSig$.  It is also nondegenerate
along $\p\Sigma$, since $f\,d\gamma^n$ vanishes and $\gamma \wedge
d\gamma^{n - 1}$ is a volume form on $T(\p\Sigma) = \ker df$.  It
follows that there is a unique vector field~$X_f$ on $\Sigma$
satisfying the equation
\begin{equation*}
  \iota_{X_f} \mu = n\, \gamma \wedge d\gamma^{n - 1} \;.
\end{equation*}
Using $\iota_X \omega^n = n\, \beta \wedge d\beta^{n - 1}$ on the
interior~$\IntSig$, one can check that $\restricted{X_f}{\IntSig} =
\frac{1}{f}\,X$, and since the first term of $\mu$ vanishes at~$\p\Sigma$
and $f$ decreases in the outward direction, it follows that $X_f$ points
transversely outward through~$\p\Sigma$.

We now construct the collar neighborhood.
The basic idea is to follow the flow of $X_f$ starting from $\p\Sigma$,
but for a particular choice of the function $f\colon \Sigma \to [0,\infty)$
with regular level set $f^{-1}(0) = \p\Sigma$.  Starting from an
arbitrary function~$f$ of this type, any other such function~$h$ can be
written as $h = gf$ for some positive function~$g$ on~$\Sigma$. 
We then seek $h$ such that the 
vector field~$X_h = \frac{1}{h}\, X$ satisfies
\begin{equation*}
  \lie{X_h} (h\beta) =  0 \;.
\end{equation*}
This condition is equivalent to $dh(X_h) = -1$, which leads to the
ordinary differential equation $dg(X_f) = - \frac{1+df(X_f)}{f}\, g$.
The function~$f$ vanishes along $\p\Sigma$, and by the construction
above, we see that $df(X_f) = -1$ on $\p\Sigma$, thus the
differential equation is well behaved at $\p\Sigma$ and can be
solved with initial condition $\restricted{g}{\p\Sigma} \equiv 1$.

We denote by $\alpha$ the contact form induced by $h\beta$ on
$\p\Sigma$.  Since $\iota_{X_h} h\beta = 0$ and $\lie{X_h} h\beta
= 0$, the flow $\flow_t^{X_h}$ of $X_h$ for negative~$t$ pulls $h\beta$
back to $\alpha$.  Further, from $dh(X_h) = -1$ we obtain
$h\circ \flow_t^{X_h} = -t$, so
\begin{equation*}
  (\flow_t^{X_h})^* (h\beta) =
  -t\cdot (\flow_t^{X_h})^*\beta = \alpha \;.
\end{equation*}
Reparameterizing the time variable, we finally obtain the map
$\Phi(s,p) := \flow_{s-1}^{X_h}(p)$ which gives the desired collar
neighborhood with $(1 - s)\, \Phi^*\beta = \alpha$.
\end{proof}

\subsection{Giroux domains}
\label{subsec:GirouxDomains}

Given an ideal Liouville domain $(\Sigma, \omega, \xi)$ and a Liouville form
$\beta$, one can endow $\Sigma \times \RR$ with the contact structure
$\ker (f\, dt + f\beta)$ for any smooth function $f\colon \Sigma \to [0,\infty)$
with regular level set $f^{-1}(0) = \p\Sigma$.  Over the interior
of $\Sigma$, $\ker (f\,dt + f\,\beta) = \ker (dt + \beta)$, so one recovers the
standard notion of the \emph{contactization} of the Liouville manifold 
defined by~$\beta$.  On the boundary we have
$f\,dt = 0$, so the contact hyperplanes are $\xi \oplus T\RR$. 
Any two contact structures obtained in this way from
different Liouville forms are isotopic relative to the boundary. 
Since the contact forms constructed on $\Sigma \times \RR$ are $\RR$-invariant,
one can just as well replace $\RR$ by~$\SS^1$.
We will refer to $\Sigma \times \SS^1$ with the contact structure defined
in this way as the \defin{Giroux domain} associated to $(\Sigma,
\omega, \xi)$; see Example~\ref{ex:torsionDomain} 
below for our main motivation.  Observe that the
boundary is a $\xi$-round hypersurface modeled on $(\p\Sigma, \xi)$.

\begin{remark}
The above is a special case of a more general construction, also due to
Giroux, known as the \emph{suspension} of a
symplectomorphism $\varphi$ with compact support in $\IntSig$.  The result
of this construction also has $\xi$-round boundary, and blowing it down 
gives the contact manifold associated to the abstract open book 
$(\Sigma, \varphi)$.  Observe that unlike Giroux's original construction of the
contact structure associated to an open book (see e.g. in
\cite[Section~7.3]{GeigesBook}), this construction does not require any tweaking
near the binding.

In a different direction, one can generalize the construction of Giroux 
domains to allow for nontrivial circle bundles over $\Sigma$ using ideas from
\cite{Geiges_circle_bundles}.
\end{remark}

\begin{example}\label{ex:torsionDomain}
We consider
\begin{equation*}
  \Sigma = \SS^1 \times [0,\pi] , \quad
  \omega = \frac 1{\sin^2 s}\, d\theta \wedge ds
\end{equation*}
where $s$ is the coordinate in $[0,\pi]$ and $\theta$ the
coordinate in $\SS^1$, carrying the trivial contact structure 
$\ker \pm d\theta$. 
One can take as a Liouville form 
$\beta = \cot s\; d\theta$. Setting $f(\theta,s) = \sin s$, we get the
contact form 
$f(\theta,s)\cdot (\beta + dt) = \cos s\; d\theta + \sin s\; dt$ on 
$\Sigma \times \SS^1$.
Thus the Giroux domain associated to this ideal Liouville domain is
a Giroux $\pi$-torsion domain.
\end{example}

\subsection{Blowing down}
\label{subsec:blowdown}

Let $M$ be a union of connected components of the boundary of a Giroux domain
$\Sigma \times \SS^1$. These components are $\xi$-round
hypersurfaces and can thus be blown down as described in
\S\ref{subsec:round}.  We shall denote the resulting manifold by
$(\Sigma \times \SS^1) \blowdown M$.  It inherits a natural contact structure
for which each of the blown down boundary components becomes a codimension two
contact submanifold.

\begin{example}
\label{ex:torsionLutz}
Continuing the annulus example, a Giroux $\pi$-torsion domain with one boundary
component blown down is a so-called \emph{Lutz tube}, i.e.~the solid torus
that results from performing a Lutz twist along a transverse knot. 
With both boundary components blown down,
it is the standard contact structure on $\SS^2 \times \SS^1$.
\end{example}

In the above example, when one boundary component is blown down but not the
other, the resulting domain contains an overtwisted disk. We now generalize this
to higher dimensions.

\begin{proposition}\label{prop:bLob_in_blown_down_domain}
	Suppose $(V,\xi)$ is a contact manifold containing a subdomain~$G$ with
	nonempty boundary, obtained from a Giroux domain by blowing down at least one
	boundary component. Then $(V,\xi)$ contains a small \BLOB
	(cf.~Remark~\ref{remark:small}).
\end{proposition}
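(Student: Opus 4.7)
The plan is to construct a small \BLOB explicitly using the rotational contact structure that emerges near a blown-down boundary component. Let $M_0$ be a boundary component of $\Sigma$ that has been blown down to form $G$, and note that since $\p G \ne \emptyset$, $\Sigma$ must also have some other boundary component $M_1$ contributing to $\p G$.

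First I would produce an explicit ``half-torsion'' model in a neighborhood of a path from $M_0$ to $M_1$ in $\Sigma$. Using Lemma~\ref{lemma:idealCollar} near $M_0$ and the analogous description near $M_1$, together with the freedom in choosing the auxiliary Liouville form $\beta$ (which takes values in a contractible set), I would normalize coordinates in a tubular neighborhood of this path so that the Giroux domain becomes modeled on $(M_0 \times [0,\pi]_s) \times \SS^1_t$ carrying the contact form $\cos s\; \alpha_{M_0} + \sin s\; dt$, exactly as in Example~\ref{ex:torsionDomain}. After blowing down $\{s=0\} \times \SS^1_t$ via $(s,t) \leftrightarrow (r^2, \theta)$, this piece becomes $M_0 \times \overline{\DD^2}$ carrying a contact form which near the center is the local Lutz tube $\alpha_{M_0} + r^2\, d\theta$ but extends all the way to radius $r = \sqrt{\pi}$, where the form degenerates to $-\alpha_{M_0}$.

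Second, I would pick a small closed Legendrian submanifold $F \subset M_0$ of dimension $n-1$ (for instance a Legendrian $\SS^{n-1}$) contained in a Darboux ball $B_F \subset M_0$, and define $N := F \times \overline{\DD^2}$ as a submanifold of $G \subset V$. I then verify the \BLOB axioms: the binding $B := F \times \{0\}$ sits along the blown-down locus (identified with $F \subset M_0 \subset G$) and has codimension~$2$ in $N$; the map $\theta\colon N\setminus B\to\SS^1$ given by the angle on $\overline{\DD^2}$ defines a relative open book whose pages $F \times [0,\sqrt{\pi}]\times\{t_0\}$ are Legendrian because $\alpha_{M_0}|_{TF}=0$ and $\sin(r^2)\, d\theta$ kills the radial direction; and the boundary $\p N = F \times \{\sqrt{\pi}\} \times \SS^1_\theta$ is Legendrian because at $r=\sqrt{\pi}$ the contact form is $-\alpha_{M_0}$, whose kernel contains both $TF$ and $\p_\theta$. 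Because $F$ lies inside $B_F$, the whole \BLOB $N$ sits inside $B_F \times \overline{\DD^2}$, which is diffeomorphic to a $(2n+1)$-dimensional ball in $V$, so the resulting \BLOB is small in the sense of Remark~\ref{remark:small}.

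The main obstacle is the first step: showing that for a general ideal Liouville domain $\Sigma$ (not necessarily of the product form $M_0 \times [0,\pi]$ arising from a Liouville pair as in Section~\ref{sec:Liouville}), the contact form on the Giroux domain can indeed be put into the Example~\ref{ex:torsionDomain} model along a path joining $M_0$ to $M_1$. I would handle this using the contractibility of the space of admissible auxiliary Liouville forms $\beta$ for a fixed ideal Liouville structure, combined with a Moser-type deformation along the collar provided by the $\xi$-round description of Lemma~\ref{lemma:round_neighborhood} at both boundary components, interpolating between the two local models. Since the \BLOB construction only requires this normal form in a small tubular neighborhood of the connecting path, no global modification of $\beta$ is required, and a Weinstein-style localized argument should suffice.
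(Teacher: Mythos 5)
Your construction, a product $N = F \times \overline{\DD^2}$ with $F$ a closed Legendrian in the blown-down locus $M_0$, is essentially the plastikstufe of Mori, and it does succeed when the ideal Liouville domain $\Sigma$ contains a product collar $[0,\pi] \times M_0$ reaching from $M_0$ to a non-blown-down boundary component --- i.e.\ for (a portion of) a Giroux torsion domain exactly as in Example~\ref{ex:torsionDomain}. Indeed the paper observes in the proof of Theorem~\ref{thm:LutzMori} that this direct construction is available for those concrete examples. But the gap you flag in your final paragraph is genuine and is \emph{not} repairable by the methods you propose. A tubular neighborhood in $\Sigma$ of a path $\gamma$ from $M_0$ to $M_1$ is diffeomorphic to a ball, not to $M_0\times[0,\pi]$, so the phrase ``the Giroux domain becomes modeled on $(M_0\times[0,\pi])\times\SS^1$'' has the wrong topology unless $\Sigma$ happens to be a product; moreover $M_1$ need not even be diffeomorphic to $M_0$, and the contact forms $\alpha_{M_0}$, $\alpha_{M_1}$ on the two boundary components are unrelated data. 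The $dt$-coefficient of the contact form on a Giroux domain vanishes only at $\p\Sigma$, and what you find there is governed by $\alpha_{M_1}$, not $-\alpha_{M_0}$; so the Legendrian condition at $\p N$ fails at your chosen radius for generic $\Sigma$. Gray/Moser isotopies only relate $C^1$-close contact structures, and contractibility of the space of admissible $\beta$'s yields isotopies between contactizations of the \emph{same} $\Sigma$ --- neither can make a general ideal Liouville domain resemble a product.

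The paper avoids any normalization by working with a Lagrangian $L\subset\Sigma$ with cylindrical ends: properly embedded, Lagrangian in $\IntSig$, with Legendrian boundary in $\p\Sigma$ and tangent to a Liouville vector field near $\p\Sigma$. One takes boundary-parallel Lagrangian disks $L_{\mathrm{bd}}$ near $M_0$ and $L_{\mathrm{p}}$ near a preserved component $M_1$, isotopes them to meet transversely, and joins them by Polterovich's Lagrangian surgery. Lemma~\ref{lemma:lag_to_blob} then shows that $L\times\SS^1\subset\Sigma\times\SS^1$ carries a maximal Legendrian foliation; the key point is that the Lagrangian condition makes $\restricted{\beta}{L}$ a closed $1$-form, so Tischler's construction produces the fibration $\vartheta$ with no normal form for $\beta$ in the interior required. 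The degeneracy of the contact form needed at $\p N$ only has to occur at $\p\Sigma$, where it holds automatically by the definition of an ideal Liouville domain. Your $F\times(0,1]$ near $M_0$ is exactly the cylindrical end of $L_{\mathrm{bd}}$; the missing ingredient is the surgery step that joins it to a corresponding piece near $M_1$ through the interior of $\Sigma$, rather than trying to push a product collar across.
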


The \BLOB in the above proposition will come from a Lagrangian submanifold in
an ideal Liouville domain $(\Sigma,\omega,\xi)$. We first need a technical 
definition
describing how these submanifolds will be allowed to approach the boundary. We
say that a submanifold~$L$ properly embedded inside $\Sigma$ and transverse
to the boundary is a \defin{Lagrangian with cylindrical end} if:
\begin{itemize}
	\item $\mathring L$ is Lagrangian in $\IntSig$,
	\item $\p L$ is Legendrian in $\p \Sigma$,
	\item There is a Liouville form $\beta$ whose $\omega$-dual vector 
	field is tangent to $L$ near $\p \Sigma$. 
		More precisely, there is a collar $(0, 1] \times \partial \Sigma$ 
		as in Lemma~\ref{lemma:idealCollar} which intersects $L$ along
		$(0, 1] \times \p L$.
\end{itemize}
We will say that the Liouville form in this definition 
is \defin{adapted} to~$L$.

\begin{lemma}
\label{lemma:lag_to_blob}
Let $(\Sigma, \omega, \xi)$ be an ideal Liouville domain. If $L$ is a
Lagrangian with cylindrical end in $\Sigma$, then $\widehat L := L
\times \SS^1$ inside the contactization $\Sigma \times \SS^1$ is
isotopic to a maximally foliated submanifold whose singular set is its
boundary and whose foliation is otherwise defined via a fibration
\begin{equation*}
  \vartheta\colon \widehat L \to \SS^1,\quad (l,t) \mapsto F(l) + t \;,
\end{equation*}
for some smooth function $F\colon L \to \SS^1$ that is constant
on a neighborhood of~$\p L$.
\end{lemma}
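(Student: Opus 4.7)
The plan is to realize the isotoped submanifold as the graph of a $1$-form inside a Weinstein-type model for a neighborhood of~$\widehat L$ in the contactization.

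First, pick any Liouville form~$\beta$ adapted to~$L$. In the collar of Lemma~\ref{lemma:idealCollar} the Liouville form has the shape $\beta = \frac{1}{1-s}\alpha$ for some contact form~$\alpha$ on~$\p\Sigma$, and the adaptation hypothesis together with the Legendrian condition $T\p L\subset\ker\alpha$ forces $\beta|_L$ to vanish on a collar neighborhood of~$\p L$; in particular $\beta|_L$ is a closed $1$-form on~$L$ whose support is contained in a compact subset of $\mathring L$.

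Next, apply the Weinstein Lagrangian neighborhood theorem to $\mathring L\subset\IntSig$ and absorb the resulting closed-exact correction via a fiber-preserving contactomorphism of the form $(q,p,t)\mapsto(q,p,t-h(q,p))$. The upshot is a contactomorphic identification of some neighborhood of~$\widehat L$ in $(\Sigma\times\SS^1,\xi)$ with an open neighborhood of the zero section in $T^*L\times\SS^1$, carrying the contact form $dt+\lcan+\pi^*(\beta|_L)$ and with $\widehat L$ corresponding to the zero section times~$\SS^1$.

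Now choose any smooth $F\colon L\to\SS^1$ that is locally constant on a neighborhood of~$\p L$, and set $\sigma:=dF-\beta|_L$. Then $\sigma$ is a $1$-form on~$L$ compactly supported in $\mathring L$, and a direct calculation shows that the graph
\[
\Gamma_\sigma := \{(q,\sigma(q),t)\;:\;(q,t)\in L\times\SS^1\}\subset T^*L\times\SS^1
\]
pulls the model contact form back, via its natural parametrization by $L\times\SS^1$, to $dt+\sigma+\beta|_L = dt+dF = d\vartheta$, whose kernel is exactly the fibration $\vartheta(l,t)=F(l)+t$. The linear family $\Gamma_{s\sigma}$ for $s\in[0,1]$ is realized by the ambient isotopy $\Phi_s(q,p,t):=(q,p+s\sigma(q),t)$ of the model, which, after multiplication by a bump function equal to~$1$ on the compact region swept out, has compact support and extends by the identity to an ambient isotopy of $\Sigma\times\SS^1$ carrying $\widehat L$ to $\Gamma_\sigma$. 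The singular set of the resulting foliation is precisely $\p L\times\SS^1$, as the contact form $f(dt+\beta)$ degenerates along $\p\Sigma\times\SS^1$ while the isotopy is the identity near there.

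The main technical point is to ensure the graphs $\Gamma_{s\sigma}$ stay inside the Weinstein model for all $s\in[0,1]$. Since $\sigma$ is supported in a compact subset of $\mathring L$ where the Weinstein radius has a positive lower bound, this reduces to arranging $\norm{\sigma}_{C^0}$ sufficiently small on this support; the freedom in choosing both~$F$ and the adapted Liouville form~$\beta$ within the contractible space of admissible Liouville forms provides enough flexibility to achieve this.
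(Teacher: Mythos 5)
There is a genuine gap, and it sits exactly at the point you flag as a ``technical point'' at the end: you cannot in general make $\norm{\sigma}_{C^0}$ small. Once $\beta$ and $F$ are chosen, $\sigma = dF - \beta|_L$ is a closed $1$-form whose cohomology class is $[dF] - [\beta|_L]$, and since $F$ takes values in $\SS^1 = \RR/\ZZ$ the class $[dF]$ ranges only over the integer lattice in the relevant subspace of $H^1(L;\RR)$. So $[\sigma]$ lives in the coset $-[\beta|_L] + H^1(L;\ZZ)$, which is bounded away from $0$ whenever $[\beta|_L]$ is not itself integral. A nonzero period of $\sigma$ over some loop $\gamma$ forces $\norm{\sigma}_{C^0} \ge \abs{\int_\gamma \sigma}/\mathrm{length}(\gamma)$, a positive lower bound that does not shrink as you vary $F$. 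Consequently the graph $\Gamma_\sigma$ need not fit inside the Weinstein neighborhood (whose thickness is a fixed, possibly very small, constant), and the linear family $\Gamma_{s\sigma}$ cannot be realized by an ambient isotopy supported there. The appeal to ``the freedom in choosing the adapted Liouville form $\beta$'' does not rescue this: for fixed $\omega$, two adapted Liouville forms differ by a closed $1$-form on $\IntSig$, and there is no reason the restriction of such a form to $L$ can cancel an arbitrary real cohomology class; moreover the perturbation the paper actually uses (adding $\rho\lambda_L$) is not closed on $\Sigma$, so it perturbs $\omega$ as well, which is a different and strictly larger amount of freedom than varying the Liouville form for a fixed symplectic structure.

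The paper avoids this obstruction entirely by not moving the submanifold. It instead applies Gray's theorem twice: once to perturb $\beta$ (and $\omega$) so that $[\beta|_L]$ becomes a \emph{rational} class, and once to rescale the contact form to $\hbar\,u\,dt + u\beta$ so that $\hbar^{-1}[\beta|_L]$ is \emph{integral}. After those isotopies of the contact structure, $\widehat L = L\times\SS^1$ is already foliated by $\ker(\hbar\,dt + i^*\beta)$, which is a closed nowhere-vanishing $1$-form with periods in $\hbar\ZZ$, and Tischler's construction produces the fibration $\vartheta$ directly. Your graph-of-a-$1$-form picture in the Weinstein model is a reasonable starting point, and your computation that a graph pulls the model contact form back to $dt + \sigma + \beta|_L$ is correct; but to make it an actual proof you would still need to import both of the paper's steps (the perturbation making $[\beta|_L]$ rational and the $\hbar$-rescaling of the $dt$-factor), at which point the two arguments essentially coincide.
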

\begin{proof}
We first assume that there is a Liouville form
$\beta$ adapted to $L$ which induces a rational cohomology class on $L$.
This implies there is a real number $\hbar > 0$ such that
$\hbar^{-1}$ times the cohomology class of the restriction of $\beta$ to $L$ is
integral: $\hbar^{-1}\,[i^*\beta] \in H^1(L; \ZZ)$.
First note that $\hbar u\,dt + u\beta$ defines a contact structure isotopic to
$\ker (u\, dt + u\beta)$ relative to the boundary of the Giroux domain 
$G := \Sigma \times \SS^1$.
Furthermore, the vector field constructed in the standard proof of Gray's
theorem vanishes along $\p\Sigma \times \SS^1$, so this isotopy is actually tangent
to the identity along the boundary. We shall now prove the lemma using this 
contact form (and no further isotopy of $L \times \SS^1$).

In the interior of $G$, the contact structure is defined by $\hbar\,dt + \beta$,
which restricts to $\eta = \hbar\,dt + i^*\beta$ on $\widehat L$. Since $\eta$ is
closed, $\widehat L$ is foliated. Moreover, $\hbar\, dt$ never vanishes in 
$\widehat L$, so there is no singularity there. Along the boundary, the contact
structure is defined by $u\beta$, whose restriction to $\widehat L$ vanishes,
thus the singularities are as claimed.

We now define the fibration $\vartheta$ using Tischler's 
construction (cf.~\cite{Tischler}).
Let $(l_0, t_0)$ be any base point in the interior of $\widehat L$. We define
$\vartheta(l, t) = \frac {1}{\hbar}\,\int_\gamma \eta$, where $\gamma$ is any 
path from $(l_0, t_0)$ to $(l, t)$. Since $\eta$ is closed, Stokes' theorem 
guarantees that this is well defined modulo the integral of
$\eta$ along loops based at $(l_0, t_0)$.  If $(\gamma_L, \gamma_t)$ is such a
loop, then the integral over it is 
$\bigl\langle [\beta], [\gamma_L] \bigr\rangle + 
\hbar\, \bigl\langle [dt], [\gamma_t] \bigr\rangle$,
which belongs to $\hbar\, \ZZ + \hbar \ZZ = \hbar\, \ZZ$, thus $\vartheta$ has
a well-defined value in $\SS^1 = \RR/\ZZ$.
Observe that $\vartheta(l,t) = \vartheta(l,0) + t$, and two
points $(l_1,t_1)$ and $(l_2,t_2)$ lie in the same connected component
of a fiber of $\vartheta$ if and only if they lie on the same leaf of
the Legendrian foliation. On a suitable collar neighborhood of the boundary,
the $1$-form $\eta$ simplifies to $\hbar\, dt$, so the behavior
of~$\vartheta$ is also as claimed.

We now explain how to enforce the rationality assumption by perturbation of the
Liouville structure.
Suppose $\beta_0$ is any Liouville form adapted to $L$, in which case
$\restricted{\beta_0}{TL}$ is a closed $1$-form that vanishes on a
collar neighborhood of~$\p L$.
For every $\epsilon > 0$, we will find a closed $1$-form~$\lambda_L$ on~$L$
with compact support in $\mathring L$ and
$\norm{\lambda_L} < \epsilon$ (in the $\mathcal{C}^0$-norm with respect to a 
fixed auxiliary metric on $L$) such that $i^*\beta_0 + \lambda_L$
represents a rational cohomology class on~$L$.
Since the restriction of $\beta_0$ to $L$ vanishes near $\p L$, its cohomology
class belongs to the kernel $K$ of the map $H^1_{\dR}(L) \to
H^1_{\dR}(\p L)$ induced by inclusion.
Let $\alpha_1, \dotsc, \alpha_p$ be a set of closed $1$-forms representing
a basis of the image in 
$K$ of $H^1(L; \ZZ)$. By the definition of $K$, we can assume that all these 
$1$-forms vanish near the boundary of $L$. The restriction of $\beta_0$ to $L$
can be written as $\sum c_i \alpha_i + df$ for some real coefficients $c_i$ 
and some function $f$. Since $\QQ$ is dense in $\RR$, one can find arbitrarily
small numbers $\varepsilon_i$ such that $c_i + \varepsilon_i$ is rational for
all $i$ and then set $\lambda_L = \sum \varepsilon_i \alpha_i$.

We extend $\lambda_L$ to a tubular neighborhood~$U$ of $L$ in $\Sigma$ by
pulling it back to the normal bundle, and multiply it by a fixed
cutoff function $\rho\colon U \to [0,1]$ that has compact support on
$U$ and equals $1$ on~$L$.
In this way we obtain a $1$-form $\beta_0'$ given by $\beta_0 +
\rho\lambda_L$ on $U$ that extends smoothly to $\beta_0$ on
$\Sigma\setminus U$, and whose restriction to $L$ yields the desired
closed $1$-form with compact support in $\mathring L$ that represents
a rational cohomology class.
We can choose $\epsilon$ above arbitrarily small, hence we can
assume that all forms in the segment between $d\beta_0$ and
$d\beta_0'$ are symplectic.
The corresponding contact structures are then isotopic relative to
$\Sigma \setminus U$ and $\p \Sigma$.
\end{proof}

\begin{proof}[Proof of Proposition \ref{prop:bLob_in_blown_down_domain}]
  Let $(\Sigma,\omega, \xi_\p)$ denote the ideal Liouville domain used to construct
  $G$.  We will construct a Lagrangian $L\subset \Sigma$ with cylindrical end
	and blow down the foliated submanifold of Lemma~\ref{lemma:lag_to_blob} to
	find the desired \BLOB.
	If $\dim \Sigma = 2$, it suffices to take for~$L$ an embedded path
	between two distinct boundary components of~$\Sigma$, where one corresponds
	to a blown down boundary component of~$G$ and the other does not.
	More generally, choose two disjoint boundary parallel Lagrangian disks
	$L_\mathrm{bd}$ and $L_\mathrm{p}$ with cylindrical ends in $\Sigma$ such that
	$\p L_\mathrm{bd}$ is a Legendrian sphere in one of the blown down boundary
	components of $\p\Sigma$, and $\p L_\mathrm{p}$ is a Legendrian sphere in another
	boundary component that is not blown down.
	By a symplectic isotopy supported in a tube connecting them, we can deform
	$L_\mathrm{p}$ away from $\p L_\mathrm{p}$ so that it intersects
	$L_\mathrm{bd}$ transversely.

	One can remove transverse self-intersection points between two
	Lagrangians $L$ and $L'$ using \cite{PolterovichLagrangianSurgery}.  This
	construction works by removing for each intersection two small balls
	from $L$ and $L'$ containing this point, and gluing in a tube
	diffeomorphic to $[-\epsilon,\epsilon]\times \SS^{n-1}$ joining the
	boundaries of the two balls.  In fact, the construction is explicit:
	choose a Darboux chart around the intersection point such that
	$L$ and $L'$ are represented by the $n$-planes
	$\{(x_1,\dotsc,x_n,0,\dotsc,0)\}$ and $\{(0,\dotsc,0,
	y_1,\dotsc,y_n)\}$ respectively.  Remove a disk of radius $\epsilon$ 
	around $0$ in both planes and glue in the tube
	\begin{equation*}
		(-\epsilon,\epsilon) \times \SS^{n-1} \hookrightarrow \RR^{2n},\,
		(t;x_1,\dotsc,x_n) \mapsto \bigl(\rho_1(t)\cdot (x_1, \dotsc,
		x_n); \rho_2(t)\cdot (x_1, \dotsc, x_n)\bigr)
	\end{equation*}
	for a smooth function $\rho_1 \colon (-\epsilon,\epsilon) \to [0,1]$
	that is $0$ for values between $-\epsilon$ and $-\epsilon/2$, has
	positive derivative for $t > -\epsilon/2$ and is the identity
	close to $+\epsilon$.
        Define $\rho_2(t) := \rho_1(-t)$.
  This defines a Lagrangian manifold that glues
	well to $L\setminus \epsilon\cdot \DD^n$ for $t$ close to $\epsilon$ and to
	$L'\setminus \epsilon\cdot\DD^n$ for $t$ close to~$-\epsilon$.
	The symplectic isotopy and the surgery process both took place
        far away from the boundary, so we obtain by this
        construction a Lagrangian that still has cylindrical ends.
	Lemma~\ref{lemma:lag_to_blob} then produces a foliated submanifold which
	becomes a \BLOB in the blown down Giroux domain.
  This \BLOB also embeds into a ball,
	because $L$ is obtained from two Lagrangian disks parallel to the boundary and
	a thin tube that lies in the neighborhood of an embedded path, so that $L$
	lies in a ball of the form $[0,1]\times \DD^{2n-1} \subset \Sigma$.
  Moving to the contactization and blowing down the
  corresponding boundary components then gives a neighborhood
  diffeomorphic to a ball $\DD^2 \times \DD^{2n-1}$ that contains the
  \BLOB.
\end{proof}

\subsection{Convex hypersurfaces and gluing}
Recall that a hypersurface $\Sigma$ in a contact manifold $(V, \xi)$ is
said to be \defin{$\xi$-convex} (cf.~\cite{Giroux_91}) if there is 
a contact vector field $X$ transverse to
$\Sigma$. In this situation, the \defin{dividing set} associated to $\Sigma$ and
$X$ is the hypersurface $\Gamma$ in $\Sigma$ where $X$ is tangent to $\xi$.
Observe that since the vector field $X$ corresponds to a ``contact
Hamiltonian'' function and can thus be cut off
away from $\Sigma$, its flow identifies a neighborhood of $\Sigma$ with 
$\Sigma \times \RR$, with $\xi$ defined by $\gamma + u\, dt$ where $t$ denotes the
coordinate on $\RR$, $\gamma$ is a $1$-form on $\Sigma$ and $u$ a function on
$\Sigma$. 
It follows from the computations in \cite{Giroux_91} that the $1$-form $\gamma$
induces a contact structure $\xi_\Gamma$ on the dividing set $\Gamma$ and, if
$\Sigma_0$ denotes the closure of a connected component of $\Sigma \setminus
\Gamma$, $\bigl(\Sigma_0, d(\gamma/u), \xi_\Gamma\bigr)$ is an ideal Liouville
domain whose contactization is $\Sigma_0 \times \RR$ equipped with the
restriction of~$\xi$.

If $\Sigma$ is closed then $\Gamma$ cannot be empty, otherwise $\Sigma$ would be
a closed exact symplectic manifold, contradicting Stokes' theorem.  So in this
case, $\Sigma \setminus \Gamma$ has at least one component on which $u$ is
positive and one where it is negative.  One can then see $\Sigma \times \RR$ as
several contactizations of ideal Liouville domains glued together. Going in the
opposite direction, we can take advantage of the fact that boundary 
components of Giroux domains are $\xi$-round hypersurfaces and use 
Lemma~\ref{lemma:round_neighborhood} to glue together any two
Giroux domains along boundary components modeled on
isomorphic contact manifolds.

\begin{remark}
\label{remark:convexLiouville}
One can check that the ideal Liouville domain $(\Sigma_0,d(\gamma/u),
\xi_\Gamma)$ defined above
depends only on the contact structure and contact vector field
near~$\Sigma$, not on the choice of contact form.  For an arbitrary
choice of contact form, one cannot expect $d\gamma$ itself to be symplectic
everywhere on $\Sigma \setminus\Gamma$, but analogously to
Remark~\ref{remark:fbeta}, one can always choose a contact form for which
this is true.  The surface $\Sigma \times \{\text{const}\} \subset
\Sigma \times \SS^1$ in Example~\ref{ex:torsionDomain} provides a
popular example.
\end{remark}

\subsection{Obstructions to fillability}

We now want to state a non-fillability result.
As preparation, note that any embedding of the interior of a Giroux domain
$I_\Sigma := \IntSig \times \SS^1$ into a contact manifold
$(V, \xi)$ determines a distinguished subspace
$H_1(\Sigma; \RR) \otimes H_1(\SS^1; \RR) \subset H_2(V ; \RR)$.
We call its annihilator in $H^2_{\dR}(V)$ the space of cohomology classes
\defin{obstructed} by $I_\Sigma$, and we denote it by $\oO(I_\Sigma)$.
Classes in $\oO(I_\Sigma)$ are exactly those whose restriction to $I_\Sigma$ can
be represented by closed $2$-forms pulled back from the interior of $\Sigma$.
If $N \subset (V,\xi)$ is any subdomain resulting from gluing together a
collection of Giroux domains
$I_{\Sigma_1},\dotsc,I_{\Sigma_k}$ and blowing down some of their boundary
components, then we define its obstructed subspace $\oO(N) \subset H^2_{\dR}(V)$
to be $\oO(I_{\Sigma_1}) \cap \dotsm \cap \oO(I_{\Sigma_k})$.  We will say that
such a domain is \defin{fully obstructing} if $\oO(N) = H^2_{\dR}(V)$.

\begin{example}
\label{ex:Lutz}
If $\Sigma$ is homeomorphic to $[-1, 1] \times M$ for some closed manifold $M$,
and $N$ is the result of blowing down one boundary component of the
Giroux domain $I_\Sigma$, then \emph{any} embedding of~$N$ is fully obstructing.
Indeed, any class in $H_1(\Sigma;\RR) \otimes H_1(\SS^1;\RR)$ 
can be represented by a cycle in
the $M \times \DD^2$ part of the blown down Giroux domain and, of course,
$H_1(\SS^1;\RR)$ becomes trivial in $H_1(\DD^2;\RR)$.  For instance,
a Lutz tube (see Example~\ref{ex:torsionLutz})
in a contact $3$-manifold is always fully obstructing, and the same is
true for the higher dimensional generalization that we will discuss
in~\S\ref{sec:torsion}.
\end{example}

\begin{theorem}
\label{thm:torsion}
Suppose $(V,\xi)$ is a closed contact manifold containing a
subdomain~$N$ with nonempty boundary, which is obtained by gluing and
blowing down Giroux domains.
\begin{itemize}
\item[(a)] If $N$ has at least one blown down boundary component then
it contains a small \BLOB, hence $(V,\xi)$ does not have any (semipositive) 
weak filling.
\item[(b)] If $N$ contains two Giroux domains $\Sigma^+\times \SS^1$
  and $\Sigma^-\times \SS^1$ glued together such that $\Sigma^-$ has a
  boundary component not touching $\Sigma^+$, then $(V,\xi)$ has no
  (semipositive) weak filling $(W,\omega)$ with
  $[\omega_V] \in \oO(\Sigma^+ \times \SS^1)$.
\end{itemize}
In particular $(V,\xi)$ has no (semipositive) strong filling in either case.
\end{theorem}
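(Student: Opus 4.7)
The proof breaks into two parts, with (a) serving essentially as a direct consequence of earlier results and (b) reducing to (a) via a symplectic handle attachment, in the spirit of the cobordism construction from \cite{WendlCobordisms}.

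For part (a), observe that the hypothesis provides exactly the situation covered by Proposition~\ref{prop:bLob_in_blown_down_domain}: $N$ is obtained from Giroux domains with at least one blown down boundary component, so it contains a small \BLOB in $(V,\xi)$. Because a small \BLOB embeds into a ball, the restriction of the symplectic form of any weak filling to this \BLOB is automatically exact, and Theorem~\ref{thm:plastikstufe_and_weak_fillability} rules out the existence of a (semipositive) weak filling.

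For part (b), I would argue by contradiction: suppose $(W,\omega)$ is a (semipositive) weak filling of $(V,\xi)$ with $[\omega_V]\in \oO(\Sigma^+\times \SS^1)$. The idea is to symplectically attach a ``handle'' $H$ diffeomorphic to $\Sigma^+\times \DD^2$ along the Giroux domain $\Sigma^+\times \SS^1 \subset V$, so that the $\SS^1$-factor is filled in by a $\DD^2$-factor. Topologically this surgery capsules off the $\xi$-round hypersurface $M\times \SS^1$ along which $\Sigma^+\times \SS^1$ and $\Sigma^-\times \SS^1$ were glued, replacing it by $M\times \DD^2$; equivalently, the boundary component $M\times \SS^1$ of $\Sigma^-\times \SS^1$ is blown down to $M$. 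Since $\Sigma^-$ was assumed to possess another boundary component not touching $\Sigma^+$, the image of $\Sigma^-\times \SS^1$ in the new boundary $(V',\xi')$ is a subdomain, with nonempty boundary, obtained from a Giroux domain by blowing down at least one of its boundary components. Part (a) then applies to~$(V',\xi')$ and, assuming the handle can be attached while preserving the weak filling property, produces a contradiction.

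The heart of the argument is therefore the construction of the handle and the verification that the resulting compact symplectic manifold $(W\cup H,\widetilde\omega)$ is still a weak filling of its new boundary $(V',\xi')$. On $H = \Sigma^+\times \DD^2$ one wants a model symplectic form of the type $\pi_{\Sigma^+}^*\sigma + \pi_{\DD^2}^*\omega_{\DD^2}$, where $\sigma$ is symplectic on a suitable compactification of $\mathring{\Sigma}^+$ and $\omega_{\DD^2}$ is an area form. The hypothesis $[\omega_V]\in \oO(\Sigma^+\times \SS^1)$ is used precisely here: it guarantees that $\restricted{\omega_V}{\Sigma^+\times \SS^1}$ is cohomologous to a $2$-form pulled back from $\Sigma^+$, so by a collar deformation via Lemma~\ref{thm:cohomologous_deformation_positive_end} one may assume that $\omega$ matches the model form near $\Sigma^+\times \SS^1$, allowing a smooth extension over~$H$. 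The main obstacle will be the verification that the new contact structure $\xi'$, obtained from $\xi$ by a contact cut along the surgered region, is weakly dominated by the glued symplectic form at the new boundary. This is exactly the point at which the flexibility of weak rather than strong fillability is indispensable, since the contact cut generically destroys any Liouville structure one might have had near $\Sigma^+\times \SS^1$ and the new boundary will typically be only weakly filled, even when $(W,\omega)$ was strongly filling to begin with.
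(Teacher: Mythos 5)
Your proof of part~(a) is correct and coincides with the paper's argument: Proposition~\ref{prop:bLob_in_blown_down_domain} supplies a small \BLOB, and since a small \BLOB lies in a ball, the exactness hypothesis of Theorem~\ref{thm:plastikstufe_and_weak_fillability} is automatic.

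For part~(b), your overall strategy matches the paper's disk-based proof in \S\ref{sec:BLOB_in_handle}: apply Theorem~\ref{thm:cobordism} to attach a handle $\Sigma^+\times\DD^2$, producing a weak filling $W_1$ of a new boundary $(V',\xi')$ containing $(\Sigma^-\times\SS^1)\blowdown Y$, which in turn contains a small \BLOB. However, there is a genuine gap at the final step, where you write ``Part (a) then applies to $(V',\xi')$\ldots and produces a contradiction.'' Theorem~\ref{thm:plastikstufe_and_weak_fillability}---and hence part~(a)---is only proved for \emph{semipositive} weak fillings, and the enlarged filling $W_1 = W \cup (\Sigma^+\times\DD^2)$ need not be semipositive even when $W$ is: attaching a symplectic handle can create new homotopy classes of spheres that violate the semipositivity condition. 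Thus part~(b) cannot simply be reduced to part~(a) as a black box. This is precisely what the paper flags in \S\ref{sec:BLOB_in_handle}, and its resolution is to re-run the Bishop family compactness argument directly in $W_1$: choose $J$ so that the co-core $\Sigma^+\times\{0\}$ is a $J$-complex hypersurface of product type, use intersection positivity with the co-core to constrain which components of a nodal limit can carry nontrivial intersection number, and then observe that every sphere bubble is homotopic into $W_0$, where the original semipositivity and genericity can be invoked to bound indices. Without this (or the alternative sphere-based argument of \S\ref{sec:hol_Spheres_in_handle}, which sidesteps semipositivity via polyfolds at the cost of stricter homological hypotheses), your argument for part~(b) is incomplete.
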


The first statement in this theorem follows immediately from
Proposition~\ref{prop:bLob_in_blown_down_domain} and
Theorem~\ref{thm:plastikstufe_and_weak_fillability}.
We will prove the second in \S\ref{sec:applications}, essentially by using the
symplectic cobordism construction of the next section to reduce it to
the first statement, though some care must be taken because the filling 
obtained by attaching our cobordism to a given
semipositive filling need not always be
semipositive.  We will also give in \S\ref{sec:applications}
an alternative argument for both parts of Theorem~\ref{thm:torsion}
using $J$-holomorphic spheres: this requires slightly stricter homological
assumptions than stated above, but has the advantage of not requiring
semipositivity at all, due to the polyfold machinery recently developed
in \cite{HoferWZ_GW}.

Without delving into the details, we should mention that we also expect
the above filling obstruction to be detected algebraically in
Symplectic Field Theory via the notion of \emph{algebraic torsion}
defined in \cite{LatschevWendl}.  Recall that a contact manifold is said
to be \emph{algebraically overtwisted} if it has algebraic $0$-torsion
(this is equivalent to having vanishing contact homology), but there are
also infinitely many ``higher order'' filling obstructions known as
algebraic \emph{$k$-torsion} for integers $k \ge 1$.  It turns out that
one can always choose the data on a Giroux domain $\Sigma \times \SS^1$
so that gradient flow lines of a Morse function on~$\Sigma$ give rise to
holomorphic curves in the symplectization of $\Sigma \times \SS^1$, and these
can be counted in SFT.  The expected result is as follows:

\begin{conjecture}
\label{conj:algebraic}
Suppose $(V,\xi)$ contains a subdomain~$N$ as in Theorem~\ref{thm:torsion},
choose any $c \in \oO(N)$ and consider SFT with coefficients
in $\RR[H_2(V;\RR) / \ker c]$.  Then $(V,\xi)$ has algebraic $1$-torsion,
and it is also algebraically overtwisted
if~$N$ contains any blown down boundary components.
\end{conjecture}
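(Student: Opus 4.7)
The plan is to generalise to higher dimensions the strategy from \cite{LatschevWendl} that detects Giroux torsion algebraically in dimension three: pick contact forms on the Giroux domains inside~$N$ whose Reeb dynamics and moduli spaces of holomorphic cylinders can be read off from Morse theory on the underlying ideal Liouville domains, and then exhibit explicit curves realising the required torsion relations. On each Giroux domain $\Sigma^\pm \times \SS^1$ contained in~$N$, I would choose a Liouville form $\beta^\pm$ and a smooth function $f^\pm \colon \Sigma^\pm \to [0,\infty)$ vanishing transversely along $\p\Sigma^\pm$, Morse on the interior, and adapted to the Liouville flow near the boundary. Then the contact form $f^\pm(dt + \beta^\pm)$ has closed Reeb orbits (and their iterates) in bijection with critical points of $f^\pm$, organised as Morse--Bott families parametrised by $\SS^1$; a standard Morse--Bott perturbation à la Bourgeois splits each family into nondegenerate orbits indexed by critical points of an auxiliary function on the critical circle.

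In this setup Morse--Bott SFT identifies the rigid holomorphic cylinders in $\RR \times V$ with positive and negative ends on such orbits with generalised gradient trajectories of $f^\pm$, together with auxiliary gradient data in the Morse--Bott directions. Because each such cylinder lies inside~$N$ and any $c \in \oO(N)$ vanishes on $H_1(\Sigma^\pm;\RR) \otimes H_1(\SS^1;\RR) \subset H_2(V;\RR)$, the homology class of every such curve maps to zero in $H_2(V;\RR)/\ker c$ and it contributes with coefficient $1$. The full SFT differential on the subalgebra generated by orbits inside~$N$ can therefore be read off combinatorially from the Morse theory of the $f^\pm$.

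To obtain algebraic $1$-torsion in case~(b) of Theorem~\ref{thm:torsion}, I would arrange the Morse data so that a short Reeb orbit $\gamma$ on the interface between $\Sigma^+$ and $\Sigma^-$ bounds, on the $\Sigma^-$ side, a rigid holomorphic cylinder descending toward the free boundary component, and on the $\Sigma^+$ side a similar cylinder; a matching of Morse indices and actions should then combine these cylinders and the ambient Morse--Bott data into an SFT chain whose differential is $\hbar$, which is the defining relation for algebraic $1$-torsion. In case~(a), when $N$ has a blown-down boundary component, the explicit local model from Proposition~\ref{prop:bLob_in_blown_down_domain} supplies a Bishop family of holomorphic planes whose rigid representative is somewhere injective, automatically regular, and asymptotic to a short Reeb orbit, so its count realises $\p(\mathrm{pt}) = 1$ in contact homology and yields algebraic $0$-torsion. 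The main obstacle will be analytical: making this rigorous requires the Morse--Bott SFT transversality package together with a virtual cycle or polyfold foundation for SFT capable of handling multiply covered curves and bubbling, plus an action filtration argument ruling out unexpected genus-zero contributions from outside~$N$ to the relevant differentials.
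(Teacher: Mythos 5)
Be aware that the statement you are proving is labeled a \emph{Conjecture} in the paper and the authors offer no proof; the only indication they give of the intended argument is a single sentence preceding the statement, namely that one can choose data on a Giroux domain $\Sigma\times\SS^1$ so that gradient flow lines of a Morse function on~$\Sigma$ give rise to SFT-countable holomorphic curves in the symplectization. Your proposal is a faithful elaboration of that heuristic, in the spirit of the dimension-three computation of algebraic $1$-torsion for Giroux torsion domains in \cite{LatschevWendl}: build a contact form whose Reeb dynamics encode a Morse function on each $\Sigma^\pm$, identify rigid holomorphic cylinders with gradient trajectories, use $c\in\oO(N)$ to trivialise the twisted coefficients on the relevant moduli spaces, and then exhibit the explicit chain realising the $1$-torsion relation. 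You also correctly identify the chief analytical obstruction, namely the lack of a Morse--Bott SFT transversality package robust enough to handle multiple covers and to localise the relevant moduli inside~$N$. As an approach to the conjecture rather than a proof, this matches what the authors have in mind.

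Two points need fixing before this could become a proof. First, the Reeb orbits you extract from critical points of $f$ on a Giroux domain are isolated circles $\{x_{\mathrm{crit}}\}\times\SS^1$ and their covers, not $\SS^1$-parametrised Morse--Bott families; the Morse--Bott degeneracy one actually has to perturb away comes from the full $\Sigma$-family of fibres $\{x\}\times\SS^1$ of the unweighted contactization $\ker(dt+\beta)$, so the Bourgeois-style perturbation and its index bookkeeping should be set up over~$\Sigma$, not over a circle. Second, and more substantively, in case~(a) you invoke Proposition~\ref{prop:bLob_in_blown_down_domain} as supplying a Bishop family of holomorphic \emph{planes}. That proposition produces a \BLOB, and the Bishop family associated to it (used in Theorem~\ref{thm:plastikstufe_and_weak_fillability}) consists of holomorphic \emph{disks} with boundary on the \BLOB. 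Contact homology and algebraic $0$-torsion instead require a holomorphic \emph{plane} asymptotic to a Reeb orbit, which is a different moduli problem. The expected mechanism in the blown-down case is rather that cylinders whose negative end approaches the blown-down boundary of the Giroux domain degenerate, after the blow-down, to planes asymptotic to a short orbit near the core $M_{\mathrm{bd}}$, in the same way a $3$-dimensional Lutz tube produces a plane that kills contact homology. As written, your case~(a) conflates these two distinct pieces of machinery.
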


\section{Surgery along Giroux domains}
\label{section:surgery}

\subsection{A handle attachment theorem}

In this section, we explain a surgery procedure which removes the interior of a
Giroux domain from a contact manifold and blows down the resulting boundary.
This surgery corresponds to a symplectic cobordism that can be glued on
top of any weak filling satisfying suitable cohomological conditions, 
leading to a proof of Theorem~\ref{thm:torsion}.

Suppose 
$(V,\xi)$ is a $(2n-1)$-dimensional
contact manifold without boundary, containing a
Giroux domain $G \subset V$, possibly with some boundary components
blown down.  Removing the interior of~$G$, the boundary of
$\overline{V \setminus G}$ is then a $\xi$-round hypersurface
\begin{equation*}
  \p(\overline{V \setminus G}) = M \times \SS^1 \;,
\end{equation*}
where $(M,\xi_M)$ is a (possibly disconnected) closed contact manifold.
We can thus blow it down as described in \S\ref{subsec:round}, 
producing a new manifold
\begin{equation*}
  V' := (\overline{V \setminus G}) \blowdown M
\end{equation*}
without boundary, which inherits a natural contact structure~$\xi'$.

Topologically, the surgery taking $(V,\xi)$ to $(V',\xi')$ can be
understood as a certain handle attachment.
We now give a point-set description of this handle attachment which
is sufficient to state the theorem below, and postpone the smooth
description to the next subsection.
Assume that $G$ is obtained from the ideal Liouville domain
$\bigl(\Sigma, \omega, \xi_\Sigma\bigr)$ with boundary $\p \Sigma =
M_\mathrm{p} \sqcup M_\mathrm{bd}$ by blowing down the Giroux domain
$\Sigma \times \SS^1$ at $M_\mathrm{bd} \times \SS^1$ but preserving
$M_\mathrm{p} \times \SS^1$ as in Fig.~\ref{fig:BlowDownSketch}
(here \textsl{bd} stands for ``blown down'', and \textsl{p} 
for ``preserved'').
Then topologically,
\begin{equation*}
  G = \bigl(M_\mathrm{bd} \times \DD^2\bigr)
  \cup_{M_\mathrm{bd} \times \SS^1} (\Sigma \times \SS^1) \;.
\end{equation*}
Note that $M_\textrm{bd}$ can now be regarded as a codimension~$2$
contact submanifold of~$G$, namely by identifying it with
$M_\textrm{bd} \times \{0\}$.
Next, remove a small open collar neighborhood of $M_\textrm{bd}$ from
$\Sigma$ and denote the resulting submanifold by~$\Sigma_h$.
We can regard $\Sigma_h\times \SS^1$ as a subdomain of $G$, and
consider the manifold with boundary and corners defined by
\begin{equation*}
  \bigl([0,1] \times V\bigr) \cup_{\{1\} \times (\Sigma_h \times \SS^1)}
  (\Sigma_h \times \DD^2) \;.
\end{equation*}
After smoothing the corners, this becomes a smooth oriented
cobordism~$\Wcob$ with boundary 
(see Fig.~\ref{fig:GirouxDomainPresurgery}),
\begin{equation*}
  \p \Wcob = - V \sqcup V' \sqcup (M_\mathrm{bd} \times \SS^2) \;.
\end{equation*}

\begin{figure}[htbp]
  \centering \subfigure[The domain $G$ is obtained from the product
  manifold $\Sigma \times \SS^1$ by blowing down the boundary
  components $M_\mathrm{bd}\times \SS^1$ to
  $M_\mathrm{bd}$.\label{fig:BlowDownSketch}]
  {\includegraphics[width=0.3\textwidth,keepaspectratio]{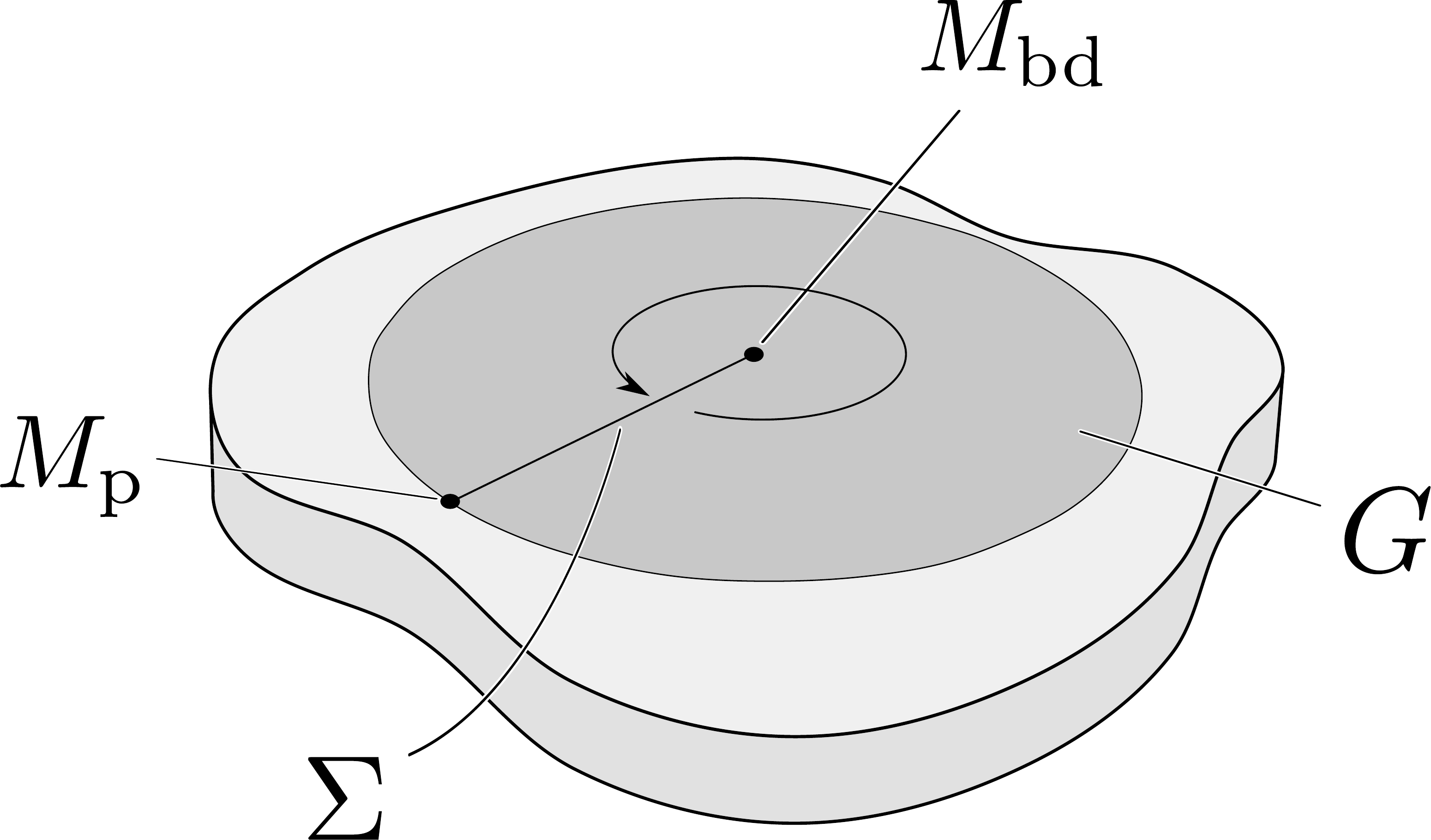}}
  \qquad
  \subfigure[The cobordism is obtained by gluing
  $\Sigma_h\times \DD^2$ onto $G$, and rounding its corners.
  Note that after the handle attachment the boundary of the surgered
  manifold consists of the contact manifold $V'$ plus components
  diffeomorphic to $M_\mathrm{bd} \times \SS^2$ corresponding to the
  blown down boundary of~$G$.\label{fig:GirouxDomainPresurgery}]
  {\includegraphics[width=0.6\textwidth,keepaspectratio]{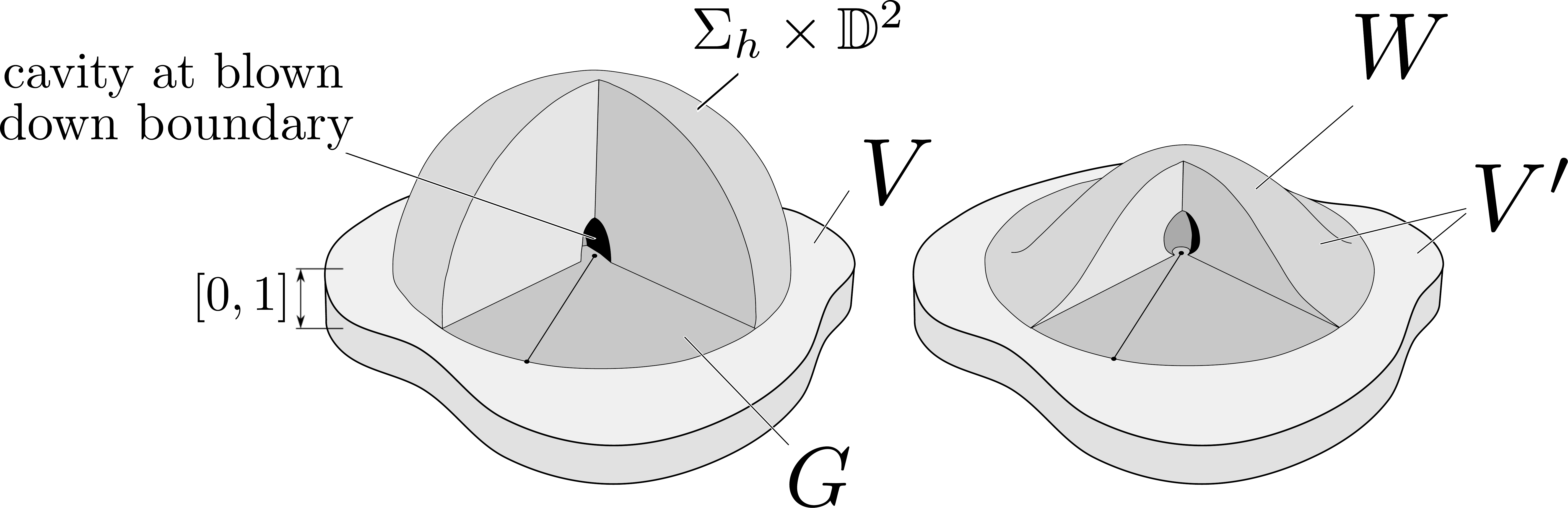}}
  \caption{}
\end{figure}

We can now state the main theorem of this section.
\begin{theorem}
\label{thm:cobordism}
Suppose $\Wcob$ denotes the $2n$-dimensional smooth cobordism described above, 
and $\Omega$ is a closed $2$-form on $V$ such that:
\begin{itemize}
\item
$\Omega$ weakly dominates $\xi$
\item
the cohomology class of $\Omega$ belongs to the obstructed subspace 
$\oO(G)$, i.e.~for every $1$-cycle $Z$ in~$\Sigma$,
\begin{equation*}
  \int_{Z \times \SS^1} \Omega = 0 \;.
\end{equation*}
\end{itemize}
Then $\Wcob$ admits a symplectic structure~$\omega$ with the
following properties:
\begin{enumerate}
\item $\restricted{\omega}{TV} = \Omega$.
\item The co-core $\Sigma_h \times \{0\} \subset \Sigma_h \times \DD^2
  \subset \Wcob$ is a symplectic submanifold weakly filling $(\p
  \Sigma_h \times \{0\}, \xi_\Sigma)$.
\item $(V',\xi')$ is a weakly filled boundary component of
  $(\Wcob, \omega)$ that is contactomorphic to the blown down
  manifold $(\overline{V \setminus G}) \blowdown M_\mathrm{p}$.
\item A neighborhood of $M_\mathrm{bd} \times \SS^2 \subset \p \Wcob$ in
$(\Wcob, \omega)$ can be identified symplectically with 
\begin{equation*}
  \bigl((-\delta,0] \times M_\mathrm{bd} \times \SS^2,\,
  \omega_0 \oplus \omega_{\SS^2}\bigr)
\end{equation*}
for some $\delta > 0$, where 
$\omega_{\SS^2}$ is an area form on~$\SS^2$ and $\omega_0$ is a
symplectic form on $(-\delta,0] \times M_\mathrm{bd}$ for which the boundary
$(M_\mathrm{bd},\xi_\Sigma)$ is weakly filled.  Moreover, the intersection of
the co-core $\Sigma \times \{0\}$ with this neighborhood has the form
$(-\delta,0] \times M_\mathrm{bd} \times \{\operatorname{const}\}$.
\end{enumerate}
\end{theorem}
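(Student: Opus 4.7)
The strategy is to write $W$ as the union of the cylinder $[0,1] \times V$ and the handle $\Sigma_h \times \DD^2$, and to construct the symplectic form $\omega$ separately on each piece, then match them along the attaching hypersurface $\Sigma_h \times \SS^1$. On the cylinder I would use a suitable modification of the magnetic completion of $(V, \Omega)$; on the handle I would use the ideal Liouville structure $\beta$ of $\Sigma$ together with an area-like form on $\DD^2$.

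First I would invoke the obstruction hypothesis $[\Omega] \in \oO(G)$. By the Künneth decomposition $H^2(\IntSig \times \SS^1) \cong H^2(\Sigma) \oplus \bigl(H^1(\Sigma) \otimes H^1(\SS^1)\bigr)$, the annihilator condition forces $[\Omega]$ restricted to the interior of the Giroux domain to be a pullback $\pi^*[\Omega_\Sigma]$ from $\Sigma$. Applying Lemma~\ref{thm:cohomologous_deformation_positive_end} inside the magnetic completion of $V$, I can deform $\Omega$ on a collar of the attaching hypersurface so that, for $s$ near $1$ and over the interior Giroux region, the cylinder symplectic form becomes
\begin{equation*}
  \omega_{\mathrm{cyl}} = \pi^*\Omega_\Sigma + d\bigl(s(\beta + d\theta)\bigr),
\end{equation*}
while every intermediate level $\{s\} \times V$ stays weakly dominated; in particular $\omega|_{TV} = \Omega$ at $s = 0$, giving property (1).

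On the handle $\Sigma_h \times \DD^2$, using polar-type coordinates $(r,\theta)$ on $\DD^2$, I would put
\begin{equation*}
  \omega_h := \pi^*\Omega_\Sigma + d\bigl(g(r^2)\,\beta\bigr) + d\bigl(\phi(r^2)\, d\theta\bigr),
\end{equation*}
for smooth functions $g, \phi$ with (i) $g(r^2) = \phi(r^2) = T r^2$ near $r = 1$, so that $s = T r^2$ identifies $\omega_h$ with $\omega_{\mathrm{cyl}}$ on the gluing collar; (ii) $g(0) > 0$ and $\phi'(0) > 0$, ensuring a smooth nondegenerate extension across the core $r = 0$; and (iii) $g', \phi' \ge 0$ throughout, with the magnetic parameter $T$ chosen large enough that $g\,d\beta$ dominates $\pi^*\Omega_\Sigma$ pointwise. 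The key identity $d(r^2) \wedge d(r^2) = 0$, together with the vanishing of $(\pi^*\Omega_\Sigma + g\,d\beta)^n$ for dimensional reasons on $\Sigma$, collapses the wedge expansion of $\omega_h^n$ to
\begin{equation*}
  \omega_h^n = 2n\,\phi'(r^2)\,\bigl(\pi^*\Omega_\Sigma + g(r^2)\,d\beta\bigr)^{n-1} \wedge dx \wedge dy,
\end{equation*}
which is a positive volume form under the above conditions. The restriction of $\omega_h$ to the co-core $\Sigma_h \times \{0\}$ is $\pi^*\Omega_\Sigma + g(0)\,d\beta$, a symplectic form weakly dominating $\xi_\Sigma$ on $\partial \Sigma_h$, giving property (2). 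Near the interior boundary component $M_\mathrm{bd}' \subset \partial \Sigma_h$, shrinking to a subregion where $g$ is constant presents $\omega_h$ as a genuine product, which glues smoothly with the analogous product on $M_\mathrm{bd} \times \DD^2 \subset G$ to produce the structure $\omega_0 \oplus \omega_{\SS^2}$ near $M_\mathrm{bd} \times \SS^2$ of property (4).

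The main obstacle concerns the other boundary component $M_\mathrm{p} \subset \partial \Sigma_h$, where the Liouville form blows up as $\beta \sim \frac{1}{1-s}\alpha_\mathrm{p}$ by Lemma~\ref{lemma:idealCollar}. There I would replace $g(r^2)\,\beta$ in the handle formula by $g(r^2) \cdot f\beta$ for the collar multiplier $f$ provided by the ideal Liouville structure (so that $f\beta$ is smooth and restricts to $\alpha_\mathrm{p}$ at $M_\mathrm{p}$). This substitution introduces new cross terms $g\,df \wedge \beta$ in $\omega_h$; verifying that the $d(r^2) \wedge d(r^2) = 0$ collapse still yields $\omega_h^n > 0$ and that the restriction of $\omega_h$ to $M_\mathrm{p} \times \DD^2 \subset V'$ weakly dominates the blown-down contact form $\alpha_\mathrm{p} + r^2\,d\theta$ from Lemma~\ref{lemma:round_neighborhood} is the principal technical calculation and establishes property (3). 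The interpolation between the clean interior formula and its modification near $M_\mathrm{p}$ is delicate but pointwise in character; most of the remaining work in the proof consists of arranging the cutoff functions so that all four matching conditions hold simultaneously while global symplecticity is preserved.
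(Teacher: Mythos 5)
Your decomposition into cylinder plus handle is the right first move, but the proposal has two genuine gaps, both stemming from the blow-up of the ideal Liouville form $\beta$ near $\p\Sigma$.

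First, your proposed cylinder normal form $\omega_{\mathrm{cyl}} = \pi^*\Omega_\Sigma + d\bigl(s(\beta + d\theta)\bigr)$ is already singular: $\beta$ blows up like $\frac{1}{1-s}\alpha$ near $M_{\mathrm p}$ (Lemma~\ref{lemma:idealCollar}), yet $\Sigma_h$ touches $M_{\mathrm p}$, so your formula is not defined over a neighborhood of the entire attaching region. The same problem infects the handle formula $d(g(r^2)\beta)$. You flag the issue, propose substituting $f\beta$ for $\beta$ near $M_{\mathrm p}$, and then say ``the interpolation \ldots is delicate but pointwise in character.'' That is precisely where the content is. The paper sidesteps this by never working with the singular $\beta + d\theta$ at all: it fixes once and for all the \emph{smooth} contact form $\lambda = u\beta + u\,d\theta = \gamma + u\,d\theta$ (where $u$ vanishes on $\p\Sigma$, so $\gamma = u\beta$ extends smoothly), runs the entire construction with $\lambda$ and $\gamma$, and in Lemma~\ref{lemma:Psiomega} extracts the clean formula $\Psi^*\omega = 2C\omega_D + C\,d(hu)\wedge d\theta + \Omega_0$ with $\Omega_0 = C\,d(h\gamma) + \omega_0$ automatically smooth on $\Sigma_h$. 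There is no interpolation to do near $M_{\mathrm p}$; the correction factor $u$ is built in from the start.

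Second, you glue $\Sigma_h \times \DD^2$ directly onto $\{1\} \times (\Sigma_h \times \SS^1) \subset \{1\} \times V$ and never address the corner that appears along $\{1\} \times \p\Sigma_h \times \SS^1$, where the top face of the cylinder meets the lateral boundary $\Sigma_h \times \p\DD^2$ of the handle. The theorem's $\Wcob$ is a smooth cobordism, so this must be smoothed, and smoothing corners while keeping a symplectic form symplectic and keeping all three boundary components weakly filled is not a formality. The paper's key device for this is to define the attaching hypersurface $\hH$ as the graph of a function $h$ dipping from $\{1\}\times V$ into the interior of the cylinder (creating a ``trench''), and then to glue the handle by flowing along the $\omega$-dual $X$ of $-C\,d\theta$. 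Since $-C\,d\theta$ is closed, $X$ is symplectic, so the flow transports $\omega$ and yields a smooth symplectic manifold with no corner left to smooth by hand; and since $X=\p_s$ near $\{1\}\times\p G$, the gluing restricts exactly to the blow-down map of Section~\ref{subsec:round}, which is what makes property (3) drop out cleanly. Without some substitute for this mechanism (or an explicit corner-rounding argument showing your two formulas agree to high order after smoothing), the proposal does not actually produce the symplectic cobordism claimed.

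Finally, a small point: your identification $s = Tr^2$ near $r=1$ doesn't match dimensions as written (the cylinder parameter $s$ lives in $[0,1]$ but $Tr^2$ near $r=1$ is approximately $T\gg 1$), and more importantly the deformation of Lemma~\ref{thm:cohomologous_deformation_positive_end} does not on its own give you a collar of $\{1\}\times V$ where the form equals $\pi^*\Omega_\Sigma + d(s(\beta+d\theta))$; you also need the rescaling trick of Lemma~\ref{lemma:collarDeformationNew} to compress the deformed completion into $[0,1]\times V$ and produce the large constant $C$ in front of the $d(t\lambda)$ term, which is then essential for the nondegeneracy estimates in Step~4.
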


\begin{remark}
Recall that due to Lemma~\ref{lemma:model_tubular_neighborhood}, a pair
of weak symplectic cobordisms can be smoothly glued together along
a positive/negative pair of contactomorphic boundary components whenever
the symplectic forms restricted to these boundary components match.
Thus the symplectic cobordism of the above theorem can be glued on top of
any weak filling $(W, \omega)$ of $(V,\xi)$ for which 
$[\restricted{\omega}{TV}] \in \oO(G)$.
\end{remark}

\subsection{Construction of the symplectic cobordism}

In this section we will give the proof of Theorem~\ref{thm:cobordism}.
The proof will consist of the following five steps:
\begin{enumerate}
\item Find a standardized model with a special contact form~$\lambda$
  for tubular neighborhoods of $\partial G$ and the blown down
  components~$M_\textrm{bd}$.
\item Construct a symplectic form on our proto-cobordism~$[0, 1]
  \times V$ that is well adjusted to both $\Omega$ and $\lambda$.
\item Carve out the interior of $\{1\}\times \Sigma \times \SS^1$ from
  $[0, 1]\times V$.  This creates a notch with corners along its
  edges, and we will then smoothly glue the handle $\Sigma \times \DD^2$
  into the cavity, creating a smooth manifold.
\item Study the symplectic form induced from the proto-cobordism on the glued
	part of the handle and extend it to the whole handle.
\item Check that the new boundary of the cobordism has the desired
properties.
\end{enumerate}

\step{Neighborhoods and contact form for $G$} \ \\
For simplicity, we first pretend that $G$ is a Giroux domain 
$\Sigma \times \SS^1$ without blown down boundary components.
Consider a collar neighborhood $(0, 1 ] \times \partial \Sigma$ associated to
some Liouville form $\beta$ by Lemma~\ref{lemma:idealCollar}
and denote by $\alpha$ the corresponding contact form on $\partial \Sigma$.
Let $s$ be the coordinate in $(0, 1]$.  We denote by $u$ 
a smooth function $\Sigma \to [0,1]$ which has the boundary
$\p\Sigma = u^{-1}(0)$ as a regular level set, equals $1 - s$ in the region 
$s \geq 3/4$ and $1$ in the region $s \leq 1/4$ and outside the collar, and 
satisfies $u'\le 0$ everywhere on the collar (see Fig.~\ref{fig:graphs_u_f}).
We set $\gamma = u\beta$.
The contact form on $G$ associated to $\beta$ and $u$ is then 
$\lambda := u\beta + u\,d\theta = \gamma + u \,d\theta$, where $\theta$
denotes the coordinate on~$\SS^1$. In the collar one can
set $f := u/(1 - s)$ so that $\lambda = f \alpha + u\, d\theta$. Note that the
contact condition in $(0, 1] \times \p \Sigma \times \SS^1$ is equivalent to
\begin{equation}
\label{eqn:contactNew}
f\,(f'u -u'f) > 0 \;,
\end{equation}
so appealing to Lemma~\ref{lemma:round_neighborhood}, we can slightly
extend our collar neighborhood embedded in $(V,\xi)$ to one of the form
$(0, 1 + \varepsilon] \times \p\Sigma \times \SS^1$, with $\lambda$
written as above and $u$ extended as $1 - s$ when $s > 1$.

\begin{figure}[htbp]
  \centering
  \includegraphics[height=3cm,keepaspectratio]{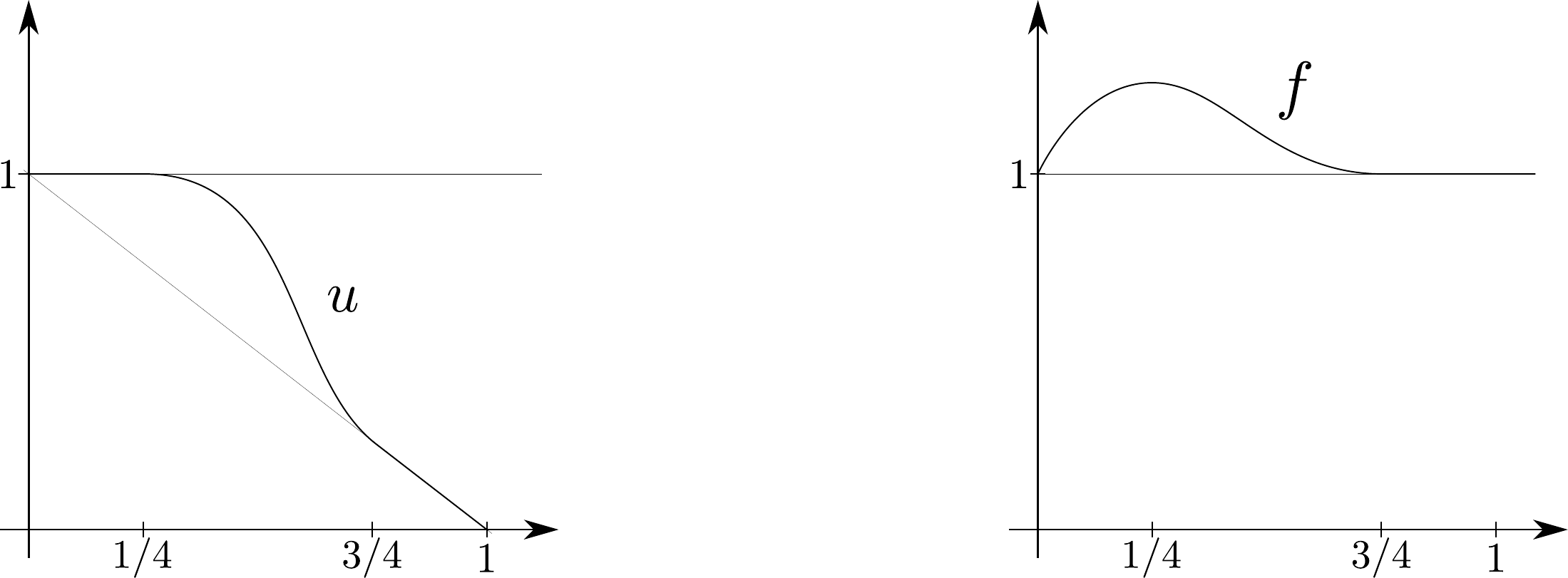}
\caption{The functions $u$ and $f$.}\label{fig:graphs_u_f}
\end{figure}

In the more general case where $G$ is a Giroux domain with some boundary
components blown down, the function $u$ becomes $r^2$ in 
$M_\mathrm{bd} \times  \DD^2$, so $\lambda$ is still a smooth contact form
(but of course there is no extended collar).

\step{The symplectic form on $[0, 1] \times V$} \ \\
The assumption that $\Omega$ weakly dominates $\xi$ implies that the $2$-form
$\omega = d(t\lambda) + \Omega$ is symplectic on $(-\delta, 1] \times V$
for some small positive constant $\delta$.
The cohomological assumption $[\Omega] \in \oO(G)$ implies that $\Omega$ is
cohomologous to some $2$-form 
$\omega_0$ such that $\restricted{\omega_0}{G}$ is the pull
back of a $2$-form on $\Sigma$. In addition, since the collar neighborhood
$(0,1] \times \p\Sigma$ retracts to $\p\Sigma$, we can assume that
$\iota_{\partial_s} \omega_0 = 0$ when $s \geq 1/4$.

\begin{lemma}
\label{lemma:collarDeformationNew}
We can modify the form $\omega$ defined above
to a new symplectic form on $(-\delta, 1]\times V$,
keeping the assumption that $\omega$ restrict to $\Omega$ on $\{0\} \times V$
and $\xi$ be weakly dominated by~$\omega$ on each slice 
$\{ t\} \times V$, but asking in addition that $\omega$ restrict to
$C\, d(t\lambda) + \omega_0$ on $[1/2, 1] \times V$
for some large constant~$C > 0$.
\end{lemma}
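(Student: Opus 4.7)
The idea is to adapt the Moser-type construction from Lemma~\ref{thm:cohomologous_deformation_positive_end}, with the extra feature that the $t\lambda$ term must also be rescaled by a factor $C$. Using the cohomology hypothesis $[\Omega]=[\omega_0]$, write $\omega_0 = \Omega + d\eta$ for some $1$-form $\eta$ on $V$. I look for the new symplectic form in the guise
\begin{equation*}
\omega \;=\; d\bigl(g(t)\,\lambda + \rho(t)\,\eta\bigr) + \Omega
\;=\; dt \wedge \bigl(g'(t)\,\lambda + \rho'(t)\,\eta\bigr)
\;+\; g(t)\,d\lambda + \Omega + \rho(t)\,d\eta,
\end{equation*}
where $g\colon(-\delta,1]\to\RR$ and $\rho\colon(-\delta,1]\to[0,1]$ are smooth monotone functions chosen so that $g(t)=t$ and $\rho(t)=0$ for $t\leq 0$, while $g(t)=Ct$ and $\rho(t)=1$ for $t\geq 1/2$. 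Then on $(-\delta,0]$ the form reduces to the original $d(t\lambda)+\Omega$; on $[1/2,1]\times V$ it is exactly $C\,d(t\lambda)+\omega_0$; and the slice-wise restriction at $t=0$ equals $g(0)\,d\lambda+\Omega+\rho(0)\,d\eta = \Omega$. So the boundary and interpolation conditions hold automatically.

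It remains to arrange $g$ and $\rho$ so that $\omega$ is symplectic and its slice-wise restriction $\omega_V(t):=g\,d\lambda+\Omega+\rho\,d\eta$ weakly dominates $\xi$ for every $t$. Since
\begin{equation*}
\omega^n = n\,dt\wedge\bigl(g'\,\lambda+\rho'\,\eta\bigr)\wedge \omega_V(t)^{n-1},
\end{equation*}
symplecticity amounts to positivity of the volume form $(g'\lambda+\rho'\eta)\wedge\omega_V(t)^{n-1}$ on $V$, and weak domination of $\xi$ on the slice $\{t\}\times V$ is equivalent to $\lambda\wedge\bigl((\tau+g)\,d\lambda + \Omega + \rho\,d\eta\bigr)^{n-1} > 0$ for every $\tau \geq 0$. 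When $\rho(t)\in\{0,1\}$ these positivity conditions follow immediately: for $\rho=0$ from the hypothesis that $\Omega$ weakly dominates $\xi$, and for $\rho=1$ from the fact that a sufficiently large multiple of $d\lambda$ makes $(\tau+g)\,d\lambda+\omega_0$ positive on $\xi$. The only real obstacle is the transition region where $\rho\in(0,1)$, in which the intermediate form $\Omega+\rho\,d\eta$ is not a priori known to weakly dominate $\xi$.

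To circumvent this, I will arrange that $\rho$ only begins to vary once $g$ has already passed a uniform threshold $G_0$ large enough that $s\,d\lambda + \Omega + \rho\,d\eta$ dominates $\xi$ in the sense above for every $s\geq G_0$ and every $\rho\in[0,1]$; such a $G_0$ exists by compactness of $V\times[0,1]$ and the contact condition on $\lambda$. Concretely, pick $0<\tau_1<\tau_2<1/2$, let $g$ be strictly increasing with $g(\tau_1)\geq G_0$, and let $\rho$ have support in $[\tau_1,\tau_2]$, rising monotonically from $0$ to $1$. On $[\tau_1,\tau_2]$ the slice-wise condition follows from the choice of $G_0$, while for the symplecticity condition on $V$ the dominant contribution to $(g'\lambda+\rho'\eta)\wedge\omega_V^{n-1}$, after factoring out $g^{n-1}$, is the positive volume form $g'\,g^{n-1}\,\lambda\wedge d\lambda^{n-1}$; the bounded error involving $\rho'\,\eta\wedge d\lambda^{n-1}$ can be dominated by choosing $\rho$ to rise so slowly that $|\rho'|$ is much smaller than $g'$. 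On $[\tau_2,1/2]$, $\rho$ is already constant at $1$ and we need only extend $g$ smoothly to match $Ct$ at $t=1/2$ while staying above $G_0$, which is possible as soon as $C\geq 2G_0$. Combining these choices produces the required deformation.
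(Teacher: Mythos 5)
Your proof is correct, but it takes a more hands-on route than the paper's. The paper dispatches the lemma in two lines by reducing it to Lemma~\ref{thm:cohomologous_deformation_positive_end}: apply that lemma to obtain a symplectic form $\omega'$ on $(-\delta,\infty)\times V$ that restricts to $d(t\lambda)+\omega_0$ for $t>C/2$, then rescale the $t$-coordinate by a factor of $C$; the rescaling fixes the slice $\{0\}\times V$, preserves the slice-wise weak domination (each slice is sent to another slice), and turns $d(t\lambda)$ into $C\,d(t\lambda)$ on $[1/2,1]\times V$. You instead rerun the interpolation argument of the earlier lemma from scratch with the extra wrinkle of simultaneously growing the Liouville coefficient, deforming $g(t)\lambda$ and $\rho(t)\eta$ together and staging the transitions so that $g$ first clears a compactness threshold $G_0$ before $\rho$ starts to vary. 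This is sound and has the virtue of making the positivity estimates explicit, but it is longer and touches more moving parts. One caveat on phrasing: in the transition region one cannot literally ``choose $\rho$ to rise so slowly that $|\rho'|$ is much smaller than $g'$'' --- since $\rho$ goes from $0$ to $1$ over a subinterval of $(0,1/2)$, its derivative cannot be made small in absolute terms. The needed pointwise bound $|\rho'|\ll g'$ is really secured by making $g'$ large on $[\tau_1,\tau_2]$, which is possible precisely because $C$ is at your disposal; with that understood, the rest of your argument goes through.
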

\begin{proof}
Using Lemma~\ref{thm:cohomologous_deformation_positive_end}, we find
a symplectic form $\omega'$ on $(-\delta, \infty) \times V$ such that
each $\{ t \} \times V$ is still weakly filled
and $\omega'$ restricts to $d(t\lambda) + \omega_0$ for $t$ greater than some
large constant $C/2$.
The scaling diffeomorphism $(t,v) \mapsto (t/C, v)$ pulls back $\omega'$
to the desired symplectic form.
\end{proof}

\step{Handle attachment} \ \\
We now give a smooth description of the handle attachment which
is compatible with the smooth description of the blow-down process for
$\xi$-round hypersurfaces.
For this, we will first create a small basin in the top of
$[0,1]\times V$ to which we can glue in the handle.

\begin{figure}[htbp]
  \centering \subfigure[{The precise construction of the handle
    attachment sketched in Fig.~\ref{fig:GirouxDomainPresurgery} works
    by creating a trench on the top side of the cobordism $[0,1]
    \times V$ to which we can glue in the handle.
  In the picture above we need to remove the hatched area, which
  corresponds to the Giroux domain~$\Sigma \times \SS^1$ minus a small
  neighborhood of the blown down boundary.}\label{fig:hole1}]
  {\includegraphics[width=0.45\textwidth,keepaspectratio]{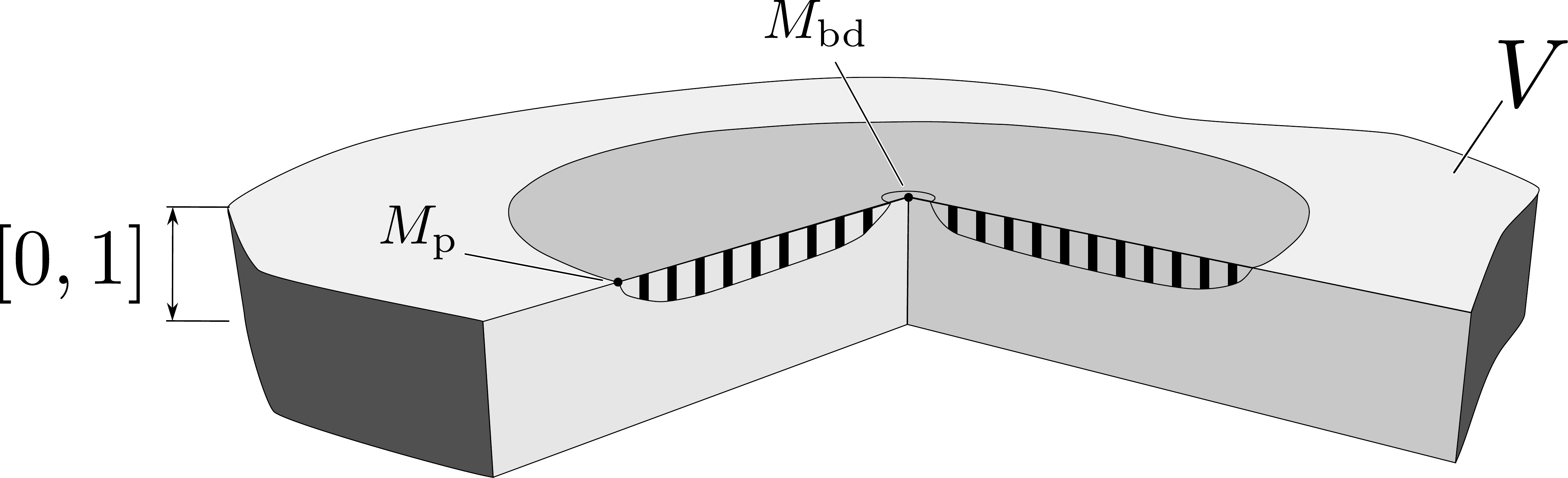}}
  \qquad
	\subfigure[{The vector field $X$ is tangent to the top face and transverse to
	the hypersurface $\hH$, which is $\Sigma_h$ pushed inside $[0, 1] \times
	\Sigma$ relative to its boundary. Everything above $\hH$ has been discarded to
	make room for the handle.}\label{fig:hole2}]{\includegraphics[width=0.45\textwidth,keepaspectratio]{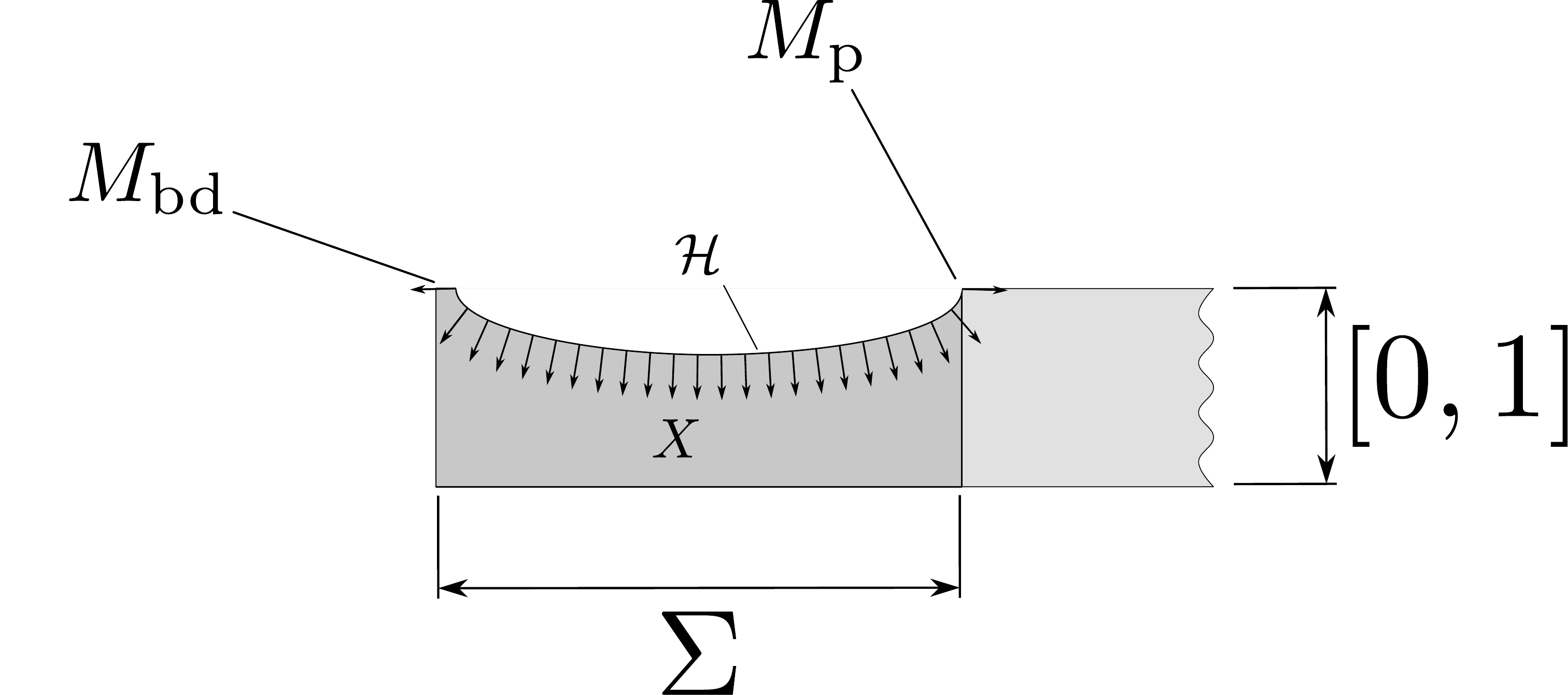}}
  \caption{}\label{fig:hole}
\end{figure}

Recall that $M_\mathrm{bd}$ denotes the blown down boundary components of the
Liouville domain~$\Sigma$, and $M_\mathrm{p}$ denotes the other components.
Let $h$ be a smooth function from $\Sigma$ to $(1/2, \infty)$ such that
\begin{itemize}
\item $h$ restricts on the special collar of Step~1 to a function only
  depending on $s$ with nonnegative derivative $h'(s)$,
\item $h$ is constant on $(0,1/4)\times \p\Sigma$ and outside the
  boundary collar,
\item For $s \ge 1 - \epsilon$, 
$h(s) = s$ near $M_\mathrm{p}$, and $h(s) = s + \epsilon$ near $M_\mathrm{bd}$.
\end{itemize}
We denote by $\Sigma_h$ the subset of $\Sigma$ on which
$h$ is less than or equal to~$1$, and by $\hH \subset [0, 1] \times
V$ the product of $\SS^1$ with the graph of $h$ over $\Sigma_h$,
see Fig.~\ref{fig:hole}.
We discard the region $\{ t \geq h \}$ from $[0,1]\times V$ to get an
open manifold, to which we will glue the
``handle'' $\Sigma_h \times D$.
Here $D$ denotes the disk around the origin in $\RR^2$ with radius
$\sqrt\varepsilon$.
In the following, we will find a symplectic vector field $X$ in a neighborhood of
the hypersurface $\hH$ in $[0,1]\times V$ that is transverse to $\hH$,
never points in the positive $t$-direction, and is tangent
to $\{1\}\times V$ near the boundary of~$\hH$.
Shrinking $\epsilon$ if needed, we may assume that the flow of $X$ starting from
$\hH$ embeds $\hH \times [0, \varepsilon]$ into $[1/2, 1] \times V$.
We denote by $\flow^X_\tau$ the flow of $X$ at time $\tau$.
The manifold $W'$ is obtained by attaching $\Sigma_h \times D$ to
$\bigl([0,1]\times V\bigr) \setminus \{ t \geq h \}$ using the gluing
map $\Psi$ from $\Sigma_h \times D^*$ (where $D^* = D \setminus
\{0\}$) to $[0,1]\times V$ defined by
\begin{equation*}
  \Psi(\sigma, re^{i\theta}) = \flow^X_{r^2}\bigl(h(\sigma),
  \sigma, \theta\bigr) \;.
\end{equation*}
Note that as a point-set operation, the handle attachement reduces to
the operation of adding the co-core $\Sigma_h \times \{0\}$ to the open manifold
$\bigl([0,1]\times V\bigr) \setminus \{ t \geq h \}$.

The vector field $X$ that we will use below coincides with $\partial_s$ near
$ \{ 1 \} \times \p G$.
This implies that the attachment using $\Psi$ restricts precisely to
the gluing map used to blow down the $\xi$-round hypersurface 
$M_\mathrm{p} \times \SS^1$.

As a gluing vector field $X$, we choose the $\omega$-dual of $-C\, d\theta$, where $C$ is the constant appearing in~$\omega$.
Since this $1$-form is closed, $X$ is a symplectic vector field.

\begin{lemma}
The vector field $X$ is transverse to the hypersurface $\hH$ and coincides with
$\partial_s$ near $\{1\}\times \p G$ and $\{1\}\times M_\mathrm{bd}$. 
\end{lemma}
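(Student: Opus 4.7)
The approach is to compute $X$ explicitly in local coordinates near each of the two specified regions, by solving the linear equation $\iota_X\omega = -C\,d\theta$, and then to verify transversality to $\hH$ by direct inspection.

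Near $\{1\}\times \p G$, I would use the extended collar coordinates $(t,s,\theta,p) \in [1/2,1] \times (0,1+\varepsilon] \times \SS^1 \times M_\mathrm{p}$ from Step~1, in which $\lambda = \alpha_\mathrm{p} + (1-s)\,d\theta$ (since $f\equiv 1$ and $u = 1-s$ there) and $\omega = C\,d(t\lambda) + \omega_0$ by Lemma~\ref{lemma:collarDeformationNew}, with $\omega_0$ pulled back from a closed $2$-form $\omega_\mathrm{p}$ on $M_\mathrm{p}$. Writing $X = a\,\p_t + b\,\p_s + c\,\p_\theta + Y$ with $Y \in TM_\mathrm{p}$ and separating $\iota_X\omega = -C\,d\theta$ into its $dt$, $ds$, $d\theta$ and $T^*M_\mathrm{p}$ components produces four linear equations. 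The $ds$- and $dt$-equations give $c = 0$ and $\alpha_\mathrm{p}(Y) = 0$; pairing the $T^*M_\mathrm{p}$-equation with the Reeb vector field of $\alpha_\mathrm{p}$ and invoking the weak-domination hypothesis (which makes $Ct\,d\alpha_\mathrm{p} + \omega_\mathrm{p}$ symplectic on $\ker \alpha_\mathrm{p}$ once $C$ is large enough) then forces $Y = 0$ and $a = 0$. The remaining $d\theta$-equation delivers $b = 1/t$, so $X = \tfrac{1}{t}\,\p_s$ throughout this region, which equals $\p_s$ on the slice $\{t = 1\}$.

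The analogous computation in disk coordinates $(r,\theta,p)$ near a blown-down component, where $\lambda = \alpha_\mathrm{bd} + r^2\,d\theta$, yields $X = -\tfrac{1}{2tr}\,\p_r$ on $r > 0$; via the blow-down relation $1 - s = r^2$ this matches $\tfrac{1}{t}\,\p_s$ and again equals $\p_s$ at $t = 1$. The locus $r = 0$ lies at positive distance from $\hH$ by construction of $h$ near $M_\mathrm{bd}$ (where $\Sigma_h$ is defined by $h(s) = s+\epsilon \leq 1$, hence $s \leq 1-\epsilon$), so the singular behavior of $d\theta$ at $r = 0$ is harmless.

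Transversality to $\hH = \{t = h(\sigma)\}$ reduces to the inequality $a \neq h'(s)\,b$, since a tangent vector to $\hH$ has $\p_t$-component equal to $h'$ times its $\p_s$-component. On the two boundary regions computed above one has $a = 0$, $b = 1/t > 0$, and $h'(s) = 1$, so the condition holds. On the part of $\Sigma_h$ where $h$ is constant, a quick repeat of the same calculation with $u \equiv 1$ gives $a = -1$ and $h' = 0$, so again $a \neq h'b$. In the intermediate zone, the explicit formulas $a = f'/(u'f - uf')$ and $b = -f/[t(u'f - uf')]$ together with the contact condition $f(f'u - u'f) > 0$ control the sign of $a - h'b$. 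The principal delicate point is this last overlap argument, since both $a$ and $h'$ can vanish on adjacent subintervals of the collar; the remedy is to arrange $h$ in Step~3 so that its strictly increasing region covers the short interval of $s$ on which $a = 0$, which the flexibility afforded by the choice of the profiles of $u$ and $h$ readily permits.
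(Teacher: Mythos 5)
You take essentially the same approach as the paper: solve $\iota_X\omega = -C\,d\theta$ in the collar coordinates (and in disk coordinates near the blown-down components) to obtain $X^t$, $X^s$ explicitly, then check transversality against the graph $\hH$. Your formulas agree with the paper's. One stylistic remark: the paper simply posits the ansatz $X = X^t\p_t + X^s\p_s$, shows the resulting $2\times 2$ system makes $\iota_X\omega = -C\,d\theta$ hold identically, and invokes uniqueness of the $\omega$-dual. There is no need to derive $c = 0$ and $Y = 0$ separately, and your invocation of the weak-domination hypothesis is beside the point for $a=0$: what forces $Y = 0$ is the nondegeneracy of $Ctf\,d\alpha + \omega_0$ on $\ker\alpha$ for large $C$, while $a = X^t = 0$ near $\p G$ is an immediate consequence of the formula $X^t = -f'/(uf'-u'f)$ and $f'\equiv 0$ for $s \ge 3/4$.

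The transversality step is where both your argument and the paper's are thinner than they look. Along $\hH$ one needs $h'f + hf' = (hf)' \ne 0$. Since $f = u/(1-s)$ rises from $1$ to $4/3$ on $[0,1/4]$ and falls back to $1$ at $s = 3/4$, $f'$ necessarily changes sign in $(1/4,3/4)$; so besides the region you flag where $a = f' = 0$, there is a zone where $f' < 0$, i.e.\ $a > 0$, and there $h' > 0$ can make $(hf)'$ vanish. Your proposed remedy (push $h'$'s positivity over $\{a=0\}$) doesn't exclude this cancellation --- and the paper's assertion that $f' > 0$ for $s<3/4$ cannot hold for the specified $u$. What is actually needed is to coordinate the profiles of $u$ and $h$ so that $(hf)'$ stays sign-definite along $\hH$; the flexibility available does allow this, but neither you nor the paper's short proof spells it out.
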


\begin{proof}
Away from the special collar neighborhoods considered in Step~1, 
$\lambda = d\theta + \beta$, and this gives $dt(X) = -1$.
Elsewhere, on the collars 
$[0, 1] \times \bigl( [1/4, 1] \times \p \Sigma \times \SS^1\bigr)$, 
we use the ansatz $X = X^t\, \p_t + X^s\, \p_s$.
Computing the interior product $\iota_X \omega$ using 
$\omega = C\, d(t(u\, d\theta + f\alpha)) + \omega_0$ and 
$\iota_{\partial s}\omega_0 = 0$, we find that $X$
is indeed $\omega$-dual to $-C\, d\theta$ provided
\begin{equation*}
  \begin{split}
    u X^t + t u' X^s &= -1 \;,\\
    f X^t + t f' X^s &= 0 \;.
  \end{split}
\end{equation*}
This system is everywhere nonsingular due to the contact
condition~\eqref{eqn:contactNew}.
For $s \ge 3/4$ and $t = 1$, we have $X = \p_s$ as promised.
For $s < 3/4$, the conditions $f(s) > 0$ and $f'(s) > 0$ imply $X^t <
0$ and $X^s > 0$, hence $X$ is transverse to $\hH$.
\end{proof}

\step{Symplectic form on the handle}
\begin{lemma}
	\label{lemma:Psiomega}
The gluing map $\Psi$ from $\Sigma_h \times D^*$ to $[0, 1] \times V$ pulls back
$\omega$ to
\begin{equation*}
\Psi^*\omega = 2C\omega_D + C\, d(hu) \wedge d\theta + \Omega_0
\end{equation*}
where $\omega_D := -r\, dr \wedge d\theta$ and
$\Omega_0$ is a symplectic form on $\Sigma_h$ which weakly fills 
$(\partial \Sigma_h, \ker \gamma)$.
\end{lemma}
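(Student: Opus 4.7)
The plan is to exploit the fact that $X$ is the $\omega$-dual of the closed $1$-form $-C\,d\theta$, hence a symplectic vector field whose flow preserves $\omega$. Moreover, from $\omega(X,X) = 0$ and $\iota_X\omega = -C\,d\theta$ one reads off $d\theta(X) = 0$, so the flow of $X$ also preserves the coordinate $\theta$ itself. Factor the gluing map as $\Psi = F \circ G$, where
\begin{equation*}
	G \colon \Sigma_h \times D^* \to \Sigma_h \times [0,\varepsilon] \times \SS^1,\qquad (\sigma, re^{i\theta}) \mapsto (\sigma, r^2, \theta),
\end{equation*}
and $F(\sigma, \tau, \theta) = \flow^X_\tau(\iota(\sigma,\theta))$ with $\iota\colon \hH \hookrightarrow [0,1]\times V$ the graph embedding $(\sigma,\theta)\mapsto(h(\sigma),\sigma,\theta)$.

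Because $\mathcal{L}_X\omega = 0$, the pullback $F^*\omega$ is invariant under translation in $\tau$; since $X$ preserves $\theta$, the identity $\iota_{\p_\tau}(F^*\omega) = F^*(\iota_X\omega)$ simplifies to $-C\,d\theta$. These two properties together force
\begin{equation*}
	F^*\omega = \pi^*(\iota^*\omega) - C\,d\tau \wedge d\theta,
\end{equation*}
where $\pi\colon \Sigma_h\times[0,\varepsilon]\times\SS^1\to \hH$ is the projection that forgets $\tau$. A direct computation starting from $\omega = C\,dt\wedge\lambda + Ct\,d\lambda + \omega_0$ with $\lambda = \gamma + u\,d\theta$, and using $\iota^*t = h$, yields
\begin{equation*}
	\iota^*\omega = C\,d(h\gamma) + C\,d(hu)\wedge d\theta + \omega_0.
\end{equation*}
Since $G^*(d\tau) = 2r\,dr$, the flow-direction term pulls back to $-2Cr\,dr\wedge d\theta = 2C\,\omega_D$, giving the claimed identity with $\Omega_0 := C\,d(h\gamma) + \omega_0$.

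It remains to verify that $\Omega_0$ is symplectic on $\Sigma_h$ and that it weakly fills $(\p\Sigma_h, \ker\gamma)$. In the bulk of $\Sigma_h$, the function $h$ is constant and $\gamma$ coincides with the Liouville form $\beta$ of the ideal Liouville domain, so $\Omega_0 = Ch\,d\beta + \omega_0$ is a small perturbation of the symplectic form $d\beta$. Near $\p\Sigma_h$, the collar of Lemma~\ref{lemma:idealCollar} allows us to write $\gamma = f\alpha$, and since $\ker\gamma = \ker\alpha = \xi_\Sigma$, the restriction of $\Omega_0$ to this distribution reduces to $Chf\,d\alpha|_{\xi_\Sigma} + \omega_0|_{\xi_\Sigma}$ while $d\gamma|_{\xi_\Sigma} = f\,d\alpha|_{\xi_\Sigma}$; thus the ray $\Omega_0|_{\xi_\Sigma} + \tau\,d\gamma|_{\xi_\Sigma} = (Ch+\tau)f\,d\alpha|_{\xi_\Sigma} + \omega_0|_{\xi_\Sigma}$ consists entirely of symplectic forms. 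The main (and only) subtlety is that the constant $C$ supplied by Lemma~\ref{lemma:collarDeformationNew} must be taken large enough for the $d\beta$-term to dominate $\omega_0$ uniformly on the compact manifold $\Sigma_h$; this simultaneously yields interior non-degeneracy and the weak filling condition at the boundary.
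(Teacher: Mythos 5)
Your decomposition $\Psi = F\circ G$ and the resulting computation of $\Psi^*\omega$ coincide with the paper's $\Psi = \Phi\circ P$, and your identification $\Omega_0 = C\,d(h\gamma) + \omega_0$ is the same; this part is fine. Where you diverge from the paper is the verification that $\Omega_0$ is symplectic and that it weakly fills $(\p\Sigma_h,\ker\gamma)$, and your route there has a real gap.

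The paper establishes symplecticity of $\Omega_0$ with a one-line wedge computation: since $\omega_D$ is the only term of $\Psi^*\omega$ containing a $dr$-factor, $(\Psi^*\omega)^n = 2nC\,\omega_D\wedge\Omega_0^{n-1}$, and this is nonzero because $\Psi^*\omega$ is a pullback of a symplectic form — no largeness of $C$ is needed. For the weak filling of $\p\Sigma_h$, the paper restricts the weak domination inequality $\lambda\wedge(\omega + \nu\,d\lambda)^{n-1} > 0$ (which holds on $\{1\}\times V$ by Lemma~\ref{lemma:collarDeformationNew}) to $\{1\}\times G$ and reads off $\gamma\wedge(\Omega_0 + \nu\,d\gamma)^{n-2} \neq 0$ for all $\nu\geq 0$ by comparing coefficients; again the given $C$ suffices. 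Your route instead proposes to enlarge $C$ until ``the $d\beta$-term dominates $\omega_0$ uniformly.'' Even granting that the scaling step of Lemma~\ref{lemma:collarDeformationNew} permits taking $C$ arbitrarily large (true, but you should say so, since the lemma as stated just produces \emph{some} large $C$, and this choice must then be propagated back through Step~2), the domination of the $\tau=0$ form $Cf\,d\alpha|_{\xi_\Sigma} + \omega_0|_{\xi_\Sigma}$ does not by itself give positivity of the full ray $(C+\tau)f\,d\alpha|_{\xi_\Sigma} + \omega_0|_{\xi_\Sigma}$ for every $\tau\geq 0$: the volume form $\gamma\wedge\bigl((C+\tau)f\,d\alpha + \omega_0\bigr)^{n-2}$ is a polynomial of degree $n-2$ in $\tau$, and one needs to argue that $C$ is already beyond all its roots, uniformly over the compact $\p\Sigma_h$. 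That argument is not in your write-up. So your approach is salvageable, but as written it asserts the key inequality rather than proving it, and it introduces an avoidable dependence on choosing $C$ large — the paper's computation sidesteps both issues.
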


\begin{proof}
Let $j_\hH$ denote the embedding  $\Sigma_h\times \SS^1 \to
\hH \subset [0,1]\times V, (\sigma, \theta) \mapsto \bigl(h(\sigma),
\sigma, \theta\bigr)$.
Then we can decompose $\Psi$ as $\Psi = \Phi \circ P$, where $P$ is the
map from $\Sigma_h \times D^*$ to $\Sigma_h \times (0, \varepsilon] \times
\SS^1$ defined by $P(\sigma, re^{i\theta}) = (\sigma, r^2, \theta)$ and
\begin{equation*}
  \Phi(\sigma, \tau, \theta) := \flow^X_\tau\bigl(h(\sigma), \sigma,
  \theta\bigr) = \flow^X_\tau\bigl(j_\hH(\sigma, \theta)\bigr) \;.
\end{equation*}
Using the fact that the flow of $\bigl(\flow^X_\tau\bigr)_*\partial_\tau = X$
preserves $\omega$ and $\iota_X \omega = -C\, d\theta$, we obtain
for the pull back
\begin{equation*}
  \Phi^*\omega = -C\, d\tau \wedge d\theta + j_\hH^*\omega \;,
\end{equation*}
and since the symplectic form $\omega$ is given in the range of
$j_\hH$ by $C\, d(t\lambda) + \omega_0$ with $\lambda = u\, d\theta +
\gamma$, we can compute
\begin{equation*}
  j_\hH^*\omega = C\, d(h\lambda) + \omega_0 = C\, d(hu) \wedge d\theta
  + \Omega_0 \;,
\end{equation*}
where we have set $\Omega_0 = C\, d(h\gamma) + \omega_0$ (which is a $2$-form
on $\Sigma_h$).

Now since $P^*d\tau = 2 r\,dr$, the only thing left to prove is that $\Omega_0$ is a
symplectic form which weakly fills $(\partial \Sigma_h, \ker \gamma)$.
Since $\omega_D$ is the only term in $\Psi^*\omega$ that contains a
$dr$-factor, and thus it follows that 
$(\Psi^*\omega)^n = 2nC\omega_D \wedge \Omega_0^{n-1} \ne 0$, we deduce that
$\Omega_0$ is symplectic.

The $2$-form $\Omega_0$ restricts on $\p\Sigma_h$ to $C\, d\gamma +
\omega_0$.
Recall that the weakly dominating condition on $\{1\} \times V$ means that
for any constant $\nu \ge 0$,
$\lambda \wedge \bigl( \omega + \nu\, d\lambda \bigr)^{n - 1} > 0$.
Restricting to $\{1\} \times G$, where $\lambda = u\, d\theta + \gamma$ and 
$\omega = C\, d\lambda + \omega_0$, this becomes:
\begin{equation*}
(u\, d\theta + \gamma) \wedge \left[(C + \nu)\, du \wedge d\theta 
+ \bigl( C\, d\gamma + \omega_0 + \nu\, d\gamma \bigr)\right]^{n - 1} > 0 \; ,
\end{equation*}
which we expand along $\{1\} \times \partial \Sigma_h \times \SS^1$ as
\begin{equation*}
(n - 1)(C + \nu)\,\gamma \wedge du \wedge d\theta 
\wedge \big( C\, d\gamma + \omega_0 + \nu\, d\gamma \big)^{n - 2} > 0 \;.
\end{equation*}
In particular, this proves that $\gamma \wedge (\Omega_0 + \nu\,
d\gamma)^{n - 2}$ never vanishes. In order to check that it has the correct
sign, it suffices to consider the case $\nu = 0$.
\end{proof}

To finish the construction of the symplectic cobordism, we want to
define a symplectic structure on $\Sigma_h \times D$ that agrees in a
neighborhood of the boundary $\Sigma_h \times \partial D$ with
$\Psi^*\omega$, and that has a split form near $\Sigma_h \times
\{0\}$.  Let $\rho_1$ and $\rho_2$ be functions from $[0, \sqrt\varepsilon]$ to
$\RR$ (constraints will be added later). We set:
\begin{align*}
\widetilde\omega &:=  2C\rho_1 \omega_D + C\, d(\rho_2 hu) \wedge d\theta +
\Omega_0\\
&= g\omega_D + C\rho_2\, d(hu) \wedge d\theta + \Omega_0
\quad \text{ with $g := \left(2\rho_1 - \frac{hu\rho_2'}r\right)C$.}
\end{align*}

We choose $\rho_1(r) = \rho_2(r) = 1$ for $r$ close to $\sqrt\varepsilon$, so 
that $\widetilde\omega$ extends $\Psi^*\omega$.  Near 0, we choose $\rho_1$ to
be a large positive constant and $\rho_2$ to vanish so that $\widetilde\omega$
makes sense near the center of $D$.
One can compute $\widetilde\omega^n = ng\omega_D \wedge \Omega_0^{n - 1}$.
Since $\Omega_0$ is symplectic on $\Sigma_h$, we see that $\widetilde\omega$
is symplectic as soon as $g$ is positive. This condition is arranged
by choosing $\rho_1$ sufficiently large away from $r = \sqrt\varepsilon$.

\step{Properties of the new boundary of $W$} \ \\
We now consider in turns the two types of new boundary components resulting from
the above construction:
$V'$ and $M_\mathrm{bd} \times \SS^2$.
Since $hu$ is constant on $\p \Sigma_h$, the restriction of
$\widetilde\omega$ to $M_\textrm{p} \times D$ is $g\omega_D +
\Omega_0$.
As we already noted, the gluing map $\Psi$ extends the one used to define
the blow-down, and the contact form on $V'$ is $\lambda = \gamma - r^2 d\theta$. 
Thus in order to check the weak filling condition, we only need compute, for
any constant $\nu \ge 0$,
\begin{equation*}
  \lambda \wedge (\widetilde\omega + \nu\, d\lambda)^{n - 1} 
  = (n - 1)(g + 2\nu)\, \omega_D \wedge 
  \gamma \wedge (\Omega_0 + \nu\, d\gamma)^{n - 2} .
\end{equation*}
This is indeed a positive volume form for any nonnegative $\nu$ because 
$(\Sigma_h, \Omega_0)$ is a weak filling of $(M_\textrm{p}, \ker \gamma)$
according to Lemma~\ref{lemma:Psiomega}.

The situation along $M_\textrm{bd} \times \SS^2$ is very similar. 
There $\widetilde\omega$ induces $\omega_{\SS^2} + \Omega_0$ for some
area form $\omega_{\SS^2}$. The distribution we consider is $\ker \gamma$.
We compute:
\begin{equation*}
\gamma \wedge (\widetilde\omega + \nu\, d\gamma)^{n - 1} 
= (n - 1)\, \omega_{\SS^2} \wedge 
\gamma \wedge (\Omega_0 + \nu\, d\gamma)^{n - 2}
\end{equation*}
so the restriction of $\widetilde\omega$ is symplectic on $\ker \gamma$
because of Lemma~\ref{lemma:Psiomega}.
Lemma~\ref{lemma:model_tubular_neighborhood} then gives us a neighborhood of
$M_\textrm{bd} \times \SS^2$ in $(W', \widetilde\omega)$
that can be identified symplectically with
\begin{equation*}
  \bigl((-\epsilon,0] \times  M_\textrm{bd} \times \SS^2,
  \left( d(t\gamma) + \Omega_0 \right) \oplus 
  \omega_{\SS^2}\bigr)
\end{equation*}
for $\epsilon > 0$ sufficiently small.  Observe also that $\widetilde\omega$
already takes this split form in a neighborhood of the co-core
$\Sigma_h \times \{0\} \subset W'$, so we can apply the extension property of
Lemma~\ref{lemma:model_tubular_neighborhood} to get a collar whose intersection
with the co-core is precisely $(-\epsilon,0] \times M_\textrm{bd} \times \{0\}$
with $0 \in \DD \subset \SS^2$.

\section{Giroux domains and non-fillability}
\label{sec:applications}

We now use the cobordism of the preceding section to prove
Theorem~\ref{thm:torsion} on filling obstructions.  We will present two
slightly different approaches in \S\ref{sec:BLOB_in_handle} and
\S\ref{sec:hol_Spheres_in_handle} respectively: the first uses holomorphic
disks and the \BLOB, thus relying on a version of
Theorem~\ref{thm:plastikstufe_and_weak_fillability}.  The second 
approach uses holomorphic spheres and proves a slightly weaker result, as it 
requires stricter homological assumptions on the Giroux domains---though it 
should be mentioned that these assumptions are satisfied in all the 
interesting examples we know thus far, namely for the higher dimensional
notions of Lutz twists and Giroux torsion defined in~\S\ref{sec:torsion}.
The use of spheres instead of disks simplifies the proof in that it makes
the Freholm and compactness properties easier, while at the same time
allowing the use of the recently completed polyfold technology
\cite{HoferWZ_GW} to handle transversality issues.  For this reason the
second proof does not require semipositivity.

\subsection{Proof of Theorem~\ref{thm:torsion} via the \BLOB}
\label{sec:BLOB_in_handle}

Part~(a) of the theorem follows immediately from the fact that
if $(V,\xi)$ contains a Giroux domain~$N$ that has some boundary
components that are blown down and others that are not, then by
Proposition~\ref{prop:bLob_in_blown_down_domain} it
contains a small \BLOB, so Theorem~\ref{thm:plastikstufe_and_weak_fillability}
implies that $(V,\xi)$ does not admit any semipositive weak filling.
To prove part~(b), suppose $N$ has the form
\begin{equation*}
  N = (\Sigma^+ \times \SS^1) \cup_{Y \times \SS^1} 
  (\Sigma^- \times \SS^1) \;,
\end{equation*}
where $\Sigma^\pm$ are ideal Liouville domains with boundary $\p
\Sigma^\pm = \p_\mathrm{glue} \Sigma^\pm \sqcup \p_\mathrm{free}
\Sigma^\pm$, $Y := \p_\mathrm{glue} \Sigma^+ = \p_\mathrm{glue}
\Sigma^-$ carries the induced contact form~$\alpha$ and
$\p_\mathrm{free} \Sigma^-$ is not empty.
Arguing by contradiction, assume that $(V,\xi)$ is weakly filled by a
semipositive symplectic filling $(W_0,\omega)$ with
$[\restricted{\omega}{TV}] \in \oO(\Sigma^+)$.
This establishes the cohomological condition needed by
Theorem~\ref{thm:cobordism} on $\Sigma^+ \times \SS^1$, so applying
the theorem, we can enlarge $(W_0,\omega)$ by attaching $\Sigma^+
\times \DD^2$, producing a compact symplectic manifold $(W_1,\omega)$
whose boundary $(V',\xi')$ supports a contact structure that is weakly
filled.
Since the boundary~$V'$ of the new symplectic manifold~$(W_1,\omega)$
is contactomorphic to 
$(\overline{V \setminus (\Sigma^+ \times \SS^1)}) \blowdown Y$, we
find in $(V',\xi')$ a domain isomorphic to $(\Sigma^-\times \SS^1)
\blowdown Y$ that contains a small \BLOB.
Unfortunately this does not directly obstruct the existence of the
weak filling $(W_1,\omega)$, because even though $W_0$ was
semipositive, $W_1$ might not be.
We will follow the proof of
Theorem~\ref{thm:plastikstufe_and_weak_fillability}, with the
difference that we need to reconsider compactness to make sure that
bubbling is still a ``codimension~$2$ phenomenon''.
Choose an almost complex structure $J$ on $(W_1,\omega)$ with the
following properties:
\begin{itemize}
\item[(i)] $J$ is tamed by~$\omega$ and makes $(V', \xi')$ strictly
  $J$-convex,
\item[(ii)] $J$ is adapted to the \BLOB in the standard way, i.e.~it
is chosen close to the boundary of the \BLOB according to
  Lemma~\ref{lemma:complexStructureBoundaryBLOB} and in a neighborhood
  of the binding according to \cite{NiederkruegerPlastikstufe}
  (cf.~the proof of Theorem~\ref{thm:plastikstufe_and_weak_fillability}),
\item[(iii)] for some small radius $r > 0$, $J = J_{\Sigma^+} \oplus
  i$ on $\Sigma^+ \times \DD^2_r \subset W_1$, where $J_{\Sigma +}$ is
  a tamed almost complex structure on~$\Sigma^+$ for which
  $\p\Sigma^+$ is $J_{\Sigma^+}$-convex.
\end{itemize}
The third condition uses the fact from Theorem~\ref{thm:cobordism}
that the co-core $\cocore := \Sigma^+ \times \{0\}$ of the handle is a
symplectic (and now also $J$-holomorphic) hypersurface weakly filling
its boundary.
The binding of the \BLOB lies in the boundary of the
co-core~$\cocore_+$, and the normal form described in
\cite{NiederkruegerPlastikstufe} is compatible with the
splitting~$\Sigma^+ \times \DD^2_r$ so that (ii) and (iii) can be
simultaneously achieved.
By choosing $J_{\Sigma^+}$ generic, we can also assume that every
somewhere injective $J_{\Sigma^+}$-holomorphic curve in $\Sigma^+$
is Fredholm regular and thus has nonnegative index.  Note that
any closed $J$-holomorphic curve in $\Sigma^+ \times \DD^2_r$
is necessarily contained in $\Sigma^+ \times \{z\}$ for some
$z \in \DD^2_r$, and the index of this curve differs from its
index as a $J_{\Sigma^+}$-holomorphic curve in $\Sigma^+$ by
the Euler characteristic of its domain.  This implies that every
somewhere injective $J$-holomorphic sphere contained in
$\Sigma^+ \times \DD^2_r$ has index at least~$2$.
Likewise, by a generic perturbation of~$J$ outside of this
neighborhood we may assume all somewhere injective curves that are 
\emph{not} contained entirely in $\Sigma^+ \times \DD^2_r$ also
have nonnegative index.

Now let $\mM$ be the connected moduli space of holomorphic disks attached to 
the \BLOB that contains the standard Bishop family.
We can cap off every holomorphic disk~$u \in \mM$ by attaching a
smooth disk that lies in the \BLOB, producing a trivial homology class
in $H_2(W_1)$.
The cap and the co-core intersect exactly once, and it follows that $u$ also
must intersect the co-core~$\cocore_+$ exactly once, because $u$ and
$\cocore_+$ are both $J$-complex.
To finish the proof, we have to study the compactness of $\mM$ and argue
that $\overline{\mM} \setminus \mM$ consists of strata of codimension
at least~$2$.
A nodal disk~$u_\infty$ lying in $\overline{\mM} \setminus \mM$ has
exactly one disk component $u_0$, which is injective at the boundary,
and one component $u_+$ that
intersects the co-core once; either $u_+ = u_0$ or $u_+$ is a holomorphic
sphere.
Every other nonconstant connected component~$v$ is a holomorphic
sphere whose homology class has vanishing intersection with the relative class
$[\cocore_+]$. So either $v$ does not intersect the $J$-complex
submanifold~$\cocore_+$ at all or $v$ is completely contained in $\cocore_+$.
In either case, $v$ is homotopic to a sphere lying in $W_0$:
indeed, if $v$ does not intersect the co-core, we can move it out of the
handle by pushing it radially from $\Sigma^+ \times (\DD^2 \setminus
\{0\})$ into the boundary $\Sigma^+ \times \SS^1 \subset W_0$, and if
$v\subset \cocore_+ = \Sigma_+\times \{0\}$, then we can simply shift it to
$\Sigma_+\times \{1\} \subset W_0$.
Using the fact that $u_0$ and $u_+$ are both somewhere injective, together
with the semipositivity and genericity assumptions, we deduce that
every connected component
of~$u_\infty$ has nonnegative index, thus $\overline{\mM} \setminus
\mM$ has codimension at least two in~$\overline\mM$.
The rest of the proof is the same as for
Theorem~\ref{thm:plastikstufe_and_weak_fillability}.

\subsection{An alternative argument using holomorphic spheres}
\label{sec:hol_Spheres_in_handle}

In this section we will prove the following variation on
Theorem~\ref{thm:torsion}, which does not involve the word
``semipositive'' at all.

\begin{theorem}
\label{thm:torsion2}
Suppose $(V,\xi)$ is a closed contact manifold containing a
subdomain~$N$ with nonempty boundary, which is obtained by gluing and
blowing down Giroux domains.  If either $N$ has at least one blown down
boundary component or it includes at least two Giroux domains glued
together, then $(V,\xi)$ does not admit any weak filling $(W,\omega)$
with $[\omega_V] \in \oO(N)$.
In particular $(V,\xi)$ is not strongly fillable.
\end{theorem}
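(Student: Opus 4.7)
The plan is to cap $(V,\xi)$ off by a symplectic cobordism whose top boundary carries a natural family of symplectic $\SS^2$ fibers, and then use closed $J$-holomorphic spheres to obtain a contradiction. This avoids the \BLOB argument and its semipositivity requirement, replacing it with polyfold transversality applied to closed curves.

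First I would apply Theorem~\ref{thm:cobordism} once or twice to the hypothetical weak filling $(W,\omega)$ to produce a weak filling $(W_1,\omega_1)$ whose boundary contains a component with a collar neighborhood of the form $\bigl((-\delta,0] \times M \times \SS^2,\, \omega_M \oplus \omega_{\SS^2}\bigr)$. In case (a), where $N$ already has a blown-down boundary component, a single application suffices: the cobordism recovers the blown-down components as $M_\mathrm{bd} \times \SS^2$ boundary. In case (b), one first cobords off $\Sigma^+ \times \SS^1$, so that the boundary $V'$ of the resulting filling contains the Giroux domain $(\Sigma^- \times \SS^1) \blowdown Y$ with $Y$ now blown down; a second application of Theorem~\ref{thm:cobordism} then produces the $Y \times \SS^2$ boundary. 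The hypothesis $[\omega_V] \in \oO(N)$ is what makes each iteration admissible, and propagating the obstructed cohomology class through the successive surgeries is the first point that requires care.

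Next I would complete $(W_1,\omega_1)$ along its $V'$-side via the magnetic symplectization of Corollary~\ref{cor:magnetic_completion}, yielding an open symplectic manifold $(\widehat W_1, \widehat\omega_1)$. Choose a tame almost complex structure $J$ that is cylindrical on the $V'$-end and splits as $J_M \oplus j_{\SS^2}$ on the $M \times \SS^2$ collar, so each fiber $\{m\} \times \SS^2$ is a $J$-holomorphic sphere with trivial normal bundle. Let $\mathcal M$ be the moduli space of unparametrized $J$-holomorphic spheres in this class, and let $\mathcal M_0$ denote the component containing the standard family near the boundary. The expected dimension is $2n-2$, matching the dimension of the given family. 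The polyfold machinery of \cite{HoferWZ_GW} is then used to perturb $J$ away from the split collar so that $\mathcal M_0$ becomes an orbifold of the expected dimension without any semipositivity assumption.

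The contradiction comes from compactness together with an evaluation argument. Curves in $\mathcal M_0$ have energy bounded by $\int_{\SS^2}\omega_{\SS^2}$, so SFT compactness~\cite{BourgeoisCompactness} applies. Generic choice of $J$ together with the low degree of the sphere class should restrict bubbling and breaking to codimension at least two, so $\overline{\mathcal M_0}$ carries a fundamental class and the evaluation map
\begin{equation*}
\ev \colon \overline{\mathcal M_0} \times_{\Aut(\SS^2)} \SS^2 \longrightarrow \widehat W_1
\end{equation*}
has degree one over the split collar. Pushing this forward and propagating via the connectedness of $\mathcal M_0$ would then force the spheres to invade the $V'$-end arbitrarily far, contradicting the uniform energy bound. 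The hardest step is controlling the Gromov--SFT compactification of $\mathcal M_0$: one must rule out bubbling off of spheres in classes arising from the cobordism (using the co-core $\Sigma_h \times \{0\}$ from Theorem~\ref{thm:cobordism} as a $J$-holomorphic hypersurface to control intersections) and rule out breaking at Reeb orbits on $V'$ using the assumption $[\omega_V]\in\oO(N)$ to control the $\widehat\omega_1$-flux across any candidate broken level. This is precisely the place where polyfold perturbations replace semipositivity.
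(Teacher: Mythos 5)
Your general approach matches the paper's: attach the cobordism from Theorem~\ref{thm:cobordism} to obtain a larger weak filling $W_1$ with a boundary component $\p_\mathrm{sph} W_1 \cong M\times\SS^2$ fibered by symplectic spheres, study the moduli space of closed $J$-holomorphic spheres in the fiber class, and invoke polyfolds to replace semipositivity. But there are two unnecessary complications in your setup and a genuine gap in the endgame.

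On the setup: the magnetic completion along $V'$ is not needed and creates problems you then have to re-solve. The paper keeps $W_1$ compact, chooses $J$ making the remaining boundary $\p_\mathrm{ct} W_1$ strictly $J$-convex, and simply observes that closed $J$-holomorphic curves cannot touch a $J$-convex boundary. So ordinary Gromov compactness on the compact $W_1$ suffices; there is no SFT-style breaking to rule out (closed spheres have no punctures), and the worry about ``flux across broken levels'' is misplaced — the hypothesis $[\omega_V]\in\oO(N)$ is used only to make the cobordism of Theorem~\ref{thm:cobordism} exist, not to control compactness. Similarly, you correctly flag the co-cores $\cocore_\pm=\Sigma^\pm\times\{0\}$ as $J$-holomorphic hypersurfaces, but the point is more specific than ``controlling intersections'': each sphere in the relevant component $\mM$ has intersection number exactly $1$ with each co-core, so every curve in $\mM$ is somewhere injective, which is what makes genericity and the polyfold step bite.

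The real gap is the contradiction. Your closing claim that degree-one evaluation forces spheres ``to invade the $V'$-end arbitrarily far, contradicting the uniform energy bound'' is not a coherent argument: the energy is uniformly bounded by construction and never enters, and the obstruction is $J$-convexity, not energy. You also skip the crucial uniqueness lemma: every somewhere injective $J$-holomorphic sphere meeting the collar $(-\delta,0]\times Y\times\SS^2$ is one of the standard fibers $u_{(t,y)}$; this is shown by projecting onto the $(-\delta,0]\times Y$ factor and noting that the projection is a closed $J_Y$-holomorphic map which touches the $J_Y$-convex boundary tangentially, hence is constant. Granting that, the paper's clean version of your degree argument is: put one marked point on the spheres, take the polyfold perturbation $\overline\mM'_1$, and pick an embedded path $\ell$ from $\p_\mathrm{ct}W_1$ to $\p_\mathrm{sph}W_1$. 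Then $\ev^{-1}(\ell)$ is a compact, oriented, weighted branched $1$-manifold with boundary $\ev^{-1}(\p\ell)$; by $J$-convexity no sphere reaches $\p_\mathrm{ct}W_1$, so this boundary is a single point, namely the unique $u_{(0,y)}$ through $\ell\cap\p_\mathrm{sph}W_1$. No such $1$-manifold exists. Your proposal needs this precise statement (or an equivalent one) to close; as written it gestures at the right idea but attributes the contradiction to the wrong mechanism.
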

\begin{proof}
We consider first the case where $N$ has no blown down boundary components
but consists of at least two Giroux domains glued together: without loss
of generality, we may then assume~$N$ has the form
\begin{equation*}
  N = (\Sigma^+ \times \SS^1) \cup_{Y \times \SS^1} 
  (\Sigma^- \times \SS^1) \;,
\end{equation*}
where $\Sigma^\pm$ are ideal Liouville domains with boundary $\p
\Sigma^\pm = \p_\mathrm{glue} \Sigma^\pm \sqcup \p_\mathrm{free}
\Sigma^\pm$, $Y := \p_\mathrm{glue} \Sigma^+ = \p_\mathrm{glue}
\Sigma^-$ carries the induced contact form~$\alpha$ and
$\p_\mathrm{free} \Sigma^+$ is not empty.  Arguing by contradiction,
assume also that $(V,\xi)$ has a weak filling
$(W_0,\omega)$ with $[\restricted{\omega}{TV}] \in
\oO(N)$.  This establishes the cohomological condition needed by
Theorem~\ref{thm:cobordism} on both $\Sigma^+ \times \SS^1$ and
$\Sigma^- \times \SS^1$, so applying the theorem, we can enlarge
$(W_0,\omega)$ by attaching $\Sigma^+ \times \DD^2$ and $\Sigma^-
\times \DD^2$ in succession, producing a compact symplectic manifold
$(W_1,\omega)$ whose boundary is a disjoint union of pieces
\begin{equation*}
  \p W_1 = \p_\mathrm{sph} W_1 \sqcup \p_\mathrm{ct} W_1 \;,
\end{equation*}
where $\p_\mathrm{ct} W_1 \neq \emptyset$ supports a contact structure
that is weakly dominated and $\p_\mathrm{sph} W_1 \cong Y \times \SS^2$
with symplectic fibers $\{*\} \times \SS^2$ (here \textsl{ct} stands for 
``contact'' and \textsl{sph} for ``sphere'').  Moreover, a neighborhood
of $\p_\mathrm{sph} W_1$ in $(W_1,\omega)$ can be identified
symplectically with the collar
\begin{equation*}
  \bigl((-\delta,0]\times Y \times \SS^2, \omega_Y \oplus \omega_{\SS^2}\bigr) \;,
\end{equation*}
where $\omega_{\SS^2}$ is an area form on~$\SS^2$ and $\omega_Y$ is a
symplectic form on $(-\delta,0]\times Y$ 
with weakly filled boundary $(Y,\ker\alpha)$.

We can choose an almost complex structure $J$ on $(W_1,\omega)$ with the
following properties:
\begin{enumerate}
\renewcommand{\labelenumi}{(\roman{enumi})}
\item $J$ is tamed by~$\omega$ and makes $\p_\mathrm{ct} W_1$ strictly $J$-convex,
\item $J = J_Y \oplus j$ on the collar 
$(-\delta,0] \times Y \times \SS^2$,
where $j$ is an $\omega_{\SS^2}$-compatible almost complex structure
on~$\SS^2$ and $J_Y$ is an almost complex structure on $(-\delta,0] \times Y$
which is tamed by $\omega_Y$ and makes
$\{0\}\times Y$ strictly $J_Y$-convex.
\end{enumerate}
By choosing $J$ generic outside the collar neighborhood
$(-\delta,0] \times Y \times \SS^2$, we may assume
all somewhere injective $J$-holomorphic curves that
aren't contained entirely in that region are Fredholm regular.

For each $(t,y) \in (-\delta,0] \times Y$, there is now an embedded
pseudoholomorphic sphere
\begin{equation*}
  u_{(t,y)} \colon (\SS^2,j) \to \bigl( (-\delta,0] \times Y \times \SS^2,J \bigr),\,
  z \mapsto  (t,y,z) \;.
\end{equation*}
These curves are all Fredholm regular: indeed, a neighborhood of
$u_{(t,y)}$ in the moduli space of unparametrized $J$-holomorphic spheres
in $W_1$ can be identified with a neighborhood of zero in the kernel of
the linearized Cauchy-Riemann operator on its normal bundle, but the latter is
a trivial bundle with the standard Cauchy-Riemann operator.  Hence the
operator splits into a direct sum of standard Cauchy-Riemann operators
on trivial line bundles over~$\SS^2$, all of which have index~$2$ and
are surjective by the Riemann-Roch theorem (cf.~\cite{McDuffSalamonJHolo}).
It follows also that the curves $u_{(t,y)}$ have index~$2n-2$.
Denote the co-cores of the two handles by
$$
\cocore_\pm := \Sigma^\pm \times \{0\} \subset \Sigma^\pm \times \DD^2
\subset W_1.
$$
The curves~$u_{(t,y)}$ have exactly one transverse intersection with
each of the two co-cores, so we have homological intersection numbers:
\begin{equation}
\label{eqn:intersection}
[u_{(t,y)}] \bullet [\cocore_+] = [u_{(t,y)}] \bullet [\cocore_-] = 1 \;.
\end{equation}
We claim that \emph{every} somewhere injective $J$-holomorphic sphere
in~$W_1$ which intersects $\{0\}\times Y \times \SS^2$ is equivalent
to $u_{(0,y)}$ for some $y \in Y$.
Indeed, if
$u \colon \SS^2 \to W_1$ is any such sphere, we define the open subset
$\uU = u^{-1}( (-\delta,0] \times Y \times \SS^2)$, and observe that
$\restricted{u}{\uU}$ can be identified with a pair of maps
$u_{\SS^2} \colon \uU \to \SS^2$ and $u_Y \colon \uU \to  (-\delta,0] \times Y$
which are $j$-holomorphic and $J_Y$-holomorphic respectively.  But then
$u_Y$ touches the boundary of $(-\delta,0]\times Y$ tangentially,
which is impossible due to pseudoconvexity unless $u_Y$ is constant,
so we conclude that $\uU = \SS^2$ and $u_{\SS^2} \colon \SS^2 \to \SS^2$ is
a degree~$1$ holomorphic map, hence the identity up to parameterization.

Let $\mM$ denote the connected component of the moduli space of 
unparametrized $J$-holomorphic spheres containing the curves $u_{(t,y)}$,
and define $\mM_1$ to be the same space of curves but with one marked point,
along with the natural compactifications $\overline{\mM}$ and
$\overline{\mM}_1$, consisting of nodal $J$-holomorphic spheres.  
Note that curves in $\overline{\mM}$ can never touch $\p_\mathrm{ct} W_1$ due to
$J$-convexity.
By~\eqref{eqn:intersection}, every curve
in~$\mM$ intersects each of $\cocore_+$ and $\cocore_-$ algebraically once,
thus all curves in~$\mM$ are somewhere injective, and
the only nodal curves in $\overline{\mM}$ intersecting 
$\{0\}\times Y \times \SS^2$ are $u_{(0,y)}$ for $y \in Y$.
Now our genericity assumptions for~$J$, along with the Fredholm regularity of
the special curves $u_{(t,y)}$, imply that $\mM$ is a smooth 
$(2n-2)$-dimensional manifold with boundary, where the boundary is a single
connected component consisting of the curves $u_{(0,y)}$.
After perhaps shrinking $\delta > 0$, we claim in fact that every
curve in~$\mM$ intersecting $(-\delta,0] \times Y \times \SS^2$
is one of the special curves $u_{(t,y)}$: were this not the case, we would
find sequences of negative numbers $t_k \to 0$ and holomorphic spheres
$u_k \in \mM$ which are not equivalent to any $u_{(t,y)}$ but
intersect $\{t_k\} \times Y\times \SS^2$, and a subsequence then converges
to a nodal curve in $\overline{\mM}$ intersecting 
$\{0\} \times Y \times \SS^2$, but the latter must be of the form
$u_{(0,y)}$.  We then obtain a contradiction from the implicit function theorem,
as the $(2n-2)$-dimensional moduli space of curves close to $u_{(0,y)}$
consists only of curves of the form $u_{(t',y')}$ for
some $(t',y') \in (-\delta,0] \times Y $.

Although $\mM$ and $\mM_1$ are smooth as explained above, their 
compactifications $\overline{\mM}$ and $\overline{\mM}_1$ need not be, 
due to the presence of nodal curves with
multiply covered components for which transversality fails.  This is
exactly the kind of problem that the polyfold machinery of 
Hofer-Wysocki-Zehnder \cite{HoferWZ_GW} is designed to solve: we perturb
the nonlinear Cauchy-Riemann equation via a generic multisection of the
appropriate polyfold bundle so that the
compact space $\overline{\mM}'$ of solutions to this perturbed 
equation is, in general, an oriented, weighted branched orbifold with 
boundary and corners.  In the case at hand, the perturbation can be chosen to have
support in a neighborhood of the nonsmooth part of $\overline{\mM}$, thus
we may assume in particular that elements of~$\overline{\mM}'$ approaching
a neighborhood of the boundary are still actually $J$-holomorphic curves,
so the uniqueness statements above continue to hold.

To conclude the proof, choose a smoothly embedded path $\ell \subset W_1$ with
one boundary point in $\p_\mathrm{ct} W_1$ and the other in 
$\p_\mathrm{sph} W_1$, meeting both transversely, and define the compact
space
\begin{equation*}
  \overline{\mM}_\ell' = \ev^{-1}(\ell)
\end{equation*}
where $\ev \colon \overline{\mM}_1' \to W_1$ denotes the natural evaluation
map.  For generic choices, $\overline{\mM}_\ell'$ is then a smooth, compact,
oriented, weighted branched $1$-dimensional manifold with boundary,
the latter being $\ev^{-1}(\p \ell)$.  By
pseudoconvexity however, no curve in $\overline{\mM}'$ intersects
$\p_\mathrm{ct} W_1$, hence $\p \overline{\mM}_\ell' = 
\ev^{-1}(\p_\mathrm{sph} W_1)$, but this consists
of only one curve, namely the unique $u_{(0,y)}$ with $y \in \p\ell$.
Since there is no compact, oriented, weighted branched $1$-manifold with 
connected boundary (see e.g.~\cite[Lemma~5.11]{Salamon_lecture_notes}), 
this gives the desired contradiction.

The proof is essentially the same but slightly simpler
if $N \subset (V,\xi)$ has any blown down boundary components.
If suffices then to consider the case where $N$ is a single blown down
Giroux domain $(\Sigma \times \SS^1) \blowdown M_\mathrm{bd}$.
Attaching $\Sigma \times \DD^2$ via Theorem~\ref{thm:cobordism}, we again
obtain a symplectic manifold $(W_1,\omega)$ with
$\p W_1 = \p_\mathrm{ct} W_1 \sqcup \p_\mathrm{sph} W_1$, where 
$\p_\mathrm{ct} W_1 \ne \emptyset$ 
is weakly filled
and $\p_\mathrm{sph} W_1 \cong M_\mathrm{bd} \times \SS^2$ is foliated by
symplectic spheres that give rise to $J$-holomorphic spheres intersecting the
co-core $\Sigma \times \{0\}$ exactly once.  The rest of the argument is the
same.
\end{proof}

\begin{remark}
\label{remark:nopolyfolds}
If the original filling is assumed semipositive, then the above proof can
also be modified to take advantage of the symplectic co-core in the
same way as \S\ref{sec:BLOB_in_handle} and thus avoid the need for polyfolds.
\end{remark}

\section{Construction of Liouville domains with disconnected boundary}
\label{sec:Liouville}

\subsection{Contact products and Liouville pairs}

In this section we shall construct Liouville pairs on closed
manifolds of every odd dimension; more precisely, we shall prove
Theorem~\ref{thm:LiouvilleExists} from the introduction and thus lay the
groundwork for our Giroux torsion construction in~\S\ref{sec:torsion}.  
Recall that the goal is to find positive/negative pairs of contact forms
$(\alpha_+,\alpha_-)$ on oriented
odd-dimensional manifolds~$M$ with the property that, if $s \in \RR$ denotes
the coordinate on the first factor of $\RR \times M$,
\begin{equation*}
  \beta := e^{-s} \alpha_- + e^{s} \alpha_+
\end{equation*}
defines a positively oriented Liouville form on $\RR\times M$.  

The first example of a Liouville pair is $\pm d\theta$ on $\SS^1$. 
One can construct higher dimensional examples using contact 
products.  The contact product of 
$(M_1, \xi_1)$ and $(M_2, \xi_2)$ is defined as the product
of their symplectizations $S\xi_1 \times S\xi_2$ divided by the diagonal
$\RR$-action (cf.~\cite{Giroux_Bourbaki}).
This describes a contact manifold but, since the Liouville pair
condition is really about contact forms and not only contact structures, we want
a more specific construction.
Suppose we have contact forms $\alpha_1$ and $\alpha_2$. Those give
identifications between $S\xi_i$ and $\RR \times M_i$ with fiber coordinates
$t_i$ on $\RR$. On the product, one has the Liouville form 
$\lambda = e^{t_1}\alpha_1 + e^{t_2}\alpha_2$ and its dual vector field 
$X = \partial_{t_1} + \partial_{t_2}$. 
We shall 
say that a manifold $V$ with a contact form $\lambda$ is a \defin{linear
model} for the contact product of $(M_1, \alpha_1)$ and $(M_2, \alpha_2)$
if it is realized as a hypersurface in $S\xi_1 \times S\xi_2$ 
transverse to $X$ and defined by a linear equation on $t_1$ and $t_2$.
Concretely, this means 
$V = M_1 \times \RR \times M_2$ is embedded into the product
$(\RR \times M_1) \times (\RR \times M_2)$ 
by $\varphi(m_1, t, m_2) = (\mu t, m_1, \nu t, m_2)$ for some constants $\mu$
and $\nu$.  This gives a hypersurface positively transverse to $X$ provided
$\nu > \mu$.  The contact form induced by $\lambda$ on $V$ is then
$e^{\mu t}\alpha_1 + e^{\nu t} \alpha_2$.

\begin{proposition}
\label{prop:Liouville_product}
If $M_1$ is $\RR$ or $\SS^1$ endowed with the Liouville pair 
$\alpha_\pm = \pm d\theta$ and $(M_2, \alpha_2)$ is any manifold with a contact
form, then any linear model for the contact product inherits a Liouville pair
$\pm e^{\mu t} d\theta + e^{\nu t} \alpha$.
\end{proposition}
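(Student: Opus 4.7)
The strategy is a direct computation verifying both conditions in the definition of a Liouville pair. Let $\dim M_2 = 2m-1$, so that $V$ has dimension $2m+1$. First I would verify the contact conditions on $\lambda_\pm := \pm e^{\mu t} d\theta + e^{\nu t}\alpha$. Computing $d\lambda_\pm$ and expanding $\lambda_\pm \wedge (d\lambda_\pm)^m$, only two terms survive after one discards all wedge products in which any of $d\theta$, $dt$, or $\alpha$ appears twice. Reordering into a common top form, this will yield
\[
\lambda_\pm \wedge (d\lambda_\pm)^m = \pm m(\nu - \mu)\, e^{(\mu + m\nu)t}\, d\theta \wedge dt \wedge \alpha \wedge (d\alpha)^{m-1},
\]
which has the required sign since $\nu > \mu$ and $\alpha \wedge (d\alpha)^{m-1}$ is a volume form on $M_2$.

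Next I would verify that $\beta := e^{-s}\lambda_- + e^{s}\lambda_+ = 2\sinh(s)\, e^{\mu t}\, d\theta + 2\cosh(s)\, e^{\nu t}\, \alpha$ satisfies $(d\beta)^{m+1} > 0$ on $\RR \times V$. Writing $d\beta$ as a sum of five $2$-form summands and expanding $(d\beta)^{m+1}$ multinomially, in any non-vanishing term each of $ds$, $dt$, $d\theta$, and $\alpha$ must appear exactly once. Only two families of multi-indices meet all four constraints: one pairs $(ds \wedge d\theta)$ with $(dt \wedge \alpha)$, and the other pairs $(dt \wedge d\theta)$ with $(ds \wedge \alpha)$; each is completed by $m-1$ copies of the remaining summand $2\cosh(s)e^{\nu t}\, d\alpha$. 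Collecting the two contributions with appropriate signs will produce
\[
(d\beta)^{m+1} = 2^{m+1}\, m(m+1)\, e^{(\mu + m\nu)t}\, \cosh^{m-1}(s)\, \bigl[\nu \cosh^2(s) - \mu \sinh^2(s)\bigr]\, ds \wedge d\theta \wedge dt \wedge \alpha \wedge (d\alpha)^{m-1}.
\]

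The entire argument then reduces to positivity of the bracketed factor. Rewriting it as $\nu \cosh^2(s) - \mu \sinh^2(s) = \nu + (\nu - \mu)\sinh^2(s)$, this is strictly positive whenever $\nu > 0$ and $\nu > \mu$; both are ensured for the linear models used here (the second is the transversality assumption made above, and the first is automatic when $\mu \geq 0$). The main obstacle in executing this plan is not conceptual but organizational: correctly identifying the two surviving multi-indices in the multinomial expansion of $(d\beta)^{m+1}$ and tracking the signs that arise from reordering the wedge products into a common top form.
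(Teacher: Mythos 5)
Your proposal is correct and follows essentially the same route as the paper's proof: a direct expansion of the wedge power. The paper writes $a = e^s + e^{-s}$, $b = e^s - e^{-s}$ (so $a = 2\cosh s$, $b = 2\sinh s$, $a^2 - b^2 = 4$) and, denoting $\dim M_2 = 2q+1$ (so $q = m-1$ in your notation), arrives at
\begin{equation*}
dB^{q+2} = (q+1)(q+2)\, a^q\, e^{(\mu + (q+1)\nu)t}\, (\nu a^2 - \mu b^2)\, ds\wedge d\theta\wedge dt\wedge\alpha\wedge d\alpha^q ,
\end{equation*}
which matches your formula exactly after the substitution.

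One remark on the last step: you correctly observe that $\nu\cosh^2 s - \mu\sinh^2 s = \nu + (\nu-\mu)\sinh^2 s$, so strict positivity requires \emph{both} $\nu > \mu$ and $\nu > 0$. The paper's sentence justifying positivity only invokes $\nu > \mu$ and $a^2 - b^2 = 4$, which by itself does not rule out, say, $\mu < \nu < 0$; an extra sign hypothesis (e.g.\ $\nu > 0$, or a normalization of the pair $(\mu,\nu)$ removing the residual $\pm$ ambiguity in the parametrization of the hypersurface) is implicitly being used. You hedge on this point too (``automatic when $\mu \ge 0$''), which leaves the same loose end, but you have at least identified it explicitly—a small improvement in precision over the published argument. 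The other minor difference is that you separately verify the contact conditions $\pm\lambda_\pm\wedge(d\lambda_\pm)^m > 0$, which the paper leaves implicit; this is harmless and in fact follows from $d\beta^n > 0$ by looking at the $s\to\pm\infty$ asymptotics.
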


\begin{proof}
We set $a = e^s + e^{-s}$, $b = e^s - e^{-s}$ and $e_\rho = e^{\rho t}$ for any
real number $\rho$ so that our candidate Liouville form on 
$\RR \times M_1 \times \RR \times M_2$ is $B = ae_\nu \alpha + b e_\mu d\theta$.
One computes
\begin{equation*}
dB = e_\nu(a\nu \,dt + b\,ds) \wedge \alpha + a e_\nu\,d\alpha
+e_\mu(b\mu\,dt + a\,ds)\wedge d\theta
\end{equation*}
and then, denoting by $2q + 1$ the dimension of $M_2$,
\begin{equation*}
dB^{q + 2} = f(\nu a^2 - \mu b^2) 
ds \wedge d\theta \wedge dt \wedge \alpha \wedge d\alpha^q ,
\end{equation*}
where $f = (q + 1)(q + 2) a^q e_{\mu + (q + 1)\nu}$ 
and $\nu a^2 - \mu b^2$ is positive because $\nu > \mu$ and $a^2 - b^2 = 4$.
\end{proof}

\begin{remark}
One can ask whether the above proposition extends to other Liouville pairs. It
seems that not all linear models will be suitable for this. What we can prove,
but will not use in this paper, is that if $\alpha_\pm$ is a Liouville pair on
some manifold $M_1$ (of any dimension) then $\alpha_\pm + e^t \alpha$ is a
Liouville pair on $M_1 \times \RR \times M_2$. 
\end{remark}

Of course, the disadvantage of the contact product construction is that 
the resulting manifold is never compact, and there seems to be
no general way of finding compact quotients
of contact products.  We shall therefore specialize further by seeking
examples among Lie groups which can be seen as
symplectizations of some subgroups that have co-compact lattices.
(The idea to consider left-invariant contact forms on Lie groups
is borrowed from Geiges \cite{Geiges_disconnected}.)

Before that, let us describe a corollary of the following 
algebraic constructions that has the advantage of seeming somewhat
more concrete.  We will not use this concrete description explicitly,
so we leave it as an exercise to check that it can indeed be related to our
abstract treatement below.
Taking any integer $n \ge 0$, if we assign to $\RR^{n} \times \RR^{n+1}$ the
coordinates
$(t_1,\dotsc,t_n,\theta_0,\dotsc,\theta_n)$ 
then one can show using the above proposition or by an explicit calculation that
for a suitable choice of orientation on $\RR^n \times \RR^{n+1}$,
\begin{equation}
\label{eqn:LiouvillePair}
\alpha_\pm := \pm e^{t_1 + \dotsm + t_n} \, d\theta_0 +
e^{-t_1}\, d\theta_1 + \dotsm + e^{-t_n} \, d\theta_n \;.
\end{equation}
is a Liouville pair. We would like to prove the existence of 
compact quotients of 
$\RR^n \times \RR^{n+1}$ to which $\alpha_+$
and~$\alpha_-$ both descend.  Observe that both are invariant under the group
action of $\RR^n$ on $\RR^n \times \RR^{n+1}$ defined by
\begin{multline}
\label{eqn:groupAction}
(\tau_1,\dotsc,\tau_n) \cdot (t_1,\dotsc,t_n,\theta_0,\theta_1,\dotsc,
\theta_n) := \\
(t_1 + \tau_1,\dotsm,t_n + \tau_n, e^{-(\tau_1 + \dotsm + \tau_n)} \theta_0,
e^{\tau_1} \theta_1,\dotsc,e^{\tau_n} \theta_n) \;.
\end{multline}

What we will prove in the next few sections implies the following:

\begin{lemma}
\label{lemma:concrete}
There exist lattices $\Lambda \subset \RR^n$
and $\Lambda' \subset \RR^{n+1}$ such that the group action of $\Lambda$
on $\RR^n \times \RR^{n+1}$ defined by \eqref{eqn:groupAction} preserves
$\RR^n \times \Lambda'$.
\end{lemma}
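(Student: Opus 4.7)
The plan is to derive both lattices from the arithmetic of a totally real number field $\k$ of degree $n+1$ over $\QQ$; such fields exist in every degree (for instance, the maximal real subfield of a cyclotomic field). Let $\sigma_0,\sigma_1,\dots,\sigma_n\colon \k\hookrightarrow\RR$ denote the $n+1$ real embeddings, and define
$$\Lambda' := \{(\sigma_0(x),\sigma_1(x),\dots,\sigma_n(x)) : x \in \Ok\} \subset \RR^{n+1},$$
which is a full-rank lattice. The group $\Okxp$ of totally positive units acts on $\Ok$ by multiplication; this action conjugates through the embedding into coordinate-wise diagonal multiplication by $(\sigma_0(u),\dots,\sigma_n(u))$ on $\RR^{n+1}$, which manifestly preserves $\Lambda'$. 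Because $u\in\Okxp$ is a unit, $\prod_i\sigma_i(u) = N(u) = \pm 1$, and total positivity forces the sign to be $+1$; hence the logarithmic embedding
$$L\colon \Okxp \to \RR^{n+1},\qquad u\mapsto (\log\sigma_0(u),\log\sigma_1(u),\dots,\log\sigma_n(u))$$
takes values in the hyperplane $H:=\{x_0+x_1+\dots+x_n=0\}$, and Dirichlet's unit theorem asserts that $L(\Okxp)$ is a rank-$n$ lattice in $H$.

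Identify $\RR^n$ with $H$ via
$$(\tau_1,\dots,\tau_n)\longmapsto \Bigl(-\sum_{i=1}^n \tau_i,\ \tau_1,\ \dots,\ \tau_n\Bigr),$$
and let $\Lambda\subset\RR^n$ be the preimage of $L(\Okxp)$ under this identification; this is a lattice by the previous paragraph. Under this correspondence, an element $\tau\in\Lambda$ arises from some $u\in\Okxp$ with $\tau_i = \log\sigma_i(u)$ for $i\ge 1$ and $-\sum\tau_i = \log\sigma_0(u)$, so the scalings $e^{\tau_i}=\sigma_i(u)$ and $e^{-\sum\tau_i}=\sigma_0(u)$ appearing in \eqref{eqn:groupAction} describe exactly the diagonal action by $u$ on $\RR^{n+1}$. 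Therefore $\tau\cdot\Lambda'\subset\Lambda'$, as required.

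The only genuine obstacle here is conceptual rather than computational: one needs to recognize that the very specific shape of the action \eqref{eqn:groupAction}---diagonal multiplicative entries constrained to have product $1$---is precisely the geometric incarnation of Dirichlet's unit theorem for a totally real field of degree $n+1$. Once this is noticed, the argument is immediate and indeed fits into the broader number-theoretic framework already established in the statement of Theorem~\ref{thm:LiouvilleExists}.
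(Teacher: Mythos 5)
Your proof is correct and follows essentially the same route as the paper. In \S\ref{subsec:number}--\S\ref{subsec:cocompact} the paper constructs, for an arbitrary number field $\k$ of degree $n+1$ with $r$ real and $2s$ complex embeddings, the lattice $\Gammak \ltimes j(\Ok)$ inside $\GrsunOrig$, using $j(\Ok)$ as the ``fiber'' lattice and the logarithmic image of $\Okxp$ (Dirichlet's unit theorem) as the ``base'' lattice; Lemma~\ref{lemma:concrete} is then stated to follow from this when $\k$ is totally real, i.e.\ $s=0$. Your writeup gives a direct, self-contained proof of exactly that totally real specialization: $\Lambda' = j(\Ok)$, the identification of $\RR^n$ with the trace-zero hyperplane $H$, and $\Lambda$ as the pullback of $L(\Okxp)$, with the preservation of $\Lambda'$ coming from $\Okxp\cdot\Ok = \Ok$. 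The only difference from the paper is organizational (you prove the lemma directly rather than extracting it from the more general Proposition~\ref{prop:moreLattices}, which also handles complex embeddings and is needed for the comparison with Geiges' groups in \S\ref{subsec:Geiges}); the underlying arithmetic — ring of integers for the torus fiber, Dirichlet's unit theorem for the base — is identical.
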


It follows that the action of $\Lambda$ on $\RR^n \times \RR^{n+1}$
descends to a smooth group action on $\RR^n \times (\RR^{n+1} / \Lambda')$,
so dividing by this action we obtain a bundle with fiber
$\RR^{n+1} / \Lambda' \cong \TT^{n+1}$ and base $\RR^n / \Lambda \cong \TT^n$,
to which the Liouville pair $(\alpha_+,\alpha_-)$ descends.  In this way
one obtains the following result, which suffices to prove the existence
of Liouville domains with disconnected boundary in all dimensions:

\begin{proposition}
\label{prop:torusBundle}
For every integer $n \ge 0$,
the Liouville pair defined by \eqref{eqn:LiouvillePair} on
$\RR^n \times \RR^{n+1}$ descends to a compact quotient which is
a $\TT^{n+1}$-bundle over $\TT^n$.
\end{proposition}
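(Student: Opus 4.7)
The plan is to reduce the statement to the explicit construction of lattices promised by Lemma~\ref{lemma:concrete}, and then identify the resulting compact quotient as a torus bundle. That the $1$-forms in \eqref{eqn:LiouvillePair} actually form a Liouville pair can be obtained by iterating Proposition~\ref{prop:Liouville_product} $n$ times, starting from the Liouville pair $\pm d\theta_0$ on~$\RR$ and repeatedly taking a linear model for the contact product with an extra $\RR$-factor; alternatively, one can verify the contact and Liouville conditions directly by hand. A short computation also shows that both $\alpha_+$ and $\alpha_-$ are invariant under the action \eqref{eqn:groupAction}: the factor $e^{\tau_i}$ coming from $d\theta_i\mapsto e^{\tau_i}d\theta_i$ cancels the factor $e^{-\tau_i}$ gained by $e^{-t_i}\mapsto e^{-(t_i+\tau_i)}$, and similarly the factor $e^{\tau_1+\dotsm+\tau_n}$ from $e^{t_1+\dotsm+t_n}$ is killed by the pullback of $d\theta_0\mapsto e^{-(\tau_1+\dotsm+\tau_n)}d\theta_0$.

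The real content is therefore Lemma~\ref{lemma:concrete}, which I would prove using algebraic number theory. Pick a totally real number field $\k$ of degree $n+1$ over~$\QQ$ (such fields exist in every degree, e.g.~as maximal real subfields of suitable cyclotomic fields), with real embeddings $\sigma_0,\dotsc,\sigma_n$. Embed $\k$ into $\RR^{n+1}$ by
\begin{equation*}
\iota\colon x\longmapsto\bigl(\sigma_0(x),\sigma_1(x),\dotsc,\sigma_n(x)\bigr),
\end{equation*}
so that $\Lambda':=\iota(\Ok)$ is a full lattice in $\RR^{n+1}$. The group of totally positive units $\Okxp$ acts on $\Lambda'$ by coordinate-wise multiplication, and by Dirichlet's unit theorem it is (after passing to a finite-index, torsion-free subgroup if necessary) free abelian of rank~$n$. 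Its image under the logarithm map $u\mapsto(\log\sigma_0(u),\dotsc,\log\sigma_n(u))$ is a lattice in the hyperplane $H=\{t_0+\dotsm+t_n=0\}\subset\RR^{n+1}$; projecting $H$ onto the last $n$ coordinates identifies it with $\RR^n$, and this identification yields the lattice $\Lambda\subset\RR^n$. By construction, if $(\tau_1,\dotsc,\tau_n)\in\Lambda$ corresponds to a unit $u$, then $\tau_i=\log\sigma_i(u)$ and $\log\sigma_0(u)=-(\tau_1+\dotsm+\tau_n)$, so multiplication by $u$ on $\Lambda'$ is exactly the diagonal action $\mathrm{diag}(e^{-\sum\tau_j},e^{\tau_1},\dotsc,e^{\tau_n})$ appearing in \eqref{eqn:groupAction}.

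With the lattices in hand, form the semidirect product $\Gamma:=\Lambda\ltimes\Lambda'$, where $\Lambda$ acts on $\Lambda'\subset\RR^{n+1}$ through the diagonal matrices just described. Letting $\Lambda$ act on $\RR^n\times\RR^{n+1}$ by \eqref{eqn:groupAction} and $\Lambda'$ act by translation on the second factor, one obtains a free, properly discontinuous, cocompact action of $\Gamma$ on $\RR^n\times\RR^{n+1}$ preserving the Liouville pair $(\alpha_+,\alpha_-)$. The projection onto the first factor descends to a fibration of $(\RR^n\times\RR^{n+1})/\Gamma$ over $\RR^n/\Lambda\cong\TT^n$ with fiber $\RR^{n+1}/\Lambda'\cong\TT^{n+1}$, giving the desired closed $\TT^{n+1}$-bundle over $\TT^n$ equipped with the induced Liouville pair.

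The main obstacle is Lemma~\ref{lemma:concrete}: it is not visible by elementary geometry that the additive lattice $\Lambda\subset\RR^n$ can be matched with a lattice $\Lambda'\subset\RR^{n+1}$ preserved under the exponential diagonal action, and this genuinely requires invoking the structure of totally positive units of a totally real number field via Dirichlet's theorem. Everything else (invariance of $\alpha_\pm$, freeness and cocompactness of the action, and the torus bundle structure of the quotient) is essentially formal once the lattices are available.
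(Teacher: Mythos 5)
Your proposal is correct and follows essentially the same route as the paper: the nontrivial content is Lemma~\ref{lemma:concrete}, which both you and the paper (\S\ref{subsec:number}--\ref{subsec:cocompact}, specialized to the totally real case $s=0$) establish by taking a totally real number field of degree $n+1$, applying Dirichlet's Unit Theorem to produce a rank-$n$ lattice of totally positive units acting on the rank-$(n+1)$ lattice $j(\Ok)$, and matching this with the action in \eqref{eqn:groupAction}. The remaining steps (invariance of $\alpha_\pm$, cocompactness, and the torus-bundle structure of the quotient) are formal exactly as you say; your only departure is that you work directly in the explicit coordinates of \eqref{eqn:LiouvillePair} and its group action, whereas the paper packages the argument in terms of left-invariant forms and lattices in the Lie groups $\GrsunOrig$ and leaves the coordinate translation as an exercise (also note that in the totally real case $\Okxp$ is automatically torsion-free, so your ``pass to a finite-index subgroup'' caveat is unnecessary).
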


Lemma~\ref{lemma:concrete} is trivial when $n=0$, and elementary when $n=1$:
for the latter case, one can choose $\Lambda \subset \RR$ to be 
generated by any real number $\tau \ne 0$ such that $e^\tau$ is an 
eigenvalue of some matrix $A \in \SL(2,\ZZ)$.  Then~$A$ may be viewed as
the matrix of the linear transformation $\RR^2 \to \RR^2 :
(\theta_0,\theta_1) \mapsto (e^{-\tau} \theta_0, e^\tau \theta_1)$ in some
other basis where it has integer coefficients. This transformation 
therefore preserves the lattice generated by that basis.
This produces a Liouville pair on every $\TT^2$-bundle over $\SS^1$ with 
hyperbolic monodromy---these examples have appeared in the previous
work of Geiges \cite{Geiges_disconnected4} and Mitsumatsu
\cite{Mitsumatsu_Anosov}.
A hint of the general arithmetic strategy we will use below appears in this
discussion, as the condition that $e^\tau$ should be an eigenvalue of some
matrix in $\SL(2,\ZZ)$ implies that $e^\tau$ belongs to a quadratic extension of
the field~$\QQ$.

\subsection{Some Lie groups as symplectizations}

Denote by $\Aff^+(\RR)$ the group of orientation preserving affine
transformations of the real line.  Similarly, $\widetilde\Aff(\CC)$ will
denote the universal cover of the group $\Aff(\CC)$ of affine
transformations of the complex plane, which can be identified with 
the semi-direct product $\CC \ltimes \CC$ by associating to any 
$(a,b) \in \CC \times \CC$ the
transformation $z \mapsto e^a z + b$.  Observe that the same trick
identifies $\Aff^+(\RR)$ with $\RR \ltimes \RR$.

Let $\gR$ denote the Lie algebra of the affine group $\Aff^+(\RR)$.
The identification $\Aff^+(\RR) = \RR \ltimes \RR$ defines global coordinates
$(t,\theta)$ on $\Aff^+(\RR)$ and hence a basis
$(\T,\X)$ of left-invariant vector fields that match $(\p_t,\p_\theta)$ at the 
identity; they satisfy $[\T, \X] = \X$.  
Writing the dual Lie algebra as $\gR^*$, its dual 
basis is the pair of left-invariant $1$-forms
\begin{equation*}
  \T^* = dt, \qquad \X^* = e^{-t}\, d\theta.
\end{equation*}
Thus we can associate to $\Aff^+(\RR)$ the left-invariant Liouville forms
$\Theta^*$ or $-\Theta^*$ and view it as the symplectization of 
$(\RR, d\theta)$ or $(\RR, -d\theta)$ respectively 
with fiber coordinate $-t = -\ln \circ \det$.  Note that $(\RR,\pm d\theta)$
has a canonical contact type embedding into $(\Aff^+(\RR),\pm \Theta^*)$,
namely as the unimodular subgroup $\{ \det = 1 \}$.

We denote by $\gC$ the Lie algebra of 
$\widetilde{\Aff}(\CC) = \CC \ltimes \CC$.  Using coordinates
$(u + iv,x + iy)$ on $\CC \ltimes \CC$, the basis
$(\R,\Ss,\U,\V)$ of $\gC$ defined to match $(\p_u,\p_v,\p_x,\p_y)$ at the
identity satisfies the relations
\begin{equation*}
\begin{array}{ll}
\left[\R, \U\right] = \U, & [\Ss, \U] = \V, \\
\left[\R, \V\right] = \V, & [\Ss, \V] = -\U,
\end{array}
\end{equation*}
with all other brackets vanishing. These relations give the following exterior
derivatives for the dual basis of left-invariant $1$-forms:
\begin{equation}\label{eqn:exteriorC}
\begin{aligned}
d\R^* = 0, \qquad & d\U^* = \U^* \wedge \R^* + \Ss^* \wedge \V^*, \\
d\Ss^* = 0,\qquad & d\V^* = \U^* \wedge \Ss^* + \V^* \wedge \R^*.
\end{aligned}
\end{equation}
Although this will not be used, we note for concreteness that in the
coordinates defined above,
\begin{equation*}
\begin{array}{ll}
\R^* = du, \quad & \U^* = e^{-u} (\cos v)\, dx + e^{-u} (\sin v)\, dy, \\
\Ss^* = dv, \quad & \V^* = - e^{-u} (\sin v)\, dx + e^{-u} (\cos v)\, dy.
\end{array}
\end{equation*}
Now we can define a left-invariant Liouville form as, for instance, 
\[
\beta = \U^* \quad \text{so that} \quad 
d\beta^2 = -2\;\R^* \wedge \Ss^* \wedge \U^*\wedge \V^*.
\]
The corresponding Liouville vector field is $\beta^\# = -\R$, which is
transverse to the unimodular subgroup $\{\abs{\det}^2 = 1\} = i\RR \ltimes \CC$,
whose Lie algebra is the kernel of~$\R^*$. So we have on $\widetilde\Aff(\CC)$
a left-invariant symplectization structure with fiber coordinate
$- \ln \circ |\det|$, where $\det$ denotes the determinant of the projection in
$\Aff(\CC)$.

We now combine any number of copies of the two preceding Lie groups as
$\GrsOrig := \Aff^+(\RR)^r \times \widetilde\Aff(\CC)^s$, and then consider
the subgroup
\begin{equation*}
\GrsunOrig = \Big\{ 
(\varphi_1,\dotsc,\varphi_r,\psi_1, \dotsc, \psi_s) \in \GrsOrig \;\Big|\; 
\prod_i \det\varphi_i \prod_j \abs{\det\psi_j}^2 = 1\Big\} \;,
\end{equation*}
where $\det\psi_j$ should be understood again as the determinant of the
projection of $\psi_j$ to~$\Aff(\CC)$. 
The discussion above shows that this group can be seen as a linear model for a
contact product. When $r$ is positive, we can single out one of the
$\Aff^+(\RR)$ factors 
and apply Proposition \ref{prop:Liouville_product} to obtain:

\begin{corollary}
	\label{cor:Liouville_Lie_group}
For any positive $r$, the Lie group $\GrsunOrig$ admits a left-invariant
Liouville pair. \qed
\end{corollary}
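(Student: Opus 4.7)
The plan is to exhibit $\GrsunOrig$ as a linear model, in the sense of the discussion preceding Proposition~\ref{prop:Liouville_product}, for the contact product of $(\RR,\pm d\theta)$---coming from one of the $r$ real affine factors of $\GrsOrig$---with a certain contact manifold built from the remaining factors. Once this is established, Proposition~\ref{prop:Liouville_product} immediately supplies a Liouville pair, and left invariance falls out automatically from the construction.

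First I would split off a distinguished affine factor: $\GrsOrig = \Aff^+(\RR)_1 \times H$ with $H := \Aff^+(\RR)^{r-1} \times \widetilde{\Aff}(\CC)^s$. The factor $\Aff^+(\RR)_1$ carries the left-invariant Liouville form $\Theta_1^* = e^{-t_1}\,d\theta_1$ and realizes itself as the symplectization of its unimodular subgroup $\RR$ equipped with the Liouville pair $\pm d\theta_1$. On $H$, I would use the left-invariant Liouville form
\[
\lambda_H \;:=\; \sum_{i=2}^{r}\Theta_i^* + \sum_{j=1}^{s}\U_j^*,
\]
and verify, using the structure equations recalled above, the two identities $\iota_{X_H}\lambda_H = 0$ and $\mathcal{L}_{X_H}\lambda_H = \lambda_H$ for the associated Liouville vector field $X_H = -\sum_{i\ge 2}\partial_{t_i} - \sum_{j}\partial_{u_j}$. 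Since $X_H$ is complete and transverse to the closed subgroup $H_1 := \bigl\{\prod_{i\ge 2}\det\varphi_i \prod_{j}|\det\psi_j|^2 = 1\bigr\}$, these identities imply that the flow of $X_H$ induces a diffeomorphism $\RR \times H_1 \to H$ pulling $\lambda_H$ back to $e^{s_H}\alpha_H$, where $\alpha_H := \lambda_H|_{H_1}$ is a left-invariant contact form on $H_1$. This presents $H$ as the symplectization of $(H_1,\alpha_H)$ in the form required by Proposition~\ref{prop:Liouville_product}.

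Rewriting the defining equation $\sum_{i=1}^{r}t_i + 2\sum_{j=1}^{s}u_j = 0$ of $\GrsunOrig$ in terms of the symplectization coordinates $s_1 := -t_1$ on $\Aff^+(\RR)_1$ and $s_H$ (normalized by $X_H(s_H)=1$) on $H$, one obtains $s_1 = -(r-1+2s)\,s_H$. This is precisely the image of the linear-model embedding $(m_1,t,m_2)\mapsto (\mu t, m_1, \nu t, m_2)$ with $\mu = -(r-1+2s)$ and $\nu = 1$, and these satisfy $\nu > \mu$ for every $r \ge 1$ and $s \ge 0$. Proposition~\ref{prop:Liouville_product} then supplies the desired Liouville pair, which unwinds to the restriction of the globally defined left-invariant $1$-forms $\alpha_\pm := \pm\Theta_1^* + \lambda_H$ on $\GrsOrig$; left invariance on the closed subgroup $\GrsunOrig$ is then immediate. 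The only non-formal step is the identification of $H$ with a standard symplectization: the form $\lambda_H$ is a sum of contributions carrying different ``exponential weights'' from different factors and does not visibly factor as $e^{s_H}\alpha_H$, but the two Cartan-calculus identities above, together with completeness of $X_H$, force this factorization through the Liouville flow.
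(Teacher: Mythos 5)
Your proposal is correct and follows the paper's own (terse) argument: single out an $\Aff^+(\RR)$ factor, recognize $\GrsunOrig$ as a linear model for the contact product of $(\RR,\pm d\theta)$ with the unimodular subgroup $H_1$ of the remaining factors, and apply Proposition~\ref{prop:Liouville_product}. Your spelled-out verification that the Liouville flow of $X_H$ identifies $H$ with the symplectization of $H_1$, and the explicit computation $\mu = -(r-1+2s)$, $\nu = 1$ with $\nu > \mu$, are accurate details that the paper leaves implicit.
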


The goal of the next two sections is to prove the existence of 
co-compact lattices in this group in
order to find closed manifolds with Liouville pairs.
As preparation, it will be useful observe that both 
$\GrsOrig$ and $\GrsunOrig$ can be
viewed as semi-direct products: setting
\begin{equation}\label{eqn:hLie}
\begin{split}
\h^{r,s} &:= \RR^r \times \CC^s,\\
\h^{r,s}_1 &:= \Big\{ (t_1,\dotsc,t_r,w_1,\dotsc,w_s) \in \h^{r,s} \ \Big|\ 
\sum_i t_i + 2\sum_j \RealPart w_j  = 0 \Big\}
\end{split}
\end{equation}
and defining the action of each on $\RR^r \times \CC^s$ by
\begin{equation*}
  (t_1,\dotsc,t_r,w_1,\dotsc,w_s) \cdot (\theta_1,\dotsc,\theta_r,z_1,\dotsc,z_s)
  := (e^{t_1} \theta_1,\dotsc, e^{t_r} \theta_r,
  e^{w_1} z_1, \dotsc, e^{w_s} z_s) \;,
\end{equation*}
we have natural isomorphisms
$\GrsOrig = \h^{r,s} \ltimes (\RR^r \times \CC^s)$ and
$\GrsunOrig = \h^{r,s}_1 \ltimes (\RR^r \times \CC^s)$.

\begin{remark}
For most of the following discussion, the reader is free to assume $s=0$,
since this suffices to prove Lemma~\ref{lemma:concrete} and thus the
existence of closed manifolds admitting Liouville pairs in all dimensions.
The case $s > 0$ is only really needed to 
relate our results to those of Geiges in
\S\ref{subsec:Geiges}.
\end{remark}

\subsection{Some number theory}
\label{subsec:number}

In this section we will need some standard notions and results
from algebraic number theory, e.g.~Dirichlet's Unit Theorem; a good
reference for this material is \cite{Marcus_numberFields}.

Throughout this section and the next, $\k$ will denote a number field, 
i.e.~a finite degree extension of $\QQ$, and $n$ will 
denote its degree $[\k : \QQ]$. 
Such a field is always isomorphic to $\QQ[X]/(f)$ for some irreducible
polynomial $f \in \QQ[X]$ of degree~$n$ (with simple roots).
We will denote by $r$ the number of real roots and by $s$ the number of
complex conjugate pairs of nonreal roots, thus $n = r + 2s$.
Each root $\alpha$ gives an embedding of $\k$ into $\CC$, sending 
(the equivalence class of) $X$ to $\alpha$. 
These embeddings will be denoted by $\rho_1, \dotsc, \rho_r$ and 
$\sigma_1, \dotsc, \sigma_s, \bar\sigma_1, \dotsc, \bar \sigma_s$.
This method actually gives all embeddings of $\k$ into $\CC$, and
we can collect them to define an injective map
\begin{equation*}
  j \colon \k \to \RR^r \times \CC^s : 
  x  \mapsto (\rho_1(x), \dotsc,  \rho_r(x), \sigma_1(x), \dotsc, \sigma_s(x)) \;.
\end{equation*}
The norm of an element of $\k$ is defined as
$N(x) = \prod_i \rho_i(x) \prod_j \abs{\sigma_j(x)}^2$, and the fact that
$f$ is irreducible implies that $N(x)$ vanishes only when $x=0$.
The ring of integers $\Ok$ of $\k$ is by definition 
the set of all elements in $\k$  which
are roots of monic polynomials with coefficients in~$\ZZ$.  These all have 
integer-valued norms, and
an important observation is that the map $j$ defined above sends $\Ok$ to a
lattice in $\RR^r \times \CC^s$.

Invertible elements in the ring $\Ok$ are called \emph{units} of $\k$, and 
they form a (multiplicative) group denoted by $\Okx$.
They all have norm $\pm 1$ since $N(xy) = N(x)N(y)$.
We denote by $\Okxp$ the subgroup of \emph{positive} units: 
$\Okxp = \{ x \in \Okx \ |\  \rho_i(x) > 0 \text{ for all } i\}$.
Among units are the roots of unity, whose (finite) set is denoted by $\Uk$.
We also set $\Ukp = \Uk \cap \Okxp$.  Dirichlet's Unit Theorem implies
that $\Okx$ is a finitely generated abelian group with torsion $\Uk$ and 
rank $r + s - 1$.  Since $x^2 \in \Okxp$ whenever $x \in \Okx$, it follows
that $\Okxp$ is similarly the product of the finite cyclic group $\Ukp$
with a free abelian group of rank $r + s - 1$.
The map~$j$ restricts to an injective group homomorphism of $\Okxp$ into
the multiplicative group $H^{r,s} := (\RRpx)^r \times (\CCx)^s$, and since
$N(\Okxp) = \{1\}$, its image lies in the subgroup
\begin{equation*}
H_1^{r,s} = 
\Big\{ (\rho_1, \dots, \rho_r, \sigma_1, \dots, \sigma_s) \in H^{r,s} \ \Big|\ 
\prod_i \rho_i \prod_j \abs{\sigma_j}^2 = 1 \Big\} \;.
\end{equation*}
The precise formulation of Dirichlet's theorem is that $j(\Okxp)$ is a lattice
in $H_1^{r,s}$.

\subsubsection*{Examples}
We now discuss three examples of increasing complexity to see all the objects
discussed above appearing.
In the next subsection we will see the contact manifolds associated to these
fields and the Liouville pair construction where applicable.

The very first example of a number field is $\QQ$ itself. In this case $n = 1$,
$f = X - 1$, $r = 1$, $s = 0$ and $j$ is the inclusion of $\QQ$ in $\RR$.
The ring of integers is $\Ok = \ZZ$, with $\Okx = \{ \pm 1 \}$ and 
$\Okxp = \{ 1 \}$.

As a less trivial number field, we consider $\k = \QQ[i]$.
We have $n = 2$ and can set $f = X^2 + 1$, so the roots are $\pm i$,
hence $r = 0$ and $s = 1$.
Choosing $i$ as a member of the complex conjugate pair $\pm i$, we have
$j \colon \k \to \CC : a + bX \mapsto a + ib$.  The norm of $a + bX$ is
$a^2 + b^2$.  The integer ring $\Ok$ is $\ZZ + \ZZ X$, and its
image under $j$ is the lattice $\ZZ + i \ZZ$ in $\CC$.
The group of units is $\Okx = \{\pm 1, \pm X\} = \Uk$. All units are
automatically positive since there is no real embedding.
We have $H^{0,1} = \CC^*$ and $H_1^{0,1} = \SS^1$, in which 
$j(\Okxp)$ is indeed a lattice.

As a last example, we consider $\k = \QQ[\sqrt 2]$. 
Here $n = 2$ and $f = X^2 -2$ with roots $\pm\sqrt 2$, so $r = 2$ and $s = 0$.
The $j$ map is defined by $a + bX \mapsto (a + b\sqrt 2, a -b \sqrt 2)$.
The norm of $a + b X$ is $a^2 - 2b^2$.
The integer ring $\Ok$ is $\ZZ + \ZZ X$, and its
image under~$j$ is the lattice 
\begin{equation*}
\{ (a + b\sqrt 2, a - b\sqrt 2) \ |\  a,b \in \ZZ \} 
= \ZZ(1, 1) + \ZZ(\sqrt 2, -\sqrt 2) \subset \RR^2 \;.
\end{equation*}
The group of units is $\Okx = \{\pm (1 + X)^k\ |\ k \in \ZZ\}$,
and $\Uk = \{ \pm 1\}$. 
Restricting to positive elements, we have
$\Okxp = \{(3 + 2X)^k \ |\ k \in \ZZ\}$
and $\Ukp = \{ 1\}$. 
The image of $\Okxp$ in $H^{2,0} = (\RR^*_+)^2$ is 
$j(\Okxp) = \left\{((3 + 2\sqrt 2)^k, (3 - 2\sqrt 2)^k) \ |\ k \in \ZZ\right\}$,
which is indeed a lattice in $H_1^{2,0} = \{(y,1/y) \in (\RRpx)^2 \ |\  y > 0\}$.

\subsection{A manifold associated to a number field}
\label{subsec:cocompact}

The next result provides the desired co-compact lattices in the
Lie groups~$\GrsunOrig$.

\begin{proposition}\label{prop:moreLattices}
Suppose $\k = \QQ[X] / (f)$ is a number field of degree $n = r + 2s \ge 1$,
where $f \in \QQ[X]$ is an irreducible polynomial with $r$ real
and $2s$ complex roots (all simple).  Then one can associate to $\k$
a lattice $\Gk \subset \GrsunOrig$ such that the quotient
\begin{equation*}
  M_\k := \GrsunOrig / \Gk
\end{equation*}
is a $\TT^n$-bundle over $\TT^{n-1}$.
\end{proposition}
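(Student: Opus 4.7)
The plan is to build $\Gk$ as a semi-direct product $\tilde \Gamma \ltimes j(\Ok)$ inside $\GrsunOrig = \h^{r,s}_1 \ltimes (\RR^r \times \CC^s)$, using the two natural sources of arithmetic structure attached to $\k$: the ring of integers $\Ok$, which gives a full-rank lattice in the fiber, and the group of positive units $\Okxp$, which by Dirichlet's Unit Theorem gives a lattice in the base (modulo a small adjustment accounting for the fact that $\h^{r,s}_1$ covers $H^{r,s}_1$).

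First I would take $\Lambda := j(\Ok) \subset \RR^r \times \CC^s$; this is a lattice of rank $n = r+2s$ by standard algebraic number theory. Next I would construct a lattice $\tilde\Gamma$ in $\h^{r,s}_1$ as follows. The componentwise exponential map
\begin{equation*}
  \exp \colon \h^{r,s} \to H^{r,s}, \qquad (t_1,\dotsc,t_r,w_1,\dotsc,w_s) \mapsto (e^{t_1},\dotsc,e^{t_r},e^{w_1},\dotsc,e^{w_s})
\end{equation*}
is a surjective group homomorphism whose kernel $K := \{0\}^r \times (2\pi i\,\ZZ)^s$ is a free abelian group of rank $s$ contained in $\h^{r,s}_1$. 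Restriction yields a covering $\exp \colon \h^{r,s}_1 \to H^{r,s}_1$. By Dirichlet's theorem, $j(\Okxp)$ is a lattice of rank $r+s-1$ in $H^{r,s}_1$; set $\tilde\Gamma := \exp^{-1}(j(\Okxp))$. Then $\tilde\Gamma$ is discrete and fits in an extension $0 \to K \to \tilde\Gamma \to j(\Okxp) \to 0$, hence has rank $(r+s-1)+s = n-1 = \dim_\RR \h^{r,s}_1$, so it is a lattice.

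I would then verify that $\Gk := \tilde \Gamma \ltimes j(\Ok)$ is a subgroup of $\GrsunOrig$, which amounts to checking that $\tilde\Gamma$ preserves $j(\Ok)$ under the action~$\phi$ defined in~\eqref{eqn:hLie}. For $\tilde u \in \tilde\Gamma$ with $\exp(\tilde u) = j(u)$, $u \in \Okxp$, the action of $\phi(\tilde u)$ multiplies the $i$-th real coordinate by $\rho_i(u)$ and the $j$-th complex coordinate by $\sigma_j(u)$; thus $\phi(\tilde u)(j(x)) = j(ux)$, which lies in $j(\Ok)$ since $\Ok$ is a ring. The generators in $K$ act trivially on $\RR^r \times \CC^s$ and so trivially preserve $j(\Ok)$. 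This also shows $\Gk$ acts freely and properly discontinuously on~$\GrsunOrig$.

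Finally, the short exact sequence $\RR^r \times \CC^s \hookrightarrow \GrsunOrig \twoheadrightarrow \h^{r,s}_1$ descends to a fibration $M_\k \to \h^{r,s}_1 / \tilde\Gamma \cong \TT^{n-1}$; the fiber over a class $[h_0]$ is identified with $(\RR^r \times \CC^s)/\phi(h_0)(j(\Ok))$, which is diffeomorphic to $\TT^n$ since $\phi(h_0)$ is a linear automorphism. This gives the claimed $\TT^n$-bundle structure and, in particular, compactness of~$M_\k$. The only real content is the rank count (supplied by Dirichlet) and the observation that the $\Okxp$-action on $\Ok$ lifts consistently through the exponential covering, so I do not anticipate a serious obstacle once this algebraic framework is in place.
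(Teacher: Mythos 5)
Your argument is correct and follows the same route as the paper: both take $\Gk = \Gammak \ltimes j(\Ok)$ with $\Gammak = \exp^{-1}(j(\Okxp)) \subset \h^{r,s}_1$, use Dirichlet's Unit Theorem plus the rank-$s$ kernel $K = \{0\}\times 2\pi i\,\ZZ^s$ of $\exp$ to get the base lattice of rank $n-1$, observe that unit multiplication preserves $\Ok$ to get compatibility of the semidirect product, and read off the $\TT^n$-bundle structure over $\TT^{n-1}$ from the projection to $\h^{r,s}_1/\Gammak$. The only (harmless) cosmetic difference is that you spell out the extension $0\to K\to\tilde\Gamma\to j(\Okxp)\to 0$ and the free/properly-discontinuous verification, which the paper leaves implicit.
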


To prove this, we continue with the same notation as in the previous
section and observe that the Lie algebras of
$H^{r,s}$ and $H^{r,s}_1$ are precisely $\h^{r,s}$ and $\h^{r,s}_1$
respectively, defined in \eqref{eqn:hLie} above.
Since $H^{r,s}$ is abelian, the exponential map 
\begin{equation*}
  \exp \colon \h^{r,s} \to H^{r,s} : (t_1,\dotsc,t_r,w_1,\dotsc,w_s) \mapsto
  (e^{t_1},\dotsc,e^{t_r},e^{w_1},\dotsc,e^{w_s})
\end{equation*}
is a surjective group homomorphism, as is its restriction
to $\h^{r,s}_1 \to H^{r,s}_1$, and
its kernel is the free abelian group 
$\{0\} \times 2\pi i \ZZ^s \subset \h^{r,s}_1$.
Thus the preimage of
$j(\Okxp)$ in $\h_1^{r,s}$ is a rank $r + s - 1 + s = n - 1$ lattice,
which we denote by~$\Gammak$.

The group $H^{r,s}$ acts on $\RR^r \times \CC^s$ via coordinate-wise
multiplication, so pulling back this action via the exponential map
defines an action of $\Gammak$ on $\RR^r \times \CC^s$, which preserves
the lattice $j(\Ok)$ since multiplication by elements of $\Okx$
preserves~$\Ok$.  The inclusions
$\Gammak \hookrightarrow \h_1^{r,s}$ and
$j(\Ok) \hookrightarrow \RR^r \times \CC^s$ can therefore be combined to an
inclusion of the semi-direct product
\begin{equation*}
  \Gk := \Gammak \ltimes j(\Ok) \hookrightarrow \h^{r,s}_1 \ltimes
  (\RR^r \times \CC^s) = \GrsunOrig \;,
\end{equation*}
forming a lattice.  Proposition~\ref{prop:moreLattices} now follows from
the observation that the projection $\GrsunOrig = \h^{r,s}_1 \ltimes
(\RR^r \times \CC^s) \to \h^{r,s}_1$ descends to a well-defined projection
\begin{equation*}
  \GrsunOrig / \Gk \to \h^{r,s}_1 / \Gammak \;,
\end{equation*}
forming a bundle with fiber $(\RR^r \times \CC^s) / j(\Ok) \cong \TT^n$ 
and base $\h^{r,s}_1 / \Gammak \cong \TT^{n-1}$.

Note that the only choices we made in the construction of $M_\k$ were the
ordering of the embeddings of $\k$ into $\RR$ and $\CC$, and which complex
embedding we pick out of each complex conjugate pair.
The manifold $M_\k$ does not depend on these choices up to diffeomorphism.
Moreover, each choice of orientations for the factors of $\Aff^+(\RR)$ 
in $\GrsOrig$ determines a contact structure on~$M_\k$ uniquely up to isotopy.
Indeed, aside from the orientation of $\Aff^+(\RR)$, the only other choices 
involved were the Liouville forms
on the relevant Lie groups, but one can check that all possible left-invariant
Liouville forms defining the same orientation
are isotopic---they form a connected open subset of the dual Lie
algebra.  If we fix a single orientation of $\Aff^+(\RR)$ from the beginning,
we then obtain the
canonical contact structure promised in Theorem~\ref{thm:LiouvilleExists} from
the introduction.

We remark that $G_\k$ is not the only possible lattice in $\GrsunOrig$. 
In the totally real case 
($s = 0$) in particular, one can replace $\Ok$ by any additive subgroup $M$ of $\k$
which is a free abelian group of rank $n$, and $\Okxp$ by any of its finite
index subgroups preserving $M$. The contact manifolds obtained in this way are cusp
cross sections of Hilbert modular varieties, see \cite[Chapter~1]{vdGeer}. In
particular, they are Stein fillable and can be embedded as separating strictly
pseudoconvex hypersurfaces in holomorphic manifolds.

For later use in \S\ref{sec:torsion}, we note the following observation.
\begin{lemma}
\label{lemma:centerGk} If $\k$ is a totally real number field which is not
	$\QQ$, then $\pi_1(M_\k)$ has trivial center.
\end{lemma}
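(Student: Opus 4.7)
The plan is to identify $\pi_1(M_\k)$ with $\Gk$ itself and then compute the center of $\Gk$ directly from its semidirect product structure.

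First I would observe that when $\k$ is totally real we have $s = 0$, so $\GrsunOrig = \h^{r,0}_1 \ltimes \RR^r$ is diffeomorphic to $\RR^{r-1} \times \RR^r = \RR^{2r-1}$ and is in particular simply connected. Since $\Gk$ acts freely and properly discontinuously on $\GrsunOrig$ by left translation, the covering $\GrsunOrig \to M_\k$ is universal and $\pi_1(M_\k) \cong \Gk$. So the problem reduces to showing that $\Gk$ has trivial center.

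Next, since $s = 0$, the exponential map $\exp \colon \h^{r,0}_1 \to H^{r,0}_1$ has trivial kernel (the kernel $\{0\} \times 2\pi i \ZZ^s$ degenerates), so restricting it gives an isomorphism $\Gammak \xrightarrow{\sim} j(\Okxp) \cong \Okxp$. Under this identification, the action of $\Gammak$ on $j(\Ok) \cong \Ok$ described in the paper is simply multiplication in $\k$, so we obtain the isomorphism
\begin{equation*}
\Gk \;\cong\; \Okxp \ltimes \Ok,
\end{equation*}
with multiplication $(u_1,a_1)(u_2,a_2) = (u_1 u_2,\, a_1 + u_1 a_2)$.

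Finally, I would compute the center of this semidirect product by hand. An element $(u,a)$ commutes with every $(v,b)$ iff $(1-v)a = (1-u)b$ for all $v \in \Okxp$ and $b \in \Ok$. Taking $v = 1$ and $b$ any nonzero integer forces $u = 1$; then taking any $v \in \Okxp \setminus \{1\}$ and using that $\k$ is an integral domain forces $a = 0$. The one place where $\k \neq \QQ$ is used is in guaranteeing the existence of such a $v$: by Dirichlet's Unit Theorem $\Okxp$ has rank $r + s - 1 = r - 1$, which is positive exactly when $n = r \geq 2$, i.e.~when $\k \neq \QQ$. The only real obstacle is being careful about this rank computation; everything else is a routine verification.
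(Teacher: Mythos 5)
Your proof is correct and follows essentially the same route as the paper's: identify $\pi_1(M_\k)$ with $\Gk = \Okxp \ltimes \Ok$ via the simply connected cover, and then compute the center of the semidirect product directly, using the integrality of $\Ok$ to kill the first coordinate and the positive rank of $\Okxp$ (Dirichlet, $r-1 \geq 1$) to kill the second. Your rearrangement of the centrality condition into $(1-v)a = (1-u)b$ is a cosmetic repackaging of the same two observations.
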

\begin{proof}
Since $M_\k = \GrsunOrig / G_\k$ and $\GrsunOrig$ is simply connected, the 
lemma is equivalent to the claim that the group $G_\k$ has trivial center.
In the totally real case (i.e.~$s=0$), we have $G_\k = \Okxp \ltimes \Ok$,
so as a set $\Gk = \Okxp \times \Ok$, and the composition law is
$(u, x)(u', x') = (uu', x + ux')$.
Suppose $(u, x)$ is central in $G_\k$. This implies that for any $(u', x')$,
$x + ux' = x' + u'x$.

We can apply this to $u' = 1$ to deduce that for any integer $x'$, $ux' = x'$.
Since $\Ok$ is integral (recall it embeds in $\RR$), we get $u = 1$ or $x' = 0$.
Since $\Ok$ is not a trivial group (it has rank $\deg(\k)$),
this implies $u = 1$.

Similarily, we can apply the above formula with $x' = 0$ to deduce that
for any unit $u'$, $u'x = x$.  Hence $u' = 1$ or $x = 0$.
Since $\Okxp$ has rank $\deg(\k) - 1$ and we assume $\deg(\k) > 1$, 
we obtain $x = 0$, so $(1, 0)$ is the only central element.
\end{proof}

\subsubsection*{Examples}
Recall that our first example was $\k = \QQ$. The corresponding $\GrsunOrig$
is the group of affine transformations of $\RR$ with determinant one,
i.e.~it is~$\RR$.  The unit group $\Okxp$ is trivial, hence it acts trivially 
on $\RR$, implying $M_\QQ = \RR/j(\Ok) \cong \RR/\ZZ = \SS^1$.
The resulting Liouville pair on $\SS^1$ is $(d\theta,-d\theta)$.

Our second example was $\k = \QQ[i]$, and the corresponding $\GrsunOrig$ is
$\widetilde\Aff(\CC) = i\RR \ltimes \CC$, whose elements $(iv,w)$ correspond
to affine transformations $z \mapsto e^{iv} z + w$.  Note that since $r=0$
in this case, all left-invariant contact forms on $\GrsunOrig$ induce the
same orientation, so there can be no left-invariant Liouville pair, but we can
still extract a co-compact lattice.
We have $\h^{r,s}_1 = i\RR \subset \CC = \h^{r,s}$, and 
$\Gammak \subset i\RR$ is
spanned by $m := i\pi / 2$, where $m$ stands for ``monodromy''.
The action of $m$ on $\CC$ is $z \mapsto e^{i\pi / 2} z$, which
does indeed preserve the lattice $j(\Ok) = \ZZ + i\ZZ$.
We conclude that $M_\k$ is a $\TT^2$-bundle over $\SS^1$ whose monodromy
is a quarter turn.  Observe that $M_\k$ is a finite quotient of $\TT^3$,
which cannot admit any Liouville pair due to
\cite[Example~2.14]{WendlGirouxTorsion}.  

We proceed to the last example $\k = \QQ[\sqrt 2]$. 
The corresponding $\GrsunOrig$ is the unimodular subgroup within
$\Aff^+(\RR)^2$, which is the solvable group of Thurston's geometries. 
In the hyperplane $\h_1^{r,s} = \{ (-t,t) \} \subset \RR^2 = \h^{r,s}$, 
$\Gammak$ is spanned by $m := \big(\ln(3 + 2\sqrt 2), \ln(3 - 2\sqrt 2)\big)$.
The action of $m$ on $\RR^2$ is then
$(y_1, y_2) \mapsto ((3 + 2\sqrt 2)y_1, (3 - 2\sqrt 2)y_2)$, and
one can check by hand that it indeed preserves the lattice $j(\Ok)$.
Recall that a basis of this lattice is $\{(1, 1), (\sqrt 2, - \sqrt 2)\}$.
In this basis, the matrix of $m$ is 
$
A = \begin{pmatrix}
3 & 4 \\
2 & 3
\end{pmatrix},
$
so we see that $M_\k$ is a $\TT^2$-bundle over $\SS^1$ with monodromy $A$,
which is hyperbolic.
The Liouville pair we constructed yields two contact structures
which rotate in opposite directions between the stable and unstable
foliations of the Anosov flow defined by the monodromy 
(cf.~\cite{Mitsumatsu_Anosov}).

\subsection{Geiges pairs and Geiges groups}
\label{subsec:Geiges}

The idea of Geiges in \cite{Geiges_disconnected} was to consider a special
class of Liouville pairs (without the general definition) that satisfy a much
stronger algebraic condition, and to look for examples among left-invariant
contact forms on Lie groups.  The particular groups that Geiges considered
turn out to be a subfamily of the ones that we've studied above.

\begin{definition}
A \defin{Geiges pair} on an oriented 
manifold $M^{2n + 1}$ is a pair of contact forms
$\alpha_+$ and $\alpha_-$ on $M$ such that:
\begin{itemize}
\item 
$\alpha_+ \wedge d\alpha_+^n = -\alpha_- \wedge d\alpha_-^n > 0$, and
\item 
for all $0 \leq k \leq n -1$, 
$\alpha_\pm \wedge d\alpha_\pm^k \wedge	d\alpha_\mp^{n - k} = 0$.
\end{itemize}
\end{definition}
A version of \cite[Proposition~1]{Geiges_disconnected} is then the simple
observation that Geiges pairs are also Liouville pairs.  Note that the
Liouville pairs we constructed in the preceding section are Geiges pairs in
dimensions~$1$ and~$3$, but not in higher dimensions in general.

Geiges constructed in each odd dimension $2n - 1$ a Lie group $G_{2n - 1}$
admitting a left-invariant Geiges pair, and also found co-compact lattices in
these groups in dimensions~$3$ and $5$, thus giving examples of compact
Liouville domains with two boundary components in dimensions~$4$ and~$6$.  We
shall now show that our number theoretic construction can also be used to find
co-compact lattices for all the Geiges groups, implying the existence of Geiges
pairs on some closed manifold in every odd dimension.

\begin{proposition}
	\label{prop:isomorphism}
For any positive integer $n$, there is an isomorphism between $G_{2n - 1}$ and
$\GrsunOrig$ where $r = 1$ if $n$ is odd, $r = 2$ if $n$ is even, and 
$s = (n - r)/2$. 
\end{proposition}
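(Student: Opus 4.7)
The plan is to establish the isomorphism on the level of Lie algebras and then exponentiate, using the fact that both $G_{2n-1}$ and $\GrsunOrig$ are connected and simply connected (each is a semidirect product of $\RR^k$'s, which are all simply connected). Concretely, I would first recall Geiges' explicit presentation of $\mathfrak{g}_{2n-1}$: a basis together with its bracket relations, where Geiges distinguishes one ``real'' eigendirection (or two, according to parity) and $s$ ``complex'' eigendirection pairs in the action of the Cartan-like subalgebra on a complementary abelian ideal. The parity of $n$ in the statement reflects the fact that the Geiges pair condition forces the trace of this action on the complementary ideal to vanish, and this can be arranged with real eigenvalues only when one can write $n$ as a sum of a single ``$\pm$-pair of real directions'' plus complex pairs—hence $r = 1$ when $n$ is odd and $r = 2$ when $n$ is even.

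Next I would write down a basis of $\grs$ adapted to the direct product decomposition $\grsOrig = \gR^r \oplus \gC^s$. Specifically, combining the bases $(\T_i, \X_i)_{i=1}^r$ of the $\gR$-factors with $(\R_j, \Ss_j, \U_j, \V_j)_{j=1}^s$ of the $\gC$-factors (as set up in the preceding section), one sees immediately from \eqref{eqn:exteriorC} and the relation $[\T_i, \X_i] = \X_i$ that the $\T_i$'s and $\R_j$'s span a commutative subalgebra $\h^{r,s}$, acting diagonally (in the complex sense for the $\gC$-factors) on the abelian ideal spanned by the $\X_i$'s, $\U_j$'s and $\V_j$'s. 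Restricting to $\grs \subset \grsOrig$ amounts to intersecting this Cartan-like subalgebra with the hyperplane $\h^{r,s}_1$ defined by $\sum_i t_i + 2 \sum_j \RealPart w_j = 0$; this is precisely the traceless condition for the action on the ideal. With this picture in hand, constructing the isomorphism $\mathfrak{g}_{2n-1} \to \grs$ is just a matter of matching Geiges' ``action'' generators with the generators of $\h^{r,s}_1$ and his ``translation'' generators with the $\X_i, \U_j, \V_j$.

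The exponentiation step is then routine: the connected simply connected Lie group with Lie algebra $\grs$ is $\h^{r,s}_1 \ltimes (\RR^r \times \CC^s) = \GrsunOrig$, and Geiges' group $G_{2n-1}$ is likewise the unique connected simply connected group integrating $\mathfrak{g}_{2n-1}$, so the Lie algebra isomorphism integrates uniquely to a Lie group isomorphism.

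The main obstacle I expect is purely bookkeeping: one must match Geiges' sign and scaling conventions with ours, and in particular verify that the specific hyperplane picked out in Geiges' construction (which arises from requiring the Geiges pair condition, not a priori from a trace condition on some semisimple action) really does correspond to $\h^{r,s}_1$ and not some other codimension-one subspace. The identification of the parity parameter $r \in \{1,2\}$ will come out of this matching: since the determinant of the action of an element of $\h^{r,s}$ on the ideal $\RR^r \times \CC^s$ is $\exp(\sum t_i + 2 \sum \RealPart w_j)$, the unimodular condition is exactly the one that sees no distinction between the real and complex eigendirections, and comparison with Geiges' construction then fixes $r$ to be the unique value in $\{1, 2\}$ with $r \equiv n \pmod{2}$.
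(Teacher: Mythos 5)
The high-level strategy --- pass to Lie algebras, match generators, then integrate using simple connectedness --- is sound and is essentially parallel to the paper's group-level argument (for these simply connected solvable groups, the two viewpoints are interchangeable). But the proposal leaves out the heart of the proof: the spectrum of the matrix $A$ defining $G_{2n-1}$ is never computed. The proposition's content --- that $r=1$ or $r=2$ according to the parity of~$n$, and $s=(n-r)/2$ --- comes from the fact that $A$ is (up to a sign change of one basis vector) the matrix of a cyclic permutation of order~$n$, hence orthogonal with characteristic polynomial $X^n - 1$; its eigenvalues are therefore the $n$th roots of unity, giving one real root $\lambda = 1$ if $n$ is odd, two real roots $\lambda = \pm 1$ if $n$ is even, and $(n-r)/2$ conjugate pairs $e^{\pm i j\theta}$. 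Your sketch asserts the answer ("$r=1$ when $n$ is odd, $r=2$ when $n$ is even") by a vague parity argument about "writing $n$ as a sum of directions" but never performs the eigenvalue computation that actually determines it.

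A second, related, gap concerns the traceless condition. You attribute the vanishing trace to "the Geiges pair condition," but this is not how it arises: the group $G_{2n-1} = \RR^{n-1} \ltimes_A \RR^n$ is defined via the specific matrix $A$, and the fact that $\tr A^k = 0$ for $1 \le k \le n-1$ follows directly from $A$ being similar to a cyclic permutation (nontrivial powers of a length-$n$ cycle have no fixed points, hence zero diagonal). This is exactly the verification needed to show the image of your candidate homomorphism lands in the unimodular subgroup $\GrsunOrig$ rather than all of $\GrsOrig$, and it requires inspecting $A$ --- not invoking the Geiges pair. Without the explicit normal form $B = P A P^{-1}$ (block diagonal with real blocks and rotation blocks) there is no isomorphism to "bookkeep"; writing it down, and checking the trace of each $B^k$, is the proof.
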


The remainder of this section is devoted to the proof of this isomorphism.
The pairs
constructed by Geiges have a nice form in the basis of the Lie algebra he
considered, but our isomorphism will not preserve this basis in any nice way.
Of course, the point of our description of these groups was that it 
makes the construction of co-compact lattices much easier.

First we recall the definition of the Geiges group $G_{2n - 1}$.
For each positive integer $n$, let $A$ denote the $n \times n$ matrix
\begin{equation*}
A = 
\begin{pmatrix}
0      & -1      & 0      & \dots  & 0      \\
0      &  0      & 1      & \dots  & 0      \\
\vdots & \vdots  & \ddots & \ddots & \vdots \\
0      & 0       & \dots  &   0    & 1      \\
-1     & 0       & \dots  & \dots  & 0      
\end{pmatrix} \;.
\end{equation*}
We define the $(2n - 1)$-dimensional Geiges group $G_{2n - 1}$ as the
semi-direct product $\RR^{n - 1} \ltimes_A \RR^n$, where
$(y_1, \dots, y_{n - 1})$ acts as $\exp(y_1 A + \cdots + y_{n - 1} A^{n - 1})$
on $\RR^n$.
Reversing the sign of the first vector in the canonical basis of $\RR^n$, one
sees that $A$ is similar to the matrix of cyclic permutation of this basis.
In particular, all powers of $A$ appearing in the action have vanishing trace,
because powers between $1$ and $n - 1$ of this permutation have no
nontrivial fixed points, thus no diagonal term can appear.

The matrix $A$ is orthogonal and has characteristic polynomial
$X^n - 1$, so its eigenvalues are the~$n$th roots of unity.
We denote by $R_\alpha$ the $2 \times 2$ rotation matrix with
angle $\alpha$ and set $\theta = 2\pi/n$.
Then $A$ is similar to a block diagonal matrix 
$B = \mathrm{diag}(1, R_\theta, \dots, R_{s\theta})$ if $r = 1$, or 
$B = \mathrm{diag}(1, -1, R_\theta, \dots, R_{s\theta})$ if $r = 2$.
Choose an invertible matrix $P$ such that $A = P^{-1} B P$.
The map $(y, x) \mapsto (y, Px)$ is now an isomorphism 
$\RR^{n-1} \ltimes_A \RR^n \to \RR^{n-1} \ltimes_B \RR^n$, where we define
the latter group using~$B$ instead of~$A$ to construct the
$\RR^{n - 1}$-action analogously.

To simplify the notation, we now assume that $n$ is odd, so $r = 1$
and $s = (n-1) / 2$; the other case is completely analogous.
The matrix $B$ can be seen as acting on $\RR \times \CC^s$, with $R_\theta$
acting as multiplication by $e^{i\theta}$.  The matrix
$\rho(y) = y_1 B + \dotsm + y_{n-1} B^{n-1}$ for $y \in \RR^{n-1}$
thus splits into block form as $\mathrm{diag}(\rho_0(y),\rho_1(y),\dotsc,
\rho_s(y))$ for some linear maps $\rho_0 \colon \RR^{n-1} \to \RR$
and $\rho_i \colon \RR^{n-1} \to \CC$, $i=1,\dotsc,s$.  Using the
identification $\GrsOrig = (\RR \times \CC^s) \ltimes (\RR \times \CC^s)$, 
we can now write down an injective group homomorphism
\begin{equation*}
\begin{split}
\RR^{n-1} \ltimes_B (\RR \times \CC^s) &\to \GrsOrig \\
\big(y, (x, z_1, \dots, z_s)\big) &\mapsto 
\big( (\rho_0(y),\rho_1(y),\dotsc,\rho_s(y)),(x,z_1,\dotsc,z_s) \big).
\end{split}
\end{equation*}
Since $B$ is similar to $A$ and each power of $A$ appearing in the 
definition of the Geiges group has vanishing trace, the same is true 
for~$B$.  After taking the exponential, this translates to the fact that the
above map actually takes values in the subgroup $\GrsunOrig$: indeed,
$\abs{\exp(e^{i\theta})}^2 = \exp(2\RealPart e^{i\theta}) = \exp(\tr R_\theta)$.
We conclude that it is an isomorphism to $\GrsunOrig$ since the dimensions
match.  The desired isomorphism from $G_{2n-1}$ to $\GrsunOrig$ is now
obtained by composing the two isomorphisms we've constructed.

\section{Lutz twists and Giroux torsion in higher dimensions}
\label{sec:torsion}

In this section we examine the $(2n-1)$-dimensional generalizations of
Giroux torsion and Lutz twists that arise from any closed 
$(2n-3)$-dimensional manifold with a Liouville pair. We will begin with general
considerations and then turn to specific examples constructed using the
Liouville manifolds of \S\ref{sec:Liouville} to prove
Theorems~\ref{thm:generalisationTori} and \ref{thm:exist_weak_not_strong}
(in~\S\ref{subsec:weakNotStrong}) and~\ref{thm:stacking}
(in~\S\ref{subsec:tightNotWeak}) from the introduction.

Throughout the following, we choose an integer $n \ge 2$ and assume 
$M$ to be a closed oriented $(2n-3)$-dimensional manifold with a fixed 
Liouville pair $(\alpha_+,\alpha_-)$, writing the resulting 
positive/negative contact structures as $\xi_\pm = \ker\alpha_\pm$.
We will often consider manifolds of the form
$\RR \times \SS^1 \times M$ or $\SS^1 \times \SS^1 \times M$, with the
natural coordinates on the first two factors denoted by $s$ and~$t$ 
respectively.

\subsection{Torsion domains and the Lutz-Mori twist}
\label{subsec:LutzMori}

Given~$M$ with Liouville pair $(\alpha_+,\alpha_-)$, we define a
$1$-form on $\RR\times \SS^1 \times M$ by
\begin{equation}
\label{eqn:GT2}
\lambdaGT = \frac{1 + \cos s}{2}\, \alpha_+ + 
\frac{1 - \cos s}{2}\, \alpha_- + (\sin s) \, dt,
\end{equation}
and denote $\xiGT := \ker\lambdaGT$.

\begin{proposition}\label{prop:GTdomain}
The co-oriented distribution $\xiGT$ defined above is a positive
contact structure on $\RR \times \SS^1 \times M$, 
which can be viewed as an infinite chain of
Giroux domains $[k\pi, (k +1)\pi] \times \SS^1 \times M =
\left( M \times [k\pi, (k+1)\pi] \right) \times \SS^1$ glued together.
\end{proposition}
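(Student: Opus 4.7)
The plan is to verify the contact condition via a change of variables that exposes $\lambdaGT$ as a rescaling of the contactization of the Liouville form associated with the given Liouville pair, and then to recognize each slab $[k\pi,(k+1)\pi]\times\SS^1\times M$ as a Giroux domain in the sense of \S\ref{section:giroux_domains}.

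First I would handle the interior of a slab, say $s\in(0,\pi)$. Using the half-angle identities $\frac{1+\cos s}{2}=\cos^{2}(s/2)$ and $\frac{1-\cos s}{2}=\sin^{2}(s/2)$, one has
\[
\lambdaGT \;=\; \frac{\sin s}{2}\bigl(\cot(s/2)\,\alpha_{+}+\tan(s/2)\,\alpha_{-}\bigr)+\sin s\;dt.
\]
Introducing the coordinate $\sigma:=\ln\cot(s/2)$, which is a diffeomorphism $(0,\pi)\to\RR$, the parenthesised $1$-form becomes precisely the Liouville $1$-form $\beta(\sigma):=e^{\sigma}\alpha_{+}+e^{-\sigma}\alpha_{-}$ on $\RR\times M$ from the definition of a Liouville pair. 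Since $\sin s>0$ on $(0,\pi)$, the contact condition for $\lambdaGT$ is equivalent to the contact condition for $\lambda:=\tfrac{1}{2}\beta+dt$ on $\RR\times M\times\SS^{1}$. A direct computation gives $d\lambda=\tfrac{1}{2}d\beta$, hence
\[
\lambda\wedge d\lambda^{\,n-1} \;=\; \tfrac{1}{2^{n-1}}\,dt\wedge d\beta^{\,n-1},
\]
which is positive by the Liouville pair axiom $d\beta^{\,n-1}>0$. This shows positivity of $\lambdaGT\wedge d\lambdaGT^{\,n-1}$ on the interior; on $(\pi,2\pi)$ one runs the same argument with the change of variable adjusted so that the roles of $\alpha_{+}$ and $\alpha_{-}$ are swapped (equivalently, reversing $t$), and then by periodicity on every open interval $(k\pi,(k+1)\pi)$.

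The rescaling trick breaks down at $s=k\pi$, so there I would check the condition directly. At $s=0$, only the terms $a(s)=(1+\cos s)/2$ and $\sin s$ vary (with $a'(0)=0$, $\sin'(0)=1$), and one finds $\lambdaGT|_{s=0}=\alpha_{+}$ and $d\lambdaGT|_{s=0}=d\alpha_{+}+ds\wedge dt$. Expanding $(d\alpha_{+}+ds\wedge dt)^{n-1}$ and using that $(ds\wedge dt)^{2}=0$ and that $d\alpha_{+}^{\,n-1}=0$ on the $(2n-3)$-dimensional factor $M$, only the cross term survives, giving
\[
\lambdaGT\wedge d\lambdaGT^{\,n-1}\bigm|_{s=0}=(n-1)\,\alpha_{+}\wedge d\alpha_{+}^{\,n-2}\wedge ds\wedge dt,
\]
which is positive since $\alpha_{+}$ is a positive contact form on $M$. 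An identical computation at $s=\pi$ uses $-\alpha_{-}\wedge d\alpha_{-}^{\,n-2}>0$, together with the sign change $\sin'(\pi)=-1$, to yield again a positive volume form; by periodicity this covers every $s=k\pi$.

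For the second assertion, I would observe that the previous computation is exactly a Giroux domain presentation. Indeed, on $[0,\pi]\times M$, the form $\omega:=d\beta$ (in the $\sigma$-coordinate) is symplectic on the interior, and choosing the auxiliary function $f:=\sin s\vphantom{|}$, the product $f\cdot\tfrac12\beta=\tfrac{1+\cos s}{2}\alpha_{+}+\tfrac{1-\cos s}{2}\alpha_{-}$ extends smoothly across $\{s=0,\pi\}$ and restricts there to the contact forms $\alpha_{\pm}$; thus $\bigl([0,\pi]\times M,\,d\beta,\,\xi_{+}\sqcup\xi_{-}\bigr)$ is an ideal Liouville domain, and its contactization with parameter $t$ is exactly $([0,\pi]\times\SS^{1}\times M,\ker\lambdaGT)$. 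The two boundary components are $\xi_{\mathrm{GT}}$-round hypersurfaces modelled on $(M,\xi_{\pm})$, so by Lemma~\ref{lemma:round_neighborhood} consecutive slabs glue smoothly, yielding the advertised infinite chain. The one technical point that requires care is the orientation/co-orientation convention on the round hypersurface at $s=k\pi$ (where the roles of $\alpha_{+}$ and $\alpha_{-}$ alternate from slab to slab), and this will be the main place where signs have to be tracked; but it is forced by the parity of $k$ and causes no actual obstruction to the gluing.
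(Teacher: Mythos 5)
Your proposal is correct and, at its core, takes the same route as the paper: the change of variable $\sigma=\ln\cot(s/2)$ is identical to the paper's $u=\varphi(s)=\ln\frac{1+\cos s}{\sin s}$ (indeed $\cot(s/2)=\frac{1+\cos s}{\sin s}$ by the half-angle formulas), and the key step in both is recognizing $\lambdaGT=(\sin s)\cdot\bigl(dt+\tfrac12(e^{u}\alpha_++e^{-u}\alpha_-)\bigr)$ as the contactization of an ideal Liouville domain with auxiliary function $f=\sin s$.

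Where you diverge slightly is in structure rather than substance. The paper's proof consists entirely of identifying each slab as a Giroux domain and then invoking, as a black box from \S\ref{section:giroux_domains}, the fact that the contactization of an ideal Liouville domain is a contact manifold up to and including the boundary. You instead verify the contact condition directly on the open interior (via the pullback to the completed Liouville manifold) and then do a separate explicit check at $s=k\pi$, finding $\lambdaGT\wedge d\lambdaGT^{n-1}\bigm|_{s=0}=(n-1)\,\alpha_+\wedge d\alpha_+^{\,n-2}\wedge ds\wedge dt$ and its analogue at $s=\pi$. This boundary computation is redundant given that you also carry out the Giroux-domain identification in your second paragraph, which already yields smooth contactness across $\p\Sigma$ by construction; but it is correct and harmless, and it does make the sign contribution from $\sin'(k\pi)=\pm 1$ visible. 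The one place where you are imprecise is the description of the reduction on $(\pi,2\pi)$: passing to $s''=s-\pi$ swaps $\alpha_\pm$ \emph{and} negates $dt$, and these two moves are not separately equivalent --- the swap $\alpha_\pm\mapsto\alpha_\mp$ only produces a Liouville pair after also reversing the orientation of~$M$. This is the same sign bookkeeping you flag at the end and is handled by the alternating boundary models $(M,\xi_+)$ and $(-M,\xi_-)$ of the round hypersurfaces, exactly as in \S\ref{subsec:round}.
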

\begin{proof}
Let $\varphi \colon (0, \pi) \to \RR$ 
denote the orientation reversing diffeomorphism defined by
$\varphi(s) = \ln\frac{1 + \cos s}{\sin s}$.  This induces an
orientation preserving diffeomorphism
from the interior of $\Sigma := M \times [0,\pi]$ to $\RR\times M$, 
so pulling back
$\beta := \frac{1}{2}\left( e^u \alpha_+ + e^{-u} \alpha_-\right)$ gives a
Liouville form which defines on~$\Sigma$ the structure of
an ideal Liouville domain.  Regarding $\p\Sigma$ as the zero-set of the
function $\sin s$ and writing $u = \varphi(s)$, the Giroux domain 
$\Sigma \times \SS^1$, then inherits the contact form
\begin{equation*}
  \lambdaGT = (\sin s) \cdot \left[ dt + \frac{1}{2}\left( e^u\alpha_+ + e^{-u}\alpha_- \right)
  \right] \; ,
\end{equation*}
proving that $\lambdaGT$ is indeed a positive contact form on
$M \times [0, \pi] \times \SS^1 = [0,\pi] \times \SS^1 \times M$.
A similar argument proves the contact condition on
$[\pi,2\pi] \times \SS^1 \times M$, and the rest follows by periodicity.
\end{proof}

For any positive integer $k$, one 
can then define the \defin{Giroux $2k\pi$-torsion domain} modeled on 
$(M, \alpha_+, \alpha_-)$ as 
$([0, 2k\pi] \times \SS^1 \times M, \lambdaGT)$.

The fact that Giroux torsion is a filling obstruction in dimension three now
generalizes to the following immediate consequence of 
Theorem~\ref{thm:torsion}.  Note that for the case $n=2$, the additional
topological condition giving an obstruction to weak fillability is
equivalent to the condition that the embedding
$[0,2\pi] \times \SS^1 \times M \hookrightarrow V$ should separate~$V$.

\begin{corollary}
\label{cor:torsion}
If $(V,\xi)$ is a closed $(2n-1)$-dimensional contact manifold admitting
a contact embedding
$\iota \colon ([0, 2\pi] \times \SS^1 \times M,\xiGT) \hookrightarrow (V,\xi)$,
then $(V,\xi)$ is not strongly fillable.  Moreover, if
$\iota_*([\SS^1] \times C) = 0 \in H_2(V;\RR)$ for every $C \in H_1(M;\RR)$,
then $(V,\xi)$ is also not weakly fillable. \qed
\end{corollary}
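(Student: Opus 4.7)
The plan is to deduce Corollary~\ref{cor:torsion} directly from Theorem~\ref{thm:torsion}(b) by exhibiting the image of $\iota$ as two Giroux domains glued together and checking the cohomological hypothesis in each of the two cases.

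First, I would invoke Proposition~\ref{prop:GTdomain}: the contact manifold $([0,2\pi] \times \SS^1 \times M, \xiGT)$ decomposes as
\[
\bigl(\Sigma^-\times \SS^1\bigr) \cup_{\{\pi\}\times \SS^1 \times M} \bigl(\Sigma^+\times \SS^1\bigr),
\]
where $\Sigma^- = M \times [0,\pi]$ and $\Sigma^+ = M \times [\pi,2\pi]$ are ideal Liouville domains built from the Liouville pair $(\alpha_+,\alpha_-)$. The boundary of $\Sigma^-$ has two components, $\{0\}\times M$ and $\{\pi\}\times M$, the first of which does not touch $\Sigma^+$. Hence $\iota$ realizes a subdomain $N \subset V$ with nonempty boundary satisfying the topological hypothesis of Theorem~\ref{thm:torsion}(b).

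For the strong filling statement, any strong filling $(W,\omega)$ has $\omega$ exact in a neighborhood of $\p W = V$, so $[\omega_V] = 0 \in H^2_{\dR}(V)$, which lies trivially in $\oO(\Sigma^+ \times \SS^1)$. Theorem~\ref{thm:torsion}(b) then yields the required contradiction.

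For the weak filling statement, I need to translate the additional hypothesis $\iota_*([\SS^1] \times C) = 0$ (for all $C \in H_1(M;\RR)$) into the condition $[\omega_V] \in \oO(\Sigma^+ \times \SS^1)$. This follows from the retraction $\Sigma^+ = M \times [\pi,2\pi] \simeq M$, which identifies $H_1(\Sigma^+;\RR) \cong H_1(M;\RR)$, so that the obstructed subspace $\oO(\Sigma^+ \times \SS^1)$ is precisely the annihilator in $H^2_{\dR}(V)$ of the image of $H_1(M;\RR) \otimes H_1(\SS^1;\RR)$ under $\iota_*$. The hypothesis says exactly that this image is zero, so every cohomology class in $H^2_{\dR}(V)$ is obstructed by $\Sigma^+ \times \SS^1$; in particular $[\omega_V] \in \oO(\Sigma^+ \times \SS^1)$, and Theorem~\ref{thm:torsion}(b) again applies. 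There is no real obstacle here, as the only content is a careful bookkeeping of the obstructed subspace; the only small point to verify is the orientation conventions matching the identification of $\Sigma^+ \times \SS^1$ with the appropriate half of the torsion domain, which is routine.
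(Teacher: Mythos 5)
Your argument is correct and reconstructs exactly the derivation the paper leaves implicit (the corollary carries a \qed, with the surrounding prose asserting it is ``an immediate consequence of Theorem~\ref{thm:torsion}''): decompose the image of $\iota$ via Proposition~\ref{prop:GTdomain} as two Giroux domains $(M\times[0,\pi])\times\SS^1$ and $(M\times[\pi,2\pi])\times\SS^1$ glued along $\{\pi\}\times\SS^1\times M$, note that $\{0\}\times M$ is a free boundary component, and then check $[\omega_V]\in\oO(\Sigma^+\times\SS^1)$ — trivially for strong fillings, and from the homological hypothesis for weak ones. One small refinement worth recording: citing Theorem~\ref{thm:torsion}(b) strictly speaking yields only the (semipositive) version of the conclusion; since your hypothesis actually forces $[\omega_V]\in\oO(N)$ (both $\Sigma^\pm$ retract to $M$, so $\oO(\Sigma^+\times\SS^1)=\oO(\Sigma^-\times\SS^1)=H^2_{\dR}(V)$), one may instead invoke Theorem~\ref{thm:torsion2} to drop the semipositivity assumption and obtain the corollary exactly as stated.
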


The torsion domains $([0, 2k\pi] \times \SS^1 \times M,\xiGT)$ allow us to
define a ``twisting'' operation on contact structures that generalizes
the $3$-dimensional \emph{Lutz modification} along a pre-Lagrangian torus with
closed leaves, see \cite[Section~1.4]{ColinGH}.
Note that for any $k$, both boundary components of 
$[0,2k\pi] \times \SS^1 \times M$ are $\xiGT$-round hypersurfaces modeled on
$(M, \xi_+)$ (see \S\ref{subsec:round}). Now if $(V,\xi)$ is any
$(2n-1)$-dimensional contact manifold containing a $\xi$-round hypersurface $H
\subset V$ modeled on $(M, \xi_+)$, then we can cut~$V$ open along~$H$ and
insert $([0,2k\pi] \times \SS^1 \times M,\xiGT)$ such that the contact
structures glue together smoothly.  
The resulting manifold is diffeomorphic to~$V$, and it determines a new contact
structure~$\xi_k$ on~$V$ uniquely up to isotopy.  We shall say in this case that $\xi_k$ is obtained
from~$\xi$ by a $k$-fold \defin{Lutz-Mori twist along~$H$} (we use the name
Mori to emphasize that A.~Mori \cite{Mori_Lutz} introduced a similar 
modification along a
codimension 2 contact submanifold in dimension 5, see below).

Recall that any positive co-oriented contact structure~$\xi$ on an
oriented $(2n-1)$-dimensional manifold~$V$ induces an 
\emph{almost contact structure} on~$V$, i.e.~a reduction of the structure
group of~$TV$ to $U(n-1)$.  For our purposes, we can regard an
almost contact structure as equivalent to a choice of co-oriented
hyperplane distribution $\xi \subset TV$ together with a symplectic
structure on the bundle $\xi \to V$, and this choice is determined
uniquely up to homotopy when $\xi$ is contact.  The homotopy class of
almost contact structures amounts to a ``classical'' invariant that
one can use to distinguish non-isotopic contact structures.
As we will see, one of the important properties of the Lutz-Mori twist
is that it does not change this invariant, though it can change
the isomorphism class of the contact structure.

\begin{theorem}
\label{thm:LutzMoriTorsion}
Suppose $(V,\xi)$ is any contact manifold containing a closed $\xi$-round 
hypersurface $H$ modeled on $(M, \xi_+)$, where $\xi_+ = \ker\alpha_+$ and 
$(\alpha_+,\alpha_-)$ is a
Liouville pair on~$M$.  Then for any positive integer $k$, one can modify~$\xi$ 
near~$H$ by the $k$-fold Lutz-Mori twist as described above to define
a contact structure $\xi_k$ with the following properties:
\begin{enumerate}
\item $\xi$ and $\xi_k$ are homotopic through a family of 
almost contact structures.
\item $(V,\xi_k)$ is not strongly fillable if~$V$ is closed.
\item If $V$ is closed and the natural map $H_1(M;\RR) \to H_2(V;\RR)$ 
induced by the inclusion $\SS^1 \times M = H \hookrightarrow V$ is trivial, 
then $(V,\xi_k)$ is not weakly fillable.
\end{enumerate}
\end{theorem}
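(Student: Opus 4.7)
The plan is to prove (1) by an explicit deformation and to derive (2) and (3) as direct corollaries of our earlier non-fillability results, specifically Theorem~\ref{thm:torsion}(b).

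For part (1), note that $\xi$ and $\xi_k$ agree outside a tubular neighborhood of the inserted Lutz-Mori region, so it suffices to construct a homotopy through almost contact structures, rel boundary, from $\xiGT$ on $[0, 2k\pi] \times \SS^1 \times M$ to the constant collar structure induced by $\alpha_+$. As $s$ traverses $[0, 2k\pi]$, the formula for $\lambdaGT$ sweeps out a $k$-fold based loop of $1$-forms along the circle $\{a\alpha_+ + b\alpha_- + c\, dt : a + b = 1,\ (2a-1)^2 + c^2 = 1\}$, based at $\alpha_+$. My strategy is to shrink this loop to a constant through nowhere-vanishing (but not necessarily contact) $1$-forms by considering a two-parameter family
\begin{equation*}
\lambda_{r, s} = \tfrac{1 + a_r(s)}{2}\,\alpha_+ + \tfrac{1 - a_r(s)}{2}\,\alpha_- + b_r(s)\, dt,
\end{equation*}
where $(a_r, b_r)$ interpolates from $(\cos s, \sin s)$ at $r = 1$ to the constant loop $(1, 0)$ at $r = 0$, with endpoints fixed at $(1, 0)$ throughout the deformation. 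The accompanying conformal symplectic structure on $\ker \lambda_{r, s}$ is transported coherently using contractibility arguments in the spirit of Proposition~\ref{prop:space_cotamed_contractible}.

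For parts (2) and (3), by construction $(V, \xi_k)$ contains a contact embedded copy of $([0, 2\pi] \times \SS^1 \times M, \xiGT)$, which by Proposition~\ref{prop:GTdomain} is the union of two Giroux $\pi$-torsion domains glued along a $\xiGT$-round hypersurface. Corollary~\ref{cor:torsion}, equivalently Theorem~\ref{thm:torsion}(b), immediately yields (2): strong fillings are obstructed with no cohomological hypothesis. For (3), the cycles $[\SS^1] \times C$ for $C \in H_1(M; \RR)$ inside the inserted twist domain are, after a slice-by-slice homotopy, identified with the image of $C$ under the map $H_1(M; \RR) \to H_2(V; \RR)$ induced by $\SS^1 \times M = H \hookrightarrow V$. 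Under the triviality hypothesis of (3), every class $[\omega_V] \in H^2_{\dR}(V)$ therefore lies in the obstructed subspace $\oO(\Sigma^+ \times \SS^1)$ for $\Sigma^+ := M \times [0, \pi]$, and Theorem~\ref{thm:torsion}(b) rules out weak fillings as well.

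The main obstacle is part (1): constructing the explicit homotopy requires ensuring that the interpolating family $\lambda_{r, s}$ never vanishes, which is not automatic in dimensions $\geq 5$ because $\alpha_+$ and $\alpha_-$ define opposite orientations and their convex combinations may degenerate at certain points of $M$. One must arrange the deformation in stages---for instance, first rotating $\alpha_-$ toward $\alpha_+$ through nowhere-vanishing $1$-forms by combining with a small $dt$-component, and only then shrinking the amplitude parameters---and carry the conformal symplectic structures on the kernels along consistently using the contractibility of the relevant spaces of tame structures. Parts (2) and (3), by contrast, are essentially immediate consequences of the filling-obstruction machinery already developed.
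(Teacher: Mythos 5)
Your reduction of parts~(2) and~(3) to Corollary~\ref{cor:torsion} (equivalently Theorem~\ref{thm:torsion}(b)) is correct and matches the paper exactly: the inserted domain $([0,2\pi]\times\SS^1\times M,\xiGT)$ is a chain of two Giroux $\pi$-torsion domains, and the homological hypothesis in~(3) precisely forces $[\omega_V]\in\oO(\Sigma^+\times\SS^1)$.

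Part~(1) is where your argument breaks down, and the gap is not merely technical. You propose shrinking the $k$-fold loop $s\mapsto(\cos s,\sin s)$ to the constant $(1,0)$ through a two-parameter family $\lambda_{r,s}$ that stays inside $\Span\{\alpha_+,\alpha_-,dt\}$ at each point. Any homotopy rel endpoints from the $k$-fold loop of $\p\DD^2$ based at $(1,0)$ to the constant loop must, by a degree argument, pass through the origin $(a,b)=(0,0)$. At that point your form is $\tfrac12(\alpha_++\alpha_-)$, which the Liouville pair axioms say nothing about; Lemma~\ref{lemma:LPchar} gives nonvanishing of $C_+\alpha_+ - C_-\alpha_-$ for $C_\pm\geq 0$, but with the opposite sign convention. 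Indeed, for $M=\SS^1$ with $\alpha_\pm=\pm d\theta$ one has $\alpha_++\alpha_-\equiv 0$, so your interpolating form vanishes identically. You notice there is a problem, but the ``deform in stages'' fix you sketch cannot work while staying inside $\Span\{\alpha_+,\alpha_-,dt\}$: the obstruction is a genuine topological one (nonzero winding number forces passage through the origin), not a matter of staging. The paper's proof leaves this span entirely. Using a coordinate $\varphi_k$ that compresses all the winding into a fixed subinterval $[\epsilon/3,2\epsilon/3]$, it deforms $\lambda_k$ by interpolating toward $ds$ via $\lambda_{k,\tau}=[1-\tau\psi(s)]\lambda_k + \tau\psi(s)\,ds$. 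The $ds$-term, which is absent from your ansatz, ensures the form never vanishes, and at $\tau=1$ the hyperplane field on the winding region equals $\ker ds = T(\SS^1\times M)$, which is manifestly $k$-independent. This ``cuts'' the loop rather than contracting it, and is a different idea from yours.

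There is a second omission in your treatment of~(1): an almost contact structure is a co-oriented hyperplane field \emph{together with a symplectic bundle structure}, and your proposal dismisses the latter in one sentence by appealing to ``contractibility arguments in the spirit of Proposition~\ref{prop:space_cotamed_contractible}.'' That proposition concerns cotamed complex structures, not families of fiberwise symplectic forms on a varying hyperplane field, and does not apply directly. The issue is that at $\tau=1$ the form $\lambda_{k,1}$ is no longer contact, so $d\lambda_{k,1}$ need not be nondegenerate on $\ker\lambda_{k,1}$. The paper handles this by projecting $\xi_{k,\tau}\to T(\SS^1\times M)$ and showing the induced $2$-forms land in a contractible set $\Xi$ of nondegenerate forms
\begin{equation*}
\omega = C_+\,d\alpha_+ + C_-\,d\alpha_- + \delta\,\alpha_+\wedge\alpha_- + B\,dt\wedge(C_+\alpha_+ - C_-\alpha_-),
\end{equation*}
whose nondegeneracy is exactly Lemma~\ref{lemma:LPchar}. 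This is a substantive part of the argument that your sketch does not supply.
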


Before proving this theorem, we note the following
characterization of Liouville pairs, the proof of which is a simple computation.
It will be useful for understanding homotopy classes of
almost contact structures as well as Reeb vector fields in the next section.

\begin{lemma}\label{lemma:LPchar}
A pair of $1$-forms $(\alpha_+,\alpha_-)$ on an oriented 
$(2n-1)$-dimensional manifold~$M$ is a Liouville pair if and only if
for every pair of constants $C_+,C_- \ge 0$ not both zero,
$$
(C_+\alpha_+ - C_-\alpha_-) \wedge 
\left( C_+\, d\alpha_+ + C_-\, d\alpha_-\right)^{n-1} > 0\;.
$$
\qed
\end{lemma}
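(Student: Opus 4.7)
The plan is a direct computation of $d\beta^n$ on $\RR \times M$ for $\beta = e^{-s}\alpha_- + e^s\alpha_+$, coupled with a homogeneity observation. Starting from
\[
d\beta = ds \wedge (e^s\alpha_+ - e^{-s}\alpha_-) + (e^s\, d\alpha_+ + e^{-s}\, d\alpha_-)
\]
and noting that the part without a $ds$ factor is pulled back from $M$, so that taking its $n$th exterior power would give a $(2n)$-form on a $(2n-3)$-dimensional manifold, the binomial expansion of $d\beta^n$ collapses to the single cross term
\[
d\beta^n = n\, ds \wedge (e^s\alpha_+ - e^{-s}\alpha_-) \wedge (e^s\, d\alpha_+ + e^{-s}\, d\alpha_-)^{n-1}.
\]
Setting $C_+ := e^s$ and $C_- := e^{-s}$, the condition $d\beta^n > 0$ at each level $s$ is exactly the displayed inequality of the lemma for $(C_+, C_-)$ on the hyperbola $C_+ C_- = 1$ in the open first quadrant.

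The left-hand side of the displayed inequality is a form-valued polynomial of homogeneous degree $n$ in $(C_+, C_-)$, so its positivity along the hyperbola $C_+ C_- = 1$ is equivalent to positivity throughout the entire open first quadrant $\{C_+ > 0,\ C_- > 0\}$. Evaluating at the two coordinate axes recovers the contact sign conditions: $C_- = 0$ yields $C_+^n\, \alpha_+ \wedge d\alpha_+^{n-1} > 0$, while $C_+ = 0$ yields $-C_-^n\, \alpha_- \wedge d\alpha_-^{n-1} > 0$, which together are precisely the pair of conditions $\pm \alpha_\pm \wedge d\alpha_\pm^{n-1} > 0$ appearing in the definition of a Liouville pair.

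Combining these two observations, the Liouville pair conditions---the contact sign conditions on $\alpha_\pm$ together with $d\beta^n > 0$ on all of $\RR\times M$---correspond, via the substitution $C_\pm = e^{\pm s}$ and positive-scaling homogeneity, exactly to the positivity of the displayed expression on the closed first quadrant minus the origin. The argument in each direction is a routine evaluation; the only mild point of care is the homogeneity step extending the interior positivity from the hyperbola $C_+C_- = 1$ to the full open quadrant, and this is immediate since scaling $(C_+, C_-)$ by $\lambda > 0$ multiplies the expression by $\lambda^n > 0$.
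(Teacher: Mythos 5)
Your proof is correct and is precisely the ``simple computation'' the paper refers to when it states this lemma with an omitted proof: expand $d\beta^n$, observe that only the cross term survives because the $ds$-free part has too high a degree to survive on $M$, and then use homogeneity to pass from the hyperbola $C_+C_-=1$ to the whole open quadrant, with the axes recovering the contact sign conditions. One small slip: $M$ in this lemma is $(2n-1)$-dimensional, not $(2n-3)$-dimensional (you may be thinking of the later sections where the torsion domain is built over a $(2n-3)$-manifold), so $\nu^n$ vanishes as a $2n$-form pulled back from a $(2n-1)$-manifold; the conclusion is unchanged.
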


\begin{proof}[Proof of Theorem \ref{thm:LutzMoriTorsion}]
The last two statements are simply Corollary~\ref{cor:torsion}.
To prove the first, we can model the Lutz-Mori twist as follows.
By Lemma~\ref{lemma:round_neighborhood},
a neighborhood of~$H$ in $(V,\xi)$ can be identified with a neighborhood
of $\{0\} \times \SS^1 \times M$ in $(\RR\times \SS^1\times M, \xiGT)$,
i.e.~with $((-\epsilon,\epsilon) \times \SS^1 \times M,\xiGT)$ for
$\epsilon > 0$ sufficiently small.  Then given $k$, choose
a diffeomorphism $\varphi_k \colon (-\epsilon,\epsilon) \to 
(-\epsilon,2\pi k + \epsilon)$ as shown in 
Fig.~\ref{fig:functionsAlmostContactHomotopy}, with
fixed slope~$1$ outside the interval $(\epsilon/3,2\epsilon/3)$,
and define a new contact form on $(-\epsilon,\epsilon) \times \SS^1 \times M$
by
\begin{equation*}
  \lambda_k := \frac{1 + \cos \varphi_k(s)}{2}\, \alpha_+ + 
  \frac{1 - \cos \varphi_k(s)}{2}\, \alpha_- + (\sin \varphi_k(s)) \, dt \;.
\end{equation*}
For convenience, let us also set $\varphi_0(s) = s$, so
$\lambda_0 := \lambdaGT$ and
$\xi_0 := \xiGT$.  Then it will suffice to show that for each integer
$k \ge 0$, the almost contact structure induced by $\xi_k$
on $(-\epsilon,\epsilon) \times \SS^1 \times M$ admits a compactly
supported homotopy through almost
contact structures to a fixed almost contact structure independent of~$k$.

To see this, choose a smooth function
$\psi \colon [0,\epsilon] \to [0,1]$ which vanishes near the boundary and
equals~$1$ precisely on $[\epsilon/3,2\epsilon/3]$ 
(see Fig.~\ref{fig:functionsAlmostContactHomotopy}), 
and for $\tau \in [0,1]$,
define a smooth $1$-parameter family of nowhere zero $1$-forms
and co-oriented hyperplane fields by
\begin{equation*}
  \lambda_{k,\tau} = [1 - \tau\psi(s)]\, \lambda_k + \tau\psi(s)\, ds,
  \qquad
  \xi_{k,\tau} = \ker\lambda_{k,\tau} \;.
\end{equation*}
We have $\lambda_{k,\tau} = \lambda_k$ outside of some compact subset of
$(0,\epsilon) \times \SS^1 \times M$ for all~$\tau$,
while $\lambda_{k,0} \equiv \lambda_k$ and
$\lambda_{k,1}$ is everywhere independent of~$k$.  This shows that
the homotopy type of~$\xi_k$ as a co-oriented hyperplane field is
independent of~$k$.  It remains only to show that the homotopy
$\{ \xi_{k,\tau} \}_{\tau \in [0,1]}$ can be accompanied by a homotopy
$\{ \Omega_{k,\tau} \}_{\tau \in [0,1]}$ of symplectic bundle structures
such that $\Omega_{k,0} = d\lambda_k$ and $\Omega_{k,1}$ has no
$k$-dependence.

\begin{figure}[htbp]
  \centering
  \includegraphics[height=4cm,keepaspectratio]{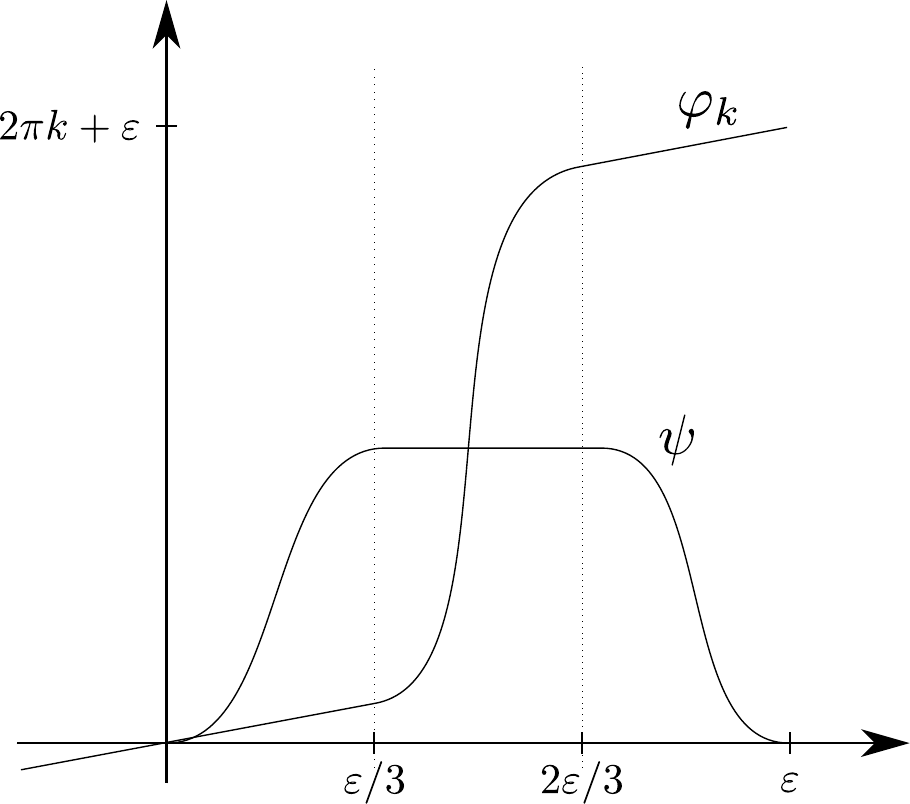}
  \caption{The function $\varphi_k$ is a diffeomorphism from
    $(-\epsilon,\epsilon)$ onto $(-\epsilon, 2\pi k + \epsilon)$ that
    has slope~$1$ outside $[\epsilon/3,2\epsilon/3]$.
    The function~$\psi$ is $1$ on the interval $[\epsilon/3,
    2\epsilon/3]$ and falls off to
    zero.}\label{fig:functionsAlmostContactHomotopy}
\end{figure}

We claim first that $\lambda_{k,\tau}$ is always contact, 
with the exception of $\lambda_{k,1} = ds$ on
$[\epsilon/3,2\epsilon/3] \times \SS^1 \times M$.
Indeed, 
$\lambda_{k,\tau} \wedge \left(d\lambda_{k,\tau}\right)^{n-1}
  = (1 - \tau\psi)^n \lambda_k \wedge (d\lambda_k)^{n-1}$
since the term $\tau\psi(s)\, ds$ vanishes in this product;
firstly it is closed, and secondly the only term where $dt$ appears in
$d\lambda_k$ is a multiple of $ds\wedge dt$.
Since $\lambda_k$ is contact, it follows that   $\lambda_{k,\tau}$ also
is whenever $\tau\psi < 1$.
Thus $d\lambda_{k,\tau}$ defines a suitable family of symplectic
bundle structures for $\tau < 1$, which we next would like to modify so
that it extends to $\tau=1$.  To facilitate this,
observe that whenever $\tau$ and $\psi$ are both close to~$1$,
the projection along the $s$-direction restricts to a fiberwise
isomorphism $\xi_{k,\tau} \to T(\SS^1 \times M)$.  Thus any symplectic
bundle structure $\Omega$ on $\xi_{k,\tau}$ can be identified via this
isomorphism with an $s$-dependent family of nondegenerate
(but not necessarily closed) $2$-forms $\hat{\Omega}(s)$ on 
$\SS^1 \times M$.  For $\Omega = d\lambda_{k,\tau}$ in particular, we find
that $\widehat{d\lambda_{k,\tau}}(s)$ belongs to the
contractible space $\Xi$ of $2$-forms on $\SS^1 \times M$ 
having the form
\begin{equation*}
  \omega := C_+\, d\alpha_+ + C_-\, d\alpha_- + \delta\, \alpha_+ \wedge \alpha_-
  + B\, dt \wedge (C_+ \alpha_+ - C_- \alpha_-)
\end{equation*}
for some constants $C_+,C_- \ge 0$, $B > 0$ and $\delta \in \RR$,
where $C_+$ and $C_-$ are assumed not both zero. In the case
of $\widehat{d\lambda_{k,\tau}}(s)$, one can compute:
\begin{equation*}
  C_\pm = \frac{1 \pm \cos\varphi_k}{2}, \qquad
  \delta = - \frac{1 - \tau\psi}{2\tau\psi}  \,
  \varphi_k'\,\sin\varphi_k
  \quad \text{ and } \quad
  B = \frac{1 - \tau\psi}{\tau\psi} \varphi_k'\;.
\end{equation*}
It turns out that \emph{any} $2$-form in~$\Xi$ is nondegenerate since
\begin{equation*}
  \omega^n = (n-1) B \, dt \wedge (C_+ \alpha_+ - C_- \alpha_-)
	\wedge (C_+\, d\alpha_+ + C_-\, d\alpha_-)^{n-1}
\end{equation*}
is nonzero due to Lemma~\ref{lemma:LPchar}. We can therefore solve the extension problem
to modify $\widehat{d\lambda_{k,\tau}}(s)$ for $\tau$ near~$1$ to a smooth
homotopy of nondegenerate $2$-forms that match 
$\widehat{d\lambda_{k,\tau}}(s)$
outside a neighborhood of $\{\epsilon/3 \le s \le 2\epsilon/3\}$ but also
extend to $\tau=1$ as nondegenerate forms with no dependence on~$k$.
Pulling back through the fiberwise isomorphism 
$\xi_{k,\tau} \to T(\SS^1 \times M)$, this determines a homotopy of almost
contact structures as desired.
\end{proof}

The original Lutz twist in dimension three modifies a contact structure in
the neighborhood of a transverse knot to produce one that is always
overtwisted, and Mori \cite{Mori_Lutz} generalized this to an operation
on contact $5$-manifolds along certain special contact submanifolds
of codimension~$2$.  In our context, Mori's construction generalizes as
follows: suppose $(V,\xi)$
is a $(2n-1)$-dimensional contact manifold containing a contact
submanifold $M \subset V$ of codimension~$2$ with trivial normal bundle
such that $\xi \cap TM = \xi_+$.
For any $k$, let $(Y_k,\xiGT)$ denote the result of blowing down
$([0, 2k\pi] \times \SS^1 \times M,\xiGT)$ along the $\xiGT$-round hypersurface
$\{0\} \times \SS^1 \times M$
as described in \S\ref{subsec:round}.  We can then remove a small neighborhood
of~$M$ from $(V,\xi)$ and glue in a correspondingly small neighborhood of
$(Y_k,\xiGT) \subset (Y_{k+1},\xiGT)$ such that the contact structures
match up.  The resulting manifold is again diffeomorphic to~$V$ and determines
a new contact structure~$\xi_k$ up to  isotopy,
and we shall say that $\xi_k$ is obtained from $\xi$
by a $k$-fold \defin{Lutz-Mori twist along~$M$}.

\begin{theorem}
\label{thm:LutzMori}
Suppose $(V,\xi)$ contains a closed
codimension~$2$ contact submanifold $M \subset V$ with trivial normal bundle
such that $\xi \cap TM = \xi_+$ where $\xi_+ = \ker\alpha_+$ for some
Liouville pair $(\alpha_+,\alpha_-)$ on~$M$.  Then for any positive integer $k$,
one can modify~$\xi$ near~$M$ by a $k$-fold Lutz-Mori twist as described
above to define a contact structure~$\xi_k$ with the following properties:
\begin{enumerate}
\item $\xi_k$ and $\xi$ are homotopic through a family of 
almost contact structures. 
\item $(V,\xi_k)$ is $PS$-overtwisted
	(cf.~Definition~\ref{defn:PSOT}) and not weakly fillable (if $V$ is closed).
\end{enumerate}
\end{theorem}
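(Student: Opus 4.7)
The plan is to reduce both claims to results already established in the paper, using an explicit local model for the twist in a tubular neighborhood of~$M$. Using triviality of the normal bundle and the condition $\xi \cap TM = \xi_+$, a tubular neighborhood of~$M$ in $(V,\xi)$ can be identified with $(M \times \DD^2,\ker(\alpha_+ + r^2\, d\theta))$ for polar coordinates $(r,\theta)$ on~$\DD^2$. The $k$-fold Lutz-Mori twist replaces this with a small neighborhood of the blown-down submanifold in $Y_{k+1}$. After performing the blowdown explicitly via $s = r^2$, $t = \theta$, the contact form $\lambdaGT$ becomes smooth on $M \times \DD^2$ and can be written as
\begin{equation*}
\lambda_k := \tfrac{1 + \cos\varphi_k(r)}{2}\,\alpha_+ + \tfrac{1 - \cos\varphi_k(r)}{2}\,\alpha_- + \sin\varphi_k(r)\, d\theta,
\end{equation*}
for a smooth increasing function~$\varphi_k$ with $\varphi_k(r) = r^2$ near $r = 0$ (so that $\lambda_k$ agrees with the standard form near the blown-down $M$) and adjusted to match the original form $\alpha_+ + r^2\,d\theta$ near the outer boundary, while ascending through a total angular range of $2k\pi$ in between.

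For the homotopy of almost contact structures, I would reproduce the pattern of Theorem~\ref{thm:LutzMoriTorsion}. Define
\begin{equation*}
\lambda_{k,\tau} := \bigl(1 - \tau\psi(r)\bigr)\,\lambda_k + \tau\psi(r)\, dr
\end{equation*}
for a cutoff~$\psi$ equal to~$1$ on the annular region where $\varphi_k$ departs from a standard profile and vanishing near $\{r = 0\}$ and near the outer boundary. A direct computation shows that $\ker\lambda_{k,\tau}$ is contact whenever $\tau\psi < 1$, and that $d\lambda_{k,\tau}$, seen via the projection along~$\partial_r$ as a family of $2$-forms on $\SS^1 \times M$, lies in exactly the same contractible class of nondegenerate forms considered in the previous proof, whose nondegeneracy is guaranteed by Lemma~\ref{lemma:LPchar}. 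Consequently the family of symplectic bundle structures extends continuously through $\tau = 1$, producing an almost contact homotopy from~$\xi_k$ to a structure with no dependence on~$k$ and hence to~$\xi$ itself.

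For the second assertion, the inserted region contains the blown-down Giroux domain $Y_k$, whose boundary component at the center of the normal disk has been blown down. Proposition~\ref{prop:bLob_in_blown_down_domain} then produces a small \BLOB inside $(V,\xi_k)$, proving $PS$-overtwistedness. Smallness (cf.~Remark~\ref{remark:small}) guarantees that the \BLOB lies in a ball of~$V$, so the restriction of any symplectic form to it is automatically exact, and Theorem~\ref{thm:plastikstufe_and_weak_fillability} rules out weak fillings.

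The main technical concern is smoothness at the blown-down locus $\{r = 0\}$, where the Giroux domain description degenerates and one must instead use the standard contact form on $M \times \DD^2$. By arranging $\varphi_k(r) = r^2$ and $\psi \equiv 0$ in a neighborhood of $\{r = 0\}$, every construction there reduces to the unmodified $\xi$, so smoothness is automatic; the work is then confined to an annular region $0 < r_1 \le r \le r_2 < \varepsilon$ where the analysis is identical to that of Theorem~\ref{thm:LutzMoriTorsion}.
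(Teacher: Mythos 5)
Your proposal essentially reproduces the paper's own argument: part~(1) is reduced to the compactly supported almost contact homotopy of Theorem~\ref{thm:LutzMoriTorsion}, applicable here because the modification is confined to an annular region disjoint from the blown-down locus and from the outer boundary; part~(2) follows from Proposition~\ref{prop:bLob_in_blown_down_domain} producing a small \BLOB, combined with Theorem~\ref{thm:plastikstufe_and_weak_fillability}.

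One genuine gap remains, though it is small. Theorem~\ref{thm:plastikstufe_and_weak_fillability} only excludes \emph{semipositive} weak fillings, so your \BLOB route establishes the parenthesized version of the fillability claim, not the full statement of Theorem~\ref{thm:LutzMori}, which asserts non-fillability without that caveat. The paper handles this by observing that the conclusion can also be deduced from Theorem~\ref{thm:torsion2} (via holomorphic spheres and the polyfold machinery), with Example~\ref{ex:Lutz} verifying that $\oO(N) = H^2_{\dR}(V)$, so no cohomological hypothesis on $\omega_V$ and no semipositivity is needed. If you want the full conclusion you should add this alternative, or at least note the restriction. A minor wording slip: the twist inserts a small neighborhood of $Y_k$ inside $Y_{k+1}$ (the whole twisted region with a thin collar), not a small neighborhood of the blown-down submanifold itself; your formula for $\lambda_k$ describes the correct object, so this is phrasing rather than mathematics.
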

\begin{proof}
Since the homotopy of almost contact structures in our proof of
Theorem~\ref{thm:LutzMoriTorsion} had compact support in
$(0,\epsilon) \times \SS^1 \times M$, the argument can be carried over
verbatim to the present context to prove the first statement.
The presence of a \BLOB can be deduced from the general
Proposition~\ref{prop:bLob_in_blown_down_domain}, 
but also much more directly for the
concrete examples we discussed in \S\ref{sec:Liouville}.
Indeed, one can check that the torus bundles on which we constructed Liouville
pairs $(\alpha_+, \alpha_-)$ always contain an $n$-torus $T$ on which 
both $\alpha_+$ and $\alpha_-$ vanish.  In $[0, 2\pi] \times \SS^1 \times M$,
the contact form $\lambdaGT$ induces on $[0, \pi] \times \SS^1 \times T$
the integrable $1$-form $(\sin s)\, dt$, whose kernel is singular exactly along
the boundary. Blowing down $\{0\} \times \SS^1 \times M$ turns this domain into
a plastikstufe inside $(Y_k,\xiGT)$.

Non-fillability can also be deduced directly from Theorem~\ref{thm:torsion2},
with the technical advantage that it does not require any semipositivity
assumption thanks to the polyfold technology for holomorphic 
spheres~\cite{HoferWZ_GW}.
(The corresponding technology for holomorphic disks remains under development.)
\end{proof}

\begin{remark}
In the $3$-dimensional case one can also define the so-called ``half-Lutz
twist'' along a positively transverse knot, which both changes the homotopy
class of the contact structure and makes it overtwisted, producing a negatively
transverse knot at the core of the inserted tube.  The equivalent operation here
would be defined by replacing a neighborhood of $(M,\xi_+)$ in $(V,\xi)$ with
the domain $([\pi,2\pi] \times \SS^1 \times M,\xiGT)$ blown down along 
$\{\pi\} \times \SS^1 \times M$.  A variation on the above argument shows that
the resulting contact manifold is also $PS$-overtwisted, and in this case the
submanifold~$M$ at the center of the inserted ``tube'' inherits the
\emph{negative} contact structure~$\xi_-$ instead of~$\xi_+$.
\end{remark}

It is not remotely clear under what circumstances in general 
one can say that the modification from $\xi$ to
$\xi_k$ or $\xi_\ell$ produces non-isomorphic contact structures for
$k \ne \ell$, though we will show in the next few subsections that this is
at least sometimes the case for Lutz-Mori twists along round hypersurfaces.
In light of the flexibility exhibited by overtwisted contact structures
in dimension three, the following natural question arises:

\begin{qu}
If $\xi_k$ and $\xi_\ell$ are obtained from the same contact structure
by a $k$-fold and $\ell$-fold Lutz-Mori twist respectively along a fixed
contact submanifold of codimension~$2$, when are they isomorphic?
\end{qu}

\begin{remark}
It should be emphasized that Lutz-Mori twists cannot be performed along
arbitrary round hypersurfaces or codimension~$2$ contact submanifolds:
we always need to assume that the contact structure restricted to the
submanifold~$M$ admits a contact form belonging to a Liouville pair.
This is a serious constraint, as there are many smooth manifolds that
are known to admit contact structures but not Liouville pairs: for instance,
by \cite{Etnyre_planar} and \cite{AlbersBramhamWendl},
this is the case for any $3$-manifold whose contact 
structures are all known to be planar (e.g.~$\SS^3$ and $\SS^1 \times \SS^2$) or
partially planar (e.g.~$\TT^3$), as these can never admit strong
symplectic semifillings with disconnected boundary.
The fact that $3$-dimensional Lutz twists
can be inserted along any contact submanifold (here transverse knots) 
can then be seen as a consequence of the fact that every contact form on every
closed $1$-dimensional manifold obviously belongs to a Liouville pair.
\end{remark}

\subsection{Liouville pairs and Reeb vector fields}

In this section we describe the Reeb vector fields corresponding to contact
forms coming from Liouville pairs.  

Lemma~\ref{lemma:LPchar} implies that for any Liouville pair $(\alpha_+,\alpha_-)$
and constants $C_+,C_- \ge 0$ that do not both vanish, the $2$-form
$C_+\, d\alpha_+ + C_-\, d\alpha_-$ has maximal rank.  Its kernel therefore
defines a nonsingular line field on~$M$.

\begin{definition}\label{defn:hypertightLP}
A Liouville pair $(\alpha_+,\alpha_-)$ is called \defin{hypertight}
if for every pair of constants $C_+,C_- \ge 0$ that are not both zero,
$M$ admits no contractible loops tangent to
$\ker\left(C_+\, d\alpha_+ + C_-\, d\alpha_-\right)$.
\end{definition}

In particular, this condition implies that $\alpha_+$ and~$\alpha_-$ 
each admit no contractible closed Reeb orbits.  As one can check,
nonzero left-invariant vector fields on the Lie groups $\GrsunOrig$ of
\S\ref{sec:Liouville} never have closed orbits, thus we have the following
useful observation:

\begin{proposition}
\label{prop:hypertightExamples}
All the Liouville pairs constructed in \S\ref{sec:Liouville} are 
hypertight.
\end{proposition}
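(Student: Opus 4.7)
My plan is to lift the problem to the simply connected universal cover $G := \Grsun$ of $M_\k$, use the left-invariance of the Liouville pair to turn a closed tangent loop into a closed orbit of a left-invariant vector field on $G$, and finish by checking that $G$ contains no nontrivial compact $1$-parameter subgroup.

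First I would carry out the reduction. Recall from \S\ref{subsec:cocompact} that $M_\k = G / \Gk$ with $G$ simply connected (cf.\ the proof of Lemma~\ref{lemma:centerGk}), and that $(\alpha_+, \alpha_-)$ is the descent of a left-invariant Liouville pair on $G$. For any $C_+, C_- \ge 0$ not both zero, the $2$-form $C_+\, d\alpha_+ + C_-\, d\alpha_-$ is left-invariant on $G$ and has maximal rank by Lemma~\ref{lemma:LPchar}; hence its kernel is a nowhere-vanishing left-invariant line field on $G$, spanned by some nonzero left-invariant vector field $X$. An immersed contractible loop $\gamma \colon \SS^1 \to M_\k$ tangent to this line field lifts to a loop $\tilde\gamma \colon \SS^1 \to G$ tangent to $\RR X$; after reparametrization (using that $\gamma$ is immersed so the coefficient has constant sign), $\tilde\gamma$ becomes an integral curve of $X$ and hence traces out a left coset of the $1$-parameter subgroup $H := \exp(\RR X_e) \subset G$. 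For this coset to contain a nontrivial loop, $H$ itself must be compact, i.e.\ isomorphic to $\SS^1$.

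The problem thus reduces to the claim that $G$ contains no nontrivial compact subgroup. Since $G$ is a closed subgroup of the product $\Grs = \Aff^+(\RR)^{r+1} \times \widetilde\Aff(\CC)^s$, and compact subgroups project to compact subgroups, it suffices to verify this for each factor. For $\Aff^+(\RR) = \RR \ltimes \RR$, any compact subgroup fixes a point on $\RR$ by Haar averaging, and the isotropy at a fixed point is the scaling subgroup $\RRpx$, which has no nontrivial compact subgroup. For $\widetilde\Aff(\CC)$, a compact subgroup projects to a compact subgroup of $\Aff(\CC) = \CCx \ltimes \CC$, which by the same averaging argument lies in the rotational $U(1) \subset \CCx$ at some fixed point; but the preimage of this $U(1)$ in the universal cover $\widetilde\Aff(\CC)$ is the noncompact subgroup $i\RR \ltimes \{0\}$, so any compact subgroup of $\widetilde\Aff(\CC)$ is trivial.

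The only technical obstacle is this final Lie-theoretic input, which is essentially the standard fact that a simply connected solvable Lie group has no nontrivial compact subgroup; the remainder of the argument is a formal consequence of left-invariance, Lemma~\ref{lemma:LPchar}, and the simple-connectedness of $G$.
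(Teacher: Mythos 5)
Your proof is correct and follows essentially the same approach the paper takes: the paper simply asserts, without further detail, that nonzero left-invariant vector fields on $\GrsunOrig$ have no closed orbits, and you supply the missing argument — lifting a contractible tangent loop to the simply connected cover, observing it traces out a coset of a one-parameter subgroup, and verifying that the relevant groups have no nontrivial compact subgroups, either directly by projecting factorwise to $\Aff^+(\RR)$ and $\widetilde\Aff(\CC)$ or by the general fact for simply connected solvable Lie groups. One small notational slip: the universal cover of $M_\k$ is $\GrsunOrig$ sitting in $\GrsOrig = \Aff^+(\RR)^{r}\times\widetilde\Aff(\CC)^s$, not $\Grsun$ in $\Grs$; the indices $(r,s)$ are those attached to the number field $\k$ in \S\ref{sec:Liouville}, with no shift.
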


The following computation will be useful for understanding Reeb vector
fields on our examples in the next two subsections.  As a simple
application, it immediately implies that whenever the pair
$(\alpha_+,\alpha_-)$ is hypertight, $\lambdaGT$ admits no contractible
closed Reeb orbits.

\begin{lemma}\label{lemma:Reeb}
Suppose $(\alpha_+,\alpha_-)$ is a hypertight
Liouville pair on a manifold~$M$ of
dimension $2n-3 \ge 1$, and $f,g,h \colon \RR \to \RR$ are smooth functions
such that $f$ and~$g$ are both nonnegative and never vanish simultaneously,
and the $1$-form on $\RR\times \SS^1 \times M$ defined by
\begin{equation*}
  \lambda := f(s)\, \alpha_+ + g(s)\, \alpha_- + h(s)\, dt
\end{equation*}
is contact.  Then the Reeb vector field $R_\lambda$ associated to~$\lambda$
has the form $R_\lambda(s,t,m) = X_s(m) + u(s)\, \p_t$, where
$u \colon \RR \to \RR$ is a smooth function and $X_s$ is a smooth $1$-parameter
family of vector fields on~$M$, each of which either vanishes identically
or has no contractible closed orbits.
\end{lemma}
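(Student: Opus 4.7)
The plan is to exploit the $\partial_t$-invariance of $\lambda$, reduce the Reeb equation to a small system of scalar equations, use the contact condition together with the vanishing of top-degree forms on~$M$ to force the $\partial_s$-component of $R_\lambda$ to vanish, and then appeal to hypertightness for the closed-orbit conclusion. I will begin by writing $R_\lambda = a\,\partial_s + b\,\partial_t + Y$ with $Y$ tangent to~$M$; uniqueness of the Reeb vector field combined with the $\partial_t$-invariance of $\lambda$ makes $a$, $b$, $Y$ independent of~$t$. Setting $\beta_s := f\alpha_+ + g\alpha_-$, $\beta'_s := f'\alpha_+ + g'\alpha_-$ and $\omega_s := f\,d\alpha_+ + g\,d\alpha_-$, so that $d\lambda = ds\wedge(\beta'_s + h'\,dt) + \omega_s$, pairing $\iota_{R_\lambda}\lambda = 1$ and $\iota_{R_\lambda}d\lambda = 0$ with $\partial_s$, $\partial_t$, and vectors tangent to~$M$ will yield the four pointwise equations
\begin{equation*}
\beta_s(Y) + h\,b = 1, \qquad a\,h' = 0, \qquad \beta'_s(Y) + h'\,b = 0, \qquad a\,\beta'_s + \iota_Y\omega_s = 0.
\end{equation*}

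The main obstacle is establishing $a \equiv 0$. Because $\omega_s^{n-1}$ is a $(2n-2)$-form on the $(2n-3)$-dimensional manifold~$M$, it vanishes identically, and the Leibniz rule then gives $(\iota_Y\omega_s)\wedge\omega_s^{n-2} = 0$; combined with the fourth equation this forces $a\,\beta'_s\wedge\omega_s^{n-2} = 0$. A direct expansion of $\lambda\wedge(d\lambda)^{n-1}$, again using $\omega_s^{n-1}=0$ to discard top-degree terms on~$M$, produces
\begin{equation*}
\lambda\wedge(d\lambda)^{n-1} = (n-1)\,ds\wedge dt\wedge(h'\beta_s - h\beta'_s)\wedge\omega_s^{n-2},
\end{equation*}
which must be a positive volume form by the contact hypothesis, so $(h'\beta_s - h\beta'_s)\wedge\omega_s^{n-2}$ is everywhere a nonvanishing top form on~$M$. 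If $a\ne 0$ at some point $(s_0,m_0)$, then $h'(s_0)=0$ by the second equation, and the contact identity reduces to $-h(s_0)\,\beta'_{s_0}\wedge\omega_{s_0}^{n-2}\ne 0$, forcing both $h(s_0)\ne 0$ and $\beta'_{s_0}\wedge\omega_{s_0}^{n-2}\ne 0$ at $m_0$; this contradicts the identity $a\,\beta'_s\wedge\omega_s^{n-2}=0$, so $a\equiv 0$.

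With $a\equiv 0$ in hand, the fourth equation becomes $\iota_Y\omega_s = 0$, so $X_s := Y$ is a section of the characteristic line field $\ker\omega_s = \ker(f(s)\,d\alpha_+ + g(s)\,d\alpha_-)$, which has rank one by the Liouville pair property (Lemma~\ref{lemma:LPchar}). The first and third equations, combined with the invertibility of the $2\times 2$ system whose determinant is the nonvanishing quantity $(h'\beta_s - h\beta'_s)(X_s)$, then determine the remaining components, producing the desired splitting $R_\lambda(s,t,m) = X_s(m) + u(s)\,\partial_t$. Every closed orbit of $X_s$ is a closed integral curve of the line field $\ker\omega_s$, and Definition~\ref{defn:hypertightLP} applied with $(C_+,C_-) = (f(s),g(s))$ prevents such curves from being contractible; hence $X_s$ either vanishes identically or has no contractible closed orbits, finishing the argument.
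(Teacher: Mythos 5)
Your proof is correct and reaches the paper's conclusion by a genuinely different route. The paper's argument is purely constructive: it directly posits a candidate $X_s + u(s)\p_t$ with no $\p_s$-component, defines $X_s$ as the unique solution of $\iota_{X_s}\omega_s = 0$ together with $(hf'-h'f)\alpha_+(X_s) + (hg'-h'g)\alpha_-(X_s) = -h'$, defines $u$ case-by-case, and verifies the Reeb equations; uniqueness of the Reeb vector field then forces $R_\lambda$ to have vanishing $\p_s$-component, with no separate argument needed. You instead make that vanishing a theorem: writing $R_\lambda = a\p_s + b\p_t + Y$, you derive a contradiction from $a\neq 0$ by playing the relation $a\,\beta'_s\wedge\omega_s^{n-2}=0$ (extracted from $\omega_s^{n-1}\equiv 0$ on the $(2n-3)$-dimensional~$M$, the Leibniz rule, and the $TM$-component of the Reeb equation) against the nonvanishing of $(h'\beta_s - h\beta'_s)\wedge\omega_s^{n-2}$ coming from the contact condition. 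That contradiction step is the genuinely new ingredient in your write-up; both routes otherwise rest on the same expansion of $\lambda\wedge(d\lambda)^{n-1}$ and on $\ker\omega_s$ being a rank-one distribution via Lemma~\ref{lemma:LPchar}. Two small imprecisions: the determinant of your final $2\times 2$ system should be $(h'\beta_s - h\beta'_s)(Z)$ for a nonvanishing local section $Z$ of $\ker\omega_s$, not $(h'\beta_s - h\beta'_s)(X_s)$, since $X_s$ itself vanishes when $h'(s)=0$; and to obtain the stated dichotomy you should observe explicitly that solving the system yields $X_s$ proportional to $h'(s)$ (with a nowhere-vanishing proportionality factor), so that $X_s$ vanishes identically exactly when $h'(s)=0$ and is nowhere zero otherwise.
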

\begin{proof}
Computing $\lambda \wedge (d\lambda)^{n-1}$, we find that the contact
condition implies
\begin{equation}
\label{eqn:contCond}
[ (hf'-h'f) \alpha_+ + (hg'-h'g) \alpha_- ] \wedge (f\, d\alpha_+
+ g\, d\alpha_-)^{n-2} \ne 0,
\end{equation}
thus there is for each $s \in \RR$ a unique vector field $X_s$ on~$M$ satisfying
the conditions
\begin{equation*}
\begin{split}
( h f' - h' f) \,\alpha_+(X_s) + ( h g' - h' g )
\,\alpha_-(X_s) &= - h',\\
f\, d\alpha_+(X_s,\cdot) + g\, d\alpha_-(X_s,\cdot) &= 0,
\end{split}
\end{equation*}
This vector field vanishes precisely when $h'(s)=0$, and otherwise it
has no contractible orbits due to the hypertightness assumption.
The relation \eqref{eqn:contCond} also implies that $h(s)$ and $h'(s)$ can 
never simultaneously vanish, thus one can define a
function $u \colon \RR \to \RR$ by
\begin{equation*}
  u(s) = \begin{cases}
    \frac{1}{h} \left[ 1 - f\, \alpha_+(X_s) - g\, \alpha_-(X_s) \right]
    & \text{ when $h(s) \ne 0$},\\
    -\frac{1}{h'} \left[ f'\, \alpha_+(X_s) + g'\, \alpha_-(X_s) \right]
    & \text{ when $h(s) = 0$.}
  \end{cases}
\end{equation*}
With these definitions, it is straightforward to check that
$d\lambda(X_s + u(s)\,\p_t,\cdot) = 0$ and $\lambda(X_s + u(s)\,\p_t) = 1$.
\end{proof}

\subsection{A sequence of contact structures on $\TT^2 \times M$}
\label{subsec:weakNotStrong}

In order to prove Theorems~\ref{thm:generalisationTori}
and~\ref{thm:exist_weak_not_strong} from the introduction, 
we now consider an example that generalizes
the well-known sequence of weakly but not strongly fillable contact structures
on~$\TT^3$ \cite{Giroux_plusOuMoins,Eliashberg3Torus}.
Assume as usual that
$M$ has a Liouville pair $(\alpha_+,\alpha_-)$, and define for each
positive integer $k$ a contact structure $\xi_k$ on $\TT^2 \times M$ by
identifying the latter with $(\RR / 2k\pi\ZZ) \times \SS^1 \times M$
and setting $\xi_k := \xiGT$ via \eqref{eqn:GT2}.  
Theorems~\ref{thm:generalisationTori} and~\ref{thm:exist_weak_not_strong} are
then consequences of the following result, together with
Example~\ref{ex:S1product} below.

\begin{theorem}\label{thm:weakNotStrong}
For any closed manifold~$M$ with a Liouville pair $(\alpha_+,\alpha_-)$,
the sequence of contact structures $\{ \xi_k\}_{k > 0}$ on
$\TT^2 \times M$ defined above has the following properties:
\begin{enumerate}
\item $(\TT^2 \times M,\xi_1)$ is exactly fillable.
\item $(\TT^2 \times M,\xi_k)$ is not strongly fillable for any $k \ge 2$.
\item For any $k, \ell$, $\xi_k$ and $\xi_\ell$ are homotopic through
a family of almost contact structures.
\item If $(\alpha_+,\alpha_-)$ is hypertight 
(see Definition~\ref{defn:hypertightLP}) then every $\xi_k$ for $k \in \NN$
is hypertight, and no two of these contact structures are isotopic.
If additionally $\pi_1(M)$ has trivial center, then no two of these contact
structures are contactomorphic.
\item Suppose additionally that $\SS^1 \times M$ admits a closed $2$-form
$\omega$ such that for some constants $c_+, c_- > 0$ and
all sufficiently small $\epsilon > 0$, $\epsilon \omega + 
c_+ d\alpha_+ + c_- d\alpha_-$ is symplectic on $\SS^1 \times M$.
Then $(\TT^2 \times M,\xi_k)$ is weakly fillable for every~$k$.
\end{enumerate}
In particular, the first four statements are true for all the examples of 
\S\ref{sec:Liouville} with $\dim M \ge 3$ and $s = 0$ (see
Proposition~\ref{prop:hypertightExamples} and Lemma~\ref{lemma:centerGk}), 
and the fifth statement is also true when $\dim M = 3$, so~$M$ may be any
$\TT^2$-bundle over $\SS^1$ with hyperbolic monodromy
(see the discussion following Proposition~\ref{prop:torusBundle}).
\end{theorem}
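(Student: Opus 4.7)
The five parts are approached by distinct techniques assembled from the machinery earlier in the paper. Part~(2) follows by observing that for $k\ge 2$, the subdomain $[0,2\pi]\times\SS^1\times M \subset (\RR/2k\pi\ZZ)\times\SS^1\times M$ has nonempty boundary and decomposes as a union of two Giroux $\pi$-torsion domains, so Theorem~\ref{thm:1torsion} applies directly. Part~(3) follows because $\xi_k$ can be obtained from $\xi_1$ by a $(k-1)$-fold Lutz--Mori twist along the $\xi_1$-round hypersurface $\{s_0\}\times\SS^1\times M$, so Theorem~\ref{thm:LutzMoriTorsion}(1) guarantees that the almost contact homotopy class is preserved. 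For Part~(1), an exact filling can be constructed directly as $W=\DD^2\times\SS^1_t\times M$ with Liouville form $\lambda_W = \tfrac{1+p_1}{2}\alpha_+ + \tfrac{1-p_1}{2}\alpha_- + p_2\,dt$ restricting to $\lambda_1$ on $\partial W$ via $(p_1,p_2)=(\cos s,\sin s)$; the verification that $d\lambda_W$ is symplectic uses Lemma~\ref{lemma:LPchar} combined with the Liouville pair structure, and if this naive form is insufficient in higher dimensions one builds the filling by attaching Weinstein handles to a product of the Liouville-pair domain $\Sigma_0=[-c,c]\times M$ with a disk.

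For Part~(4), hypertightness of each $\xi_k$ is a direct application of Lemma~\ref{lemma:Reeb}: the Reeb vector field of $\lambda_k$ has the form $X_s + u(s)\p_t$ with no $\p_s$ component, so any closed Reeb orbit lives at constant $s$ and either projects to a non-contractible trajectory in~$M$ (when $X_s\not\equiv 0$, by hypertightness of the pair $(\alpha_+,\alpha_-)$) or wraps the $t$-circle with nonzero winding (when $X_s\equiv 0$); either case is non-contractible in $\TT^2\times M$. To distinguish the $\xi_k$ up to isotopy, I would extract from the Giroux domain decomposition an integer invariant---essentially the higher-dimensional Giroux torsion introduced in~\S\ref{sec:torsion}---which equals~$k$ for $\xi_k$ and is invariant under isotopy. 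Under the stronger center-freeness assumption on $\pi_1(M)$, a contactomorphism $\phi\colon(\TT^2\times M,\xi_k)\to(\TT^2\times M,\xi_\ell)$ induces a $\pi_1$-automorphism that must, by rigidity of the $\pi_1$-structure, preserve the distinguished classes coming from the two $\SS^1$ factors of $\TT^2$; combining this rigidity with the torsion invariant then forces $k=\ell$.

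For Part~(5), take $W = \DD^2\times\SS^1_t\times M$ with $z = p_1+ip_2$ and $u_k+iv_k = z^k$, and set
\begin{equation*}
\Omega = A\,dp_1\wedge dp_2 + \tfrac{1}{k}\bigl(du_k\wedge dt + dv_k\wedge\tfrac{\alpha_+-\alpha_-}{2}\bigr) + c_+\,d\alpha_+ + c_-\,d\alpha_- + \epsilon\,\pi^*\omega,
\end{equation*}
for large $A > 0$ and small $\epsilon>0$, where $\pi\colon W\to\SS^1\times M$ is the projection. This form is closed, and on $\partial W$ its ``$ds$''-piece equals $\tfrac{1}{k}$ times the $ds$-part of $d\lambda_k$ (using $du_k|_{\partial W}=k\cos(ks)\,ds$ and $dv_k|_{\partial W}=-k\sin(ks)\,ds$), while the remaining boundary terms supply the hypothesized symplectic 2-form $c_+d\alpha_+ + c_-d\alpha_- + \epsilon\omega$ on $\SS^1\times M$. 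A direct computation of $\lambda_k\wedge(\Omega|_{T\partial W}+\tau d\lambda_k)^{n-1}$ then reduces weak domination, for every $\tau\ge 0$, to the Liouville-pair positivity of Lemma~\ref{lemma:LPchar} pointwise in~$s$. The main obstacle is the non-isotopy and non-contactomorphism assertions of Part~(4), where the Giroux torsion invariant must be shown robust against all isotopies (and, under the center-free hypothesis, against all contactomorphisms); Part~(5) is computationally delicate but mechanical once the form $\Omega$ is correctly set up, while Part~(1) may need a mild perturbation or a Weinstein construction to ensure global symplecticity in all dimensions.
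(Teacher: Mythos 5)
Your treatment of parts~(2) and~(3) matches the paper's (invoking the torsion/fillability obstruction and Theorem~\ref{thm:LutzMoriTorsion} respectively), and your proof of hypertightness in part~(4) via Lemma~\ref{lemma:Reeb} is the same. The other parts have real problems.

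For part~(1), the paper's exact filling is simply the \emph{product} of the two Liouville domains $([-1,1]\times\SS^1,\,\sigma\,d\theta)$ and $([-c,c]\times M,\,e^s\alpha_++e^{-s}\alpha_-)$ with corners rounded. Your proposed Liouville form $\lambda_W = \tfrac{1+p_1}{2}\alpha_+ + \tfrac{1-p_1}{2}\alpha_- + p_2\,dt$ on $\DD^2\times\SS^1\times M$ is unlikely to be Liouville in general: computing $(d\lambda_W)^n$ one is led to the term $(\alpha_+-\alpha_-)\wedge\bigl(\tfrac{1+p_1}{2}\,d\alpha_+ + \tfrac{1-p_1}{2}\,d\alpha_-\bigr)^{n-2}$, whose sign Lemma~\ref{lemma:LPchar} controls only when the coefficients in the one-form piece match those in the two-form piece, which they do not here unless $p_1=0$. (Your form happens to work for Geiges pairs, which is the $3$-dimensional case, but not for general Liouville pairs.) The proposed fallback via Weinstein handle attachment is not developed and is not what the paper does.

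For part~(4), the crucial step---showing the $\xi_k$ are pairwise nonisotopic---is not actually carried out. You propose to "extract a Giroux torsion invariant," but no such invariant is defined in the paper, and defining one and proving its contact invariance would require serious additional work. What the paper actually does is compute cylindrical contact homology $HC_*^{\bar a}(\TT^2\times M,\xi_k)$ for the free homotopy class $\bar a$ of the $t$-circle, finding $k$ copies of $H_*(M)$ coming from $k$ Morse-Bott tori of Reeb orbits (and vanishing for all other classes projecting trivially to $M$); the transversality/Morse-Bott foundations are justified via Lemma~\ref{lemma:SFTworks}. Finally, the center-free hypothesis on $\pi_1(M)$ is used precisely so that $\pi_1(\TT^2)\times\{1\}$ is the center of $\pi_1(\TT^2\times M)$, hence preserved by any automorphism; your appeal to "rigidity of the $\pi_1$-structure" is not an argument.

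For part~(5), the $2$-form you wrote is not closed: with $u_k+iv_k=(p_1+ip_2)^k$ one has $d\bigl(dv_k\wedge\tfrac{\alpha_+-\alpha_-}{2}\bigr) = -\tfrac{1}{2}\,dv_k\wedge(d\alpha_+ - d\alpha_-)\ne 0$ for $\dim M\ge 3$, so $d\Omega\ne 0$. You have likely dropped the term $\tfrac{1}{k}\,v_k\,\tfrac{d\alpha_+-d\alpha_-}{2}$, and including it would change the subsequent symplecticity computation. The paper avoids this bookkeeping entirely: it takes the exact filling $[-1,1]\times\SS^1\times[-c,c]\times M$ with $\omega_\epsilon = d[e^s\alpha_++e^{-s}\alpha_-+\sigma\,d\theta]+\epsilon\,\omega$, observes it is a weak filling of $(\TT^2\times M,\xi_1)$ containing $\{0\}\times\SS^1\times M$ as a symplectic submanifold, and takes the $k$-fold symplectic branched cover branched there. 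Your $z\mapsto z^k$ idea is morally the same, but you need to pull back the paper's form correctly rather than guessing its expression.
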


\begin{example}\label{ex:S1product}
We do not know any examples of Liouville pairs with $\dim M \ge 5$ for which
we can verify the last condition, and this is why 
Theorem~\ref{thm:exist_weak_not_strong} in the introduction is stated only for
dimension five.
For $\dim M = 1$, the condition is the trivial observation that $\TT^2$
admits an area form, and our argument will then reproduce Giroux's
construction \cite{Giroux_plusOuMoins} of weak fillings for the
tight contact structures on~$\TT^3$, which directly inspired our general case.
Theorem~\ref{thm:exist_weak_not_strong} depends on finding closed 
$3$-manifolds $M$ with Liouville pairs such that $\SS^1 \times M$ is
symplectic, and this is also not hard. Every $\TT^2$-bundle over $\SS^1$ with
hyperbolic monodromy admits a hypertight Liouville pair that can be written as
follows: on $\RR \times \RR^2$ with coordinates $(t,x,y)$ let
\begin{equation*}
  \alpha_\pm = \pm e^t \, dx + e^{-t}\, dy \;.
\end{equation*}
Then if $A \in \SL(2,\ZZ)$ has eigenvalues $e^{\pm\tau}$ for $\tau > 0$,
one can find a lattice $\Lambda_A \subset \RR^2$ which is preserved by
the linear transformation $(x,y) \mapsto (e^{-\tau}x,
e^\tau y)$, so that $\alpha_+$ and $\alpha_-$ both descend to the
mapping torus
\begin{equation*}
M_A := \left(\RR \times (\RR^2 / \Lambda_A)\right) \Big/ (t,x,y) \sim 
(t + \tau,e^{-\tau} x,e^\tau y) \;.
\end{equation*}
Since $M_A$ fibers over~$\SS^1$, $\SS^1 \times M_A$ admits a symplectic form,
and we can write it explicitly as
\begin{equation*}
\omega = d\phi \wedge dt + dx \wedge dy \;,
\end{equation*}
where $\phi$ denotes the additional $\SS^1$-coordinate.  This form
satisfies $\omega \wedge d\alpha_\pm = 0$, hence 
$\epsilon\omega + c_+\, d\alpha_+ + c_-\, d\alpha_-$ 
is symplectic for all constants $\epsilon > 0$ and $c_\pm \in \RR$.
\end{example}

Our argument for distinguishing the contact structures~$\xi_k$ for different
values of~$k$ will use cylindrical contact homology as sketched by
Eliashberg-Givental-Hofer \cite{SFT}, a theory which in its most general form
has not yet been rigorously defined due to the difficulty of achieving
transversality for multiply covered holomorphic curves.  In our situation
however, we are in the lucky position of being able to rule out multiply covered
curves topologically.  Suppose $(V,\xi)$ is a closed contact manifold and
$\bar{a}$ denotes a free homotopy class of loops $S^1 \to V$.  We shall say that
a contact form $\lambda$ for $(V,\xi)$ is \defin{$\bar{a}$-admissible} if all
its Reeb orbits in the homotopy class~$\bar{a}$ are Morse-Bott and their periods
are uniformly bounded, and there are no contractible Reeb orbits.  The idea
sketched in \cite{SFT} is that if $\lambda$ is nondegenerate, one should define
a chain complex generated by a certain class of Reeb orbits homotopic to
$\bar{a}$, with the differential counting rigid holomorphic cylinders in the
symplectization for a generic choice of almost complex structure
adapted to~$\lambda$.  The
resulting homology is meant to depend only on $(V,\xi)$ and $\bar{a}$ up to
natural isomorphisms, so we denote it by $HC_*^{\bar{a}}(V,\xi)$.  Bourgeois
\cite{Bourgeois_thesis} has also explained how to extend this definition to
Morse-Bott contact forms by counting so-called ``holomorphic cascades.''

\begin{lemma}
\label{lemma:SFTworks}
Suppose $\bar{a}$ is a free homotopy class of loops in $(V,\xi)$ which is
primitive, i.e.~it is not a positive multiple of any other homotopy class, and
suppose $(V,\xi)$ admits an $\bar{a}$-admissible contact form.  Then the
cylindrical contact homology $HC_*^{\bar{a}}(V,\xi)$ sketched in \cite{SFT} is
well defined and can be computed as described in \cite{Bourgeois_thesis} by
counting holomorphic cascades for generic data associated to any
$\bar{a}$-admissible contact form.
\end{lemma}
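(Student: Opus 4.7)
The plan is to use the primitivity hypothesis on $\bar a$ to sidestep the multiple-cover pathologies that obstruct cylindrical contact homology in general, thereby reducing everything to well-established somewhere injective theory. First I would observe that any closed Reeb orbit whose free homotopy class is $\bar a$ must be simply covered: if $\gamma$ were a $k$-fold iterate of a simple orbit $\gamma_0$ with $k \ge 2$, then $\bar a = k\cdot [\gamma_0]$, contradicting primitivity. The analogous reasoning rules out multiply covered holomorphic cylinders: if $u \colon \RR \times \SS^1 \to \RR \times V$ has both asymptotic orbits in $\bar a$ and factored as $u = v \circ \varphi$ for a branched cover $\varphi$ of degree $k \ge 2$, the asymptotic orbits of $u$ would be $k$-fold iterates of those of $v$, again contradicting primitivity. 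Hence every cylinder contributing to the chain complex is somewhere injective.

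With somewhere injectivity established, transversality reduces to the standard Sard--Smale argument: for a generic $\RR$-invariant almost complex structure $J$ on $\RR \times V$ adapted to the given $\bar a$-admissible contact form, the moduli spaces of holomorphic cylinders with ends in $\bar a$ are smooth manifolds of the expected dimension. The SFT compactness theorem of \cite{BourgeoisCompactness} then provides the relevant compactifications: the uniform period bound built into $\bar a$-admissibility ensures that only finitely many asymptotic orbits can appear in any limiting building, the absence of contractible Reeb orbits eliminates plane bubbling, and the exactness of $d(e^t\lambda)$ on a symplectization rules out any closed sphere components. Each broken configuration is therefore a chain of cylinders all of whose ends still lie in $\bar a$, so all of them are somewhere injective and all strata of the compactified moduli space are cut out transversely. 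Invariance of $HC_*^{\bar a}(V,\xi)$ under the choice of contact form and $J$ then follows from the usual exact cobordism continuation argument, to which the same transversality and compactness inputs apply verbatim.

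The step requiring the most care, and the one I expect to be the main obstacle, is upgrading this picture from nondegenerate to Morse--Bott data, since admissibility permits positive-dimensional families of orbits in $\bar a$. Here I would follow Bourgeois's cascade construction \cite{Bourgeois_thesis}: the differential counts finite chains of somewhere injective holomorphic cylinders joined by segments of a gradient flow on the orbit manifolds, with respect to an auxiliary Morse function chosen generically on each $\bar a$-component. Every cylinder in such a cascade remains somewhere injective by the primitivity argument above, so holomorphic transversality is again standard; the only genuinely new ingredient is the simultaneous genericity of the Morse data and the evaluation maps matching gradient trajectory endpoints to asymptotic ends, which is exactly the combinatorial-analytic framework Bourgeois sets up in the contact homology setting and which carries over here without essential modification once multiply covered cylinders have been excluded.
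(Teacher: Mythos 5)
Your identification of the role of primitivity is correct and matches the paper's first observation: every Reeb orbit in $\bar a$ is simply covered, hence every relevant cylinder is somewhere injective and transversality is attainable via generic $J$ (the paper cites Dragnev for the $\RR$-invariant setting, which is the precise result one needs rather than plain Sard--Smale, but this is a minor point).

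The genuine gap is in your treatment of plane bubbling across the Morse--Bott-to-nondegenerate passage. You assert that "the absence of contractible Reeb orbits eliminates plane bubbling," but the hypothesis only guarantees that the given $\bar a$-admissible contact form $\lambda$ has no contractible orbits. To actually define the chain complex Floer-theoretically, one replaces $\lambda$ by a nondegenerate perturbation $\lambda'$, and $\lambda'$ can perfectly well acquire contractible Reeb orbits -- only their periods grow without bound as the perturbation shrinks. A sequence of cylinders for $(\lambda',J)$ could therefore a priori degenerate into a building containing a plane asymptotic to one of these new contractible orbits, and your compactness argument as written does not exclude this. The paper's device for closing this gap is a period cutoff: since the periods of orbits in $\bar a$ are uniformly bounded (this is exactly why boundedness is built into $\bar a$-admissibility) while the contractible orbits of $\lambda'$ can be pushed past any fixed period, one defines a subcomplex generated only by orbits below a suitably chosen period threshold. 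Within that subcomplex, any hypothetical plane would have more energy than a cylinder in the moduli space can carry, so planes are ruled out energetically rather than by a blanket "no contractible orbits" claim. The same cutoff is what makes the continuation maps for invariance go through -- your appeal to "the usual exact cobordism argument" inherits exactly the same issue in the cobordism, where again contractible orbits may appear on the perturbed ends. Without the cutoff argument, the proof is incomplete.
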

\begin{proof}
We only need to supplement the standard Floer-theoretic picture with the
following observations.  First, every Reeb orbit homotopic to $\bar{a}$ must be
simply covered, thus every holomorphic curve having only one positive end, which
is asymptotic to such an orbit, is guaranteed to be somewhere injective.
Transversality for these curves can therefore be achieved via a generic
perturbation of the almost complex structure, using the standard result of
Dragnev \cite{Dragnev} (see also the appendix of \cite{Bourgeois_homotopy}).
Secondly, if $\lambda$ has no contractible Reeb orbits and $\lambda'$ is a
sufficiently small nondegenerate perturbation of it as in
\cite{Bourgeois_thesis}, then one may assume every contractible Reeb orbit for
$\lambda'$ to have arbitrarily large period.  Then since the periods of Reeb
orbits homotopic to $\bar{a}$ are bounded, one can choose a
generic almost complex structure~$J$ adapted to $\lambda'$ and
define a subcomplex of the
usual complex for the data $(\lambda',J)$ by taking as generators all the Reeb
orbits up to a certain period, chosen so that all perturbations of the
Morse-Bott orbits homotopic to $\bar{a}$ are included but holomorphic planes can
never appear in the relevant compactifications because they have
too much energy.  For appropriate choices of the
period cutoff, the standard construction of natural isomorphisms (i.e.~by
counting rigid holomorphic cylinders in symplectic cobordisms) suffices to prove
that the homology is independent of auxiliary choices.
\end{proof}

\begin{proof}[Proof of Theorem~\ref{thm:weakNotStrong}]
Statements~(2) and~(3) in the theorem follow immediately from 
Corollary~\ref{cor:torsion} and Theorem~\ref{thm:LutzMoriTorsion}
respectively.  We shall now prove statements~(1), (5) and~(4), in that order.

\textsl{Proof of~(1).}
An exact filling of $(\TT^2 \times M,\xi_1)$ can be constructed as
the product of two Liouville domains of the form
$([-1,1] \times \SS^1, \sigma\, d\theta)$ and
$([-c,c] \times M,e^s \alpha_+ + e^{-s} \alpha_-)$ with rounded corners,
where $c > 0$ may be assumed arbitrarily large and
$(\sigma,\theta)$ denote
the natural coordinates on $[-1,1] \times \SS^1$.

\textsl{Proof of~(5).}
If $\SS^1 \times M$ also admits a $2$-form $\omega$ as in the
condition of statement~(5), 
then we can modify the exact filling constructed above 
to define weak fillings of
every $(\TT^2 \times M,\xi_k)$, using the fact that the latter is
naturally a $k$-fold cover of $(\TT^2 \times M,\xi_1)$.
Indeed, the assumption implies that we can find $s_0 \in (-1,1)$ such that
for any $\epsilon > 0$ sufficiently small, the $2$-form
\begin{equation}
\label{eqn:omegaEpsilon}
\epsilon\omega + e^{s_0}\, d\alpha_+ + e^{-s_0}\, d\alpha_-
\end{equation}
is symplectic on $\SS^1 \times M$.  Now observe that
since the weak filling condition is open with respect to the symplectic form,
$([-1,1] \times \SS^1 \times [-c,c] \times M,\omega_\epsilon)$
with rounded corners and
\begin{equation*}
  \omega_\epsilon := d\left[ e^s\, \alpha_+ + e^{-s}\, \alpha_- + 
    \sigma\, d\theta \right] + \epsilon \omega
\end{equation*}
is also a weak filling of $(\TT^2 \times M,\xi_1)$ if $\epsilon > 0$
is sufficiently small, and for any $\sigma_0 \in (-1,1)$ its restriction 
to the interior submanifold
\begin{equation*}
  X_0 := \{\sigma_0\} \times \SS^1 \times \{s_0\} \times M \subset
  ([-1,1] \times \SS^1 \times [-c,c] \times M,\omega_\epsilon)
\end{equation*}
is precisely \eqref{eqn:omegaEpsilon}.  Thus we have a weak filling of
$(\TT^2 \times M,\xi_1)$ diffeomorphic to $\DD^2 \times \SS^1 \times M$
and containing $\{0\} \times \SS^1 \times M$
as a symplectic submanifold.  For any $k$, the $k$-fold symplectic
branched cover of this, branched at $\{0\} \times \SS^1 \times M$,
gives a weak filling of $(\TT^2 \times M,\xi_k)$.

\textsl{Proof of~(4).}
Assume now that $(\alpha_+,\alpha_-)$ is a hypertight Liouville pair.
Lemma~\ref{lemma:Reeb} then implies that $\lambdaGT$ has no contractible
Reeb orbits.  

We next compute the cylindrical contact homology 
of $(\TT^2 \times M,\xi_k)$, which is a straightforward adaptation 
of the calculation for the
tight $3$-tori explained in \cite[\S 4.2]{Bourgeois_CH}.  
Let $\bar{a}$ denote the free
homotopy class of the loop $\SS^1 \to \SS^1 \times \SS^1 \times M : \phi \mapsto
(\text{const},\phi,\text{const})$.  Applying Lemma~\ref{lemma:Reeb}
again, the Reeb orbits of $\lambdaGT$ in
homotopy class $\bar{a}$ on $\RR / (2\pi k \ZZ) \times \SS^1 \times M$
consist of precisely $k$ Morse-Bott families foliating the submanifolds
$\{ \cos s = 0,\ \sin s = 1 \} \cong \SS^1 \times M$.
Moreover, all of these orbits
have the same period, thus our contact form is $\bar{a}$-admissible in
the sense of Lemma~\ref{lemma:SFTworks}.
Now for any choice of admissible almost complex
structure~$J$ on the symplectization of $(\TT^2 \times M,\xi_k)$, there can
never be any index~$1$ $J$-holomorphic cylinders connecting two orbits in
homotopy class~$\bar{a}$ since it would have zero energy.  After making a
nondegenerate perturbation as explained in \cite{Bourgeois_CH}, 
nondegenerate orbits in homotopy class~$\bar{a}$ are in
one-to-one correspondence with the critical points of a Morse function
on the parameter space of the Morse-Bott families, i.e.~on~$M$.
Similarly, the holomorphic cylinders for the perturbed data 
correspond to so-called
``holomorphic cascades'' for the unperturbed data, and in the absence of
actual holomorphic cylinders, these are in one-to-one correspondence
with gradient flow lines on~$M$.  We conclude that 
$HC_*^{\bar{a}}(\TT^2 \times M,\xi_k)$ is isomorphic (up to a shift in the
grading) to the direct sum of $k$ copies of the Morse homology of~$M$,
which is simply the singular homology $H_*(M)$.  

Observe also that if $\bar{b} \ne \bar{a}$ 
is any other free homotopy class of loops
in $\TT^2 \times M$ whose projections to~$M$ are contractible, then
there are no Reeb orbits homotopic to $\bar{b}$ at all, hence
$HC_*^{\bar{b}}(\TT^2 \times M, \xi_k)$ is trivial.

The above computation shows that if $k \ne \ell$, then there can be
no contactomorphism $(\TT^2 \times M, \xi_k) \to
(\TT^2 \times M, \xi_\ell)$ whose action on $\pi_1(\TT^2 \times M)$
preserves the subgroup
$$
G := \pi_1(\TT^2) \times \{1\} \subset \pi_1(\TT^2) \times
\pi_1(M) = \pi_1(\TT^2 \times M).
$$
Indeed, we have computed the cylindrical contact homology for all 
homotopy classes in this subgroup, and by Lemma~\ref{lemma:SFTworks},
these computations would have to match if such a contactomorphism
existed.  This already implies that $\xi_k$ and $\xi_\ell$
cannot be isotopic.  To show that they are not even diffeomorphic,
we add the assumption that $\pi_1(M)$ has trivial center: then the
center of $\pi_1(\TT^2 \times M)$ is~$G$, which is therefore
preserved by \emph{every} automorphism of $\pi_1(\TT^2 \times M)$.
\end{proof}

\subsection{Hypertight but not weakly fillable}
\label{subsec:tightNotWeak}

We now construct a family of examples in all dimensions that implies
Theorem~\ref{thm:stacking} from the introduction.  Throughout this section, we
denote by $\Sigma_g$ the closed oriented surface of genus~$g$, and by
$\Sigma_{g,m}$ the compact oriented surface with genus~$g$ and~$m$ boundary
components.

\begin{theorem}\label{thm:tightNotWeak}
Suppose $M$ is any closed $(2n-3)$-dimensional manifold admitting a
hypertight Liouville pair.
Then for any integer $g > 0$, $\Sigma_{2g} \times M$
admits a sequence of contact structures $\{ \xi_k\}_{k > 0}$
with the following properties:
\begin{enumerate}
\item $(\Sigma_{2g} \times M,\xi_1)$ is exactly fillable.
\item $(\Sigma_{2g} \times M,\xi_k)$ is not weakly fillable for any $k \ge 2$.
\item $(\Sigma_{2g} \times M,\xi_k)$ is hypertight for all~$k$.
\item For any $k \ne \ell$, $\xi_k$ and $\xi_\ell$ are homotopic through a
family of almost contact structures but are not isotopic.  If additionally
$\pi_1(M)$ has trivial center and is solvable, then they are not even
contactomorphic.
\end{enumerate}
In particular, all of these statements are true for the Liouville pairs 
defined from totally real number fields in \S\ref{sec:Liouville}.
\end{theorem}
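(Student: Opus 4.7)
The plan is to construct $\xi_1$ as the contact boundary of an explicit Liouville domain, obtain $\xi_k$ for $k \ge 2$ by inserting Lutz--Mori twists, and then invoke Theorem~\ref{thm:torsion}(b) together with cylindrical contact homology. Let $F = \Sigma_{g,1}$ be the genus-$g$ surface with one boundary circle equipped with any Liouville form $\lambda_F$ (which exists since $\chi(F) < 0$), and let $W_M = [-T,T] \times M$ carry the Liouville form $\beta = e^s \alpha_+ + e^{-s}\alpha_-$, which is the Liouville domain with disconnected convex boundary $(M,\xi_+) \sqcup (M,\xi_-)$ underlying the Corollary to Theorem~\ref{thm:LiouvilleExists}. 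Then $(F \times W_M, d(\lambda_F + \beta))$ is a Liouville domain, and after smoothing the corner $\p F \times \p W_M$ its boundary is diffeomorphic to $\Sigma_{2g} \times M$: the annular neck $\p F \times W_M = \SS^1 \times [-T,T] \times M$ joins the two copies of $F \times M$ coming from $F \times \p W_M$, and gluing two copies of $\Sigma_{g,1}$ across an annulus produces precisely the double $\Sigma_{2g}$. Let $\xi_1$ denote the resulting contact structure, which is exactly fillable by construction; inside the neck the induced contact form is $d\theta + \beta$, and the substitution $\cos\tilde s = \tanh s$ identifies a constant multiple of it with $\lambdaGT$ restricted to an open sub-domain of a Giroux $\pi$-torsion domain.

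For $k \ge 2$, define $\xi_k$ by applying a $(k-1)$-fold Lutz--Mori twist to $\xi_1$ along a $\xi_1$-round hypersurface inserted between the neck and one of the caps (using the collar of Lemma~\ref{lemma:round_neighborhood} to extend the neck and expose a round hypersurface modeled on $(M,\xi_+)$); equivalently, $\xi_k$ has a torsion neck of total length $(2k-1)\pi$ in the $\lambdaGT$ model, still capped by $F \times M$ on both sides. Homotopy of $\xi_k$ to $\xi_1$ through almost contact structures is then Theorem~\ref{thm:LutzMoriTorsion}(1). To obstruct weak fillability for $k \ge 2$, I take two adjacent Giroux $\pi$-torsion pieces inside the extended neck, say $\Sigma^+ \times \SS^1 = [\pi, 2\pi] \times \SS^1 \times M$ and $\Sigma^- \times \SS^1 = [0, \pi] \times \SS^1 \times M$ glued at $\{s = \pi\}$; the $s = 0$ boundary component of $\Sigma^-$ does not touch $\Sigma^+$, so the configuration hypothesis of Theorem~\ref{thm:torsion}(b) is satisfied. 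The obstructed subspace $\oO(\Sigma^+ \times \SS^1) \subset H^2_{\dR}(\Sigma_{2g} \times M)$ is the annihilator of the image of $H_1(\Sigma^+) \otimes H_1(\SS^1) \to H_2(\Sigma_{2g} \times M; \RR)$, i.e.\ of classes $[\SS^1_{\mathrm{neck}} \times C]$ for $C \in H_1(M;\RR)$; but the neck circle is a separating simple closed curve in $\Sigma_{2g}$ and so is null-homologous, hence each such class vanishes in $H_2(\Sigma_{2g} \times M; \RR)$ by K\"unneth. The cohomological hypothesis of Theorem~\ref{thm:torsion}(b) is therefore automatic, and that theorem (or Theorem~\ref{thm:torsion2} to avoid semipositivity) rules out any weak filling.

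For hypertightness I would exhibit a concrete Reeb vector field on each $\xi_k$. Inside the torsion neck, Lemma~\ref{lemma:Reeb} produces a Reeb field of the form $X_s + u(s)\p_t$ where each $X_s$ either vanishes or has no contractible closed orbits on $M$ (by hypertightness of the Liouville pair, Definition~\ref{defn:hypertightLP}); the $t$-circle orbits arising at slices where $X_s = 0$ represent the separating neck curve, which is a nontrivial element of $\pi_1(\Sigma_{2g})$ and hence of $\pi_1(\Sigma_{2g} \times M)$. Over each cap $F \times M$ the product contact form $\lambda_F + c\alpha_\pm$ admits the Reeb vector field $R_{\alpha_\pm}$ (regarded as a vector field on $M$ extended trivially to $F$), whose closed orbits are non-contractible in $M$ by hypertightness of the Liouville pair; gluing these models across the collars by convex interpolation is routine. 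To distinguish the $\xi_k$'s up to isotopy I compute cylindrical contact homology $HC^{\bar a}_*$ with $\bar a$ the free homotopy class of the neck circle, which is primitive in $\pi_1(\Sigma_{2g} \times M)$ because simple closed curves on orientable surfaces represent primitive elements, so Lemma~\ref{lemma:SFTworks} applies; the Morse--Bott calculation of Theorem~\ref{thm:weakNotStrong}(4) transfers verbatim to produce precisely $k$ Morse--Bott families of Reeb orbits in class $\bar a$ (one at each level $s = \pi/2 + 2\pi m$, $m = 0,\dotsc,k-1$, in the extended neck), each contributing a copy of $H_*(M)$. The main obstacle is the passage from isotopy to contactomorphism, which is exactly where both hypotheses on $\pi_1(M)$ enter: I would argue that since $\pi_1(\Sigma_{2g})$ for $g \ge 1$ is a hyperbolic surface group containing nonabelian free subgroups, it has no nontrivial normal solvable subgroup, so the factor $\{1\} \times \pi_1(M)$ is the unique maximal normal solvable subgroup of $\pi_1(\Sigma_{2g}) \times \pi_1(M)$ (the trivial center hypothesis rules out diagonal-type enlargements); any self-contactomorphism therefore preserves this subgroup, descends to an automorphism of $\pi_1(\Sigma_{2g})$ that must preserve the conjugacy class of the neck curve, and so preserves the contact-homology invariant.
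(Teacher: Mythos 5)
Your proposal matches the paper's strategy in most respects—the Liouville-domain construction of $\xi_1$ as $\partial(\Sigma_{g,1}\times[-T,T]\times M)$, the use of Lemma~\ref{lemma:Reeb} for hypertightness, the cylindrical contact homology count of $k$ Morse--Bott families in the separating neck class, and the group-theoretic argument via a unique maximal normal solvable subgroup (the paper packages this as Lemma~\ref{lemma:deCornulier})—but there is a genuine gap in the weak-fillability obstruction for $k=2$.

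You propose to apply Theorem~\ref{thm:torsion}(b) by taking $\Sigma^+\times\SS^1=[\pi,2\pi]\times\SS^1\times M$ and $\Sigma^-\times\SS^1=[0,\pi]\times\SS^1\times M$, arguing that the $s=0$ boundary of $\Sigma^-$ is free. But in $(\Sigma_{2g}\times M,\xi_2)$ the torsion neck has total length $(2k-1)\pi=3\pi$, and both dangling ends of that neck have been \emph{blown down with genus}, i.e.\ capped off by $\Sigma_{g,1}\times M$. The slice $\{0\}\times\SS^1\times M$ is therefore not a $\xi$-round hypersurface bounding a subdomain---it is simply absorbed into the cap. The only $\xi_2$-round hypersurfaces in the interior of the neck occur at $s=\pi$ and $s=2\pi$, so the only codimension-$0$ subdomain of $(\Sigma_{2g}\times M,\xi_2)$ with round boundary that is a union of Giroux domains is the single $\pi$-torsion piece $[\pi,2\pi]\times\SS^1\times M$. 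Neither Theorem~\ref{thm:torsion}(b) (which needs two Giroux domains glued inside a subdomain $N$ with nonempty boundary) nor Theorem~\ref{thm:torsion2} applies to this $N$. By contrast, for $k\ge 3$ one does find an embedded $2\pi$-torsion subdomain (e.g.\ $[\pi,3\pi]\times\SS^1\times M$) strictly inside the neck, and there Corollary~\ref{cor:torsion} applies. This is precisely why the paper flags $k=2$ separately and sketches an argument using a generalization of the surgery of \S\ref{section:surgery} that accommodates ends blown down with genus $g$: after removing the interiors of two of the three $\pi$-torsion pieces, one of the new boundary components is $\Sigma_g\times M$ (rather than $\SS^2\times M$), and one must rule out holomorphic spheres escaping through that component---which the paper does by a product almost complex structure and a genus argument. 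Your proposal is missing this entirely.

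Two further but more minor points. First, the Lutz--Mori twist you invoke requires a $\xi_1$-round hypersurface modeled \emph{exactly} on $(M,\ker\alpha_+)$, but the slices of the Liouville collar $[-T,T]\times M$ with $\beta=e^s\alpha_++e^{-s}\alpha_-$ carry the contact structure $\ker(e^s\alpha_++e^{-s}\alpha_-)$, which only approaches $\xi_+$ as $s\to\infty$; the paper handles this with Lemma~\ref{lemma:cutoff}, modifying $\beta$ near the ends so the $\alpha_-$-term is literally cut off. Second, in the contactomorphism argument your claim that the induced automorphism of $\pi_1(\Sigma_{2g})$ ``must preserve the conjugacy class of the neck curve'' is unjustified and not needed: it suffices, as the paper observes, that the automorphism preserves the subgroup $\pi_1(\Sigma_{2g})\times\{1\}$ as a whole, and then the maximal rank of $HC^{\bar a}_*$ over all $\bar a$ in that subgroup already distinguishes $k$ from $\ell$.
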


The contact structures $\xi_k$ on $\Sigma_{2g} \times M$ will be constructed
using a simple generalization of the blow-down 
operation along round hypersurfaces that was introduced in \S\ref{subsec:round}.
To start with, we consider $(Z_k,\xiGT)$ where $Z_k :=
[0,(2k-1)\pi] \times \SS^1 \times M$, so the two boundary components
\begin{equation*}
  \p_+Z_k := \{0\} \times \SS^1 \times M, \qquad
  \p_-Z_k := \{(2k-1)\pi\} \times \SS^1 \times M
\end{equation*}
are $\xiGT$-round hypersurfaces
modeled on $(M,\xi_+)$ and $(-M,\xi_-)$ respectively.  At $\p_+Z_k$
in particular, we find by Lemma~\ref{lemma:round_neighborhood}
a collar neighborhood identified with 
$([0,\epsilon) \times \SS^1 \times M, \ker(\alpha_+ + s\, dt))$ for some
$\epsilon > 0$.
Now choose a Liouville form~$\beta$ on
$\Sigma_{g,1}$ such that $\int_{\p\Sigma_{g,1}} \beta = \epsilon$.
Then $\p\Sigma_{g,1}$ has a neighborhood $\nN(\p\Sigma_{g,1}) \subset
(\Sigma_{g,1},\beta)$ that can be identified with 
$((0,\epsilon] \times \SS^1, s\, dt)$, defining a natural embedding
$\Phi_+ \colon \nN(\p\Sigma_{g,1}) \times M \hookrightarrow (0,\epsilon] \times
\SS^1 \times M \subset Z_k$ with $\Phi_+^*\xiGT = \ker(\beta + \alpha_+)$.
Similarly, the other end of $\mathring{Z}_k$ admits 
an orientation preserving embedding 
$\Phi_- \colon \nN(\p\Sigma_{g,1}) \times (-M) \hookrightarrow \mathring{Z}_k$
such that $\Phi_-^*\xiGT = \ker(\beta + \alpha_-)$.  We can therefore glue
three pieces together to define
\begin{multline*}
(\Sigma_{2g} \times M,\xi_k) := (\Sigma_{g,1} \times M,\ker(\beta + \alpha_+))
\cup_{\Phi_+} (\mathring{Z}_k,\xiGT) \\ \cup_{\Phi_-} 
(\Sigma_{g,1} \times (-M), \ker(\beta + \alpha_-)).
\end{multline*}
Note that if $g=0$, this construction is equivalent to blowing down
$(Z_k,\xiGT)$ at both boundary components as defined in
\S\ref{subsec:round}, and we shall think of the more general operation
defined here as ``blowing down with genus~$g$.''

We now proceed to construct a model of $(\Sigma_{2g} \times M,\xi_k)$ with a
more tractable Reeb vector field.
The disadvantage of using $\lambdaGT$ for this purpose is that it cannot easily
be related to the normal forms
$\alpha_\pm + s\, dt$ coming from Lemma~\ref{lemma:round_neighborhood},
as for instance near $\p_+Z_k$, the $\alpha_-$-term
in $\lambdaGT$ is small but not identically vanishing.  The following
lemma allows us to eliminate it entirely after a small adjustment which
essentially replaces the Liouville form $e^s\alpha_+ + e^{-s}\alpha_-$ 
on $\RR \times M$ by one which is explicitly the completion of a Liouville
domain $[-c, c] \times M$.

\begin{lemma}\label{lemma:cutoff}
Choose a smooth cutoff function $\psi \colon \RR \to [0,1]$ that equals~$0$
on $(-\infty,0]$ and~$1$ on $[1,\infty)$.  Then
for any Liouville pair $(\alpha_+,\alpha_-)$ on a $(2n-1)$-dimensional
manifold~$M$, the $1$-form
\begin{equation*}
  \beta := \psi(c + s)\, e^s \alpha_+ + \psi(c - s)\, e^{-s} \alpha_-
\end{equation*}
is Liouville if $c > 0$ is a sufficiently large constant.
\end{lemma}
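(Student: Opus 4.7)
The plan is to verify $d\beta^n > 0$ on $\RR\times M$ directly, by splitting $\RR$ into pieces according to the behavior of the two cutoffs $\psi(c+s)$ and $\psi(c-s)$. By the manifest symmetry under $s\mapsto -s$ combined with the exchange $\alpha_+\leftrightarrow\alpha_-$, it suffices to treat $s\ge 0$; three intervals then arise: the bulk $[0,c-1]$ where both cutoffs equal $1$, the right-hand transition $[c-1,c]$, and the outside $[c,\infty)$ where $\psi(c-s)=0$.

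First the bulk and outside are immediate. On $[0,c-1]\times M$, $\beta$ coincides with $e^s\alpha_+ + e^{-s}\alpha_-$, which is Liouville on $\RR\times M$ by the very definition of a Liouville pair. On $[c,\infty)\times M$, $\beta$ reduces to $e^s\alpha_+$, and
\begin{equation*}
(d\beta)^n \;=\; n\,e^{ns}\,ds\wedge \alpha_+\wedge (d\alpha_+)^{n-1}
\end{equation*}
is a positive volume form since $\alpha_+$ is a positive contact form on $M$. The analogous reflected statements take care of $s\le 0$.

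The heart of the argument is the transition interval. Setting $g(s):=e^s$ and $h(s):=\psi(c-s)\,e^{-s}$, we have $\beta=g\alpha_+ + h\alpha_-$ with $g>0$ and $h\ge 0$. Using that the $ds$-component of $d\beta$ squares to zero, a direct expansion yields
\begin{equation*}
(d\beta)^n \;=\; n\,ds\wedge \bigl(g'\alpha_+ + h'\alpha_-\bigr)\wedge \bigl(g\,d\alpha_+ + h\,d\alpha_-\bigr)^{n-1}\;.
\end{equation*}
Substituting $g'=g$ and $h'=-[\psi'(c-s)+\psi(c-s)]\,e^{-s}$, the $M$-factor multiplying $ds$ splits as
\begin{equation*}
\bigl(g\alpha_+ - h\alpha_-\bigr)\wedge\bigl(g\,d\alpha_+ + h\,d\alpha_-\bigr)^{n-1} \;-\; \psi'(c-s)\,e^{-s}\,\alpha_-\wedge\bigl(g\,d\alpha_+ + h\,d\alpha_-\bigr)^{n-1}\;.
\end{equation*}
By Lemma~\ref{lemma:LPchar} applied with $(C_+,C_-)=(g,h)\ne(0,0)$, the first summand is a positive volume form on $M$; by compactness of $M$ together with compactness of the unit semicircle in $\{C_+,C_-\ge 0\}$, it is bounded below by $\delta\,e^{ns}\,\omega_0$ for a fixed reference volume form $\omega_0$ on $M$ and a positive constant $\delta$. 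The second summand is bounded in absolute value by a constant depending only on $\alpha_\pm$ and $M$ times $\|\psi'\|_\infty\,e^{(n-2)s}$. Hence on $[c-1,c]\times M$ the ratio of error to main term is $O(e^{-2s})=O(e^{-2c})$, so for $c$ large enough the sum is strictly positive.

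The main obstacle is the sign analysis on the transition zones; the crucial input is the uniform positivity furnished by Lemma~\ref{lemma:LPchar} over \emph{all} nonnegative coefficient pairs $(C_+,C_-)\ne 0$, which allows the cutoff derivative to be absorbed as a lower-order correction as soon as the symplectization parameter $s$ is pushed far from zero. Taking $c$ large achieves exactly this on both transition zones, completing the proof.
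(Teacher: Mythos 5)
Your proof is correct and follows the paper's structure closely: localize to the two transition zones, expand $d\beta^n$, split the $M$-factor into a main term and an error term carrying a $\psi'$, and observe that the error is dominated once $c$ is large. The one genuine difference is how positivity of the main term is established. You invoke Lemma~\ref{lemma:LPchar}, which yields a uniform lower bound for $(C_+\alpha_+ - C_-\alpha_-)\wedge(C_+\,d\alpha_+ + C_-\,d\alpha_-)^{n-1}$ over the entire nonnegative quadrant; combined with homogeneity this gives the quantitative decay $O(e^{-2c})$ of the error-to-main ratio. The paper does not appeal to that lemma at all: it factors out the dominant exponential ($e^{-ns}$ on the left transition, equivalently $e^{ns}$ on the right) and notes that the rescaled main term then converges uniformly (as $c\to\infty$, with $M$ compact) to $-\alpha_-\wedge d\alpha_-^{n-1}$, resp.\ $\alpha_+\wedge d\alpha_+^{n-1}$, whose positivity is already part of the Liouville pair definition. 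Your route is a bit heavier but more robust, since it would still work if $(g,h)$ did not concentrate near an axis; the paper's is marginally more elementary, exploiting precisely that concentration. One further remark on your symmetry reduction $s\mapsto -s$, $\alpha_+\leftrightarrow\alpha_-$: this is valid, but $(\alpha_-,\alpha_+)$ is a Liouville pair only for the \emph{opposite} orientation of $M$, and the reflection also reverses orientation on the $\RR$-factor, so the two sign reversals cancel; a sentence spelling this out would tighten the argument. The paper sidesteps the issue by carrying out the left transition explicitly and stating that the right is analogous.
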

\begin{proof}
The claim is immediate whenever $\psi' = 0$, so it will suffice to examine
$d\beta$ on the segments $\{ -c \le s \le -c + 1 \}$ and
$\{ c - 1 \le s \le c \}$.  On the former, we have
$\beta = \psi_c(s) e^s\, \alpha_+ + e^{-s} \alpha_-$ where
$\psi_c(s) := \psi(c+s)$.  Thus
\begin{equation*}
\begin{split}
d\beta^n &= n\, ds \wedge (\psi_c e^s\alpha_+ - e^{-s}\alpha_- +
\psi_c' e^s \alpha_+) \wedge (\psi_c e^s\, d\alpha_+ + e^{-s}\, d\alpha_-)^{n-1} \\
&= n e^{-ns}\, ds \wedge \Big[ (\psi_c e^{2s} \alpha_+ - \alpha_-) \wedge
(\psi_c e^{2s}\, d\alpha_+ + d\alpha_-)^{n-1} \\
&\qquad + e^{2s} \psi_c' \, \alpha_+ \wedge (\psi_c e^{2s}\, d\alpha_+ +
d\alpha_-)^{n-1} \Big].
\end{split}
\end{equation*}
In this last expression, the first term in the brackets can be made
arbitrarily close to $-\alpha_- \wedge d\alpha_-^{n-1} > 0$ by assuming
$c > 0$ large, while the second term can be made arbitrarily close to~$0$,
hence the sum is positive.  A similar argument also works for the
segment $\{ c - 1 \le s \le c \}$.
\end{proof}

Combining this lemma with the reparametrization trick in the
proof of Proposition~\ref{prop:GTdomain}, we can now introduce a
convenient modification of the contact form $\lambdaGT$: on
$Z_k = [0,(2k-1)\pi] \times \SS^1 \times M$, there exists a contact form of type
\begin{equation*}
  \lambda_k = f(s)\, \alpha_+ + g(s)\, \alpha_- + h(s)\, dt
\end{equation*}
for some smooth functions $f,g,h \colon [0,(2k-1)\pi] \to \RR$, such that
for some small constant $\epsilon > 0$:
\begin{itemize}
\item $\lambda_k = \lambdaGT$ on $[2\epsilon,(2k-1)\pi - 2\epsilon] \times
\SS^1 \times M$,
\item $\lambda_k$ is everywhere $C^1$-close to~$\lambdaGT$,
\item $\lambda_k = \alpha_+ + s\, dt$ on $[0,\epsilon] \times \SS^1 \times M$,
\item $\lambda_k = \alpha_- + \left[(2k-1)\pi - s\right]\, dt$ on
$[(2k-1)\pi - \epsilon,(2k-1)\pi] \times \SS^1 \times M$.
\end{itemize}
Then if $\beta$ denotes the Liouville form on $\Sigma_{g,1}$ as described
above with collar neighborhood $\nN(\p\Sigma_{g,1}) = (0,\epsilon] \times \SS^1$
in which $\beta = s\, dt$, we can smoothly glue $\Sigma_{g,1} \times M$
with contact form $\lambda_k := \alpha_+ + \beta$ to the interior of
$(Z_k,\lambda_k)$ along $(0,\epsilon] \times \SS^1 \times M$.
Similarly, defining the auxiliary coordinate $s' := (2k-1)\pi - s \in
[0,\epsilon]$ on the opposite collar neighborhood, we can glue this
neighborhood to $\Sigma_{g,1} \times M$ with contact form
$\lambda_k := \alpha_- + \beta$ so that the coordinates $(s',t)$ match the
collar $\nN(\p\Sigma_{g,1}) = (0,\epsilon] \times \SS^1$.
The kernel of $\lambda_k$ is now isotopic to $\xi_k$.

\begin{proof}[Proof of Theorem~\ref{thm:tightNotWeak}]
The claim regarding almost contact structures follows by the same
argument as in Theorem~\ref{thm:LutzMoriTorsion}.  With this
understood, we shall
now proceed to prove items~(3) and~(4) from the statement of the theorem,
and after that prove items~(1) and~(2).

\textsl{Proof of~(3) and~(4)}.
The contact form $\lambda_k$ constructed above determines a Reeb vector
field $R_{\lambda_k}$ that is given by Lemma~\ref{lemma:Reeb}
on $[\epsilon,(2k-1)\pi - \epsilon] \times \SS^1 \times M$ and matches
the Reeb vector fields of $\alpha_+$ and $\alpha_-$ respectively on the
two copies of $\Sigma_{g,1} \times M$.  While this vector field does have
nullhomologous closed orbits, none of them are contractible if $g > 0$
since $\p\Sigma_{g,1}$ is not contractible in $\Sigma_{2g}$.  
Similarly, for $g > 0$ one can define the cylindrical contact homology
$HC_*^{\bar{a}}(\Sigma_{2g} \times M,\xi_k)$ for any primitive homotopy 
class~$\bar{a}$ due to Lemma~\ref{lemma:SFTworks}.
A repeat of the argument in the proof of Theorem~\ref{thm:weakNotStrong} then
shows that for $k \ne \ell$, there is no contactomorphism
$$
(\Sigma_{2g} \times M,\xi_k) \to (\Sigma_{2g} \times M,\xi_\ell)
$$
whose action on $\pi_1(\Sigma_{2g} \times M)$ preserves $\pi_1(\Sigma_{2g})$.
So in particular, $\xi_k$ and $\xi_\ell$ are not isotopic. Under the additional
assumption on $\pi_1(M)$, they are not even contactomorphic due to
Lemma~\ref{lemma:deCornulier} below.

\textsl{Proof of~(1)}.
An exact filling of $(\Sigma_{2g} \times M,\xi_1)$ can be constructed as the
product of the two Liouville domains $(\Sigma_{g,1},\beta)$ and
$([-c,c] \times M, e^s \alpha_+ + e^{-s} \alpha_-)$ for sufficiently
large~$c$.  

\textsl{Proof of~(2)}.
Corollary~\ref{cor:torsion} implies that
$(\Sigma_{2g} \times M,\xi_k)$ is not weakly fillable for $k \ge 3$;
note that here we need the fact that for any $1$-cycle $C$ in~$M$,
the $2$-cycle $\{\text{const}\} \times \SS^1 \times C$ in $Z_k \subset
\Sigma_{2g} \times M$ can be realized as the boundary of
$\Sigma_{g,1} \times M$ and is thus nullhomologous.

At this point we've proved everything except the fact that
$(\Sigma_{2g} \times M,\xi_2)$ is not weakly fillable.  Since this
already suffices to prove Theorem~\ref{thm:stacking}, and the
non-fillability of~$\xi_2$ doesn't quite follow from our previous results
as stated, we shall content ourselves with a sketch of the proof.
The idea is analogous to the proof of Theorem~\ref{thm:torsion2}, but using
a straightforward generalization of the surgery in
\S\ref{section:surgery} to accommodate boundary components that are,
in the terminology introduced above, blown down with genus.  In particular,
$(\Sigma_{2g} \times M,\xi_2)$ can be realized as a chain of three Giroux domains
$G_0 \cup G_1 \cup G_2$ glued end to end, with the dangling ends of
$G_0$ and $G_2$ blown down with genus~$g$.  Now if we perform surgery to
remove the interiors of $G_0$ and $G_1$, we obtain a symplectic cobordism
to a manifold with three connected components
\begin{equation*}
  (M \times \Sigma_g) \sqcup (M \times \SS^2) \sqcup (V',\xi') \;,
\end{equation*}
where $(V',\xi')$ is a weakly filled boundary component and the other two
components are foliated by symplectic submanifolds $\{*\} \times \Sigma_g$
and $\{*\} \times \SS^2$ respectively.  Then if $(\Sigma_{2g}\times M,\xi_2)$
is assumed to be weakly fillable, one can derive a contradiction as in
the proof of Theorem~\ref{thm:torsion2} by examining
the moduli space of holomorphic spheres that emerge from the symplectic
submanifolds $\{*\} \times \SS^2$.  This only involves one feature not
already present in the proof of 
Theorem~\ref{thm:torsion2}: the holomorphic spheres
cannot approach the boundary component $M \times \Sigma_g$.  This is guaranteed
if one uses a product complex structure near this
boundary component, because then every somewhere injective holomorphic curve
touching a neighborhood of it must be of the form $\{*\} \times \Sigma_g$,
and no sequence of holomorphic spheres can converge to any cover of these
curves since such a cover would necessarily have positive genus.
\end{proof}

In the above proof we used the following algebraic lemma, whose proof
was kindly explained to us by Yves de Cornulier.
\begin{lemma}
\label{lemma:deCornulier}
Suppose $\Sigma$ is a closed oriented surface of genus at least two. 
If $G$ is any solvable group with trivial center, then any automorphism of 
$\pi_1(\Sigma) \times G$ preserves  $\pi_1(\Sigma)$.
\end{lemma}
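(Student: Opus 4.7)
The plan is to identify $\pi := \pi_1(\Sigma)$ inside $\Gamma := \pi \times G$ through intrinsic algebraic data. First I would show that $G$ is a characteristic subgroup of $\Gamma$, and then upgrade this to the statement that $\pi$ itself is characteristic using the hypothesis $Z(G) = 1$.

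To show $G$ is characteristic, I would prove that it is the unique maximal solvable normal subgroup of $\Gamma$. If $N \trianglelefteq \Gamma$ is solvable, its image under the projection $p\colon \Gamma \to \pi$ is a solvable normal subgroup of $\pi$. Since $\Sigma$ has genus at least two, $\pi$ is a non-elementary torsion-free Gromov-hyperbolic group with trivial center, in which every abelian subgroup is cyclic and every cyclic normal subgroup is central (hence trivial). An induction on derived length then shows $\pi$ has no nontrivial solvable normal subgroup: if $N' \trianglelefteq \pi$ were one, the last nontrivial term of its derived series would be abelian and characteristic in $N'$, hence normal in $\pi$, hence trivial by the above. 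Therefore $p(N) = 1$, so $N \subseteq \ker p = G$, proving the claim.

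For the second step, take any $\phi \in \Aut(\Gamma)$. Since $\phi(G) = G$, it descends to some $\bar\phi \in \Aut(\Gamma/G) = \Aut(\pi)$. For each $x \in \pi$ write
\[
\phi(x, 1) = (\bar\phi(x), s(x))
\]
with $s(x) \in G$. A routine check using that $\phi$ is a homomorphism shows $s\colon \pi \to G$ is a group homomorphism. Writing $\phi(1, g) = (1, \phi_G(g))$ with $\phi_G := \phi|_G \in \Aut(G)$ and applying $\phi$ to the identity $(x, 1)(1, g) = (1, g)(x, 1)$ gives
\[
s(x)\,\phi_G(g) = \phi_G(g)\, s(x)
\]
for all $x \in \pi$ and $g \in G$. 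Since $\phi_G$ is surjective, $s(x)$ centralizes $G$, so $s(x) \in Z(G) = 1$. Thus $\phi(\pi) \subseteq \pi$, and the same argument for $\phi^{-1}$ yields $\phi(\pi) = \pi$.

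The main obstacle is the first step, namely establishing that surface groups of genus at least two admit no nontrivial solvable normal subgroups. This is standard once one invokes the hyperbolicity of $\pi$ (or alternatively the free cocompact action of $\pi$ on the hyperbolic plane), but it is the only nontrivial input; everything afterwards is elementary group theory exploiting the triviality of $Z(G)$.
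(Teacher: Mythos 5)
Your overall structure mirrors the paper's: identify $G$ as the unique maximal normal solvable subgroup of $\pi_1(\Sigma) \times G$, hence characteristic, and then recover $\pi_1(\Sigma)$ as the centralizer of $G$ (your explicit computation with the section $s\colon \pi_1(\Sigma)\to G$ is a written-out version of the paper's one-line observation that the centralizer of $G$ in the product is $\pi_1(\Sigma)\times Z(G) = \pi_1(\Sigma)$, which is then automatically characteristic). Where you diverge is in the key input that $\pi_1(\Sigma)$ has no nontrivial normal solvable subgroup: the paper realizes $\pi_1(\Sigma)$ as a Zariski-dense subgroup of $\mathrm{PSL}(2,\RR)$ and observes that the Zariski closure of the projected subgroup would be a proper normal solvable subgroup of the simple, non-solvable group $\mathrm{PSL}(2,\RR)$, hence trivial; you instead invoke Gromov-hyperbolicity of the surface group together with an induction on derived length. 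Both routes are standard and of comparable depth. One small imprecision in your write-up: ``every cyclic normal subgroup is central'' is not quite the mechanism. The actual argument is that a nontrivial normal infinite cyclic subgroup would force the centralizer of a generator (which is virtually cyclic by hyperbolicity) to have index at most two in $\pi_1(\Sigma)$, making $\pi_1(\Sigma)$ virtually cyclic and contradicting non-elementarity. With that reformulation your step is airtight.
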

\begin{proof}
We set $H = \pi_1(\Sigma) \times G$.  Our goal will be to show that $G$ is the unique
maximal normal solvable subgroup of~$H$, thus $G$ is preserved by any
automorphism.  Since $G$ has trivial center, its centralizer in $H$ is
$\pi_1(\Sigma)$, which is therefore also preserved by any automorphism.

We now prove the claim about $G$. Suppose $G_1$ is a normal solvable 
subgroup of~$H$. The projection $p(G_1)$ of $G_1$ into $\pi_1(\Sigma)$ is 
normal in $\pi_1(\Sigma)$ and solvable.
We now view $\pi_1(\Sigma)$ as a Zariski dense subgroup of 
$\mathrm{PSL}(2, \RR)$. The Zariski closure of $p(G_1)$ 
is still solvable and
is normal in the closure of $\pi_1(\Sigma)$, hence trivial because
$\mathrm{PSL}(2, \RR)$ is simple and not solvable.  Thus $p(G_1)$ 
is trivial and $G_1 \subset G$. 
\end{proof}

\appendix

\section{Cotamed complex structures: Existence and convexity}

\subsection{Contractibility of the space of cotamed almost complex
  structures}
\label{section:contractibility_of_space_J}

To go from the linear situation to global existence results on a manifold we
will need the following result.

\newtheorem*{propositionContractibilityComplex}{Proposition~\ref{prop:space_cotamed_contractible}}

\begin{propositionContractibilityComplex}[Sévennec]
        The space of complex structures on a finite dimensional 
vector space tamed by two
        given symplectic forms is either empty or contractible.
\end{propositionContractibilityComplex}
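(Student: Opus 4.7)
The plan is to imitate the classical polar-decomposition argument that proves contractibility of $\mathcal{J}(\omega)$ for a single symplectic form, bootstrapping it to accommodate the second taming form. First I would apply Proposition~\ref{prop:carac_cotamed}: the hypothesis that $\mathcal{J}(\omega_0,\omega_1)$ is nonempty forces the linear segment $\omega_t:=(1-t)\omega_0+t\omega_1$ to consist of symplectic forms, and since taming a fixed $J$ is a convex (indeed linear) condition in the $2$-form, one has
\[
\mathcal{J}(\omega_0,\omega_1)=\bigcap_{t\in[0,1]}\mathcal{J}(\omega_t).
\]
In particular, $\mathcal{J}(\omega_0,\omega_1)$ is an open subset of the contractible space $\mathcal{J}(\omega_\star)$, where $\omega_\star:=\tfrac12(\omega_0+\omega_1)$.

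Next I would recall the classical construction: for any $J\in\mathcal{J}(\omega_\star)$, the symmetrized bilinear form $g_J(v,w):=\tfrac12\big(\omega_\star(v,Jw)+\omega_\star(w,Jv)\big)$ is positive definite, which yields a canonical $\omega_\star$-compatible complex structure $J^c$ defined by $\omega_\star(\cdot,J^c\,\cdot)=g_J$. There is a well-known explicit homotopy $\{J_t\}_{t\in[0,1]}$ from $J_0=J$ to $J_1=J^c$ inside $\mathcal{J}(\omega_\star)$, constructed via the $g_J$-polar decomposition of $J$, which provides a deformation retraction of $\mathcal{J}(\omega_\star)$ onto $\mathcal{J}^c(\omega_\star)$. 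The target space $\mathcal{J}^c(\omega_\star)$ is the generalized Siegel upper half-space (a bounded symmetric domain), and in the Siegel coordinates determined by a fixed $J_\star\in\mathcal{J}^c(\omega_\star)\cap\mathcal{J}(\omega_0,\omega_1)$ the set of $\omega_\star$-compatible structures also taming $\omega_0$ (equivalently, $\omega_1$) is cut out by an open affine-linear condition, hence convex.

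Combining these, my proposed proof would run in two halves: (i) verify that the polar-decomposition homotopy $J\mapsto J_t$ preserves taming by each $\omega_i$ individually whenever $J\in\mathcal{J}(\omega_0,\omega_1)$; (ii) conclude from the convexity of $\mathcal{J}^c(\omega_\star)\cap\mathcal{J}(\omega_0,\omega_1)$ in Siegel coordinates that this intersection is contractible, so the composed retraction $\mathcal{J}(\omega_0,\omega_1)\to \mathcal{J}^c(\omega_\star)\cap\mathcal{J}(\omega_0,\omega_1)\to\{J_\star\}$ is the required deformation to a point.

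The main obstacle, in my view, is step (i). Taming by $\omega_\star$ is automatic along $J_t$ by the classical argument, but taming by the individual $\omega_i$ is a strictly stronger property that need not be preserved by an arbitrary path in $\mathcal{J}(\omega_\star)$. I would expect to have to write out the path $J_t$ using an explicit formula (e.g.\ $J_t = J\,\exp(-t\log(J^{-1}J^c))$, or equivalently a Cayley-type transform adapted to the $g_J$-orthogonal splitting of $E$), and then exploit the fact that $J^{-1}J^c$ is $g_J$-symmetric with spectrum in $(0,\infty)$ to verify that the quadratic forms $v\mapsto \omega_i(v,J_tv)$ remain positive along the homotopy. Should this fail for the naive choice $\omega_\star=\tfrac12(\omega_0+\omega_1)$, a natural backup is to replace the single retraction by an iterated or integrated one, averaging the polar-decomposition flow against the segment of $\omega_t$'s, which is the closest one can get to a genuinely ``two-form-equivariant'' version of the classical deformation retraction.
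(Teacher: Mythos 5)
Your proposal takes a genuinely different route from the paper's, but it has a gap that you yourself flag without resolving, and I believe the gap is real. The paper's proof (Appendix~\ref{section:contractibility_of_space_J}) does not pass through compatible (calibrated) complex structures at all: starting from a single $J_0$ tamed by both $\omega_0$ and $\omega_1$, it uses the Cayley-type map $\mu_{J_0}\colon J \mapsto (J+J_0)^{-1}(J-J_0)$ of Lemma~\ref{lemma:cayley_sevennec}, and the key point (Lemma~\ref{lemma:image_sevennec_map_convex}) is that $\mu_{J_0}$ sends the full space of $\omega$-\emph{tamed} complex structures to a convex subset of the $J_0$-antilinear endomorphisms, for \emph{any} symplectic $\omega$ taming $J_0$. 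Intersecting for $\omega_0$ and $\omega_1$ then immediately gives a convex, hence contractible, set, with no retraction onto calibrated structures needed anywhere.

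Your step (i) is the real obstruction. The polar-decomposition retraction $J \mapsto J_t \mapsto J^c$ is built entirely from $\omega_\star$ and $g_J$, and there is no reason it should preserve the strictly stronger property of taming $\omega_0$ and $\omega_1$ individually. You acknowledge this, but the proposed fallback (``iterated or integrated retraction, averaging against the segment of $\omega_t$'s'') is not an argument --- it is not clear what that would mean, and the literature offers no such device. The paper's Remark~\ref{rk:cocompatible}, giving two symplectic forms on $\RR^4$ that admit a common taming $J$ but no $J$ calibrated by one and tamed by the other, is a warning sign that mixing compatibility with taming across different forms is not benign. Moreover, the existence of the basepoint $J_\star \in \jJ^c(\omega_\star)\cap\jJ(\omega_0,\omega_1)$ you need in step (ii) is itself only guaranteed if (i) works, so the gap propagates.

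Two smaller inaccuracies in step (ii): the parenthetical ``(equivalently, $\omega_1$)'' is not correct --- for an $\omega_\star$-compatible $J$ one only knows $\omega_0(v,Jv)+\omega_1(v,Jv)>0$, which does not make taming $\omega_0$ equivalent to taming $\omega_1$. And the taming condition in coordinates is not cut out by an \emph{affine-linear} condition; what one can show (this is the content of Lemma~\ref{lemma:image_sevennec_map_convex}) is convexity, via the observation that the restriction of the taming inequality along a segment is a concave quadratic and hence positive if positive at both endpoints. If you replace the polar-decomposition detour by the Cayley map $\mu_{J_0}$ applied to the whole tamed space, both inaccuracies disappear and step (i) becomes unnecessary.
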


Using the fact that the space of complex structures tamed by a symplectic
form is nonempty (which follows for instance by the linear Darboux theorem), 
and applying the proposition above twice to the same symplectic form, we
recover as a special case the classical result of Gromov that states
that the space of tamed complex structures is contractible.  The proof
of the proposition uses the following two lemmas, of which
the first is more or less standard.

\begin{lemma}[Cayley, Sévennec]
\label{lemma:cayley_sevennec}	
Let $V$ be a real finite dimensional 
vector space and $\jJ(V)$ the space of complex
structures on $V$.  We can define for any fixed $J_0 \in \jJ(V)$ a
map
\begin{equation*}
\mu_{J_0}\colon J \mapsto (J + J_0)^{-1} \cdot (J - J_0)
\end{equation*}
which is a diffeomorphism from
\begin{equation*}
\jJ_{J_0}^*(V) := \bigl\{ J \in \jJ(V)\ |\  J + J_0 \in \GL(V)\bigr\}
\end{equation*}
to 
\begin{equation*}
\mathcal{A}_{J_0}^*(V) := \bigl\{ A \in \End(V)\ |\  A J_0 = -J_0 A\text{
and } A - I\in \GL(V)\bigr\} \;.
\end{equation*}
The inverse of this map is given by $\mu_{J_0}^{-1} \colon A \mapsto
(A- I)J_0(A - I)^{-1}$.
\end{lemma}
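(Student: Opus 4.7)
The plan is to prove the lemma by a direct computation mimicking the classical Cayley transform, which interchanges unitary and skew-Hermitian operators. Here $J_0$ plays the role of $i$, so the natural analogue sends complex structures $J$ (the ``$J_0$-unitary'' objects with $J^2 = -I$) to endomorphisms $A$ anticommuting with $J_0$ (the ``$J_0$-skew'' objects).

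First I would compute, for $J \in \jJ_{J_0}^*(V)$ and $A := \mu_{J_0}(J) = (J+J_0)^{-1}(J-J_0)$, the two identities
\begin{equation*}
A - I = (J+J_0)^{-1}\bigl[(J-J_0) - (J+J_0)\bigr] = -2(J+J_0)^{-1}J_0,
\end{equation*}
\begin{equation*}
A + I = (J+J_0)^{-1}\bigl[(J-J_0) + (J+J_0)\bigr] = 2(J+J_0)^{-1}J.
\end{equation*}
The first formula, combined with the invertibility of $J_0$ (since $J_0^{-1} = -J_0$), shows immediately that $A - I \in \GL(V)$, so $A$ lies in the appropriate ambient set.

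Next I would verify the anticommutation $AJ_0 + J_0 A = 0$, which is the main algebraic step. Multiplying on the left by $J+J_0$ and using $(J+J_0)A = J - J_0$, I compute
\begin{equation*}
(J+J_0)(AJ_0 + J_0 A) = (J-J_0)J_0 + (J+J_0)J_0 A = (JJ_0 + I) + (JJ_0 - I)A.
\end{equation*}
Substituting $A = (J+J_0)^{-1}(J-J_0)$ into the last term and simplifying using $J^2 = J_0^2 = -I$, one checks that the two identities $(J+J_0)JJ_0 = -J_0 + J_0 J J_0$ and $J_0 J(J+J_0) = -J_0 + J_0 J J_0$ agree, which forces the expression to vanish; since $J+J_0$ is invertible, this gives $AJ_0 = -J_0 A$.

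For the inverse map $\nu_{J_0}(A) := (A-I)J_0(A-I)^{-1}$, I would check the three things one needs: (i) $\nu_{J_0}(A)^2 = (A-I)J_0^2(A-I)^{-1} = -I$ by direct cancellation; (ii) using $AJ_0 + J_0 A = 0$, compute
\begin{equation*}
\nu_{J_0}(A) + J_0 = \bigl[(A-I)J_0 + J_0(A-I)\bigr](A-I)^{-1} = -2J_0(A-I)^{-1},
\end{equation*}
which lies in $\GL(V)$, so $\nu_{J_0}(A) \in \jJ_{J_0}^*(V)$; and (iii) $\mu_{J_0}\circ\nu_{J_0} = \id$ and $\nu_{J_0}\circ\mu_{J_0} = \id$. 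For the first composition, substituting the formulas above gives $\mu_{J_0}(\nu_{J_0}(A)) = (A-I)A(A-I)^{-1}$, which equals $A$ because $A$ and $A-I$ commute. For the second, using $(A-I)^{-1} = \tfrac{1}{2}J_0(J+J_0)$ from the formula for $A - I$, a similar cancellation reduces $\nu_{J_0}(\mu_{J_0}(J))$ to $(J+J_0)^{-1}(J_0 J - I) = (J+J_0)^{-1}(J+J_0)J = J$. Smoothness of both maps is automatic since they are rational in their arguments on open sets.

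The main obstacle is simply organizing the algebra cleanly: the key nontrivial identity is the anticommutation $AJ_0 = -J_0 A$, which is not obvious from $J^2 = J_0^2 = -I$ alone and requires the manipulation above. Everything else is formal Cayley-transform bookkeeping.
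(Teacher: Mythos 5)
Your proof is correct and follows essentially the same route as the paper: both carry out the Cayley-transform algebra directly, using identities that boil down to $J^2 = J_0^2 = -I$ (your $(J+J_0)JJ_0 = J_0J(J+J_0)$ is equivalent to the paper's $(J+J_0)J_0 = J(J+J_0)$ after multiplying by $J_0$). You are somewhat more explicit than the paper, which states only the forward direction (anticommutation plus invertibility of $A-I$, via $A - I = -2(J+J_0)^{-1}J_0$) and leaves the inverse-map verification implicit.
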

\begin{proof}
  One can view $\mathcal{A}^*_{J_0}(V)$ as the set of $J_0$-complex
  antilinear maps that do not have any eigenvalue equal to $1$.
  Using the equations $(J-J_0)\,J_0 = -J\,(J-J_0)$ and $(J+J_0)\,J_0 =
  J\,(J+J_0)$, one sees that the image of $\mu_{J_0}$ consists of
  $J_0$-complex antilinear maps, and $\mu_{J_0}(J) - I =
  -2\,(J + J_0)^{-1}\, J_0$ is invertible.
\end{proof}

\begin{lemma}[Sévennec]
\label{lemma:image_sevennec_map_convex}
Let $(V, \omega)$ be a finite dimensional 
symplectic vector space and denote by
$\jJ_t(\omega) \subset \jJ(V)$ the space of complex structures tamed
by $\omega$.  Choosing any $J_0 \in \jJ_t(\omega)$, it follows that
$\jJ_t(\omega)$ lies in $\jJ_{J_0}^*(V)$, and the image of
$\jJ_t(\omega)$ under the associated map $\mu_{J_0}$ is a convex
domain in $\mathcal{A}_{J_0}^*(V)$.
\end{lemma}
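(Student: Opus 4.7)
The plan is to transport the taming condition through the Cayley-type bijection $\mu_{J_0}$ of Lemma~\ref{lemma:cayley_sevennec} and show that it becomes a pointwise \emph{concave} inequality in $A$; since superlevel sets of concave functions are convex and intersections of convex sets are convex, convexity of $\mu_{J_0}(\jJ_t(\omega))$ will follow almost for free.

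First I would check that $\jJ_t(\omega) \subseteq \jJ_{J_0}^*(V)$, so that $\mu_{J_0}$ is defined on the relevant domain. This is immediate: if $J$ and $J_0$ are both tamed by $\omega$, then for every nonzero $v$ one has $\omega(v,(J+J_0)v) = \omega(v,Jv) + \omega(v,J_0 v) > 0$, so $J+J_0$ is injective and hence invertible.

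Next I would reformulate the condition that $J := \mu_{J_0}^{-1}(A) = (A-I) J_0 (A-I)^{-1}$ is $\omega$-tamed as an inequality on $A$. Substituting $v = (A-I)u$ (legitimate since $A-I$ is invertible), the condition $\omega(v,Jv) > 0$ for all $v \neq 0$ reads
\begin{equation*}
f_u(A) \;:=\; \omega\bigl((A-I)u,\,(A-I) J_0 u\bigr) \;>\; 0 \qquad \text{for every } u \neq 0.
\end{equation*}
Expanding this with the $J_0$-antilinearity $A J_0 = -J_0 A$, and writing $q(v) := \omega(v, J_0 v)$ for the positive-definite quadratic form determined by the tamed structure $J_0$, one obtains
\begin{equation*}
f_u(A) \;=\; q(u) - q(Au) + \omega(u, J_0 A u) - \omega(Au, J_0 u).
\end{equation*}
The decisive observation is that for each fixed $u \neq 0$ the function $f_u$ is \emph{concave} in $A$: the only quadratic-in-$A$ piece is $-q(Au)$, and $A \mapsto q(Au) = \lVert Au\rVert^2$ is a convex (positive semi-definite) quadratic form in $A$, namely the squared norm from the polarization of $q$ pulled back by the linear map $A \mapsto Au$; the remaining terms in $f_u(A)$ are affine in $A$. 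Consequently, for each $u \neq 0$ the superlevel set $\{A : f_u(A) > 0\}$ is convex, and $\mu_{J_0}(\jJ_t(\omega))$, as the intersection of these convex sets over all nonzero $u$, is itself convex.

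The only real obstacle is bookkeeping in the computation of $f_u(A)$: I need to verify carefully that, after using $A J_0 = -J_0 A$ to eliminate the antilinear factors, the quadratic-in-$A$ contribution enters as $-q(Au)$ rather than $+q(Au)$—the opposite sign would give convexity instead of concavity and wreck the argument. Once that sign is in place, the rest of the proof is the elementary fact that superlevel sets of concave functions, and intersections thereof, are convex.
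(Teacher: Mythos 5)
Your proposal is correct and is essentially the paper's own argument, repackaged: the paper shows convexity of each $C_v = \{A : f_v(A) > 0\}$ by computing the degree-$2$ polynomial $P(s) = -\omega\bigl((A_s - I)v, J_0(A_s + I)v\bigr)$ along a segment $A_s = (1-s)A_0 + sA_1$ and observing that its leading coefficient is $-\omega\bigl((A_1 - A_0)v, J_0(A_1 - A_0)v\bigr) \le 0$, which is precisely your global statement that the only $A$-quadratic piece of $f_u$ is $-q(Au) \le 0$, i.e.\ that $f_u$ is concave. The one small point you leave implicit — that the intersection $\bigcap_{u\ne 0}\{f_u > 0\}$ actually lands inside $\mathcal{A}_{J_0}^*(V)$ — is handled in the paper by noting that if $(A-I)w = 0$ for some $w \ne 0$ then $f_w(A) = 0$, and is worth spelling out.
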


We first explain how to prove
Proposition~\ref{prop:space_cotamed_contractible} using the above
lemma.  Suppose there is a complex structure $J_0$ tamed by $\omega_0$
and $\omega_1$.  The space of cotamed complex structures
$\jJ_t(\omega_0) \cap \jJ_t(\omega_1)$ is then diffeomorphic under the
map~$\mu_{J_0}$ to the intersection of the convex
subsets given by the lemma. This intersection is again convex and
hence contractible.

\begin{proof}[Proof of Lemma~\ref{lemma:image_sevennec_map_convex}]
For any complex structure $J$ tamed by $\omega$, the endomorphism $J + J_0$ is
invertible because for any nonzero $w$, we have
$\omega\bigl(w, (J + J_0)\,w\bigr) > 0$, so in
particular $(J + J_0)\,w$ is not zero.  This proves the first part of the lemma.

Now fix a nonzero vector $v\in V$, and let $C_v$ be the set of
$A\in \End(V)$ that anticommute with $J_0$, and that satisfy
\begin{equation*}
  \omega\bigl((A - I)\, v, (A- I)\,J_0v\bigr) =   -\omega\bigl((A - I)\, v,
  J_0(A+ I)\, v\bigr) > 0 \;.
\end{equation*}
We now prove that $C_v\subset \End(V)$ is convex. 
Every segment $A_s = (1-s)\,A_0 + s\,A_1$ with $s\in[0,1]$ for
arbitrary $A_0,A_1 \in C_v$ defines a polynomial of degree~$2$
\begin{equation*}
  P(s) = -\omega\bigl((A_s - I)\,v, J_0(A_s + I)\,v\bigr) \;,
\end{equation*}
and the above inequality corresponds to checking that $P(s)$ is
positive for all values $s\in[0,1]$.  The leading coefficient
$-\omega\bigl((A_1-A_0)\, v, J_0(A_1 - A_0)\, v\bigr)$ of $P(s)$ is
never positive, because $J_0$ tames $\omega$, so that $P(s)$ is either
a line or a parabola facing downward.  In both cases $P(s) \ge \min
\{P(0), P(1)\} > 0$ for all $s\in(0,1)$ so the inequality holds
for the whole segment $A_s$.

Note that $C_v\ne \emptyset$ since $\0 \in C_v$.  Define the
intersection
\begin{equation*}
  C^* := \bigcap_{v\ne 0} C_v \;,
\end{equation*}
which is a nonempty convex subset of $\End(V)$.  In fact, one has
$C^* \subset \mathcal{A}_{J_0}^*(V)$, because if
there were a matrix $A\in C^*$ with $\det (A-I) = 0$, then $A$ would
have an eigenvector $w\in V$ with eigenvalue~$1$, but then
$-\omega\bigl((A - I)\,w, J_0(A + I)\,w\bigr) = 0$ so that $A\notin
C_w$.

Since $C^*$ lies in the domain of $\mu_{J_0}^{-1}$ and $\jJ_t(\omega)$
lies in the domain of $\mu_{J_0}$, we have
$C^* = \mu_{J_0}\bigl(\jJ_t(\omega)\bigr)$, so that the image of the complex
structures tamed by $\omega$ is convex as we wanted to show.
\end{proof}

\subsection{Existence of a cotamed complex structure}
\label{section:linear_algebra}

In this appendix, we prove Proposition~\ref{prop:carac_cotamed}, which we now
recall:

\newtheorem*{propositionDominatingComplex}{Proposition~\ref{prop:carac_cotamed}}

\begin{propositionDominatingComplex}
  Let $V$ be a finite dimensional 
real vector space equipped with two symplectic forms
  $\omega_0$ and $\omega_1$.
  The following properties are equivalent:
  \begin{enumerate}
  \item the segment between $\omega_0$ and $\omega_1$ consists of
    symplectic forms
  \item the ray starting at $\omega_0$ and directed by $\omega_1$
    consists of symplectic forms
  \item there is a complex structure $J$ on $V$ tamed both by $\omega_0$
    and by $\omega_1$.
  \end{enumerate}
\end{propositionDominatingComplex}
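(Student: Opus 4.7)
My plan is to prove the cycle $(3) \Rightarrow (2) \Rightarrow (1) \Rightarrow (3)$, with the first two implications being direct linearity calculations and the real content lying in the last. For $(3) \Rightarrow (2)$: if $J$ is tamed by both $\omega_0$ and $\omega_1$, then for $v \neq 0$ and $t \geq 0$,
\[
(\omega_0 + t\omega_1)(v, Jv) = \omega_0(v, Jv) + t\, \omega_1(v, Jv) > 0,
\]
so $\omega_0 + t\omega_1$ tames $J$ and is in particular nondegenerate. For $(2) \Rightarrow (1)$: each interior segment point $(1-s)\omega_0 + s\omega_1$ with $s \in [0,1)$ equals the positive scalar multiple $(1-s)(\omega_0 + \tfrac{s}{1-s}\omega_1)$ of a point on the ray, while $\omega_1$ itself is symplectic by hypothesis.

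For $(1) \Rightarrow (3)$, my plan is to analyze the endomorphism $B := \omega_0^{-1}\omega_1 \in \End(V)$ obtained by viewing $\omega_0,\omega_1$ as isomorphisms $V \to V^*$. A short calculation gives $\omega_0(Bu,v) = \omega_0(u,Bv)$, so $B$ is $\omega_0$-symmetric, and the segment condition translates into $B$ having no eigenvalue in $(-\infty,0]$. Combining the $\omega_0$-symmetry of $B$ with nondegeneracy of $\omega_0$, generalized eigenspaces of $B$ on $V_\CC$ for distinct eigenvalues are $\omega_0$-orthogonal and each is individually $\omega_0$-nondegenerate (hence of even complex dimension). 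Grouping complex conjugate eigenvalues produces a $B$-invariant $\omega_0$-symplectic orthogonal splitting of $V$ whose summands are indexed either by positive real eigenvalues of $B$ or by nonreal complex-conjugate pairs, reducing the construction of a cotaming $J$ to each summand.

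On a summand where $B$ has only positive real eigenvalues, I would choose $J$ compatible with $\omega_0$ and commuting with $B$; the inner product $g_0 := \omega_0(\cdot, J\cdot)$ then makes $B$ symmetric and positive-definite, so $\omega_1(v, Jv) = \omega_0(Bv, Jv) = g_0(Bv, v) > 0$ and $J$ tames $\omega_1$ as well. The main obstacle I expect is the summand with a nonreal complex-conjugate eigenvalue pair, where $B$ cannot be made symmetric with respect to any positive-definite inner product, so the commutation strategy fails. My approach there is to introduce the principal square root $C := B^{1/2}$, well defined because $\sigma(B)$ avoids $(-\infty,0]$ so that $\sigma(C)$ lies in the open right half-plane; $C$ is $\omega_0$-symmetric, and the geometric-mean form $\omega_* := \omega_0(C\cdot,\cdot)$ is a symplectic form satisfying $\omega_0 = \omega_*(C^{-1}\cdot,\cdot)$ and $\omega_1 = \omega_*(C\cdot,\cdot)$. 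It then suffices to produce $J$ compatible with $\omega_*$ such that the associated inner product $g_* := \omega_*(\cdot, J\cdot)$ makes the $g_*$-symmetric part of $C$ positive-definite, from which a short calculation (using symmetry of $g_*$) yields the same positivity for $C^{-1}$ and hence the taming of both $\omega_0$ and $\omega_1$. Producing such a compatible $J$ is a Lyapunov-type problem, solvable because $\sigma(C)$ lies in the open right half-plane, but reconciling the Lyapunov freedom with the $\omega_*$-compatibility constraint is precisely where I expect the main technical work.
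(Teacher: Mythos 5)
Your implications $(3)\Rightarrow(2)\Rightarrow(1)$ are correct and essentially identical to the paper's. Your setup for $(1)\Rightarrow(3)$ --- the endomorphism $B=\varphi_0^{-1}\varphi_1$, its $\omega_0$-symmetry, the observation that (1) forces $\sigma(B)\cap(-\infty,0]=\emptyset$, and the $\omega_0$- and $\omega_1$-orthogonal splitting into generalized eigenspaces --- also matches the paper. The divergence, and the gap, is in how you build a cotaming $J$ on each summand.

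First, the real-eigenvalue summand: choosing $J$ compatible with $\omega_0$ and commuting with $B$ implicitly assumes $B$ is semisimple there. If $JB=BJ$ and $J$ is $\omega_0$-compatible, then $B$ is symmetric for the inner product $g_0=\omega_0(\cdot,J\cdot)$ and hence diagonalizable; so no such $J$ exists when $B$ has a nontrivial Jordan block. These do occur: $B$ is only constrained to be skew-Hamiltonian ($\omega_0 B$ antisymmetric), and e.g.\ on $(\RR^4,\Omega_4)$ the block matrix $B=\bigl(\begin{smallmatrix} a & 0 \\ 0 & a^T\end{smallmatrix}\bigr)$ with $a=\bigl(\begin{smallmatrix} 1 & 1 \\ 0 & 1\end{smallmatrix}\bigr)$ is $\omega_0$-symmetric, has spectrum $\{1\}$, satisfies (1), and is not semisimple. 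Second, the Lyapunov-compatibility reconciliation for the geometric-mean form $\omega_*$ you explicitly leave open --- and that is not a formality, it is the crux; moreover your square root $C=B^{1/2}$ inherits whatever nilpotent part $B$ has, so you would face the same non-semisimplicity there.

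The paper handles both difficulties at once, with a different device. Its Proposition~\ref{prop:LR} rescales a Jordan chain for $B$ by powers of a small $\epsilon$, producing a basis in which $\omega_0$ is standard and $\omega_1$ is $\epsilon$-close to a simple block matrix ($\lambda\,\Omega_{2k}$ for a real eigenvalue $\lambda$, a rotation-type $4\times 4$ block for a conjugate pair). Proposition~\ref{prop:tamed_J_models} exhibits an explicit $J$ cotaming each model block, and since taming is an open condition, the same $J$ cotames the actual $(\omega_0,\omega_1)$ once $\epsilon$ is small. This $\epsilon$-rescaling plus openness is precisely what replaces both the semisimplicity you would need and the Lyapunov/compatibility argument you flag as the remaining technical work. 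If you wish to pursue your route you will need to supply that Lyapunov argument in full and, independently, deal with the nilpotent part of $C$ --- most likely by importing the same $\epsilon$-rescaling idea.
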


The equivalence between (1) and (3) was explained to us by Jean-Claude Sikorav.
It relies on the simultaneous reduction of symplectic forms.
Specifically, we need \cite[Theorem~9.1]{LancasterRodman} which we shall state
(in a slightly weakened form) and reprove (in its full force) below as
Proposition~\ref{prop:LR}, since the very general context of
\cite{LancasterRodman} makes it hard to read for people interested only in
the symplectic case.

Recall that according to the linear Darboux theorem, any symplectic form on a
$2n$-dimensional vector space is represented in some basis by the standard
matrix 
\begin{equation*}
\Omega_{2n} =
\begin{pmatrix}
      \0 &  \mathbf{1} \\
      -\mathbf{1} & \0
 \end{pmatrix} \;.
\end{equation*}
We now want to understand what can be said for a pair of symplectic structures.
Below we give an approximate normal form which is sufficient for our purposes
and more pleasant to state than the precise result 
(cf.~\cite[Theorem~9.1]{LancasterRodman}), though the precise result can
also be extracted from the proof that we will give at the end of this
section.

\begin{proposition}
\label{prop:LR}
Let $\omega_0$ and $\omega_1$ be symplectic forms on a 
finite dimensional vector space
$V$.  There exists a matrix $A_1$ that splits into blocks of the form
\begin{equation*}
  \begin{pmatrix}
    0 &  \lambda \\
    -\lambda & 0
	\end{pmatrix} \in \mathcal{M}_2(\RR) \text{ and }
 \begin{pmatrix}
   0 & 0 & \mu & \nu  \\  0 & 0 & -\nu & \mu \\
   -\mu & \nu & 0 & 0 \\ -\nu & -\mu & 0 & 0
 \end{pmatrix} \in \mathcal{M}_4(\RR)
\end{equation*}
for $\lambda, \nu \ne 0$ with the following property: for any
$\epsilon >0$, there is a basis of $V$ such that $\omega_0$ is represented
by a block diagonal matrix with standard blocks $\Omega_{2k}$,
and $\omega_1$ is represented by a matrix which is $\epsilon$-close to~$A_1$.

If the linear segment between $\omega_0$ and $\omega_1$ consists of
symplectic forms, then the coefficients $\lambda$ in the $2\times
2$-blocks of $A_1$ described above cannot be negative.
\end{proposition}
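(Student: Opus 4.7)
The plan is to reduce the problem to a simultaneous normal form for an $\omega_0$-self-adjoint endomorphism. Since $\omega_0$ is non-degenerate, there is a unique $A \in \End(V)$ with $\omega_1(v,w) = \omega_0(Av,w)$; skew-symmetry of both forms forces $\omega_0(Av,w) = \omega_0(v,Aw)$, so $A$ is self-adjoint with respect to $\omega_0$. After a change of basis bringing $\omega_0$ to the block-diagonal standard form $\Omega_0$, the question becomes one of classifying such $A$ up to conjugation by symplectic maps, and $\omega_1$ non-degenerate amounts to $A$ invertible. The key structural fact, which follows immediately from self-adjointness, is that eigenspaces of $A$ for distinct eigenvalues are $\omega_0$-orthogonal, so the primary decomposition of $A$ refines into a symplectic decomposition of $V$.

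Next, I would perturb $\omega_1$ slightly so that $A$ becomes semisimple with only simple eigenvalue pairs. The crucial linear-algebraic observation is that on any $2$-dimensional symplectic subspace with $\omega_0 = \Omega_2$, the condition $A^T\Omega_2 = \Omega_2 A$ forces $A = \lambda I$, so a real eigenvalue necessarily occupies an \emph{even}-dimensional block: generically it lives on a $2$-dimensional symplectic block where $A$ acts as the scalar $\lambda$, giving $\omega_1 = \lambda\omega_0$ and hence the block $B$ in any Darboux basis. Non-real eigenvalues of $A$ come in complex conjugate pairs $\mu\pm i\nu$, and by the same scalar argument they cannot live on a $2$-dimensional symplectic block, so generically they require $4$ real dimensions. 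To extract the normal form $C$ there I would pick a complex basis $\{v_1,v_2\}$ of the $(\mu+i\nu)$-eigenspace inside $V^{\CC}$, observe that $\omega_0$ is non-degenerate on this eigenspace (otherwise it would be $\omega_0$-isotropic, forcing it to coincide with its conjugate, a contradiction), rescale to arrange $\omega_0(v_1,v_2) = 1$, and then convert to the real basis $(\RealPart v_1, \ImaginaryPart v_1, \RealPart v_2, \ImaginaryPart v_2)$. A direct bookkeeping computation followed by one further real-symplectic change of basis brings $\omega_0$ to $\Omega_4$ and $\omega_1$ to~$C$. Openness of ``semisimple with distinct block data'' inside the space of $\omega_0$-self-adjoint operators justifies the $\epsilon$-approximation for an arbitrary $\omega_1$.

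For the final claim about $\lambda \geq 0$, suppose the segment $\omega_s := (1-s)\omega_0 + s\omega_1$ consists of symplectic forms for every $s \in [0,1]$. This is an open condition on $\omega_1$ (since symplectic non-degeneracy is open and $[0,1]$ is compact), so choosing $\epsilon$ sufficiently small in the approximation step, the property transfers to the segment $(1-s)\Omega_0 + s A_1$. That segment respects the block decomposition, and a direct sum of skew forms is non-degenerate iff each summand is, so the restriction of $\omega_s$ to every block is non-degenerate on $[0,1]$. On a $2\times 2$ block this restriction is $\bigl((1-s)+s\lambda\bigr)\Omega_2$, which is non-degenerate precisely when $(1-s)+s\lambda \neq 0$ for $s \in [0,1]$; since this affine function takes the values $1$ and $\lambda$ at the endpoints, the condition is equivalent to $\lambda > 0$. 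On a $4\times 4$ block a short computation gives the restricted form determinant $\bigl[((1-s)+s\mu)^2+(s\nu)^2\bigr]^2$, which never vanishes when $\nu \neq 0$; so $4$-dimensional blocks impose no constraint, and the conclusion follows.

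The main obstacle I expect will be the explicit verification that a generic $4$-dimensional block normalizes to exactly~$C$ rather than a $\GL$-conjugate of it: one has to juggle the complex structure on the eigenspace $E^{\CC}_{\mu+i\nu}$, the non-degenerate restriction of $\omega_0$ to it, and the real structure on $V^{\CC}$ coming from complex conjugation, and then choose the real-linear symplectic change of basis that brings both $\omega_0$ and $\omega_1$ simultaneously into their advertised matrix forms. The density/genericity reduction, by contrast, is standard once one notes that the set of semisimple $\omega_0$-self-adjoint operators with pairwise distinct block parameters is open and dense in the space of all $\omega_0$-self-adjoint operators.
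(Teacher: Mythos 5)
Your approach is sound in outline and genuinely different from the paper's. Both proofs start from the $\omega_0$-self-adjoint endomorphism $B = \varphi_0^{-1}\circ\varphi_1$ and the observation that generalized eigenspaces for $\lambda \neq \bar\mu$ are $\omega_0$- and $\omega_1$-orthogonal, but at that point they diverge. The paper keeps $\omega_1$ fixed and handles an arbitrary Jordan structure by building explicit chain bases $v_j = \epsilon^{-j}(B-\lambda)^j v_0$, $w_{j-1} = \epsilon^{-1}(B-\lambda)w_j$, so that the nilpotent part of $B$ contributes only terms of size $\epsilon$ to the matrix of $\omega_1$; the blocks of $A_1$ therefore carry exactly the eigenvalues of $B$, repeated according to multiplicity. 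You instead perturb $\omega_1$ to make $B$ semisimple with minimal multiplicities (each $\dim E_\lambda = 2$ for real $\lambda$, $\dim_{\CC} E^{\CC}_\lambda = 2$ for complex $\lambda$) and rely on the pretty observation that an $\omega_0$-self-adjoint map on a $2$-dimensional symplectic block is scalar. This works, but it buys the same conclusion at a slightly higher bookkeeping cost: you must then argue that as the perturbation shrinks, the eigenvalues of the perturbed $B'$ converge to those of $B$ and hence that there is a single $A_1$, independent of $\epsilon$, to which the various perturbed normal forms cluster. The paper's chain construction sidesteps this by never moving $\omega_1$ at all.

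The sign argument, however, has a genuine gap. You write that because the symplectic-segment property is open and $[0,1]$ is compact, and because $\omega_1$ is represented by a matrix $\epsilon$-close to $A_1$, "the property transfers to the segment $(1-s)\Omega_0 + sA_1$." Openness goes the wrong way here: from the hypothesis you only know that each matrix $A_1 + E_\epsilon$ (the $\epsilon$-close representative of $\omega_1$) lies in the open set $U$ of endpoints with everywhere-nondegenerate pencil, and since $A_1 + E_\epsilon \to A_1$, this places $A_1$ in $\overline{U}$, not in $U$. Openness does not let you pass from a sequence inside $U$ to its limit. Concretely, the Pfaffian of $(1-s)\Omega_0 + sA_1$ is a limit of nonzero quantities and could a priori vanish. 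The direct fix, which is the paper's argument, avoids $A_1$ entirely: each $\lambda$ appearing in a $2\times 2$ block is an actual real eigenvalue of $B$, with eigenvector $v$, and then
\begin{equation*}
\omega_t(v,\cdot) = (1-t)\,\omega_0(v,\cdot) + t\,\omega_0(Bv,\cdot) = \bigl((1-t)+t\lambda\bigr)\,\omega_0(v,\cdot),
\end{equation*}
which vanishes at $t = 1/(1-\lambda) \in (0,1)$ whenever $\lambda < 0$, contradicting nondegeneracy of the pencil. Alternatively, your limiting argument can be salvaged by tracking the Pfaffian rather than the determinant (a negative $\lambda$ forces a sign change of the Pfaffian of $(1-s)\Omega_0 + sA_1$, and sign changes persist under uniformly small perturbations), but the eigenvalue argument is shorter.
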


The relation with cotamed complex structures will come from the following.

\begin{proposition}\label{prop:tamed_J_models} \ 
\begin{itemize}
  \item [(a)] Let $V = \RR^2$ with two antisymmetric bilinear forms
    $\omega_0$ and $\omega_1$ defined by $\omega_j(v,w) = v^t A_j w$,
    where
    \begin{equation*}
      A_0 =
      \begin{pmatrix}
        0 & 1 \\ -1 & 0
      \end{pmatrix}
\quad \text{ and } \quad
      A_1 =
      \begin{pmatrix}
        0 & \lambda \\ -\lambda & 0
      \end{pmatrix} \;.
    \end{equation*}
    If $\lambda > 0$, then $J =
    \begin{pmatrix}
      0 & -1 \\ 1 & 0
    \end{pmatrix}$ is tamed by both forms.
  \item [(b)] Let $V = \RR^4$, and let $\omega_0$ and $\omega_1$ be
    antisymmetric bilinear forms defined by the matrixes
    \begin{equation*}
      A_0 =
      \begin{pmatrix}
        0 & 0 & 1 & 0 \\ 0 & 0 & 0 & 1 \\ -1 & 0 & 0 & 0 \\ 0 & -1 & 0
        & 0
      \end{pmatrix}
      \quad \text{ and } \quad
      A_1 =
      \begin{pmatrix}
        0 & 0 & \lambda & \mu  \\  0 & 0 & -\mu & \lambda \\
        -\lambda & \mu & 0 & 0   \\   -\mu & -\lambda & 0 & 0  \\
      \end{pmatrix} \;,
    \end{equation*}
    with $\mu \ne 0$.  Then there exists a complex structure $J$ on
    $\RR^4$ that is tamed by both forms.
  \end{itemize}
\end{proposition}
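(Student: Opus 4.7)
For part~(a), the verification is immediate and I would just spell it out: setting $v = (x,y)$, one computes $Jv = (-y, x)$ and $\omega_0(v, Jv) = v^t A_0 J v = x^2 + y^2$; since $A_1 = \lambda A_0$ with $\lambda > 0$, the same $J$ tames $\omega_1$ for free.

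For part~(b), my plan is to exhibit the desired complex structure by restricting the search to block-antidiagonal forms
\[
J_A := \begin{pmatrix} 0 & -A^{-1} \\ A & 0 \end{pmatrix}, \qquad A \in \GL(2,\RR),
\]
each of which automatically satisfies $J_A^2 = -I$. First I would observe that $A_1$ itself has the block form $\begin{pmatrix} 0 & B \\ -B^t & 0 \end{pmatrix}$ with $B := \lambda I + \mu K$ and $K := \begin{pmatrix} 0 & 1 \\ -1 & 0 \end{pmatrix}$, and then compute that for any $v = (v_1, v_2) \in \RR^2 \oplus \RR^2$,
\[
\omega_0(v, J_A v) = v_1^t A v_1 + v_2^t A^{-1} v_2,
\qquad
\omega_1(v, J_A v) = v_1^t (BA)\, v_1 + v_2^t (B^t A^{-1})\, v_2.
\]
Thus the two taming conditions become positive-definiteness of the symmetric parts of the four matrices $A$, $A^{-1}$, $BA$, and $B^t A^{-1}$.

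The key simplifying idea is to take $A$ in the commutative subalgebra $\RR[K] \cong \CC$, i.e.\ $A = aI + bK$, so that $B$ and $A$ commute and all four matrices remain inside $\RR[K]$. Using $K^2 = -I$, one finds $A^{-1} = (a^2+b^2)^{-1}(aI - bK)$, $BA = (\lambda a - \mu b) I + (\lambda b + \mu a) K$, and $B^t A^{-1} = (a^2+b^2)^{-1}\bigl[(\lambda a - \mu b) I - (\lambda b + \mu a) K\bigr]$. Because $I$ is symmetric while $K$ is antisymmetric, the symmetric part of any element $\alpha I + \beta K$ is the scalar matrix $\alpha I$, which is positive definite iff $\alpha > 0$. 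The four matrix conditions therefore collapse to just two scalar inequalities, $a > 0$ and $\lambda a - \mu b > 0$, which are simultaneously solvable whenever $\mu \neq 0$---take for instance $a = 1$ and $b$ of sign opposite to~$\mu$ with $|b|$ large enough to dominate~$\lambda$.

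The main obstacle is purely tactical: identifying the right ansatz. Restricting to $A \in \RR[K]$ converts an a priori complicated positivity problem on four $2 \times 2$ matrices into two transparent scalar inequalities; without this reduction, directly analyzing the taming conditions for a generic $J$ would be considerably messier. Once the ansatz is in place, the proof reduces to the routine matrix algebra sketched above.
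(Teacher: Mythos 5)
Your argument is correct, and for part~(b) it is essentially the same as the paper's: both restrict the search to block-antidiagonal $J$'s whose blocks lie in the commutative subalgebra $\RR[K]$, so that the taming conditions collapse to scalar inequalities solvable exactly when $\mu\neq 0$. The paper phrases this by identifying $\RR^4$ with $\CC^2$ and writing the candidate complex structures as $J_\phi = \bigl(\begin{smallmatrix} 0 & e^{i\phi} \\ -e^{-i\phi} & 0 \end{smallmatrix}\bigr)$; the difference from your real block-matrix computation is purely notational.
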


\begin{proof}
  We only need to prove~(b).  For simplicity write $V$ as $\CC^2$, and
  the matrices $A_0$ and $A_1$ as
  \begin{equation*}
      A_0 =
      \begin{pmatrix}
        0 & 1 \\ -1 & 0
      \end{pmatrix}
      \quad \text{ and } \quad
      A_1 =
      \begin{pmatrix}
        0 & z \\ - \bar z & 0
      \end{pmatrix}
  \end{equation*}
  with $z = \lambda + i \mu = r e^{i\psi}$.  The matrices
  \begin{equation*}
    J_\phi = \begin{pmatrix}
      0 & e^{i\phi} \\ - e^{-i\phi} & 0
    \end{pmatrix}
  \end{equation*}
  define complex structures on $V$, and it follows that $A_0 J_\phi =
  - \Bigl( \begin{smallmatrix} e^{-i\phi} & 0 \\ 0 &
    e^{i\phi} \end{smallmatrix} \Bigr)$ is positive definite if
  $\cos\phi < 0$, and $A_1 J_\phi = -r\, \Bigl( \begin{smallmatrix}
    e^{i(\psi-\phi)} & 0 \\ 0 & e^{i(\phi-\psi)}
  \end{smallmatrix} \Bigr)$ is positive definite if $\cos(\psi-\phi) <
  0$.  As long as $\psi \ne \pi$ (which we have excluded by requiring
  that $\mu \ne 0$), it follows that we can choose $\phi$ such that
  $\phi \in (\pi/2, 3\pi/2)$ and $\phi - \psi \in (\pi/2, 3\pi/2) +
  2\pi\ZZ$.
\end{proof}

\begin{proof}[Proof of Proposition~\ref{prop:carac_cotamed}]
We first explain the easy equivalence between (1) and (2).
The (open) ray starting at $\omega_0$ and directed by $\omega_1$ and
the open interval between $\omega_0$ and $\omega_1$ span the same
cone in the space of antisymmetric bilinear forms.
Since being symplectic is invariant under nonzero scalar
multiplication, we have the equivalence.

The implication (3)~$\implies$~(1) is also direct because, for any
$t \in [0, 1]$, we have
\begin{equation*}
  \big((1 - t)\,\omega_0 + t\,\omega_1\big)(v, Jv) =
  (1 - t)\,\omega_0(v, Jv) + t\,\omega_1(v, Jv) , 
\end{equation*}
which is positive whenever $v\in V$ is nonzero.
So in particular, no such $v$ can be in the kernel of an element of
the segment between $\omega_0$ and $\omega_1$.

To prove (1) $\implies$ (3), we use the fact that by Proposition~\ref{prop:LR},
there is a matrix
$A_1'$ that splits into certain standard blocks, such that we can find
for any $\epsilon > 0$ a basis of $V$ for which $\omega_0$ is in
canonical form, and for which $\omega_1$ is represented by a matrix
that is $\epsilon$-close to~$A_1'$.

If condition~(1) holds, then the blocks of $A_1'$ correspond to the
ones described in Proposition~\ref{prop:tamed_J_models}, and we obtain
the existence of a complex structure $J$ on $V$ that is tamed both by
the standard symplectic form and by $A_1'$.  By choosing $\epsilon >
0$ sufficiently small, it follows that $J$ is also tamed by $\omega_0$
and $\omega_1$, because tameness is an open condition.
\end{proof}

\begin{proof}[Proof of Proposition~\ref{prop:LR}]
The proof will proceed in several steps.

\textbf{Decomposition into generalized eigenspaces.}
In the first step we shall decompose $V$ into suitable subspaces that are both
$\omega_0$- and $\omega_1$-orthogonal.

Let $\varphi_r \colon V \to V^*$ for $r=0,1$ be the isomorphisms defined
by $\varphi_r(v) := \omega_r(v, \cdot)$.
We consider the endomorphism $B = \varphi_0^{-1} \circ \varphi_1$ of
$V$ so that $\omega_1(v, w) = \omega_0(Bv, w)$. The endomorphism $B$ is
invertible and it is $\omega_0$-symmetric since:
\begin{equation*}
  \omega_0(Bv, w) = \omega_1(v, w) = -\omega_1(w, v) = -\omega_0(Bw, v)
  = \omega_0(v, Bw) \;.
\end{equation*}

To define the generalized eigenspaces of $B$, complexify the vector
space~$V$ to obtain $V^\CC$, and extend the $\omega_r$ to sesquilinear
forms $\omega_r^\CC$. 
A computation analogous to the preceding one shows that $B$ is
$\omega_0^\CC$-symmetric and we still have 
$\omega_0^\CC(v,Bw) = \omega_1^\CC(v,w)$.

The characteristic polynomial of $B$ splits over $\CC$ as
$P(X) = \prod_\lambda (X - \lambda)^{m_\lambda}$, so we can decompose
$V^\CC$ into generalized eigenspaces
\begin{equation*}
  V^\CC = \bigoplus_{\lambda \in Sp(B)} E^\CC_\lambda \quad;\quad 
  E^\CC_\lambda = \ker (B - \lambda)^{m_\lambda} \;.
\end{equation*}

\begin{lemma}
  If $\lambda$ and $\mu$ are eigenvalues of $B$ such that $\lambda
  \neq \bar \mu$, then $E^\CC_\lambda$ and $E^\CC_\mu$ are both
  $\omega^\CC_0$- and $\omega^\CC_1$-orthogonal.
\end{lemma}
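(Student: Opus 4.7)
The plan is to exploit the $\omega_0^\CC$-symmetry of $B$ (inherited from its $\omega_0$-symmetry, since $B$ is real and extends $\CC$-linearly to $V^\CC$) in order to shuttle powers of $(B-\lambda)$ between the two slots of $\omega_0^\CC$, while paying careful attention to the sesquilinearity of the extension, which forces a complex conjugation on the eigenvalue each time a scalar is moved across.

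First I would verify the identity
\begin{equation*}
\omega_0^\CC\bigl((B-c)v,\,w\bigr) = \omega_0^\CC\bigl(v,\,(B-\bar c)w\bigr)
\end{equation*}
for every $c \in \CC$ and $v,w \in V^\CC$. This follows by combining the $\omega_0^\CC$-symmetry $\omega_0^\CC(Bv,w)=\omega_0^\CC(v,Bw)$ (which is established on real vectors and extends to $V^\CC$ by sesquilinearity) with the observation that scalar multiplication by $c$ in the first argument produces a factor of $c$, while scalar multiplication by $\bar c$ in the second produces the same factor $c$, because of antilinearity in the second slot. Iterating this identity yields, for every integer $k \ge 0$,
\begin{equation*}
\omega_0^\CC\bigl((B-\lambda)^{k}v,\,w\bigr) = \omega_0^\CC\bigl(v,\,(B-\bar\lambda)^{k}w\bigr).
\end{equation*}

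Next, I would fix $v \in E^\CC_\lambda$ and $w \in E^\CC_\mu$ and apply the identity with $k = m_\lambda$. The left-hand side vanishes by the very definition of the generalized eigenspace $E^\CC_\lambda$, so
\begin{equation*}
\omega_0^\CC\bigl(v,\,(B-\bar\lambda)^{m_\lambda}w\bigr) = 0.
\end{equation*}
Under the hypothesis $\lambda \neq \bar\mu$, the restriction of $B$ to $E^\CC_\mu$ has the sole eigenvalue $\mu \neq \bar\lambda$, so $(B-\bar\lambda)$ is invertible on $E^\CC_\mu$, and hence so is $(B-\bar\lambda)^{m_\lambda}$. Its image being all of $E^\CC_\mu$, we conclude that $\omega_0^\CC(v,w') = 0$ for every $w' \in E^\CC_\mu$, which is the desired $\omega_0^\CC$-orthogonality.

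For the $\omega_1^\CC$-orthogonality, I would invoke the relation $\omega_1^\CC(v,w) = \omega_0^\CC(Bv,w)$ together with the $B$-invariance of $E^\CC_\lambda$: since $Bv$ still lies in $E^\CC_\lambda$, the orthogonality just established applied to $Bv$ in place of $v$ gives $\omega_1^\CC(v,w) = 0$. The only real subtlety in the whole argument is the bookkeeping of the complex conjugation produced by the sesquilinear extension, which is precisely what singles out $\lambda \neq \bar\mu$ (rather than $\lambda \neq \mu$) as the correct hypothesis; once that is absorbed, each step is essentially a symmetric bilinear form manipulation.
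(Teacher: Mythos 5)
Your proof is correct, and it takes a genuinely different (and slicker) route than the paper's. The paper runs a double induction on the pair $(k,l)$, showing $\ker(B-\lambda)^k$ and $\ker(B-\mu)^l$ are orthogonal for both forms step by step, with the base case being actual eigenvectors. You instead iterate the adjoint identity $\omega_0^\CC\bigl((B-c)v,w\bigr)=\omega_0^\CC\bigl(v,(B-\bar c)w\bigr)$ up to the nilpotency exponent $m_\lambda$, killing the left side outright, and then observe that $(B-\bar\lambda)$ is invertible on $E^\CC_\mu$ precisely because $\bar\lambda\neq\mu$, so surjectivity of $(B-\bar\lambda)^{m_\lambda}$ on $E^\CC_\mu$ finishes the $\omega_0^\CC$-case in one stroke; the $\omega_1^\CC$-case then reduces to the $\omega_0^\CC$-case via the $B$-invariance of $E^\CC_\lambda$. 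Your approach buys a shorter, more structural argument that isolates exactly where the hypothesis $\lambda\neq\bar\mu$ is used (invertibility of $B-\bar\lambda$ on the other eigenspace), whereas the paper's induction is more elementary and avoids any appeal to invertibility, at the cost of more bookkeeping. One small thing worth making explicit in your write-up is the sesquilinearity convention — the paper's $\omega_r^\CC$ is antilinear in the \emph{first} slot and linear in the second, which is what makes $\omega_0^\CC\bigl((B-c)v,w\bigr)=\omega_0^\CC\bigl(v,(B-\bar c)w\bigr)$ come out with the conjugate on the second slot rather than the first; you gesture at this but the direction matters.
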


\begin{proof}
We prove by induction on $k$ and $l$ that $\ker (B - \lambda)^k$ and
$\ker (B - \mu)^l$ are orthogonal.

To start the induction, note that if $v_\lambda \in \ker (B - \lambda)
$, and $v_\mu \in \ker (B - \mu)$, then
\begin{equation*}
  (\bar \lambda - \mu)\, \omega_0^\CC(v_\lambda, v_\mu) =
  \omega_0^\CC\bigl((B - \bar \mu)\,v_\lambda, v_\mu\bigr) =
  \omega_0^\CC\bigl(v_\lambda, (B-\mu)\, v_\mu\bigr) = 0 \;,
\end{equation*}
thus since $\lambda \ne \bar \mu$, it follows that
$\omega_0^\CC(v_\lambda, v_\mu) = 0$.  Similarly,
$\omega_1^\CC(v_\lambda, v_\mu) = \omega_0^\CC(v_\lambda, B v_\mu) =
\mu\, \omega_0^\CC(v_\lambda, v_\mu) = 0$.

Assume now it has already been shown for the integers $k$ and $l$ that
$\ker (B - \lambda)^k$ and $\ker (B - \mu)^l$ are both
$\omega_0^\CC$- and $\omega_1^\CC$-orthogonal.  Choose a vector
$v_\lambda' \in \ker (B - \lambda)^{k+1}$ and use the fact that $Bv_\lambda' =
\lambda\, v_\lambda' + w$ for some $w \in \ker (B - \lambda)^k$.  Then
we obtain for any $v_\mu \in \ker (B - \mu)^l$,
\begin{equation*}
  \begin{split}
    (\bar\lambda - \mu)^l\, \omega_0^\CC(v_\lambda', v_\mu) &= (\bar
    \lambda - \mu)^{l-1}\,\omega_0^\CC\bigl((B-\bar \mu)\, v_\lambda'
    - w, v_\mu\bigr) \\ &= (\bar \lambda - \mu)^{l-1}\,
    \omega_0^\CC\bigl((B - \bar\mu)\, v_\lambda', v_\mu\bigr) =
    \omega_0^\CC\bigl( v_\lambda', (B-\mu)^l\, v_\mu\bigr) = 0 \;,
  \end{split}
\end{equation*}
and also $\omega_1^\CC(v_\lambda', v_\mu) = \omega_0^\CC(B v_\lambda',
v_\mu) = \bar\lambda\,\omega_0^\CC(v_\lambda', v_\mu) +
\omega_0^\CC(w, v_\mu) = 0$, which proves the induction step from $(k, l)$
to $(k + 1, l)$. Since $\lambda$ and $\mu$ have completely symmetric roles, this
also explains how to go to $(k, l + 1)$.
\end{proof}

We now relate this decomposition of $V^\CC$ to the initial
real vector space $V$.
For a real eigenvalue $\lambda$, the intersection $V\cap E_\lambda^\CC$
defines a real subspace $E_\lambda$ with $\dim_\RR E_\lambda =
\dim_\CC E_\lambda^\CC$.  Complex conjugation defines an isomorphism
$E_\lambda^\CC \to E_{\bar\lambda}^\CC, v_\lambda \mapsto \bar
v_\lambda$, and we can write $V\cap \bigl(E_\lambda^\CC \oplus E_{\bar
  \lambda}^\CC\bigr)$ for $\lambda \in \CC\setminus \RR$ as the direct
sum of real subspaces $E_{\{\lambda,\bar \lambda\}} = \bigl\{v+\bar v
\bigm|\, v\in E_\lambda^\CC \bigr\} \oplus \bigl\{i\,(v-\bar v)
\bigm|\, v\in E_\lambda^\CC \bigr\}$.

This way we find a decomposition of $V$ into pairwise $\omega_0$- and
$\omega_1$-orthogonal subspaces
\begin{equation*}
  E_{\mu_1}\oplus \dotsm \oplus E_{\mu_k} \oplus
  E_{\{\lambda_1,\bar \lambda_1\}}
  \oplus \dotsm \oplus  E_{\{\lambda_l,\bar \lambda_l\}}
\end{equation*}
with $\mu_1, \dotsc, \mu_k \in \RR\setminus\{0\}$, and
$\lambda_1,\dotsc,\lambda_l \in \CC\setminus \RR$.

\textbf{Blocks with real eigenvalue.}
For the following considerations, we restrict to one of the
subspaces $E_{\lambda_j}$ with $\lambda_j \in \RR$, and denote $\lambda_j$ for
simplicity just by $\lambda$.
We will construct a basis of $E_\lambda$ such that $\omega_0$ and
$\omega_1$ have the particularly nice form described in the
proposition.
Note that $\omega_0$ and $\omega_1$ are both nondegenerate on
$E_\lambda$. 

Let $k+1$ be the nilpotency index of $B - \lambda$, i.e.~$(B -
\lambda)^{k+1} = 0$ and $(B - \lambda)^k \ne 0$.  Let $v_0$ be an
element of $E_\lambda$ not in $\ker(B - \lambda)^k$.  We set $v_j :=
\epsilon^{-j}(B - \lambda)^jv_0$ to define a collection of vectors
$v_0,\dotsc,v_k$.
Choose now a vector $w_k \in E_\lambda$ with $\omega_0(v_k,w_k) = 1$
and $\omega_0(v_j,w_k) = 0$ for every $j \ne k$, and define
inductively $w_{j-1} := \epsilon^{-1}\, (B-\lambda)\, w_j$, or
equivalently
\begin{equation*}
  B w_j = \lambda\, w_j + \epsilon\, w_{j-1}
\end{equation*}
for $j\ge 1$.

\begin{lemma}\label{lemma: real_eigenspaces_relations}
  The vectors $v_0,\dotsc, v_k, w_0,\dotsc, w_k$ are linearly
  independent and satisfy the relations $\omega_r(v_j,v_{j'}) =
  \omega_r(w_j, w_{j'}) = 0$ for all $r=0,1$, and $j,j'$, and
  \begin{equation*}
    \omega_0(v_j, w_{j'}) = \delta_{j,j'} \quad\text{ and }\quad \omega_1(v_j, w_{j'}) =
    \lambda\, \delta_{j,j'} + \epsilon\,  \delta_{j,j'-1} \;.
  \end{equation*}
\end{lemma}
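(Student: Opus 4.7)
The plan is to derive every assertion from two structural facts that have already been set up: the endomorphism $B$ is $\omega_0$-symmetric, so that $\omega_0((B-\lambda)u, u') = \omega_0(u, (B-\lambda)u')$, and the generators satisfy the shift relations $Bv_j = \lambda v_j + \epsilon v_{j+1}$ and $Bw_j = \lambda w_j + \epsilon w_{j-1}$. The only point requiring a small preliminary remark is that $(B-\lambda)w_0 = 0$: since $w_0 = \epsilon^{-k}(B-\lambda)^k w_k$ and $w_k \in E_\lambda = \ker(B-\lambda)^{k+1}$, one has $(B-\lambda)w_0 = \epsilon^{-k}(B-\lambda)^{k+1}w_k = 0$. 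In the same way $v_{k+1} = 0$ by the defining property of $v_0$. Hence both shift sequences can be treated symmetrically, with the understanding that $v_{k+1} = w_{-1} = 0$.

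For the vanishing $\omega_r(v_j, v_{j'}) = 0$, I would argue as follows. By repeated use of the $B$-symmetry, $\omega_0(v_j, v_{j'}) = \epsilon^{-j-j'}\omega_0(v_0, (B-\lambda)^{j+j'} v_0)$. This expression is symmetric in $(j, j')$; since $\omega_0$ is antisymmetric, it vanishes. Next, $\omega_1(v_j, v_{j'}) = \omega_0(Bv_j, v_{j'}) = \lambda\,\omega_0(v_j, v_{j'}) + \epsilon\,\omega_0(v_{j+1}, v_{j'}) = 0$. The same argument, using $(B-\lambda)w_0 = 0$ in place of $v_{k+1} = 0$, yields $\omega_r(w_j, w_{j'}) = 0$ for $r=0,1$.

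For the duality pairing, write $w_{j'} = \epsilon^{j'-k}(B-\lambda)^{k-j'} w_k$ and shift the operator to the left:
\[
\omega_0(v_j, w_{j'}) = \epsilon^{j'-k}\omega_0\bigl((B-\lambda)^{k-j'}v_j, w_k\bigr).
\]
If $j > j'$, then $(B-\lambda)^{k-j'}v_j = \epsilon^{-j}(B-\lambda)^{j+k-j'}v_0 = 0$ because $j + k - j' \ge k+1$, so the pairing is zero. If $j \le j'$, then $(B-\lambda)^{k-j'}v_j = \epsilon^{k-j'}v_{j+k-j'}$, and the normalization conditions imposed on $w_k$ give $\omega_0(v_{j+k-j'}, w_k) = \delta_{j+k-j',\, k} = \delta_{j,j'}$. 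Hence $\omega_0(v_j, w_{j'}) = \delta_{j,j'}$. A single further application of $B$-symmetry yields
\[
\omega_1(v_j, w_{j'}) = \omega_0(Bv_j, w_{j'}) = \lambda\delta_{j,j'} + \epsilon\delta_{j,j'-1}.
\]
Linear independence of the $2(k+1)$ vectors is then immediate: pairing any relation $\sum a_j v_j + \sum b_j w_j = 0$ with $w_{j'}$ via $\omega_0$ forces $a_{j'} = 0$, and pairing with $v_{j'}$ forces $b_{j'} = 0$.

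There is no genuine obstacle here; the entire computation is bookkeeping built around the $\omega_0$-symmetry of $B$. The only care needed is in the case split $j \le j'$ versus $j > j'$ in the duality calculation, and in recognizing that the boundary conditions $v_{k+1} = 0$ and $(B-\lambda)w_0 = 0$ hold automatically from the construction of the Jordan chains, so that the recursions close on both ends without any extra assumption.
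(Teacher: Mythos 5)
Your proof is correct and rests on exactly the same two ingredients as the paper's: the $\omega_0$-symmetry of $B$ (hence of $(B-\lambda)^m$) and the shift relations defining the chains. The one genuine organizational difference is in how you establish $\omega_r(v_j,v_{j'})=\omega_r(w_j,w_{j'})=0$: the paper runs a nested induction on $|j-j'|$ that treats $\omega_0$ and $\omega_1$ simultaneously, whereas you collapse the $\omega_0$ case to a single observation — the pairing reduces to $\epsilon^{-j-j'}\omega_0(v_0,(B-\lambda)^{j+j'}v_0)$, which depends only on $j+j'$ and is therefore killed by antisymmetry — and then deduce the $\omega_1$ case in one line. This is a cleaner route to the same conclusion and eliminates the induction entirely. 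Your handling of the boundary cases ($v_{k+1}=0$ and $w_{-1}=0$, the latter via $(B-\lambda)w_0=0$), the case split $j\le j'$ versus $j>j'$ in the duality computation, and the linear-independence argument are all correct and match what the paper needs.
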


\begin{proof}
We start by proving $\omega_r(v_j, v_{j'}) = 0$.
For this we will use an induction on $\abs{j - j'}$.
If $j - j' = 0$ then the statement follows directly from the
antisymmetry of $\omega_r$.
Suppose that the claim is true for $j - j' \le m$ and consider any $j$
and $j'$ with $j - j' = m + 1$ (in particular $j \ge 1$).  We have
\begin{align*}
  \epsilon\, \omega_0(v_j, v_{j'}) &= \omega_0\bigl((B - \lambda)\, v_{j -
    1}, v_{j'}\bigr) = \omega_1(v_{j - 1}, v_{j'}) -
  \lambda\,\omega_0(v_{j - 1}, v_{j'}) = 0 \intertext{by the induction
    hypothesis.  Using the fact that $B v_{j'} = \epsilon\, v_{j' + 1} + \lambda\,
    v_{j'}$, we compute} \omega_1(v_j, v_{j'}) &= \omega_0(v_j,
  Bv_{j'}) = \epsilon\, \omega_0(v_j, v_{j' + 1}) + \lambda\,\omega_0(v_j,
  v_{j'}) \;.
\end{align*}
The first term is zero by the induction hypothesis and the second one is zero
because of the preceding computation.
The proof of $\omega_r(w_j, w_{j'}) = 0$ follows the same lines, and
will be omitted.

Note that 
\begin{align*}
\omega_0(v_j,w_{j'}) &= \epsilon^{j'-k}\,
\omega_0\bigl(v_j, (B-\lambda)^{k-j'}\, w_k\bigr)\\
&= \epsilon^{j'-k}\, \omega_0\bigl((B-\lambda)^{k-j'}\, v_j, w_k\bigr)
= \omega_0(v_{k + j-j'}, w_k) = \delta_{j,j'} \;,
\end{align*}
and in particular this implies that $v_0,\dotsc,v_k, w_0,\dotsc, w_k$
are linearly independent vectors with respect to which $\omega_0$ has
standard form.

The remaining relation for $\omega_1$ can be obtained by
\begin{equation*}
  \omega_1(v_j,w_{j'}) = \omega_0(v_j,Bw_{j'}) =
  \lambda\,\omega_0(v_j,w_{j'}) + \epsilon\, \omega_0(v_j,w_{j'-1}) =
  \lambda\,\delta_{j,j'} + \epsilon\,\delta_{j,j'-1}\;. \qedhere
\end{equation*}
\end{proof}

If we restrict $\omega_0$ and $\omega_1$ to the subspace 
$E = \Span(v_0,\dotsc,v_k, w_0,\dotsc,w_k)$ and represent them in
this basis, we now find that $\omega_0$ is in standard
form $\Omega_{2k}$ and $\omega_1$ is represented by a matrix
$\epsilon$-close to $\lambda\, \Omega_{2k}$.

To continue the proof, restrict $\omega_0$, $\omega_1$, and $B$ to the
$\omega_0$-symplectic complement $E'$ of the space $E$.  Note that
$E'$ is stable under $B$ because for $u\in E'$,
\begin{equation*}
  \omega_0(v_j,Bu) = \omega_0(Bv_j,u) = \lambda \,\omega_0(v_j,u) +
  \epsilon\, \omega_0(v_{j-1},u) = 0 \;,
\end{equation*}
and similarly for
$\omega_0(w_j,Bu) = 0$.  We can thus proceed as before to reduce all
eigenspaces $E_\lambda$ with $\lambda\in\RR$ to $\omega_0$-symplectic blocks
in normal form.

\textbf{Blocks with complex eigenvalue.} 
We proceed now to the generalized
complex eigenspace $E^\CC_\lambda$ with $\lambda \in \CC\setminus
\RR$.  Let $k$ be the largest integer for which $E^\CC_\lambda \ne
\ker (B-\lambda)^k$, and construct as before a chain of vectors
$v_0,\dotsc,v_k \in E^\CC_\lambda$ by starting with an element $v_0
\in E^\CC_\lambda \setminus \ker (B-\lambda)^k$, and defining
inductively
\begin{equation*}
  v_{j+1} := \epsilon^{-1}\,(B-\lambda)\, v_j \;.
\end{equation*}
Using complex conjugation, we also find a chain $\bar v_0,\dotsc, \bar
v_k$ that lies in $E^\CC_{\bar\lambda}$.  Since $B$ is the
complexification of a real linear map, $\bar v_{j+1} :=
\epsilon^{-1}\,(B- \bar\lambda)\, \bar v_j$ holds.

Next, we define two chains $w_0,\dotsc, w_k$ in
$E^\CC_{\bar\lambda}$ and $\bar w_0,\dotsc, \bar w_k$ in
$E^\CC_\lambda$ by starting with a vector $w_k \in E^\CC_{\bar
  \lambda}$ with $\omega^\CC_0(v_k,w_k) = 1$ and
$\omega^\CC_0(v_j,w_k) = 0$ for every $j \ne k$, and defining
$w_{j-1} := \epsilon^{-1}\, (B-\bar \lambda)\, w_j$, or equivalently
\begin{equation*}
  B w_j = \bar \lambda \, w_j + \epsilon\, w_{j-1}
\end{equation*}
for $j\ge 1$.  Similarly, we obtain $\bar w_{j-1} = \epsilon^{-1}\,
(B- \lambda)\, \bar w_j$.

\begin{lemma} \ 
\begin{itemize}
\item [(a)] The space spanned by $v_0,\dotsc, v_{k - 1}, \bar v_0,
	\dotsc, \bar v_{k - 1}$ and the one spanned by $w_0,\dotsc, 
	w_{k - 1}, \bar w_0, \dotsc, \bar w_{k - 1}$ are each isotropic with respect
	to both $\omega_0$ and $\omega_1$.
\item [(b)] The $\omega_0^\CC$-pairings for these vectors are given
	by
	\begin{align*}
		\omega_0^\CC(v_j, \bar w_{j'}) = 0 \;, \qquad&
		\omega_0^\CC(v_j, w_{j'}) = \delta_{j,j'}\;, \\
		\omega_0^\CC(\bar v_j, w_{j'}) = 0 \;, \qquad& \omega_0^\CC(\bar
		v_j, \bar w_{j'}) = \delta_{j,j'} \;.
	\end{align*}
\item [(c)] The $\omega_1^\CC$-pairings for these vectors are given
	by
	\begin{align*}
		\omega_1^\CC(v_j, \bar w_{j'}) = 0 \;, \qquad& \omega_1^\CC(v_j,
		w_{j'}) = \lambda\, \delta_{j,j'} + \epsilon\,
		\delta_{j,j'-1}  \;, \\
		\omega_1^\CC(\bar v_j, w_{j'}) = 0 \;, \qquad& \omega_1^\CC(\bar
		v_j, \bar w_{j'}) = \bar \lambda\, \delta_{j,j'} + \epsilon\,
		\delta_{j,j'-1} \;.
	\end{align*}
\end{itemize}
\end{lemma}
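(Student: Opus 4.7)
Plan: The strategy is to mimic the induction used for the real eigenvalue case (Lemma~\ref{lemma: real_eigenspaces_relations}), while exploiting the extra cancellations provided by the complex conjugate structure.

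I would start by observing that many of the required vanishing identities in (a), (b), and (c) follow for free from the orthogonality lemma proved above. Indeed, since $\lambda \notin \RR$, applying that lemma with $\mu = \lambda$ and with $\mu = \bar\lambda$ shows that $E_\lambda^\CC$ and $E_{\bar\lambda}^\CC$ are each self-orthogonal under both $\omega_0^\CC$ and $\omega_1^\CC$. Because all the $v_j$ and $\bar w_{j'}$ sit inside $E_\lambda^\CC$, while all the $\bar v_j$ and $w_{j'}$ sit inside $E_{\bar\lambda}^\CC$, this immediately disposes of the pairings $\omega_r^\CC(v_j, v_{j'})$, $\omega_r^\CC(v_j, \bar w_{j'})$, $\omega_r^\CC(\bar w_j, \bar w_{j'})$, $\omega_r^\CC(\bar v_j, \bar v_{j'})$, $\omega_r^\CC(\bar v_j, w_{j'})$ and $\omega_r^\CC(w_j, w_{j'})$.

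The nontrivial diagonal pairings $\omega_0^\CC(v_j, w_{j'}) = \delta_{j,j'}$ and the corresponding $\omega_1^\CC$-formula are then proved by literally the same two-step induction as in Lemma~\ref{lemma: real_eigenspaces_relations}, using the defining relations $Bv_j = \lambda v_j + \epsilon v_{j+1}$ and $Bw_j = \bar\lambda w_j + \epsilon w_{j-1}$ together with the adjointness $\omega_0^\CC(Bv, w) = \omega_0^\CC(v, Bw)$ inherited from the $\omega_0$-symmetry of~$B$. The conjugate identities $\omega_0^\CC(\bar v_j, \bar w_{j'}) = \delta_{j,j'}$ and so on follow automatically by applying complex conjugation, using $\omega_r^\CC(\bar v, \bar w) = \overline{\omega_r^\CC(v, w)}$.

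The main obstacle is the remaining piece of the isotropy claim~(a), namely $\omega_0^\CC(v_j, \bar v_{j'}) = 0$ and the analogous statement for $w_j$ and $\bar w_{j'}$, since $E_\lambda^\CC$ and $E_{\bar\lambda}^\CC$ are \emph{not} mutually orthogonal and so nothing from the first step applies. To handle this, I would equate the two expressions for $\omega_1^\CC(v_j, \bar v_{j'})$ obtained from $\omega_0^\CC(Bv_j, \bar v_{j'}) = \omega_0^\CC(v_j, B\bar v_{j'})$ and expand using the defining relations to produce the two-variable recursion
\begin{equation*}
  (\lambda - \bar\lambda)\, \omega_0^\CC(v_j, \bar v_{j'}) = \epsilon\left[\omega_0^\CC(v_j, \bar v_{j'+1}) - \omega_0^\CC(v_{j+1}, \bar v_{j'})\right].
\end{equation*}
Since $\lambda \neq \bar\lambda$ and $v_{k+1} = \bar v_{k+1} = 0$, a descending induction on $j + j'$ starting from $(j,j') = (k,k)$ forces all these pairings to vanish for $0 \le j,j' \le k$, hence a fortiori for $j,j' \le k-1$. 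The corresponding $\omega_1^\CC$-vanishing is then automatic from $\omega_1^\CC(v,w) = \omega_0^\CC(Bv,w)$, and an entirely parallel recursion handles the $w$--$\bar w$ pairings.

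A small technical point implicit in the setup should also be verified, namely that the $w_k \in E_{\bar\lambda}^\CC$ normalized by $\omega_0^\CC(v_j, w_k) = \delta_{j,k}$ exists and can be chosen outside of $\ker(B - \bar\lambda)^k$, so that the chain $w_0, \dots, w_k$ consists of linearly independent vectors. Existence follows from the nondegeneracy of $\omega_0^\CC$ restricted to $E_\lambda^\CC \times E_{\bar\lambda}^\CC$ (a consequence of the orthogonality decomposition of $V^\CC$) together with the linear independence of $v_0, \dots, v_k$, while the extra condition defines an open dense subset of the affine space of admissible choices and so holds after a generic perturbation.
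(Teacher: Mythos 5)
Your argument for $\omega_r^\CC(v_j, \bar v_{j'}) = 0$ has a genuine gap coming from the sesquilinearity of the complexified forms. Recall from the paper's setup that $\omega_r^\CC$ is extended \emph{sesquilinearly} to $V^\CC$ (conjugate-linear in the first slot, linear in the second); this is what makes the preceding orthogonality lemma read $\lambda \ne \bar\mu$ rather than $\lambda + \mu \ne 0$. With that convention, expanding both sides of $\omega_0^\CC(Bv_j, \bar v_{j'}) = \omega_0^\CC(v_j, B\bar v_{j'})$ does \emph{not} yield your recursion: on the left the scalar $\lambda$ from $Bv_j = \lambda v_j + \epsilon v_{j+1}$ gets conjugated as it exits the first slot, while on the right the scalar $\bar\lambda$ from $B\bar v_{j'}$ passes through the second slot linearly, so \emph{both} sides contribute $\bar\lambda\,\omega_0^\CC(v_j,\bar v_{j'})$ and these terms cancel identically. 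What remains is only the shift relation $\omega_0^\CC(v_{j+1},\bar v_{j'}) = \omega_0^\CC(v_j,\bar v_{j'+1})$, with no $(\lambda - \bar\lambda)$ factor. Your descending induction depends essentially on being able to divide by $\lambda - \bar\lambda$, so it collapses. The shift relation shows that $f(j,j') := \omega_0^\CC(v_j,\bar v_{j'})$ depends only on $j + j'$ and, together with $v_{k+1} = 0$, kills the range $j + j' \ge k + 1$, but it gives no information below that.

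The missing input, which the paper uses and you would need to supply, is the diagonal base case $j = j'$, and it comes from sesquilinearity itself rather than from any recursion: writing $v_j = v_x + iv_y$, $\bar v_j = v_x - iv_y$ and expanding $\omega_0^\CC(\bar v_j, v_j)$, the real and imaginary parts vanish by antisymmetry of $\omega_0$ (equivalently, $\omega_0^\CC$ is skew-Hermitian, so $f(j,j') = -f(j',j)$ and in particular $f(j,j) = 0$). Once you have that, your shift relation \emph{does} close the argument: constancy of $f$ along $j + j' = \mathrm{const}$ combined with antisymmetry in $(j,j')$ forces $f$ to vanish everywhere, which is a slightly different route from the paper's induction on $j' - j$ but equally valid. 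The rest of your outline (reducing the off-eigenspace pairings to the orthogonality lemma, conjugating to obtain half the identities in (b)–(c), and the induction for $\omega_0^\CC(v_j, w_{j'}) = \delta_{j,j'}$) is sound, although the worry about needing a generic perturbation of $w_k$ is unnecessary: the nonvanishing $\omega_0^\CC(v_0, w_0) = 1$ follows automatically by moving $(B-\lambda)^k$ across the pairing.
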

\begin{proof}
To prove (a) note that since $\lambda \ne \bar \lambda$, the spaces
$E^\CC_\lambda$ and $E^\CC_{\bar\lambda}$ are both $\omega_0^\CC$-
and $\omega_1^\CC$-isotropic, so we only need to show that
$\omega_r^\CC (\bar v_j, v_{j'}) = \omega_r^\CC (\bar w_j, w_{j'}) =
0$ for all $j,j'$, and for $r=0,1$.  If $j=j'$, we write $v_j$ as $v_x
+ i v_y$, and we use sesquilinearity as follows:
\begin{align*}
  \omega_0^\CC(\bar v_j, v_j) &= \omega_0^\CC(v_x, v_x) +
  \omega_0^\CC(v_x, i v_y)
  -\omega_0^\CC(i v_y, v_x) - \omega_0^\CC(i v_y, i v_y) \\
  &= \omega_0(v_x, v_x) + i\omega_0(v_x, v_y)
  +i\omega_0(v_y, v_x) - \omega_0( v_y,  v_y) \\
  &= 0 .
\end{align*}
By the same computation, $\omega_1^\CC(\bar v_j, v_j) = 0$.

If the statement is true for $j'-j = m \ge 0$, then
\begin{align*}
  \epsilon\, \omega_0^\CC(\bar v_j, v_{j'+1}) &=
  \omega_0^\CC\bigl(\bar v_j, (B-\lambda)\,v_{j'}\bigr) =
  \omega_1^\CC(\bar v_j, v_{j'}) - \lambda\, \omega_0^\CC (\bar v_j,
  v_{j'})\\
  &= 0\\[-.2cm]
  \intertext{and} \omega_1^\CC(\bar v_j, v_{j'+1}) &= \omega_0^\CC(B
  \bar v_j, v_{j'+1}) = \omega_0^\CC(\bar \lambda\,
  \bar v_j + \epsilon\, \bar v_{j+1}, v_{j'+1}) \\
  &= 0 \;,
\end{align*}
which finishes the induction.  The argument for $\omega_r^\CC (\bar
w_j, w_{j'})$ is identical.

To prove (b), note first that the second two equations are the complex
conjugate of the first two.  Since $v_j, \bar w_{j'} \in
E_\lambda^\CC$, it also follows immediately that $\omega_0^\CC(\bar
v_j, \bar w_{j'}) = 0$, so that we are only left with showing
$\omega_0^\CC(v_j, w_{j'}) = \delta_{j,j'}$, but the required
computation is identical to the one used to show the analogous
relation in the proof of Lemma~\ref{lemma:
  real_eigenspaces_relations}.

The equalities for~(c) follow similarly.
\end{proof}

We will now intersect the complex subspace spanned by the chains
defined above with the initial real vector space $V$ to finish the
proof of the proposition.
For this, define for all $j \le k$ the real vectors
\begin{align*}
  v_j^+ = \frac{1}{\sqrt{2}}\,(v_j + \bar v_j), \qquad& v_j^- =
  \frac{i}{\sqrt{2}}\, (v_j - \bar v_j) \intertext{and} w_j^+ =
  \frac{1}{\sqrt{2}}\,(w_j + \bar w_j), \qquad& w_j^- =
  \frac{i}{\sqrt{2}}\, (w_j - \bar w_j)
\end{align*}
which all lie in $E_{\lambda, \bar \lambda}$.  Using the results deduced
above, we obtain for all $r=0,1$, and $j,j'$ the equations
$\omega_r(v_j^+, v_{j'}^\pm) = \omega_r(v_j^-, v_{j'}^\pm) = 0$ and
$\omega_r(w_j^+, w_{j'}^\pm) = \omega_r(w_j^-, w_{j'}^\pm) = 0$, and finally
\begin{align*}
  2\,\omega_0(v_j^+, w_{j'}^+) &= \omega_0^\CC(v_j, w_{j'} + \bar
  w_{j'}) +
  \omega_0^\CC(\bar v_j, w_{j'} + \bar w_{j'}) = 2\,\delta_{j,j'}\;, \\
  2\,\omega_0(v_j^+, w_{j'}^-) &= i\,\omega_0^\CC(v_j, w_{j'} - \bar
  w_{j'}) +
  i\,\omega_0^\CC(\bar v_j, w_{j'} - \bar w_{j'}) = 0\;, \\
  2\,\omega_1(v_j^+, w_{j'}^+) &= \omega_1^\CC(v_j, w_{j'} + \bar
  w_{j'}) + \omega_1^\CC(\bar v_j, w_{j'} + \bar w_{j'}) \\
  & = \omega_0^\CC(v_j, Bw_{j'}) + \omega_0^\CC(\bar v_j, B\bar
  w_{j'}) = \bar \lambda \, \omega_0^\CC(v_j, w_{j'}) + \epsilon\,
  \omega_0^\CC(v_j, w_{j'-1}) \\
  & + \lambda \, \omega_0^\CC(\bar v_j, \bar w_{j'})
  +   \epsilon\, \omega_0^\CC(\bar v_j,  \bar w_{j-1}) \\
  & = (\lambda + \bar \lambda) \, \delta_{j, j'} +
  2\epsilon\,\delta_{j, j' - 1}
\end{align*}
and similar computations for the other matrix elements, which prove the 
desired result with $\mu = \RealPart\lambda$ and $\nu = \ImaginaryPart\lambda$.

\textbf{Sign of real eigenvalues.} 
Assume that all $2$-forms in the
family
\begin{equation*}
  \omega_t := (1-t)\,\omega_0 + t\,\omega_1
\end{equation*}
for $t\in [0,1]$ are nondegenerate.  The $\lambda$-coefficients in
the $2\times 2$-blocks of $A_1'$ correspond to the real eigenvalues
of the map $B$, so that if $\lambda < 0$ with eigenvector $v$, then we
have $\omega_1(v,\cdot) = \omega_0(Bv,\cdot) = \lambda\,
\omega_0(v,\cdot)$, and it follows that $\omega_t(v,\cdot) = (1-t +
t\lambda)\,\omega_0(v,\cdot)$ has to vanish for a certain value
$t_0\in (0,1)$, so that $\omega_{t_0}$ is degenerate.
\end{proof}

\bibliographystyle{amsalphaurl}
\bibliography{main}

\end{document}